\documentclass[a4paper,11pt]{amsart}
\usepackage{amsmath,amssymb,amsfonts,amsthm}
\usepackage[abbrev,nobysame]{amsrefs}
\usepackage{mathtools,cases}
\usepackage[foot]{amsaddr}
\usepackage{enumerate}
\numberwithin{equation}{section}

\numberwithin{equation}{section}
\newcommand{\p}{\partial}
\newcommand{\vphi}{\varphi}
\newcommand{\om}{\omega}
\newcommand{\tri}{\triangle}
\newcommand{\eps}{\epsilon}
\newcommand{\thmref}[1]{Theorem~\ref{#1}}

\newcommand{\lemref}[1]{Lemma~\ref{#1}}
\newcommand{\corref}[1]{Corollary~\ref{#1}}

\newcommand{\Om}{\Omega}

\def\p{\partial}
\def\b{\beta}

\def\osc{{\rm osc\,}}

\DeclareMathOperator{\tr}{Tr}
\DeclareMathOperator\supp{supp}
\DeclareMathOperator{\Tr}{Tr}


\newtheorem{theorem}{Theorem}[section]
\newtheorem{thm}[theorem]{Theorem}
\newtheorem{rem}[theorem]{Remark}
\newtheorem{defn}[theorem]{Definition}

\newtheorem{lem}[theorem]{Lemma}
\newtheorem{claim}[theorem]{Claim}

\newtheorem{prop}[theorem]{Proposition}

\newtheorem{cor}[theorem]{Corollary}
\newtheorem{ques}[theorem]{Question}

\def\tr{\mathop{\rm tr}\nolimits}

\def\cD{{\mathcal D}}

\let\ul=\underline

\let\vphi=\varphi

\def\a{\alpha}

\title[Existence, Properness and Geodesic stability]{Existence of constant scalar curvature K\"ahler cone metrics, properness and geodesic stability}

\author{Kai Zheng}
  \curraddr{Tongji University, Shanghai 200092, P.R. CHINA}
  \email[Kai Zheng]{kaizheng@amss.ac.cn}


\begin{document}
\maketitle

\begin{abstract} We show that the existence of constant scalar curvature K\"ahler (cscK) metrics with cone singularities is equivalent to the properness of log $K$-energy. We also prove their equivalence to the geodesic stability. They are extensions of the solution of the properness conjecture and Donaldson's geodesic stability conjecture of the cscK problem for smooth K\"ahler metrics by Chen-Cheng to the setting of cscK cone metrics. 

One applications of our main results is that we introduce and construct singular cscK metrics with possible degeneration in big cohomology class. As another application, we also prove both openness and approximation property for the path of cscK cone metrics, which are paralleling to Donaldson's continuity method through K\"ahler-Einstein cone metrics in the resolution of Yau-Tian-Donaldson conjecture for Fano K\"ahler-Einstein metrics.
\end{abstract}

\tableofcontents

\section{Introduction}

In K\"ahler geometry, Yau-Tian-Donaldson (YTD) conjecture states the equivalence between the existence of constant scalar curvature K\"ahler (cscK) metrics and the algebraic notion of $K$-stability. Both properness conjecture \cite{MR3858468,MR1787650} and Donaldson's geodesic stability conjecture \cite{MR1736211} are different forms of the YTD conjecture. There are many literatures on these conjectures, see for example the most recent surveys \cite{MR3858468,MR3966735,MR3966781}. Recently, both deep conjectures have been proven in Chen-Cheng \cite{arXiv:1712.06697,arXiv:1801.00656,arXiv:1801.05907}, stating that the existence of cscK metrics is equivalent to the properness of the $K$-energy, and it is also equivalent to the geodesic stability. 

One goal of this article is on the existence problem of singular cscK metrics allowing both singularities or degeneration. This article is a continuation of our previous work \cite{MR4020314} on cscK metrics with cone singularities, where we developed analytic tools and proved regularity, uniqueness and deformation of the cscK cone metrics. 

The YTD conjectures are expected to hold in the conical case, that is the equivalence between the existence of cscK cone metrics and algebraic stability \cite{MR3858468}. We will confirms these conjectures (\thmref{cone properness conjecture}, \thmref{cone geodesic conjecture}), via combining our previous works with Chen-Cheng's breakthrough on existence of smooth cscK metrics to the conical case. 

Furthermore, we would also introduce a new notion of singular cscK metrics with possible degeneration (Definition \ref{singular cscK defn}) and prove an existence theorem (\thmref{singular csck construction weak}) by applying an approximation method. The definition of singular cscK metric generalises the notion of singular K\"ahler-Einstein metric and also extends Yau's resolution \cite{MR480350} of Calabi conjecture with zero or negative first Chern classes (Aubin has an independent work for negative first Chern classes). The singular K\"ahler-Einstein metric is widely studied in the minimal model program from birational geometry and there are many literatures on finding singular K\"ahler-Einstein metrics on a minimal projective manifold of general types or log Fano varieties, such as \cite{MR3956691,MR3090260,MR2746347,MR2505296,MR2869020}.

\bigskip

Before we start to state the theorems, we recall some definitions. We are given a K\"ahler manifold $X$ and a K\"ahler class $\Om$. Let $\om_0$ be a smooth K\"ahler metric in $\Om$. The divisor $D \subset X$ is a smooth hypersurface in $X$ and the cone angle $\beta$ stays within $(0,1]$. We let $L_D$ be the associated line bundle of the divisor $D$ and assume that $C_1(L_D)$ is non-negative.
We denote $Aut(X;D)$ the identity component of the group of holomorphic automorphisms of $X$ which fix the divisor $D$.

The cscK cone metric is defined to be a smooth cscK metric in the regular part of the manifold $M:=X\setminus D$ with prescribed cone singularity of cone angle $\beta$ along the divisor $D$. The rigorous definition (Definition \ref{csckconemetricdefn}) requires introducing the appropriate weighted H\"older spaces (Definition \ref{4thspace}).
The cscK cone metric is the critical point of the log $K$-energy and shown to be unique up to $Aut(X;D)$ actions by using the cone geodesic. We will recall these results from \cite{MR4020314} in Section \ref{Constant scalar curvature Kahler cone metrics}.

\bigskip

 We now outline the structure of this paper and sketch the main ideas of the proof. Section \ref{Constant scalar curvature Kahler cone metrics} is devoted to recall the results on the cscK cone metrics in the previous paper \cite{MR4020314}, including the weighted H\"older spaces, cone geodesics, the asymptotic behaviour and uniqueness of the cscK cone metrics, and the log $K$-energy and its convexity property.
 
 \begin{defn}
	We say a energy functional $F$ is proper, if for any sequence $\{\vphi_i\}\subset \mathcal H$, we have
	\begin{align*}
	\lim_{i\rightarrow\infty}d_{1,G}(0,\vphi_i)=\infty \implies \lim_{i\rightarrow\infty}F(\vphi_i)=\infty.	
	\end{align*}
\end{defn}

We will prove that the existence of cscK cone metrics in terms of the properness of the log $K$-energy.

\begin{thm}[Log properness theorem]\label{cone properness conjecture}
Assume that $\Om$ is a K\"ahler class and the cone angle satisfies $0<\beta\leq 1$. Then $\Om$ admits a constant scalar curvature K\"ahler cone metric with cone angle $\beta$ is equivalent to the properness of the log $K$-energy.
\end{thm}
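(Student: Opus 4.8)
\emph{Sketch of the proof.} We argue in both directions, following the strategy of Chen--Cheng \cite{arXiv:1712.06697,arXiv:1801.00656,arXiv:1801.05907} in the smooth case and adapting it to the conical setting.

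\textbf{Existence $\Rightarrow$ properness.} Assume a cscK cone metric $\om_{\vphi_*}$ exists. We use two facts recalled from \cite{arXiv:1709.09616} in \secref{Constant scalar curvature Kahler cone metrics}: the log $K$-energy $\nu_\b$ is convex along finite-energy geodesics of the metric completion $\mathcal E^1$ of $\cH$, and $\vphi_*$ is a global minimizer of $\nu_\b$. Suppose $\nu_\b$ is not proper, so there are $\vphi_i\in\cH$ with $d_1(0,\vphi_i)\to\infty$ and $\nu_\b(\vphi_i)\le C$; the triangle inequality gives $\ell_i:=d_1(\vphi_*,\vphi_i)\to\infty$. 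Along the unit-speed $d_1$-geodesic from $\vphi_*$ to $\vphi_i$, convexity and minimality force $\nu_\b(\vphi_*)\le\nu_\b\le C$, so $\nu_\b$ stays bounded along geodesic segments issuing from $\vphi_*$ whose lengths tend to infinity. Since the energy terms in the Chen--Tian decomposition of $\nu_\b$ are controlled along such a segment, the bound on $\nu_\b$ yields a uniform entropy bound, hence weak compactness; passing to the limit of the length-one initial segments produces a nontrivial geodesic ray emanating from $\vphi_*$ along which $\nu_\b$ is bounded, hence — being convex and bounded below by its value at the minimizer — constant. By the rigidity in the uniqueness theorem of \cite{arXiv:1709.09616}, such a ray is induced by a holomorphic vector field tangent to $D$; as $\Aut(X;D)$ is discrete this vector field vanishes, so the ray is trivial, a contradiction.

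\textbf{Properness $\Rightarrow$ existence.} We solve the cscK cone equation by a continuity method, using the conical analogue of Chen's twisted continuity path: for a parameter $t$ one solves a twisted cscK cone equation whose solutions $\vphi_t$ are critical points of perturbed log $K$-energies $\nu_{\b,t}$ deforming to $\nu_\b$, starting from an endpoint solvable by the (twisted) K\"ahler--Einstein cone theory and terminating at the cscK cone equation. \emph{Openness} uses the weighted Schauder theory of \cite{arXiv:1709.09616}: the linearization at a solution is a conical Lichnerowicz-type fourth-order elliptic operator on the weighted H\"older spaces, whose kernel consists of real parts of holomorphic vector fields tangent to $D$; this is trivial by discreteness, and the twisting term is seen to keep the kernel trivial along the whole path, so the implicit function theorem applies. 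For \emph{closedness}, one checks that $\nu_\b(\vphi_t)$ is monotone along the path and therefore bounded; properness then yields a uniform bound on $d_1(0,\vphi_t)$, and, since $\nu_\b$ is an entropy term minus energy functionals that are controlled by $d_1$, the entropy of $\om_{\vphi_t}$ is uniformly bounded. This is exactly the input for the a priori estimates of \secref{Apriori estimates}: the bounded entropy gives a uniform $C^0$ bound on $\vphi_t$; this gives two-sided bounds for $\tr_{\om_0}\om_{\vphi_t}$, hence uniform equivalence of $\om_{\vphi_t}$ with the model cone metric; and bootstrapping in the weighted H\"older spaces gives all higher estimates. Thus $\{\vphi_t\}$ is precompact in the weighted H\"older space $C^{4,\a,\b}$, the set of solvable parameters is closed, and a cscK cone metric exists.

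\textbf{The main difficulty} lies in the closedness step: translating $d_1$-properness into the entropy and $C^0$ control that launches the estimates requires carefully tracking $\nu_\b$ and the auxiliary volume-ratio potential along the continuity path, and the a priori estimates of \secref{Apriori estimates} themselves must reproduce Chen--Cheng's integral identities and maximum-principle arguments in the presence of the cone singularity, where the reference potential is only in a weighted H\"older space and the integrations by parts along $D$ must be justified. A secondary point is verifying that the twisted conical Lichnerowicz operator has trivial kernel along the entire path, not merely at the cscK endpoint.
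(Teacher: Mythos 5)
Your ``existence $\Rightarrow$ properness'' direction has a genuine gap at the decisive step. After producing a limit point (or ray) in $\mathcal E^1$ at which the log $K$-energy equals its minimum, you invoke ``the rigidity in the uniqueness theorem of \cite{arXiv:1709.09616}'' to say the ray is generated by a holomorphic vector field tangent to $D$. No such rigidity for finite-energy geodesics is available: the uniqueness theorem there applies to genuine cscK cone metrics in $C^{2,\a,\b}$ (indeed geometrically polyhomogeneous), not to $\mathcal E^1$-minimisers of $\nu_\beta$. The missing ingredient is precisely the regularity of log (twisted) $K$-energy minimisers (\thmref{Regularity}), which is one of the paper's main technical results: it upgrades the weak limit $e_\infty(1)$ at distance $1$ from $\vphi_{cscK}$ to an honest cscK cone metric in $D^{4,\a,\b}_{\bf w}(\om_\theta)$, after which uniqueness and discreteness of $\Aut(X;D)$ give the contradiction $d_1(e_\infty(1),\vphi_{cscK})=1=0$. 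That regularity theorem is not a soft consequence of convexity; it is proved by smoothing the minimiser along the twisted cone path \eqref{Kt} and running the full a priori estimates of Section \ref{Apriori estimates}, so it cannot be bypassed by a citation.

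For ``properness $\Rightarrow$ existence'' you propose a continuity method in a twisting parameter $t$ for the \emph{conical} equation, with openness from the weighted linear theory and closedness from a priori estimates at the cone singularity. The paper takes a different route: it never solves the conical twisted equation along a path. Instead it smooths the cone singularity, replacing $\om_\theta$ by $\om_{\theta_\eps}$ from \eqref{thetaeps} (chosen so that $Ric(\om_{\theta_\eps})\geq\theta$ when $C_1(L_D)\geq 0$) and $\nu_{\beta,\chi}$ by $\nu^\eps_{\beta,\chi}$; for each $\eps$ the critical point equation \eqref{eps twisted approximation} is a \emph{smooth} twisted cscK equation whose solvability follows from Chen--Cheng's existence theorem, properness of $\nu^\eps_{\beta,\chi}$ being inherited via \lemref{approximation proper}. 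The uniform-in-$\eps$ estimates of \thmref{aprioriestimates approximation} (with entropy bound coming from properness and \lemref{uniform entropy}) then allow passage to the limit $\eps\to 0$. This is exactly how the paper sidesteps the difficulty you flag but do not resolve, namely justifying the maximum-principle and integration-by-parts arguments against a singular reference metric: all estimates are carried out against the smooth $\om_{\theta_\eps}$, with only the constants' dependence tracked so that they survive $\eps\to 0$. Your path also needs a concrete solvable starting point in a general K\"ahler class (the twisted K\"ahler--Einstein cone endpoint you mention does not exist outside special classes), and your openness step would require verifying triviality of the kernel of the twisted conical Lichnerowicz operator along the whole path, neither of which arises in the paper's approximation scheme.
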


\bigskip

In Section \ref{Properness conjecture: automorphism is trivial} we prove one direction of the log properness \thmref{cone properness conjecture}, that is 'properness $\implies$ existence'. Precise statements are given in \thmref{properclosedness}, \thmref{Properness implies existence general}. The proof relies on two ingredients. One is the approximation scheme of the log $K$-energy, which leads to the construction of approximate twisted cscK metrics. Then we will show that this approximate sequence converges to a cscK cone metric by applying the a priori estimates given in Section \ref{a priori estimates}, that is the other ingredient of the proof of the log properness theorem. 
The a priori estimates are the core part of the cscK problem and have been a difficult problem for a long time. This difficult problem is solved in the very recent breakthrough of Chen-Cheng \cite{arXiv:1712.06697,arXiv:1801.00656,arXiv:1801.05907}, which are built on Chen-He's important work \cite{MR2993005} on the second order estimates via the integral method. 

\bigskip

In Section \ref{Singular constant scalar curvature Kahler metrics}, the approximation scheme is further extended to construct the singular cscK metrics with possible degeneration. The novel idea is to perturb the alpha invariant near a big class and apply the existence results in Section \ref{Properness conjecture: automorphism is trivial} to obtain a sequence of approximate cscK cone metrics (without degeneration). Then the a priori estimates obtained for the degenerate metrics in Section \ref{a priori estimates} is applied to get the smoothness of the limit metric. As last, we include an application of such construction on normal complex spaces. Other ingredients we have in the proof include to extend the alpha invariant to big classes by using pluripotent potential theory (Section \ref{Alpha invariants}), to obtain criteria for properness by running conical $J$-flow (Section \ref{Twisted J-metrics and properness}), and to construct reference metrics in degenerate case (Section \ref{Approximate reference metrics}).

\bigskip

Now let us draw attention to the PDEs of the cscK metric.
Let $\theta$ be a smooth $(1,1)$-form in $C_1(X,D)$. The reference metric $\om_\theta$ and its smooth approximation is constructed in Section \ref{Approximate reference metrics}.
Let  $\chi_0$ be a smooth closed $(1,1)$ form, $R$ be a smooth function and $f$ be a function such that $e^{-f}\in L^{p_0}(\om_0)$ with $
\sup_X f=0$. 
We set $
\Theta=\theta-\chi_0.
$ and the twisted form to be
\begin{align*}
\chi:=\chi_0+i\p\bar\p f\geq 0\text{ with }e^{-f}\in L^{p_0}(\om_0)\text{ for some large }p_0>>1.
\end{align*}

We consider the scalar curvature equation
\begin{align*}
S(\om_{\varphi})=\tr_{\vphi}(Ric(\om_{\theta})-\Theta+i\p\bar\p f)+R.
\end{align*}
It is reduced to the following system,
\begin{align}\label{2nd pde}
F=\log\frac{\om_{\vphi}^n}{\om_{\theta}^n},\quad \tri_{\vphi} F=\tr_{\vphi}(\Theta-i\p\bar\p f)-R.
\end{align}

In Section \ref{a priori estimates}, we establish a priori estimates for \eqref{2nd pde} in both the cscK cone problem and the degenerate cases. The a priori estimates of Chen-Cheng rely on fixed smooth K\"ahler metric. In the conical setting, their a priori estimates can not be applied directly, since the a priori estimates are required to be obtained with respect to a K\"ahler cone metric. The K\"ahler cone metric generally does not have bounded geometry, which is one of the difficulties in the study of cscK cone metrics \cite{MR3405866,MR3761174,MR3968885,MR4020314}. In order to overcome these difficulties, roughly speaking, we fully make use of the approximation scheme, carefully track the constants dependences in the estimates, and adopt the ideas in the previous paper \cite{MR4020314}, that is to pick a "nice" reference metric such that it has the correct asymptotic behaviour we need. With these preparation in hands, we are able to perform the strategy in \cite{arXiv:1801.05907}, combining with various weighted estimates to obtain a priori estimates. The following estimates are obtained for the approximation solution of the cscK cone metrics, regarding of the approximate reference metric of $\om_\theta$. The precise statements are given in Section \ref{a priori estimates}. 

\begin{thm}\label{A priori estimates}
Suppose that $\vphi$ is a solution to \eqref{2nd pde} with the twisted term $\chi$. Then there is a constant $C$ such that 
\begin{align*}
\|\vphi\|_{\infty}, \quad \|F+f\|_{\infty}, \quad \sup_X \|\p (F+f)\|^2_\vphi, \quad \sup_X\|\tr_{\om_{\theta}}\om_\vphi\|_{p;\om_{\theta}}\leq C,
\end{align*} where $C$ depends on $\alpha_1,\alpha_\beta, n$, $\|\frac{\om_{\theta}^n}{\om_0^n}\|_{L^q(\om_0)}$ for some $q>1$ and the following quantities
	\begin{align*}
	E_\beta=\frac{1}{V}\int_X\log\frac{\om^n_{\vphi}}{\om^n_{\theta}}\om_{\vphi}^{n}, \quad\|e^{-f}\|_{L^{p_0}(\om_0)},\quad \|R\|_\infty, \quad \|\Theta\|_\infty,\quad\inf_XRic(\om_{\theta}).
	\end{align*}
	In which, $p_0$ is sufficiently large and depends on $n$ and $p$. 
	
	Furthermore, when $f=0$, there is a constant $C$ such that 
	\begin{align*}
	\|\vphi\|_\infty, \quad \|F\|_\infty, \quad \sup_X \|\p F\|^2_{\om_{\theta}}, \quad\sup_X \tr_{\om_{\theta}}\om_\vphi\leq C,
	\end{align*} where $C$ depends on the following quantities
	\begin{align*}
	E_\beta, \quad \|R\|_\infty, \quad \|\Theta\|_\infty,\quad\inf_XRic(\om_{\theta}),\quad \alpha_1,\quad \alpha_\beta,\quad n.
	\end{align*}
\end{thm}

In the degenerate case, we perform Tsuji's trick and deal with the weights coming from the degeneration. The corresponding estimates in the degenerate case are given in Section \ref{Linfty-estimates for degenerate metrics}, Section \ref{F lower bound for degenerate metrics} and Section \ref{W{2,p}-estimate for degenerate metrics}.

\begin{rem}
These estimates are also obtained for cscK cone metric by directly making use of the conical background metric as shown in \cite{arXiv:1805.04944}, which are considered to be the limit of \thmref{A priori estimates}.
\end{rem}

\bigskip

In Section \ref{Regularity and uniqueness of log twisted energy minimisers}, we will show the regularity and uniqueness of log $\chi$-twisted $K$-energy minimisers. 
We actually adapt the direct proof in \cite{arXiv:1801.00656} for the regularity of the $\chi$-twisted $K$-energy minimisers. Again, it is essentially obtained from the key a priori estimates of (conical and twisted) cscK equation. We remark that assuming the existence of a smooth one, the regularity of the $K$-energy minimisers is proved in \cite{arXiv:1602.03114}, which it is a result of the uniqueness property. We refer the readers to the related results, different approaches and conjectures on the $K$-energy minimisers in \cite{arXiv:1602.03114,MR3600039,MR3858468,MR3010550,MR3896025,arXiv:1701.06943,arXiv:1801.00656} and references therein). 

\bigskip

Section \ref{Existence implies properness and geodesic stability}
 has two parts.
The other direction of \thmref{cone properness conjecture} will be given in Section \ref{Existence implies properness}, that is 'existence $\implies$ properness'. In the context of cscK metrics, it was proved in \cite{MR1471884,MR3600039,arXiv:1602.03114}. In our conical setting, the new input in the proof is the uniqueness of cscK cone metrics, which follows from our previous work \cite{MR4020314}.


In Section \ref{Geodesic stability}, we prove an analogue of Donaldson's geodesic stability conjecture in the setting of cscK cone metric, that is detecting the existence of cscK cone metrics by the geodesic ray.

\begin{thm}[Log geodesic stability theorem]\label{cone geodesic conjecture}Assume that $\Om$ is a K\"ahler class and the cone angle satisfies $0<\beta\leq 1$. 
The following are all equivalent,
\begin{itemize}
\item there exists no constant scalar curvature K\"ahler cone metric;
\item either $Fut_\beta\neq0$, or there exists a potential $\vphi_0\in\mathcal E^1_0$ and an $d_1$-geodesic ray $\{\rho(t);0\leq t< \infty\}$ starting with $\vphi_0$ such that the log $K$-energy is non-increasing along $\rho(t)$;
\item either $Fut_\beta\neq0$, or for all potential $\vphi\in\mathcal E^1_0$, there exists an $d_1$-geodesic ray $\{\rho(t);0\leq t< \infty\}$ starting with $\vphi$ such that the log $K$-energy is non-increasing along $\rho(t)$.
\end{itemize}
In conclusion, $(M,\Om)$ admits a constant scalar curvature K\"ahler cone metric if and only if it is geodesic stable (see Definition \ref{geodesic stability}).
\end{thm}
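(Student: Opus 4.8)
The plan is to derive the three-way equivalence from the log properness theorem (\thmref{cone properness conjecture}) together with the convexity and regularity theory for $d_1$-geodesic rays in the metric completion $\mathcal E^1_0$. Label the three statements (a) ``no cscK cone metric in $\Om$'', (b) ``there exists $\vphi_0$ and a $d_1$-geodesic ray from $\vphi_0$ along which $\nu_\beta$ is non-increasing'', and (c) ``the same holds for every starting potential $\vphi$''. The implications (c)$\implies$(b) is trivial, so the real content is (a)$\implies$(c) and (b)$\implies$(a). I would organise the argument around the equivalence, furnished by \thmref{cone properness conjecture}, between the \emph{failure} of existence and the \emph{failure} of properness of the log $K$-energy.

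First, (b)$\implies$(a): suppose a cscK cone metric $\om_c$ with potential $\vphi_c$ exists, and let $\{\rho(t)\}$ be any $d_1$-geodesic ray emanating from some $\vphi_0\in\mathcal E^1_0$. Since $Aut(X;D)$ is discrete, the uniqueness theorem recalled from \cite{arXiv:1709.09616} gives that $\nu_\beta$ is strictly convex along $d_1$-geodesics modulo automorphisms, hence strictly convex along $\rho(t)$, and is bounded below with $\vphi_c$ as its unique minimiser. A strictly convex function on $[0,\infty)$ that is bounded below must be eventually strictly increasing; in particular it cannot be non-increasing along the whole ray. This contradicts (b), so no cscK cone metric exists — i.e. (b)$\implies$(a). (Here I would need the convexity of $\nu_\beta$ along $d_1$-geodesic segments, and its lower semicontinuity along the $d_1$-completion, which I expect are established in Section \ref{Constant scalar curvature Kahler cone metrics} or are consequences of the estimates in Section \ref{Apriori estimates}.)

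Next, (a)$\implies$(c): if there is no cscK cone metric, then by \thmref{cone properness conjecture} the log $K$-energy $\nu_\beta$ is \emph{not} proper, so there is a sequence $\{\vphi_i\}\subset\mathcal H$ with $d_1(0,\vphi_i)\to\infty$ but $\nu_\beta(\vphi_i)$ bounded above, say $\nu_\beta(\vphi_i)\le C$. Fix an arbitrary $\vphi\in\mathcal E^1_0$. Using the Chen--Darvas theory of the $d_1$-metric (the metric completion $\mathcal E^1_0$ is a geodesic space, and finite-energy geodesic segments exist between any two points), connect $\vphi$ to each $\vphi_i$ by the $d_1$-geodesic segment $\rho_i:[0,\ell_i]\to\mathcal E^1_0$ parametrised by arclength, $\ell_i=d_1(\vphi,\vphi_i)\to\infty$. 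By convexity of $\nu_\beta$ along $\rho_i$ and the uniform upper bounds $\nu_\beta(\rho_i(0))=\nu_\beta(\vphi)$ and $\nu_\beta(\rho_i(\ell_i))\le C'$ (adjusting $C$ by the bounded $d_1$-distance from $0$ to $\vphi$), one gets $\nu_\beta(\rho_i(t))\le\max(\nu_\beta(\vphi),C')$ for all $t\in[0,\ell_i]$. Now pass to a limit: by the (weak-$*$, or $d_1$-local) compactness of the segments $\rho_i$ — after fixing $t$ the points $\rho_i(t)$ lie in a $d_1$-ball of radius $t$ around $\vphi$, which is $d_1$-precompact in $\mathcal E^1_0$ by Darvas's results — a diagonal subsequence of $\rho_i$ converges locally uniformly to a $d_1$-geodesic ray $\rho:[0,\infty)\to\mathcal E^1_0$ with $\rho(0)=\vphi$. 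Along $\rho$, lower semicontinuity of $\nu_\beta$ for the $d_1$-topology gives $\nu_\beta(\rho(t))\le\max(\nu_\beta(\vphi),C')$ for all $t$, so $\nu_\beta$ is bounded along $\rho$; combined with convexity, a convex bounded function on $[0,\infty)$ is non-increasing, which is exactly (c). (To guarantee that the limiting object is a genuine ray and not eventually constant one may have to argue that $d_1(\vphi,\rho(t))=t$ persists in the limit, which follows from the isometry of arclength-parametrised $d_1$-geodesics.)

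The main obstacle I anticipate is the compactness step producing the limiting ray: one must ensure that the $d_1$-geodesic segments $\rho_i$ of diverging length subconverge, locally in $t$, to a $d_1$-geodesic \emph{ray} within $\mathcal E^1_0$, and that $\nu_\beta$ — which is only lower semicontinuous, not continuous, for the $d_1$-topology on $\mathcal E^1_0$, and whose definition involves the log/cone terms — behaves well in this limit (in particular that the entropy and the logarithmic pairing terms pass to the limit with the correct semicontinuity). This is precisely the point where the $\mathcal E^1_0$-theory of the log $K$-energy developed in the earlier sections, and the a priori estimates of Section \ref{Apriori estimates}, must be invoked; the rest of the argument is a soft convexity argument once \thmref{cone properness conjecture} is in hand.
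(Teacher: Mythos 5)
Your implication (a)$\implies$(c) follows the paper's route essentially verbatim: non-properness produces $\vphi_i$ with $d_1(0,\vphi_i)\to\infty$ and $\nu_\beta(\vphi_i)\le C$, convexity bounds $\nu_\beta$ along the connecting segments, a compactness argument produces a limiting ray, and a convex function bounded above on $[0,\infty)$ is non-increasing. One caveat: $d_1$-balls in $\mathcal E^1_0$ are \emph{not} $d_1$-precompact; the compactness you need (\lemref{Weakcompactness}, the Berman--Darvas--Lu result) requires the uniform bound on $\nu_\beta(\rho_i(t))$ \emph{in addition to} the distance bound. You do have both bounds in hand, so this is a misattribution rather than a gap, but the sentence as written is false.

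The genuine gap is in (b)$\implies$(a). First, $\nu_\beta$ is only known to be \emph{convex} along $d_1$-geodesics (\lemref{convexityKenergy}); strict convexity is not available and in general fails. Second, and fatally, even granting strict convexity your punchline is wrong: a strictly convex function on $[0,\infty)$ that is bounded below need not be eventually increasing --- $t\mapsto e^{-t}$ is strictly convex, bounded below, and strictly decreasing on all of $[0,\infty)$. So boundedness below plus (strict) convexity yields no contradiction with the energy being non-increasing along the whole ray. The paper closes this direction differently: from the given ray $\rho$ with non-increasing energy it takes $\vphi_i=\rho(t_i)$ with $t_i\to\infty$ (so $\nu_\beta(\vphi_i)$ is bounded above while $d_1$ diverges) and reruns the compactness construction of (a)$\implies$(c) \emph{starting at the cscK potential} $\vphi_c$, producing a ray $e(s)$ with $e(0)=\vphi_c$ along which $\nu_\beta$ is non-increasing; since $\vphi_c$ is the global minimiser, every $e(s)$ is a minimiser, hence by the regularity theorem (\thmref{Regularity}) a cscK cone metric, hence by uniqueness (\thmref{Uniqueness}, using that $Aut(X;D)$ is discrete) equal to $\vphi_c$, which contradicts $d_1(\vphi_c,e(s))=s\to\infty$ combined with the triangle inequality against the fixed distance $d_1(\vphi_0,\vphi_c)$. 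Alternatively, the cheapest repair of your argument is to quote coercivity rather than convexity: \thmref{Existence implies properness discrete} gives $\nu_\beta(\cdot)\ge A\, d_1(\cdot,\vphi_{cscK})+\nu_\beta(\vphi_{cscK})$ on $\mathcal H$, which extends to $\mathcal E^1_0$ by density and $d_1$-lower semicontinuity; along a unit-speed ray this forces $\nu_\beta(\rho(t))\ge A\bigl(t-d_1(\vphi_0,\vphi_{cscK})\bigr)+\nu_\beta(\vphi_{cscK})\to\infty$, which genuinely contradicts non-increase.
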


\begin{rem}
We remark that when $Aut(X;D)$ is discrete, those results cover the important example in the K\"ahler-Einstein problem. That is when $X$ is a Fano manifold and $D$ is a smooth divisor in the linear system $|-\lambda K_X|$ for some $\lambda\in \mathbb Z^+$, there does not exist any holomorphic vector field tangential to $D$ \cite{MR2975584,MR3107540,MR3470713}. 
\end{rem}

\bigskip

\bigskip

The other goal of this article is to prove two fundamental properties of the cscK cone path in the conical continuity method, that is it is open (\thmref{openess main}) and it could be approximated by smooth metrics (\thmref{Approximation theorem}). 

It is expected that the existence of twisted cscK metric is equivalent to the existence of cscK cone metric, see \cite{MR3858468}.
The approximation scheme provides a positive answer to one direction of this problem, that is a cscK cone metric is approximated by a sequence of twisted cscK metrics. The strategy extends the counterpart of the K\"ahler-Einstein cone metrics in the canonical class \cite{MR3264766}. The proof of this theorem is a simple combination of Proposition \ref{twisted approximation thm}, \thmref{properclosedness} and \thmref{Existence implies properness discrete}.
\begin{thm}[Approximation theorem]\label{Approximation theorem}
	Assume that $\Om$ is a K\"ahler class and $Aut(X;D)$ is discrete. Assume that $C_1(L_D)\geq 0$ and $\om_{cscK}$ is a constant scalar curvature K\"ahler cone metric in $\Om$. 
	Then the cscK cone metric $\om_{cscK}$ has a smooth approximation, which is a family of the twisted cscK metrics in $\Om$.
\end{thm}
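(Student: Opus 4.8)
The plan is to assemble this approximation theorem from the three ingredients the author has already flagged: Proposition~\ref{twisted approximation thm} (the approximation scheme producing twisted cscK metrics), \thmref{properclosedness} (properness is a closed condition, or properness $\implies$ existence along the approximation), and \thmref{Existence implies properness discrete} (existence of a cscK cone metric $\implies$ properness of the log $K$-energy). The logical skeleton is: start from the hypothesis that $\om_{cscK}$ exists in $\Om$; feed this into \thmref{Existence implies properness discrete} to conclude that $\nu_\beta$ is proper with respect to $d_1$; then invoke the approximation scheme of Proposition~\ref{twisted approximation thm}, which (in the presence of properness) manufactures a family of twisted cscK metrics $\om_t$ whose twisting parameter degenerates to the cone configuration; finally use \thmref{properclosedness} together with the apriori estimates of Section~\ref{Apriori estimates} to show this family converges, and identify the limit with the given $\om_{cscK}$ via the uniqueness result recalled in Section~\ref{Constant scalar curvature Kahler cone metrics}.

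First I would set up the twisting: write the twisted cscK equation with twist term $(1-\beta)[D_\eps]$ smoothed out at scale $\eps$ (or the corresponding $\beta_\eps \nearrow 1$ regularization), so that for each $\eps$ one has a genuine smooth twisted cscK metric $\om_\eps \in \Om$, whose existence is exactly the content of Proposition~\ref{twisted approximation thm} once properness of the (approximate) log $K$-energy is known. The condition $C_1(L_D)\ge 0$ enters here to guarantee the twisting form is (semi-)positive so that the twisted $K$-energy inherits properness from $\nu_\beta$ — this is where the hypothesis is genuinely used rather than being cosmetic. Second, I would run the apriori estimates of Section~\ref{Apriori estimates} uniformly in $\eps$: a $C^0$ (oscillation) bound on the potentials $\vphi_\eps$ coming from properness plus the normalization, then the Chen--Cheng-type entropy/$W^{2,p}$ and higher-order estimates adapted to the conic setting, yielding convergence of $\om_\eps$ in the weighted Hölder spaces of Definition~\ref{4thspace} on compact subsets of $M = X\setminus D$, with the prescribed cone asymptotics along $D$ controlled by the asymptotic analysis recalled in Section~\ref{Constant scalar curvature Kahler cone metrics}.

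Third, the limit $\om_\infty$ of the $\om_\eps$ is a cscK cone metric of angle $\beta$ in $\Om$; by the uniqueness of cscK cone metrics up to $\Aut(X;D)$ and the standing assumption that $\Aut(X;D)$ is discrete (so uniqueness is absolute after fixing the normalization $\sup$-value), $\om_\infty = \om_{cscK}$. Hence $\om_{cscK}$ is the smooth-metric limit of the twisted cscK family $\{\om_\eps\}$, which is the assertion. The main obstacle is the uniform apriori estimate along the degenerating twist: one must control the entropy $\int \log(\om_\eps^n/\om_0^n)\,\om_\eps^n$ and the resulting second-order bound independently of $\eps$ as the twisting form concentrates on $D$, and check that the weighted-Hölder convergence is strong enough to pass the scalar curvature equation to the limit while preserving the cone angle — precisely the point at which the Section~\ref{Apriori estimates} estimates, formulated in the conic weighted spaces, do the heavy lifting. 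Everything else is bookkeeping: matching normalizations across the three cited results and invoking uniqueness to pin down the limit.
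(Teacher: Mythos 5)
Your proposal is correct and follows essentially the same route as the paper: existence gives properness of $\nu_\beta$ (\thmref{Existence implies properness discrete}), properness of the approximate energy then yields the smooth twisted cscK family solving \eqref{eps twisted approximation} with non-negative twist $Ric(\om_{\theta_\eps})-\theta$ (Proposition \ref{twisted approximation thm}, using $C_1(L_D)\geq 0$ via \lemref{thetaepstheta}), and the uniform apriori estimates of Section \ref{Apriori estimates} together with uniqueness identify the limit with $\om_{cscK}$ (\thmref{properclosedness}). The only cosmetic slips are that the paper regularises by replacing $|s|_h^{2}$ with $|s|_h^{2}+\eps$ in the reference metric at fixed cone angle rather than letting $\beta_\eps\nearrow 1$, and that $C_1(L_D)\geq 0$ is used to make the twist form non-negative so that the Chen--Cheng existence theorem applies, while properness of the approximate energy comes from the comparison in \lemref{approximation proper}.
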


\bigskip

In the final Section \ref{CscK cone path}, we give another application of the log properness theorem to prove the openness theorem of the cscK cone path (see Definition \ref{cscK cone path defn}), that answers Question 2.5 in Chen \cite{MR3858468} on the deformation of the cscK cone path. Although it has been a direct corollary of the linear theory developed in \cite{MR4020314}, but we would like to revisit it by a geometric proof with the aid of the log properness theorem.
\begin{thm}[Openness theorem]\label{openess main}
	Assume that $Aut(X;D)$ is discrete and $C_1(L_D)> 0$.
The cscK cone path is open when the cone angle $\beta>0$. 
\end{thm}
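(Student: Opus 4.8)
The plan is to deduce the openness of the cscK cone path from the log properness theorem (\thmref{cone properness conjecture}) together with the linear theory of the previous paper \cite{arXiv:1709.09616}. Recall that along the cscK cone path the cone angle $\beta$ (or some auxiliary parameter) varies, and at each parameter one seeks a cscK cone metric; openness means that if a solution exists at some $\beta_0>0$, then solutions exist for all $\beta$ in a neighbourhood of $\beta_0$. First I would observe that, by \thmref{cone properness conjecture}, the existence of a cscK cone metric at $\beta_0$ is equivalent to the properness of the log $K$-energy $\nu_{\beta_0}$ with respect to $d_1$. The essential point is then that properness is an open condition in $\beta$: since the divisor $D$ is fixed and $C_1(L_D)>0$, the log $K$-energy $\nu_\beta$ depends on $\beta$ through an explicit term (the Ricci/entropy contribution of the conical Ricci current plus a multiple of the $J$- or Aubin–Mabuchi energy of the potential of $D$), and this dependence is continuous, indeed affine-linear, in $\beta$ on $\mathcal E^1_0$ with uniform control. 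Hence if $\nu_{\beta_0}$ is proper, a small perturbation in $\beta$ only changes $\nu_\beta$ by a term that is bounded by a small multiple of $d_1(0,\vphi)$ plus a constant, and properness is preserved; applying \thmref{cone properness conjecture} in the reverse direction at the nearby $\beta$ gives the desired cscK cone metric.

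The key steps, in order, are: (1) fix $\beta_0>0$ at which a cscK cone metric $\om_0$ exists and invoke \thmref{cone properness conjecture} to get properness of $\nu_{\beta_0}$, i.e. constants $C_1,C_2>0$ with $\nu_{\beta_0}(\vphi)\geq C_1 d_1(0,\vphi)-C_2$; (2) write down the explicit $\beta$-dependence of the log $K$-energy, decomposing $\nu_\beta=\nu_\beta^{\mathrm{entropy}}+\nu_\beta^{\mathrm{linear}}$, and show that $|\nu_\beta(\vphi)-\nu_{\beta_0}(\vphi)|\leq |\beta-\beta_0|\,(A\, d_1(0,\vphi)+B)$ for $A,B$ independent of $\vphi$ and of $\beta$ near $\beta_0$ — the entropy term is monotone/controlled in $\beta$ and the linear term is literally affine in $\beta$; (3) conclude that for $|\beta-\beta_0|$ small, $\nu_\beta(\vphi)\geq \tfrac12 C_1 d_1(0,\vphi)-C_2'$, so $\nu_\beta$ is proper; (4) apply \thmref{cone properness conjecture} again, now in the direction "properness $\implies$ existence", to produce a cscK cone metric at $\beta$; (5) if a stronger statement (smooth dependence of the solution on $\beta$, or openness as a subset of parameter space with the cscK cone metric varying continuously) is wanted, invoke the implicit function theorem in the weighted Hölder spaces of \cite{arXiv:1709.09616} at $\om_0$, using that $Aut(X;D)$ discrete makes the linearised operator (the Lichnerowicz-type operator of the cscK cone equation) an isomorphism, which the excerpt already advertises as "a direct corollary of the linear theory."

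The main obstacle I expect is step (2): making the comparison of $\nu_\beta$ and $\nu_{\beta_0}$ uniform in $\vphi\in\mathcal E^1_0$ with only a linear-in-$d_1$ error. The entropy part of the log $K$-energy involves $\int \log\frac{\om_\vphi^n}{\om_0^n}\,\om_\vphi^n$ against a background whose Ricci current carries the conical term $2\pi(1-\beta)[D]$, and controlling how the reference conical volume form and the normalising constant $\ul S_\beta$ move with $\beta$ requires the asymptotics and the weighted estimates recalled in Section \ref{Constant scalar curvature Kahler cone metrics}; one must be careful that the potential of $D$ used in the linear term lies in $\mathcal E^1$ with finite energy and that the $d_1$-bound on $\vphi$ does translate into a bound on the relevant Aubin–Mabuchi-type functionals (this is the standard $|J(\vphi)|, |I(\vphi)-J(\vphi)|\lesssim d_1(0,\vphi)$ type estimate, valid on $\mathcal E^1$). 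Once this uniform continuity of $\beta\mapsto\nu_\beta$ on sublevel/bounded sets is in place, the openness follows formally; the alternative route through the implicit function theorem in step (5) sidesteps the energy comparison entirely but then has to confront instead the surjectivity of the linearised cscK cone operator in the correct weighted space, which is precisely where discreteness of $Aut(X;D)$ and the Fredholm theory of \cite{arXiv:1709.09616} are used.
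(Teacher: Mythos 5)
Your route (show that properness of $\nu_\beta$ is an open condition in $\beta$, then invoke ``properness $\implies$ existence'') is not the one the paper takes, and its crux, your step (2), has a genuine gap. Comparing the two energies via \lemref{log K energy} gives, up to terms that really are affine in $\beta$ and controlled by \lemref{d_1andJ},
\begin{align*}
\nu_{\beta}(\vphi)-\nu_{\beta_0}(\vphi)\;=\;(\beta-\beta_0)\,\frac{1}{V}\int_M \log|s|_h^2\,\bigl(\om_\vphi^n-\om_0^n\bigr)\;+\;O\!\left(|\beta-\beta_0|\,(d_1(0,\vphi)+1)\right),
\end{align*}
and the first term is \emph{not} of the form $|\beta-\beta_0|(A\,d_1(0,\vphi)+B)$. \lemref{d_1andJ} only covers twisting currents $\chi_0+i\p\bar\p f$ with $f$ \emph{bounded}, whereas $\log|s|_h^2$ is unbounded; after Poincar\'e--Lelong the term is $(\beta-\beta_0)$ times (a harmless $j_{\Theta_D}$ piece plus) the Aubin--Mabuchi--type energy of $\vphi$ restricted to $D$, and a $d_1$-bound on $X$ does not control the energy of $\vphi|_D$. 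So the claim that the $\beta$-dependence is ``affine-linear with uniform control'' on $\mathcal H$ is unjustified as stated; one would at least have to absorb $(\beta-\beta_0)\int(-\log|s|_h^2)\om_\vphi^n$ into the entropy via an inequality of the type $\int(-\delta\log|s|_h^2)\frac{\om_\vphi^n}{V}\le E_1(\vphi)+\log\frac{1}{V}\int|s|_h^{-2\delta}\om_0^n$ and then trade the entropy against the coercivity of $\nu_{\beta_0}$, with attention to signs on both sides of $\beta_0$. You flag step (2) as the main obstacle but do not carry out this repair, and without it steps (3)--(4) do not launch.

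For contrast, the paper avoids perturbing properness altogether. It uses properness at $\beta_0$ only to run the approximation scheme (Proposition \ref{twisted approximation thm}), producing smooth twisted cscK metrics $\vphi^{\beta_0}_\eps$; for each fixed $\eps$ it deforms the angle at the \emph{smooth} level by the implicit function theorem, where $C_1(L_D)>0$ gives $Ric(\om_{\theta^{\beta_0}_\eps})>\theta^{\beta_0}$ and hence injectivity of the linearised operator; the real work is Proposition \ref{distacen bound}, a uniform $d_1$-bound $d_1(\varphi^\beta_\eps,\varphi^{\beta_0})\le 1$ proved by contradiction using the apriori estimates of Section \ref{Apriori estimates} and the uniqueness \thmref{Uniqueness}, which keeps the deformation radius $\delta_\eps$ from collapsing as $\eps\to0$; one then passes to the limit. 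Notably, the continuity of $\nu_\beta$ in $\beta$ that you want globally is used there only along the specific sequence of approximate solutions, for which $d_1(0,\vphi_i)$ and the volume ratios are already under control. Your alternative step (5) (implicit function theorem directly in the weighted H\"older spaces) is the route the paper explicitly sets aside as ``a direct corollary of the linear theory'' of \cite{arXiv:1709.09616}; it is legitimate but is precisely what this section is written to avoid.
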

The precise statement is given in \thmref{openess}.
This result should be compared with Donaldson's continuity path of K\"ahler-Einstein cone metrics in the resolution of YTD conjecture for Fano K\"ahler-Einstein metrics \cite{MR3264766,MR3472831}. Donaldson's continuity path deforms the cone angles of K\"ahler-Einstein cone metrics \cite{MR2975584}. The paralleling idea is to apply an analogue conical continuity method for the cscK problem, showing the existence of smooth cscK metrics by deforming cone angles of the conical ones. Actually, the continuity path of cscK cone metrics is the limiting process of Chen's path of twisted cscK metrics \cite{MR3858468}.
The conical continuity method for the cscK problem has been explored in \cite{Zheng,MR4020314,MR3968885,MR3857693}.

\bigskip

For further application as shown in Section \ref{Singular constant scalar curvature Kahler metrics}, we propose a question concerning singular cscK metrics in arbitrary class, with possible relation to the (log) Minimal Model Program.
\begin{ques}
In big cohomology class, is existence of singular cscK metrics equivalent to geometric stability?
\end{ques}
This question is also related to Question 1.14 in \cite{MR4020314}.

\bigskip
\noindent {\bf Acknowledgments:}
The work of K. Zheng has received funding from the European Union's Horizon 2020 research and innovation programme under the Marie Sk{\l}odowska-Curie grant agreement No. 703949, and was also partially supported by EPSRC grant number EP/K00865X/1. 
\section{Constant scalar curvature K\"ahler cone metrics}\label{Constant scalar curvature Kahler cone metrics}
In this section, we recall results on cone geodesics and cscK cone metrics from \cite{MR4020314}.
We assume that $0<\b\leq1$.
A \emph{K\"ahler cone metric} $\om$ of cone angle $2\pi\b$ along the divisor $D$,
is a smooth K\"ahler metric on the regular part $M:=X\setminus D$, and quasi-isometric to the cone flat metric, 
\begin{align}\label{flat cone}
\om_{cone}
&:=\frac{\sqrt{-1}}{2}\b^2|z^1|^{2(\b-1)}dz^1\wedge dz^{\bar 1}+\sum_{2\leq j\leq n}dz^j\wedge dz^{\bar j}\, 
\end{align}
in the coordinate chart $U_p$
centred at the point $p$ on $D$ with the holomorphic coordinate $\{z^1,\cdots,z^n\}$ and the local defining function $z^1$ of $D$.
We denote by $\mathcal H_\beta(\om_0)$ the space of all K\"ahler cone potentials $\vphi$, satisfying that $\om_\vphi:=\om_0+i\p\bar\p\vphi$ is a K\"ahler cone metric.

\subsection{H\"older spaces and geometrically polyhomogeneous}

We assume that the H\"older exponent $\alpha$ satisfies 
\begin{align*} 
\a\b<1-\b.
\end{align*} 
The H\"older spaces $C^{k,\a,\b}$ were introduced in Donaldson \cite{MR2975584} for $k=0,1,2$. They were defined in the geometric way and played important role in constructing K\"ahler-Einstein metrics on Fano manifolds.

Recall that $U_p$ is a coordinate chart intersecting with the divisor. A function $u(z): U_p\rightarrow \mathbb R$ 
is said to be $C^{0,\a,\b}$, if we set  $v(|z^1|^{\b-1}z^1,z^2,\cdots,z^n):=u(z^1,z^2,\cdots,z^n)$ and $v$ is a $C^{0,\a}$ H\"older function. The space $C^{0,\a,\b}_{0}$ contains all functions $u\in C^{0,\a,\b}$ such that $$u(0, z^2,\cdots, z^n) =0.$$ A $(1,1)$-form $\chi$ is said to be $C^{0,\a,\b}$, if for any $2\leq i,j\leq n$, the following items satisfy
\begin{equation*}
\left\{
\begin{aligned}
&\chi(\frac{\p}{\p z^i},\frac{\p}{\p z^{\bar j}})\in C^{0,\a,\b}, &|z^1|^{2-2\b}\chi(\frac{\p}{\p z^1},\frac{\p}{\p z^{\bar 1}})\in C^{0,\a,\b},\\
&|z^1|^{1-\b}\chi(\frac{\p}{\p z^1},\frac{\p}{\p z^{\bar j}})\in C_0^{0,\a,\b}, &|z^1|^{1-\b}\chi(\frac{\p}{\p z^i},\frac{\p}{\p z^{\bar 1}}) \in C_0^{0,\a,\b}.
\end{aligned}
\right.
\end{equation*} 
The H\"older space $C^{2,\a,\b}$ is defined to be
\begin{align*}
C^{2,\a,\b}
:= \{u\; \vert \;  u, \p u, \sqrt{-1}\p\bar\p u\in C^{0,\a,\b}\}\; .
\end{align*}

For $k=3,4,\cdots$, higher order H\"older spaces depend on the background metrics. The precise definitions were introduced in \cite{MR4020314} for all $0<\b\leq 1$. In particular, when cone angle $0<\b< \frac{1}{2}$, the background metric has bounded geometry and then all derivatives are H\"older, as shown in \cite{MR3968885}. In general, not all derivatives are H\"older continuous, some derivatives will blow up, due to the geometric nature. 
We say a K\"ahler cone potential $\vphi$ is geometrically polyhomogeneous, if it has higher order geometric estimates and thus the expansion formula. The geometrical polyhomogeneity is referring to geometric estimates, which are better than the one called polyhomogeneity from PDE point of view, This is detailed in the author's previous articles, \cite{MR3911741} for general complex Monge-Amp\`ere equation with cone singularities and \cite{MR4020314} for cscK cone metrics.

We recall the definition of $C_{\mathbf {w}}^{3,\a,\b}$ from \cite{MR4020314}.
\begin{defn}[\cite{MR4020314}]\label{cpw3ab}
	A function $\vphi$ belongs to $C_{\mathbf {w}}^{3,\a,\b}$ with the H\"older exponent $\alpha$ satisfying	
	$
	\a\b<1-\b,
	$
	if it holds in the normal cone chart,
	\begin{itemize}
		\item $\vphi\in C^{2,\a,\b}$;
		\item the first derivatives of the corresponding metric $g$ satisfy for any $2\leq i,k,l\leq n$, the following items are $C^{0,\a,\b}$,	
		\begin{align*}
		\frac{\p g_{ k\bar l}}{\p z^i},\quad
		|z^1|^{1-\b}\frac{\p g_{k\bar{1}}}{\p z^{ i}},\quad
		|z^1|^{1-\b}\frac{\p g_{ 1\bar l}}{\p z^i},\quad
		|z^1|^{2-2\b}\frac{\p g_{1\bar 1}}{\p z^i};
		\end{align*}
		\item the following terms are $O(|z^1|^{-\kappa})$ with $\kappa=\b-\a\b$,	
		\begin{align*}
		|z^1|^{1-\b}\frac{\p g_{ k\bar l}}{\p z^1},\qquad
		|z^1|^{2-2\b}\frac{\p g_{k\bar{1}}}{\p z^{  1}},\quad
		|z^1|^{2-2\b}\frac{\p g_{1\bar l}}{\p z^1},\qquad
		|z^1|^{3-3\b}\frac{\p g_{1\bar 1}}{\p z^1} .
		\end{align*}
		Here, we set the corresponding metric $g$ to be the K\"ahler cone
metric given by $\vphi$, that is $g_{k\bar l}=g_{0k\bar l}+\vphi_{k\bar l}$.
	\end{itemize}
	When the coordinate chart does not intersect the divisor, all definitions are in the classical way.
\end{defn}


\subsection{Geodesics in the space of K\"ahler cone metrics}\label{Cone geodesics}
The $L^2$ metric is extended in the space $\mathcal H_\beta$. It is shown in \cite{MR3405866} that the geodesic equation is a degenerate complex Monge-Amp\`ere equation with cone singularities in the product manifold $\mathfrak X=X\times [0,1]\times S^1$. The existence, uniqueness and regularity of cone geodesic with cone angle in the whole interval $0<\beta\leq 1$ are completely proved in \cite{MR4020314}, by solving the boundary value problem of the approximation equation 
\begin{align}\label{per equ a}
\left\{
\begin{array}{ll}
\Om_{\Psi}^{n+1}
=\tau \cdot \Om_{\mathbf b}^{n+1} &\text{ in }\mathfrak{M}=M\times [0,1]\times S^1\; ,\\
\Psi=\Psi_0 & \text{ on }\p\mathfrak{X} \;.
\end{array}
\right.
\end{align}
The notions $\Om_\psi$, $\Psi_0$ represent the pull-back of the K\"ahler cone metric $\om_{\vphi}$ and the boundary values $\vphi_0,\vphi_1$ to the product manifold $\mathfrak X=X\times [0,1]\times S^1$ under the natural projection $\pi:\mathfrak X\rightarrow X$. The K\"ahler cone metric $\Om_{\mathbf b}$ is the background metric constructed on $\mathfrak X$ and satisfies appropriate curvature conditions.

We define the \textit{generalised cone geodesic} $\{\vphi(t), 0\leq t\leq 1\}$ to be the limit of solutions to \eqref{per equ a}
as $\tau\rightarrow 0$ under the $C_{\tri}^{\b}$-norm,
\begin{align*}
||\vphi||_{C_{\tri}^{\b}}=\sup_{(z,t)\in \mathfrak X}\{|\vphi|+|\p_t\vphi|+|\p_z\vphi|_{\om}+|\p_z{\p_{\bar z}}\vphi|_{\om}\}.
\end{align*}
Similarly, we define the \textit{$C_{\bf w}^{1,1,\b}$ cone geodesic} to be the limit of solutions to \eqref{per equ a}
as $\tau\rightarrow 0$ under the $C_{\bf w}^{1,1,\b}$-norm with the weight $\kappa=\beta-\a\b$,
	\begin{align*}
||\vphi||_{C_{\bf w}^{1,1,\b}}=||\vphi||_{C_{\tri}^{\b}}+\sup_{ \mathfrak X}\{\sum_{2\leq i\leq n}|\frac{\p^2\vphi}{\p z^{i}\p t} |_{\Om}+|s|^{\kappa}|\frac{\p^2\vphi}{\p z^{1}\p t} |_{\Om}+|s|^{2\kappa} |\frac{\p^2\vphi}{\p t^2} |\}.
\end{align*}

\begin{thm}[Cone geodesic Theorem 1.1 in \cite{MR4020314}]\label{geo existence}
	Suppose that $0<\b\leq1$ and $\{\om_i=\om_0+i\p\bar\p\vphi_{i}, i=0,1\}$ are two K\"ahler cone metrics in $\mathcal H_{\b}$. Then there exists a unique generalised cone geodesic connecting them. 
Furhthermore, if $\{\vphi_{i}, i=0,1\}$ are two $C_{\mathbf w}^{3,\a,\b}$ K\"ahler cone metrics. Then the generalised cone geodesic is $C_{\bf w}^{1,1,\b}$.
	
\end{thm}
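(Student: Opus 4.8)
The plan is to realize the geodesic as the limit, as $\tau\to 0$, of solutions $\Psi_\tau$ to the perturbed boundary value problem \eqref{per equ a} on the product $\mathfrak X = X\times[0,1]\times S^1$, and to extract uniform estimates that survive the degeneration. First I would set up the continuity (or Schauder) argument for the nondegenerate equation $\Om_\Psi^{n+1} = \tau\cdot\Om_{\mathbf b}^{n+1}$ with boundary data $\Psi_0$: for fixed $\tau>0$ this is a nondegenerate complex Monge--Amp\`ere equation with cone singularities along $D\times[0,1]\times S^1$, and solvability in the weighted H\"older space $C^{2,\a,\b}(\mathfrak X)$ follows from the linear theory for the conical Laplacian together with the reference metric $\Om_{\mathbf b}$ chosen (as recalled in \secref{Cone geodesics}) to have the right curvature and asymptotic properties. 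The point is that $\Om_{\mathbf b}$ is a genuine K\"ahler cone metric on $\mathfrak X$ whose restriction to each slice $X\times\{(t,\theta)\}$ lies in $\mathcal H_\beta$, so the maximum principle and the barrier constructions near $D$ go through.

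Next I would derive the $\tau$-independent a priori estimates. The zeroth order bound $\|\Psi_\tau\|_{C^0}\le C$ comes from the maximum principle applied to $\Psi_\tau - \Psi_0$ together with convexity in $t$ (the right-hand side being a positive multiple of a fixed volume form forces subsolution behavior). The gradient bound $|\p_t\Psi_\tau|, |\p_z\Psi_\tau|_\om\le C$ and the Laplacian bound $|\p_z\p_{\bar z}\Psi_\tau|_\om\le C$ — i.e. the $C^\beta_\tri$-norm — are obtained by a Yau/Aubin-type second-order estimate adapted to the conical geometry: one applies the linearized operator to $\log\tr_{\Om_{\mathbf b}}\Om_\Psi$ minus a large multiple of $\Psi_\tau$, uses the bounded bisectional curvature of $\Om_{\mathbf b}$ in the weighted sense, and controls the boundary contribution by the $C^{2,\a,\b}$ bound on $\Psi_0$. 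This yields a uniform $C^\beta_\tri$ bound, and hence a subsequential limit $\vphi(t)$, which is the generalized cone geodesic; its uniqueness follows from the comparison principle for the degenerate equation (two solutions with the same boundary data coincide by a standard $\sup/\inf$ argument using the convexity of the Monge--Amp\`ere operator). For the second part, when the endpoints $\vphi_0,\vphi_1\in C^{3,\a,\b}_{\mathbf w}$, I would bootstrap: differentiate the equation in the slice directions $\p/\p z^i$ and in $t$, and use the third-order weighted estimates on the boundary data to propagate them into the interior, obtaining the mixed-derivative bounds $|\p^2\vphi/\p z^i\p t|_\Om$, $|s|^\kappa|\p^2\vphi/\p z^1\p t|_\Om$, $|s|^{2\kappa}|\p^2\vphi/\p t^2|$ that define the $C^{1,1,\b}_{\mathbf w}$-norm, with $\kappa = \beta - \a\beta$.

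The main obstacle is exactly the conical degeneration in the second-order estimate: the reference metric $\Om_{\mathbf b}$ does \emph{not} have bounded geometry when $\beta\ge\tfrac12$, so the naive integration-by-parts and the usual application of the Chern--Lu inequality fail near $D$. The resolution — the technical heart of the argument — is to work with the weighted H\"older norms and a carefully chosen $\Om_{\mathbf b}$ whose curvature tensor satisfies the precise decay $O(|z^1|^{-\kappa})$-type bounds appearing in Definition \ref{cpw3ab}, so that every error term generated in the maximum-principle computation is dominated by a term already controlled, and the constants in the final estimate depend only on the geometry of $\Om_{\mathbf b}$ and the endpoint data, not on $\tau$. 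Making this bookkeeping consistent — i.e. matching the exponents $1-\beta$, $2-2\beta$, $3-3\beta$ on the metric derivatives with the blow-up rate $\kappa$ in the $C^{1,1,\b}_{\mathbf w}$ target — is where all the real work lies; the rest is a packaging of standard continuity-method and maximum-principle techniques.
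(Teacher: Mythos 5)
First, a point of bookkeeping: the present paper does not prove this theorem at all. It is recalled verbatim from \cite{arXiv:1709.09616} (Theorem 1.1 there), and Section \ref{Cone geodesics} only records the definitions of the $C^{\b}_{\tri}$ and $C_{\bf w}^{1,1,\b}$ cone geodesics as limits, as $\tau\to 0$, of solutions to the approximation equation \eqref{per equ a}. Your strategy --- solve \eqref{per equ a} for each $\tau>0$, establish $\tau$-independent bounds in the weighted norms, and pass to the limit --- is exactly the strategy of the cited paper, so the architecture of your argument is the right one.

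However, as written this is a plan rather than a proof, and the steps you defer are precisely the content of the theorem. Concretely: (i) you never address the boundary estimates on $\p\mathfrak X$, which must be established \emph{before} the interior Yau/Chern--Lu argument can be closed on a manifold with boundary, and which are nontrivial here because the boundary data are only $C^{2,\a,\b}$ (resp.\ $C^{3,\a,\b}_{\bf w}$) and the barriers must be built from a conical reference metric; (ii) for $\beta\ge\tfrac12$ the reference metric $\Om_{\mathbf b}$ does not have bounded geometry, and you acknowledge that the curvature terms in $\tri_{\Psi}\log\tr_{\Om_{\mathbf b}}\Om_\Psi$ blow up near $D$, but you do not say which auxiliary function absorbs them --- this is the step where the specific construction of $\Om_{\mathbf b}$ and the weights $|z^1|^{1-\b}$, $|z^1|^{2-2\b}$, $|s|^{\kappa}$ actually enter, and without it the estimate simply does not close; (iii) the $C_{\bf w}^{1,1,\b}$ statement cannot be obtained by "bootstrapping" the limit equation, since the limit is the \emph{degenerate} homogeneous Monge--Amp\`ere equation to which no Schauder theory applies; the mixed-derivative bounds $|s|^{\kappa}|\p_{z^1}\p_t\vphi|_\Om$ and $|s|^{2\kappa}|\p_t^2\vphi|$ must be proved uniformly in $\tau$ at the level of the approximating equations, by differentiating \eqref{per equ a} and running a second weighted maximum-principle argument; and (iv) uniqueness of the $C^{\b}_{\tri}$ geodesic requires a comparison principle for the degenerate equation with merely $C^{\b}_{\tri}$ regularity and conical singularities, which is not the "standard $\sup/\inf$ argument" for nondegenerate equations. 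None of these is a wrong turn, but each is a substantial estimate that your proposal names and then skips.
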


We remark that the half angle cone geodesic ($0<\beta<\frac{1}{2}$) has better regularity, as shown in \cite{MR3405866}. 




\subsection{CscK cone metrics: definition and regularity}\label{CscK cone metrics}

We recall the definition of the cscK cone from \cite{MR4020314}. There are weaker definitions appeared in \cite{Zheng,MR3803119,MR3858468,MR2975584}.
\subsubsection{Reference metric}\label{Background metric}
We denote 
\begin{align*}
C_1(X,D):=C_1(X)-(1-\b)C_1(L_D).
\end{align*}
In \cite{MR4020314}, the reference K\"ahler cone metric $\om_\theta$ was obtained by solving the following equation of $\om_\theta:=\om_0+i\p\bar\p \vphi_\theta$
\begin{align}\label{Rictheta}
Ric(\om_\theta)=\theta+2\pi (1-\b)[D].
\end{align}
In which, $\theta\in C_1(X,D)$ is a smooth $(1,1)$-form. 

We let $s$ be the defining section of $D$ and $h$ be a smooth Hermitian metric on $L_D$. We set $\Theta_D$ to be 
\begin{align*}
\Theta_D=-i\p\bar\p\log h.
\end{align*}
It is a smooth $(1,1)$-form in $C_1(L_D)$.
According to the Poincar\'e-Lelong equation, the divisor term  is given by
\begin{align*}
2\pi[D]=i\p\bar\p\log|s|_h^2-i\p\bar\p\log h=i\p\bar\p\log|s|^2.
\end{align*}

By the cohomology condition, we set $h_0$ be a smooth function satisfying
\begin{align}\label{h0}
Ric(\om_0)=\theta+(1-\beta)\Theta_D+i\p\bar\p h_0.
\end{align}
The K\"ahler cone potential $\vphi_\theta$ (K\"ahler potential of a K\"ahler cone metric) satisfies the complex Monge-Amp\`ere equation with cone singularities
\begin{align}\label{theta}
\frac{\om^n_\theta}{\om^n_0}=\frac{e^{h_0}}{|s|_h^{2-2\b}}.
\end{align}
Here, the normalisation condition of $h_0$ is 
\begin{align}\label{thetanormalise}
V=\int_M\om^n_\theta=\int_M\frac{e^{h_0}}{|s|_h^{2-2\b}}\om^n_0.
\end{align}
\begin{lem}\label{reference metric cone}
There exists a unique solution $\vphi_\theta\in C^{2,\a,\b}$ and $\vphi_\theta$ is also geometrically polyhomogeneous, Theorem 1.1 in \cite{MR3911741} (see also \cite{MR3761174} for related references).
\end{lem}


\subsubsection{Definition of cscK cone metric}\label{Definition of cscK cone metrics}

\begin{defn}[Definition 3.1. in \cite{MR4020314}]\label{csckconemetricdefn}A cscK cone metric
	$$\om_{cscK}:=\om_0+i\p\bar\p\vphi_{cscK}$$ is a K\"ahler cone metric with K\"ahler potential $\vphi_{cscK}\in C^{2,\a,\b}$ and satisfying the equations
\begin{align}
\label{2nd equ}
&\frac{\om_{cscK}^n}{\om_\theta^n}=e^F,\\
\label{Fcsck}
&\tri_{\om_{cscK}} F=\tr_{\om_{cscK}}\theta-\underline S_\b.
\end{align}
\end{defn}
	
The constant $\underline S_\b$ is independent of the choice of $\vphi_{cscK}$ (Lemma 3.1 in \cite{MR4020314}) and equals to the topological constant
\begin{align*}
\underline S_\b=\frac{C_1(X,D)[\om_0]^{n-1}}{[\om_0]^n}.
\end{align*}
By substitution into \eqref{Fcsck} with the equation of $\om_\theta$ \eqref{theta}, we have
\begin{align}\label{2nd equ smooth}
\frac{\om_{cscK}^n}{\om_0^n}=\frac{e^{F+h_0}}{|s|_h^{2-2\b}}.
\end{align}

\subsubsection{Regularity of cscK cone metrics: geometrical polyhomogeneity}\label{Regularity of cscK cone metrics: geometrical polyhomogeneity}
\begin{defn}[\cite{MR4020314}]\label{4thspace}
The 4th order H\"older space is defined as
\begin{align*}
D_{\bf w}^{4,\a,\b}(\om_\theta)=\{\vphi\in C^{2,\a,\b}\vert \log\frac{\om_\vphi^n}{\om^n_\theta}\in C^{2,\a,\b}\}.
\end{align*}
The corresponding linearisation space at $\om$ is
\begin{align*}
C_{\bf w}^{4,\a,\b}(\om)=\{u\in C^{2,\a,\b}\vert \tri_{\om} u\in C^{2,\a,\b}\}.
\end{align*}
\end{defn}

\begin{thm}[Geometrical polyhomogeneity Theorem 1.2 in \cite{MR4020314}]\label{Geometric asymptotic}
	Assume that $0<\b\leq1$ and the H\"older exponent $\alpha$ satisfies 
$\a\b<1-\b.$ Suppose that $\omega_\vphi=\om_0+i\p\bar\p\vphi$ is a constant scalar curvature K\"ahler cone metric. Then $\vphi$ is $C^{3,\a,\b}_{\bf w}\cap D_{\bf w}^{4,\a,\b}(\om_\theta)$. 
Moreover, $\vphi$ is geometrically polyhomogeneous (Theorem 4.5 in \cite{MR4020314}).
\end{thm}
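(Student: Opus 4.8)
The plan is to run a bootstrap that alternates between the two equations \eqref{2nd equ} and \eqref{Fcsck} defining a cscK cone metric, using the conical Schauder theory for the linearised Laplacian together with the regularity theory for the conical complex Monge-Amp\`ere equation. The input is $\vphi\in C^{2,\a,\b}$, so $\om_\vphi$ is a genuine K\"ahler cone metric quasi-isometric to $\om_{cone}$, with the weighted components of $g_\vphi$ and of $g_\vphi^{-1}$ lying in $C^{0,\a,\b}$; moreover, by the complex Monge-Amp\`ere regularity of \cite{arxiv:1609.03111}, the reference metric $\om_\theta$ solving \eqref{theta} is itself geometrically polyhomogeneous, so it is harmless to measure everything relative to $\om_\theta$.

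First I would exploit the linear equation \eqref{Fcsck}. Since $\theta$ is a smooth $(1,1)$-form while the singular component $g_\vphi^{1\bar 1}\sim|z^1|^{2-2\b}$ of the inverse metric vanishes along $D$ and the mixed components carry the borderline weight $|z^1|^{1-\b}$, the source $\tr_{\om_\vphi}\theta-\underline{S}_\b$ lies in $C^{0,\a,\b}$ precisely because $\a\b<1-\b$; likewise the coefficients of $\tri_{\om_\vphi}$ are $C^{0,\a,\b}$ in the conical sense, and from \eqref{2nd equ} the density $\om_\vphi^n/\om_\theta^n$ is uniformly positive and of class $C^{0,\a,\b}$, so $F\in C^{0,\a,\b}$. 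Feeding this into the conical Schauder estimate for the Laplacian (Donaldson \cite{MR2975584}, in the extended form of \cite{arXiv:1603.01743,arXiv:1709.09616}) applied to \eqref{Fcsck} upgrades $F$ to $C^{2,\a,\b}$. As $\log(\om_\vphi^n/\om_\theta^n)=F$ by \eqref{2nd equ}, this is exactly the statement $\vphi\in D_{\bf w}^{4,\a,\b}(\om_\theta)$ of Definition \ref{4thspace}.

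Next I would return to the Monge-Amp\`ere equation. Written relative to $\om_0$ as in \eqref{2nd equ smooth}, it reads $\om_\vphi^n/\om_0^n=e^{F+h_0}|s|_h^{2\b-2}$ with $F+h_0\in C^{2,\a,\b}$, since $h_0$ is smooth by \eqref{h0}. This is a conical complex Monge-Amp\`ere equation whose right-hand side lies in the weighted H\"older class governed by the regularity theory of \cite{arxiv:1609.03111}; that theory delivers the third-order weighted estimates, i.e. $\vphi\in C_{\bf w}^{3,\a,\b}$ in the sense of Definition \ref{cpw3ab}. One now iterates: with $\om_\vphi$ more regular the coefficients of $\tri_{\om_\vphi}$ and the source $\tr_{\om_\vphi}\theta$ lie in higher weighted spaces, Schauder pushes $F$ further, and this re-enters \eqref{2nd equ smooth}; because $\theta$ and $h_0$ are smooth there is no obstruction to continuing. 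Combining this iteration with the indicial-root/separation-of-variables analysis near $D$ of \cite{arxiv:1609.03111,arXiv:1709.09616} yields the full polyhomogeneous expansion of $\vphi$ in powers of $|z^1|^{\b}$ and $\log|z^1|$, which is the asserted geometric polyhomogeneity.

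The hard part is that for cone angles $\b\in[\tfrac12,1)$ the metric $\om_\vphi$ has no bounded geometry, so ordinary elliptic bootstrapping --- in which every derivative would be H\"older continuous --- breaks down: certain mixed derivatives of the metric genuinely blow up near $D$, and one must work with the refined weighted spaces $C_{\bf w}^{k,\a,\b}$ that record exactly which derivatives blow up and at what rate $\kappa=\b-\a\b$, together with weighted Schauder and Monge-Amp\`ere estimates tailored to them. The second delicate point is computing the indicial roots of the conical linearised operator, which controls which exponents $j\b+k$ and which logarithmic terms actually appear in the expansion; ruling out spurious resonances is the technical core, and is carried out in \cite{arXiv:1709.09616}.
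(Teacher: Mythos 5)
This theorem is not proved in the present paper at all: it is quoted verbatim as Theorem 1.2 of \cite{arXiv:1709.09616}, so there is no in-paper argument to compare against. Your bootstrap outline (get $F\in C^{2,\a,\b}$ from the linear equation \eqref{Fcsck} via Donaldson-type weighted Schauder theory, hence $\vphi\in D_{\bf w}^{4,\a,\b}(\om_\theta)$; then feed $F+h_0\in C^{2,\a,\b}$ back into \eqref{2nd equ smooth} and invoke the third-order weighted estimates and expansion formula of \cite{arxiv:1609.03111} for $C^{3,\a,\b}_{\bf w}$ and polyhomogeneity) is correct and is exactly the strategy of the cited reference, with the genuinely hard technical inputs rightly identified as residing there.
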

\subsection{Uniqueness of cscK cone metrics and automorphism group}
Recall that $$G:=Aut(X;D)$$ is the identity component of the group of holomorphic automorphisms of $X$ which fix the divisor $D$. We also set $\mathfrak{h}(X;D)$ to be the space of all holomorphic vector fields tangential to the divisor and $\mathfrak{h}'(X;D)$ to be the complexification of a Lie algebra consisting of Killing vector fields of $X$ tangential to $D$. The reductivity theorem of $Aut(X;D)$ for cscK cone metric was proved in \cite{MR4020314}, see also \cite{MR3968885,MR3761174,MR3264767}. 

\begin{thm}[Reductivity Theorem 1.4 in \cite{MR4020314}]\label{preceisereductivity}Suppose $\om$ is a cscK cone metric. 
	Then there exists a one-to-one correspondence between $\mathfrak{h}'(X;D)$ and the kernel of ${\mathbb{L}\mathrm{ic}}_{\om}$. 
	
	Precisely speaking, 
	the Lie algebra $\mathfrak{h} (X;D)$ has a direct sum decomposition:
	\begin{equation}
	\mathfrak{h}(X;D) = \mathfrak{a}(X;D) \oplus \mathfrak{h'} (X;D),
	\end{equation}
	where $\mathfrak{a}(X;D)$ is the complex Lie subalgebra of $\mathfrak{h}(X;D)$ consisting of all parallel holomorphic vector fields tangential to $D$,
	and $\mathfrak{h'}(X;D)$ is the ideal of $\mathfrak{h}(X;D)$ consisting of the image under $grad_g$ of the kernel of $\cD$ operator. The operator $grad_g$ is defined to be $grad_g(u)= \uparrow^{\omega}\bar\p u=g^{i\bar j}\frac{\p u}{\p z^{\bar j}}\frac{\p}{\p z^i}$.
	
	Furthermore $\mathfrak{h}'(X;D)$ is the complexification of a Lie algebra consisting of Killing vector fields of $X$ tangential to $D$. 
	In particular $\mathfrak{h}'(X;D)$ is reductive. 
	Moreover, $\mathfrak{h}(X;D)$ is reductive.
\end{thm}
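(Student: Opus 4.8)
The plan is to adapt the classical Matsushima--Lichnerowicz argument to the conical setting; the new ingredient is the polyhomogeneous asymptotics of the cscK cone metric (\thmref{Geometric asymptotic}), which is what will make the integrations by parts below legitimate even though $\om:=\om_{cscK}$ fails to have bounded geometry. Write $\cD u=\bar\p(grad_g u)$ for the complex Lichnerowicz operator; then $\mathbb{L}\mathrm{ic}_{\om}=\cD^{*}\cD$ is a fourth order self-adjoint elliptic operator on $M=X\setminus D$, and the Bochner--Weitzenb\"ock identity reads
\[
\mathbb{L}\mathrm{ic}_{\om} u=\tfrac14\,\tri_\om^{2}u+\big(\Ric(\om),\sqrt{-1}\p\bar\p u\big)_\om+\tfrac12\langle\p u,\p\underline S_\b\rangle_\om .
\]
Combining \eqref{Rictheta}, \eqref{2nd equ} and \eqref{Fcsck} shows that $\om$ has constant scalar curvature $\underline S_\b$ on $M$, so the first-order term drops; this is the only point at which the cscK hypothesis is used, and it forces $grad_g u$ to be holomorphic exactly when $\cD u=0$.

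The technical heart in the conical case is to establish
\[
\int_{M}\langle\mathbb{L}\mathrm{ic}_{\om} u,u\rangle_\om\,\om^{n}=\int_{M}|\cD u|^{2}_\om\,\om^{n}
\]
for $u$ in the weighted H\"older space $C_{\bf w}^{4,\a,\b}(\om)$, which then identifies $\ker\mathbb{L}\mathrm{ic}_{\om}$ with $\{u:\cD u=0\}$. The obstacle is that $\om$ degenerates along $D$: in integrating by parts one must show that the boundary integrals over the tubes $\{|s|_h=\ep\}$ tend to $0$ as $\ep\to0$. For this I would use that $\vphi$ is geometrically polyhomogeneous, so that $\om$ is modelled to leading order on the flat cone \eqref{flat cone}, together with the weighted estimates of \cite{arXiv:1709.09616} controlling $\nabla u$, $\nabla^{2}u$, $\nabla^{3}u$; since tangency to $D$ forces the potentials in play to vanish along $D$ to the appropriate order, each boundary integrand is $O(\ep^{\delta})$ for some $\delta>0$. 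This is the one genuinely new estimate compared with the smooth Matsushima theorem, and it is the main obstacle.

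Next I would set up the correspondence. For $V\in\mathfrak h(X;D)$ the contraction $\iota_{V}\om$ is a $\bar\p$-closed $(0,1)$-form on $M$; via a weighted Hodge decomposition on $M$ adapted to the cone geometry one writes $\iota_{V}\om=\bar\p u_{V}+h_{V}$ with $h_{V}$ harmonic. The harmonic part is dual to a parallel holomorphic vector field, producing the abelian subalgebra $\mathfrak a(X;D)$, while $V-grad_g u_{V}$ holomorphic forces $\cD u_{V}=0$, placing $grad_g u_{V}$ in $\mathfrak h'(X;D)$; conversely $\cD u=0$ yields the holomorphic field $grad_g u$. Since $G=Aut(X;D)$ fixes $D$, tangency is preserved at each step, and one checks $u_{V}$ lies in the space with the correct vanishing along $D$. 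This gives simultaneously the splitting $\mathfrak h(X;D)=\mathfrak a(X;D)\oplus\mathfrak h'(X;D)$, with $\mathfrak a(X;D)$ abelian, and the bijection between $\mathfrak h'(X;D)$ and $\ker\mathbb{L}\mathrm{ic}_{\om}$ after the usual normalisation of potentials.

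Finally, for reductivity: $\cD$ is complex linear, so $\cD u=0$ implies $\cD f=\cD g=0$ where $u=f+\sqrt{-1}g$ with $f,g$ real; and from the cscK equation, again via the vanishing of the first-order term in the Bochner identity, one shows that for a real $f$ with $\cD f=0$ the Hamiltonian field $\Im(grad_g f)$ is a Killing field of $\om$. Hence $\mathfrak h'(X;D)$ is the $\mathbb C$-span of gradients of real functions in $\ker\cD$, i.e.\ $\mathfrak h'(X;D)=\mathfrak k\otimes\mathbb C$ where $\mathfrak k$ is the real Lie algebra of Killing fields of $\om$ tangent to $D$; a complexified compact Lie algebra is reductive. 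As $\mathfrak a(X;D)$ is an abelian ideal, $\mathfrak h(X;D)=\mathfrak a(X;D)\oplus\mathfrak h'(X;D)$ is reductive as well. The algebra here is classical; what the conical setting demands is the careful bookkeeping of the polyhomogeneous expansions and weighted norms of \cite{arXiv:1709.09616} to annihilate the boundary contributions near $D$.
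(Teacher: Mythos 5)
This theorem is not proved in the present paper at all: it is recalled verbatim as Theorem 1.4 of \cite{arXiv:1709.09616} (see also \cite{arXiv:1603.01743}), so there is no in-text argument to compare yours against. At the level of strategy your outline does match what the cited source does: a Matsushima--Lichnerowicz argument in which the cscK equation \eqref{Fcsck} makes the scalar curvature constant and kills the first-order term of the Bochner identity, with the weighted/polyhomogeneous estimates of \thmref{Geometric asymptotic} used to show the boundary terms on the tubes $\{|s|_h=\ep\}$ vanish as $\ep\to 0$.

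Two of your steps, however, are asserted exactly where the real work lies. First, the claim that ``tangency to $D$ forces the potentials in play to vanish along $D$ to the appropriate order'' is not correct as stated: the potential $u_V$ of a holomorphic field tangent to $D$ need not vanish on $D$ (only the normal component of $V$ does). What actually makes each boundary integrand $O(\ep^{\delta})$ is the precise weighted decay of $\p u$, $\p\bar\p u$ and the third derivatives recorded in $C^{3,\a,\b}_{\bf w}\cap D^{4,\a,\b}_{\bf w}(\om_\theta)$, combined with the quasi-isometry of $\om$ with the model \eqref{flat cone}; this has to be checked term by term for the specific integrands arising in $\int_M u\,\cD^{*}\cD u\,\om^{n}=\int_M|\cD u|^{2}_{\om}\,\om^{n}$, and it is the content of the cited reference rather than a consequence of tangency. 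Second, the ``weighted Hodge decomposition on $M$ adapted to the cone geometry'' that you invoke to split $\iota_V\om$ is itself a nontrivial theorem for a degenerate metric and cannot be taken off the shelf; in \cite{arXiv:1709.09616,arXiv:1603.01743} the splitting $\mathfrak h(X;D)=\mathfrak a(X;D)\oplus\mathfrak h'(X;D)$ is instead obtained from the mapping properties (Fredholm alternative) of the Lichnerowicz operator in the weighted H\"older spaces, which produce a potential in the correct space up to the parallel part. Granting these two ingredients, the remainder of your argument --- complex linearity of $\cD$, Killing fields from imaginary parts of gradients of real elements of $\ker\cD$, and reductivity of a direct sum of an abelian ideal with a complexified compact algebra --- is the classical bookkeeping and is correct.
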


\begin{thm}[Uniqueness Theorem 1.6 in \cite{MR4020314}]\label{Uniqueness}
	The constant scalar curvature K\"ahler cone metric is unique up to automorphisms. 
\end{thm}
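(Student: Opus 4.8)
The plan is to run the by-now standard geodesic--convexity argument for uniqueness of canonical metrics, transplanted into the conical category, using the three ingredients recalled above: the $C^{1,1,\b}_{\mathbf w}$ cone geodesic of \thmref{geo existence}, the convexity of the log $K$-energy along cone geodesics (\lemref{convexityKenergy}), and the kernel description in the reductivity \thmref{preceisereductivity}. Let $\om_i=\om_0+i\p\bar\p\vphi_i$, $i=0,1$, be two cscK cone metrics in $\Om$. By the regularity \thmref{Geometric asymptotic} both potentials lie in $C^{3,\a,\b}_{\mathbf w}\cap D^{4,\a,\b}_{\mathbf w}(\om_\theta)$, so \thmref{geo existence} provides a $C^{1,1,\b}_{\mathbf w}$ cone geodesic $\{\vphi_t\}_{0\le t\le1}$ joining $\vphi_0$ to $\vphi_1$, with the weight $\kappa=\b-\a\b$ controlling the behaviour of the mixed second derivatives $\p_{z^1}\p_t\vphi_t$, $\p_t^2\vphi_t$ near $D$.

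First I would show that $t\mapsto\nu_\b(\vphi_t)$ is affine on $[0,1]$. It is convex by \lemref{convexityKenergy} (the $C^{1,1,\b}_{\mathbf w}$ cone geodesic being in particular a $d_1$-geodesic between its endpoints); and since $\om_0,\om_1$ are cscK cone metrics they are critical points of $\nu_\b$, so the one-sided derivatives of $t\mapsto\nu_\b(\vphi_t)$ vanish at $t=0$ and $t=1$; a convex function whose one-sided derivatives vanish at both endpoints is constant, hence affine. Next, since the cone geodesic weakly solves the geodesic equation $\ddot\vphi_t-|\Na\dot\vphi_t|^2_{\om_t}=0$, the second-variation formula for the $K$-energy collapses to
\[
\frac{d^2}{dt^2}\,\nu_\b(\vphi_t)=\int_M |\cD_{\om_t}\dot\vphi_t|^2_{\om_t}\,\om_t^n\ \ge\ 0,
\]
with $\cD_{\om}$ the operator appearing in \thmref{preceisereductivity}; affineness then forces $\cD_{\om_t}\dot\vphi_t=0$ for every $t$. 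This identity has to be read weakly, since the geodesic is only $C^{1,1,\b}_{\mathbf w}$, and --- more essentially --- the integrations by parts cannot be performed against an arbitrary K\"ahler cone metric; one carries them out using the precise asymptotics built into the reference metric $\om_\theta$, hence of $\om_t$, together with the behaviour of $\dot\vphi_t$ encoded by the weight $\kappa$, so that the boundary contributions along $D$ genuinely disappear.

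Finally I would conclude. An elliptic bootstrap inside the weighted H\"older scale upgrades $\dot\vphi_t$ to a genuine element of $\ker\cD_{\om_t}$, so by \thmref{preceisereductivity} the real vector field $V_t:=grad_{g_t}\dot\vphi_t$ is a holomorphic vector field tangential to $D$, i.e.\ $V_t\in\mathfrak h'(X;D)$; integrating the flow $\dot\sigma_t=V_t\circ\sigma_t$ with $\sigma_0=\mathrm{id}$ produces a path $\sigma_t\in Aut(X;D)$, and a direct verification shows $\om_0+i\p\bar\p\vphi_t=\sigma_t^*\om_0$, whence $\om_1=\sigma_1^*\om_0$, which is the claim. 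I expect the main obstacle to be the second-variation step: a K\"ahler cone metric has no bounded geometry, so the naive integration by parts underlying the second-variation formula and its vanishing is illegitimate, and one must carefully exploit the controlled asymptotics of $\om_\theta$ and the sharp weighted estimates on the cone geodesic --- exactly the philosophy of choosing a ``nice'' reference background --- to make every step rigorous; the low regularity of the cone geodesic, which forces a weak/limiting interpretation that must then be promoted, is the secondary difficulty.
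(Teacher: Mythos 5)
This paper does not actually prove \thmref{Uniqueness}: it is imported verbatim as Theorem~1.6 of \cite{arXiv:1709.09616}, and your outline reproduces precisely the strategy of that reference --- connect the two metrics by the $C^{1,1,\beta}_{\bf w}$ cone geodesic of \thmref{geo existence} (available thanks to the regularity \thmref{Geometric asymptotic}), use convexity of the log $K$-energy along it (\lemref{c11convexityKenergy}) to make the energy affine, extract a kernel element of the Lichnerowicz operator from a weak second-variation argument carried out against the controlled asymptotics of $\om_\theta$, and convert it into an element of $Aut(X;D)$ via the reductivity \thmref{preceisereductivity}. So the approach is the intended one; the only slip is notational, since the final identity should read $\om_{\vphi_t}=\sigma_t^*\om_{\vphi_0}$ rather than $\sigma_t^*\om_0$, the symbol $\om_0$ being reserved for the fixed background metric.
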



\subsection{Log $K$-energy and convexity}\label{Log K-energy and convexity}


The log entropy and log $K$ energy are well defined in $\mathcal H_\beta$. Their properties including continuity and convexity were shown in \cite{MR4020314,MR3803119}.

\subsubsection{Entropy}
\begin{defn}
The log entropy on $\mathcal H_\b$ is defined to be
\begin{align}\label{entropy}
E_\beta(\vphi):=\frac{1}{V}\int_M\log\frac{\om^n_\vphi}{\om_0^n|s|_h^{2\b-2}e^{h_0}}\om_\vphi^{n},
\end{align}
where, $s$ is the defining section of $D$, $h_0$ is a smooth function and $h$ is a Hermitian metric on the associated line bundle $L_D$.
By substitution with \eqref{theta}, the log entropy could be rewritten in term of the reference metric $\om_\theta$,
\begin{align*}
E_\beta(\vphi)=\frac{1}{V}\int_M\log\frac{\om^n_\vphi}{\om_\theta^n}\om_\vphi^{n}.
\end{align*}
\end{defn}
\subsubsection{$J$-functionals}
The functional associated to the Monge-Amper\`e operator os 
\begin{align*}
D_{\om_0}(\vphi)
&:=\frac{1}{V}\frac{1}{n+1}\sum_{j=0}^{n}\int_{M}\vphi\om_0^{j}\wedge\om_{\varphi}^{n-j}.
\end{align*}
The computation shows that
\begin{itemize}
\item The first variation of $D_{\om_0}$ is
\begin{align*}
\p_t D_{\om_0}(\vphi)=\frac{1}{V}\int_M \p_t\vphi \om_\vphi^n.
\end{align*}
\item $D_{\om_0}$ satisfies the cocycle condition. It means that if we write $D(\om_0,\om_\vphi):=D_{\om_0}(\vphi)$ for $\om_\vphi=\om_0+i\p\bar\p\vphi$, then 
\begin{align*}
D(\om_0,\om_\vphi)=D(\om_0,\om_\psi)+D(\om_\psi,\om_\vphi).
\end{align*}
\end{itemize}

Let $\chi$ to be a closed $(1,1)$-form $\chi$. The log $J_\chi$-functional is defined to be
\begin{align*}
J_{\chi}(\vphi)&:=j_{\chi}(\vphi)-\ul{\chi}\cdot D_{\om_0}(\vphi) 
\end{align*}
with
\begin{align*}
j_{\chi}(\vphi)&:=\frac{1}{V}\int_{M}\vphi
\sum_{j=0}^{n-1}\om_0^{j}\wedge
\om_\vphi^{n-1-j}\wedge \chi,\quad \ul{\chi}
:=\frac{n\int_X\chi\wedge \om_0^{n-1}}{V}
\end{align*}
It is direct to see that
\begin{itemize}
\item The first variation of $J_{\chi}$ is
\begin{align}\label{derivatives of J chi}
\p_t J_{\chi}(\vphi)=\frac{1}{V}\int_{M}\p_t\vphi
(n\chi\wedge \om_\vphi^{n-1}-\underline\chi\om^n_\vphi).
\end{align}
\item 
\begin{align*}
j_{i\p\bar\p f}(\vphi)
=\frac{1}{V}\int_{M}f(\om^n_\vphi-\om^n_0).
\end{align*}
\end{itemize}

\subsubsection{Log $K$-energy}
\begin{defn}\label{logKenergy}
The log $K$-energy is defined on $\mathcal H_\b$ as
\begin{align}\label{log K energy}
\nu_\beta(\vphi)
&:=E_\beta(\vphi)
+J_{-\theta}(\vphi)+\frac{1}{V}\int_M (\mathfrak h+h_0)\om_0^n,
\end{align}
where, we denote 
\begin{align}\label{mathfrakh}
J_{-\theta}(\vphi)&:=j_{-\theta}(\vphi)+\ul{S}_\beta\cdot D(\vphi),\quad \mathfrak h:=-(1-\b)\log |s|_h^2
\end{align}
with 
\begin{align*}
j_{-\theta}(\vphi)&:=-\frac{1}{V}\int_{M}\vphi
\sum_{j=0}^{n-1}\om_0^{j}\wedge
\om_\vphi^{n-1-j}\wedge \theta.
\end{align*}
\end{defn}

When $\beta=1$, the log $K$-energy coincides with Mabuchi $K$-energy, which will be denoted by,
\begin{align}\label{Mabuchienergy}
\nu_1(\vphi)
=E_1(\vphi)+
j_{-Ric(\om_0)}(\vphi)+\ul{S}_1\cdot D(\vphi),
\end{align}
with the entropy term
\begin{align*}
E_1(\vphi)=\frac{1}{V}\int_M\log\frac{\om^n_\vphi}{\om_0^n}\om_\vphi^{n}.
\end{align*}


\begin{lem}[Convex and continuity of the log $K$-energy Proposition 3.10 in \cite{MR4020314}]\label{c11convexityKenergy}
	The log $K$-energy is continuous and convex along the $C^{\b}_{\tri}$ generalised cone geodesic. 
\end{lem}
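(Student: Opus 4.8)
The plan is to reduce to the perturbed geodesics that define the cone geodesic and then invoke the standard second variation of the (log) $K$-energy on the regular part $M$. By \thmref{geo existence} the $C^{\b}_{\tri}$-cone geodesic $\{\vphi(t)\}$ joining $\vphi_0,\vphi_1\in\mathcal H_\b$ is the $C^{\b}_{\tri}$-limit, as $\tau\to 0$, of the solutions $\Psi^\tau$ of the boundary value problem \eqref{per equ a}; denote by $\vphi^\tau(t)$ the corresponding curve of potentials on $X$. Since $\tau>0$, $\Om_{\Psi^\tau}$ is a genuine conical K\"ahler metric on $\mathfrak X$, so each slice $\om_{\vphi^\tau(t)}$ lies in $\mathcal H_\b$, and by interior elliptic regularity $\vphi^\tau$ is smooth on $M\times(0,1)\times S^1$ with the prescribed conical behaviour along $D$. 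For a fixed $\tau$ one may thus run the classical variational calculus of the log $K$-energy on $M$: writing $S(\om_{\vphi_t}):=\tr_{\om_{\vphi_t}}\theta-\tri_{\om_{\vphi_t}}\log\tfrac{\om_{\vphi_t}^n}{\om_\theta^n}$ (cf.\ \eqref{2nd equ}--\eqref{Fcsck}),
\[
\frac{d}{dt}\nu_\b(\vphi_t)=-\frac1V\int_M\dot\vphi_t\bigl(S(\om_{\vphi_t})-\ul{S}_\b\bigr)\,\om_{\vphi_t}^n ,
\]
and, differentiating once more and integrating by parts over $M$,
\[
\frac{d^2}{dt^2}\nu_\b(\vphi_t)=\frac1V\int_M|\mathcal D\dot\vphi_t|^2_{\om_{\vphi_t}}\,\om_{\vphi_t}^n-\frac1V\int_M\bigl(\ddot\vphi_t-|\p\dot\vphi_t|^2_{\om_{\vphi_t}}\bigr)\bigl(S(\om_{\vphi_t})-\ul{S}_\b\bigr)\,\om_{\vphi_t}^n ,
\]
where $\mathcal D$ is the Lichnerowicz-type operator of the cone metric $\om_{\vphi_t}$. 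For the curve $\vphi^\tau$ the perturbed equation \eqref{per equ a} says exactly that the geodesic defect $\ddot\vphi^\tau-|\p\dot\vphi^\tau|^2_{\om_{\vphi^\tau_t}}$ equals $\tau$ times a density controlled by the fixed background cone metric $\Om_{\mathbf b}$; together with the uniform weighted estimates on $\vphi^\tau$ from \cite{arXiv:1709.09616} this bounds the last integral by $C\tau$ with $C$ independent of $\tau$. Hence $\frac{d^2}{dt^2}\nu_\b(\vphi^\tau(t))\ge -C\tau$ on every $[\ep,1-\ep]$, i.e.\ $t\mapsto\nu_\b(\vphi^\tau(t))+\tfrac{C\tau}{2}t^2$ is convex.

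The main obstacle is the justification of those integrations by parts: $M$ is non-compact, the cone metric $\om_{\vphi^\tau_t}$ has no bounded geometry near $D$, and differentiating $\nu_\b$ twice produces boundary contributions on shrinking tubular neighbourhoods of $D$ which must be shown to vanish. This is precisely where one uses the geometrically polyhomogeneous expansion of $\vphi_\theta$, the curvature conditions built into the reference metrics $\om_\theta$ and $\Om_{\mathbf b}$, and the weighted H\"older estimates of \cite{arXiv:1709.09616} for the approximate geodesic; it is the step that genuinely departs from the smooth case. (An alternative is to start from $\nu_\b=\nu_1+(1-\b)J_{\Theta_D}+\tfrac1V\int_M\mathfrak h(\om_0^n-\om_\vphi^n)$ and $\nu_1=E_1+j_{-\Ric(\om_0)}+\ul{S}_1\,D$: the entropy pieces are convex along the weak $\mathcal E^1$-geodesic on $X$ induced by the cone geodesic, the Monge--Amp\`ere energy $D$ is affine along it, and the $j_\chi$- and $\tfrac1V\int_M\mathfrak h\,\om_\vphi^n$-terms follow from standard pluripotential computations, the indefinite signs of $\theta$ and $\Theta_D$ being absorbed into the entropy and energy pieces; the conical boundary analysis remains the technical heart either way.)

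Finally one passes to the limit and records continuity. The $C^{\b}_{\tri}$-convergence $\vphi^\tau(t)\to\vphi(t)$, together with the continuity of the log entropy and of $j_{-\theta},D,J_{\Theta_D}$ established in \cite{arXiv:1709.09616,arxiv:1511.00178} (the uniform integrability of the entropy integrand being again supplied by the weighted bounds of \cite{arXiv:1709.09616}), yields $\nu_\b(\vphi^\tau(t))\to\nu_\b(\vphi(t))$ for every $t\in[0,1]$; in particular $\nu_\b(\vphi(t))$ is finite. Since each $t\mapsto\nu_\b(\vphi^\tau(t))+\tfrac{C\tau}{2}t^2$ is convex and converges pointwise, the limit $g(t):=\nu_\b(\vphi(t))$ is convex on $[0,1]$, hence continuous on $(0,1)$; continuity up to $t=0,1$ follows from $g(t)\le(1-t)\,\nu_\b(\vphi_0)+t\,\nu_\b(\vphi_1)+o(1)$ and the lower semicontinuity of $\nu_\b$ at the two endpoints. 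This establishes that $\nu_\b$ is continuous and convex along the $C^{\b}_{\tri}$-cone geodesic.
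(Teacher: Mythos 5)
First, a caveat: the paper does not actually prove this lemma — it is imported verbatim as Proposition 3.10 of \cite{arXiv:1709.09616} — so there is no in-paper proof to compare against; I can only judge your argument on its own terms and against the strategy the paper uses for the analogous Lemma \ref{Jchiconvex}.

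Your main route has a genuine gap at exactly the point you yourself flag as ``the technical heart'', and it is not merely a matter of boundary terms near $D$. After substituting the approximate geodesic equation $\Om_{\Psi^\tau}^{n+1}=\tau\,\Om_{\mathbf b}^{n+1}$, the error term in your second-variation formula becomes
\begin{align*}
\frac{\tau}{V}\int_M\bigl(S(\om_{\vphi^\tau_t})-\ul{S}_\b\bigr)\,\rho_{\mathbf b}\;dV,
\end{align*}
where $\rho_{\mathbf b}$ is the fixed density coming from $\Om_{\mathbf b}^{n+1}$. Bounding this by $C\tau$ with $C$ independent of $\tau$ requires a uniform (in $\tau$ and $t$) control of the scalar curvature $S(\om_{\vphi^\tau_t})$ of the approximate-geodesic slices — a fourth-order quantity in $\vphi^\tau$ (and even after one integration by parts, a third-order quantity $\p\log\frac{\om^n_{\vphi^\tau_t}}{\om^n_\theta}$). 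The estimates available for the approximate geodesic in \cite{arXiv:1709.09616} are only the $C^{\b}_{\tri}$- and $C^{1,1,\b}_{\bf w}$-bounds, i.e.\ second order with weights; no uniform bound on $S(\om_{\vphi^\tau_t})$, nor on the term $\int_M|\mathcal D\dot\vphi^\tau_t|^2_{\om_{\vphi^\tau_t}}\om^n_{\vphi^\tau_t}$ that you need in order to even derive the formula, is provided or expected. This is precisely why convexity of the $K$-energy along weak geodesics was a hard theorem already in the smooth case (Berman--Berndtsson \cite{MR3671939}, Chen--Li--P\u aun \cite{MR3582114}), and why those proofs avoid the direct second-variation computation. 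The argument that does work — and the one Proposition 3.10 of \cite{arXiv:1709.09616} and \cite{arxiv:1511.00178} follow — is the one you relegate to a parenthesis: split $\nu_\b$ via $\nu_\b=\nu_1+(1-\b)J_{\Theta_D}+\frac1V\int_M\mathfrak h(\om_0^n-\om_\vphi^n)$ into entropy and energy pieces; the energy functionals $j$, $D$, $J_{\Theta_D}$, $\int\mathfrak h\,\om_\vphi^n$ are handled by the distributional computation along smooth decreasing approximations exactly as in the proof of Lemma \ref{Jchiconvex}, while convexity of the (log) entropy along the geodesic is obtained from the Berman--Berndtsson subharmonicity mechanism, never differentiating the scalar curvature. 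Your limiting and continuity argument (pointwise limits of convex functions, endpoint semicontinuity) is fine once convexity of the approximations is in hand, but as written that convexity is not established.
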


\subsubsection{$I$ and $J$-functionals}

We will also use the log $I$ and log $J$-functional on $\mathcal H_\b$ as following
\begin{align}\label{IJ functionals}
J^A_{\om_0}(\vphi)
:=-D_{\om_0}(\vphi)+\frac{1}{V}\int_{M}\vphi\om_0^{n};\quad
I^A_{\om_0}(\vphi)
:=\frac{1}{V}\int_{M}\vphi(\om_0^{n}-\om_\vphi^{n}).
\end{align}
The relation between $I$ and $J$ is
\begin{align}\label{IJ equivalent}
I^A_{\om_0}\leq (n+1)J^A_{\om_0}\leq n I^A_{\om_0}.
\end{align}
The difference of their directives are
\begin{align}\label{derivatives IJ}
\p_t(I^A_{\om_0}(\vphi)-J^A_{\om_0}(\vphi))=\frac{n}{V}\int_{M}\p_t\vphi
(\om_0\wedge \om_\vphi^{n-1}-\om^n_\vphi).
\end{align}
So we see that 
\begin{align}\label{J=I-J}
J_{\om_0}(\vphi)=I^A_{\om_0}(\vphi)-J^A_{\om_0}(\vphi).
\end{align}

We denote the normalised space $\mathcal H_{\beta,0}=\{\vphi\in \mathcal H_{\beta}\vert D(\vphi)=0\}$. Let
	\begin{align*}
I_1(\psi,\vphi)=\frac{1}{V}\int_M|\psi-\vphi|\om_\psi^n+\frac{1}{V}\int_M|\psi-\vphi|\om_\vphi^n.
\end{align*} 
We will need the following inequalities from Theorem 3 in \cite{MR3406499} 
	\begin{align}\label{i1d1}
\frac{1}{2}d_1(\psi,\vphi)\leq I_1(\psi,\vphi)\leq C(n,p) d_1(\psi,\vphi).
\end{align} 
Here, the constant $C(n,p)$ only depends on $n$ and $p$.

We also need the following modification of  Lemma 4.4 in \cite{arXiv:1801.00656}. 
\begin{lem}\label{d_1andJ}Assume that $\chi_0$ is a smooth closed $(1,1)$-form and $\chi=\chi_0+i\p\bar\p f$ with $f\in C^{1,1,\beta}$. Then there exists a constant $C(n)$ such that for any $\vphi\in \mathcal H_{\beta,0}$, we have
	\begin{align*}
	|J_\chi(\vphi)|\leq C(\max_X \|\chi\|_{\om_0} d_1(0,\vphi)+\|f\|_\infty).
	\end{align*}
\end{lem}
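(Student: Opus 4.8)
The estimate asserts that for a $(1,1)$-form of the shape $\chi=\chi_0+i\p\bar\p f$ with $\chi_0$ smooth closed and $f\in C^{1,1,\beta}$, the functional $J_\chi$ is controlled, on the normalised space $\mathcal H_{\beta,0}$, by the $d_1$-distance together with the sup norm of $f$. I would split $J_\chi$ according to the decomposition $\chi=\chi_0+i\p\bar\p f$. For the smooth piece $\chi_0$ one has a clean cocycle/integration-by-parts identity: writing $j_{\chi_0}(\vphi)=\frac1V\int_M\vphi\sum_{j=0}^{n-1}\om_0^j\wedge\om_\vphi^{n-1-j}\wedge\chi_0$, the standard estimate (going back to the smooth theory, cf.\ \cite{MR1471884} and used in \cite{arXiv:1801.00656}) gives $|j_{\chi_0}(\vphi)|\le C(n)\max_X\|\chi_0\|_{\om_0}\big(I(\vphi)+J(\vphi)\big)$ — the point being that each term $\om_0^j\wedge\om_\vphi^{n-1-j}\wedge\chi_0\le \max_X\|\chi_0\|_{\om_0}\cdot(n-\text{th power of a K\"ahler metric in }\Om)$ up to combinatorial constants, and then one recognises the resulting integral as bounded by $I+J$.

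The key input is then the comparison $I(\vphi)+J(\vphi)\le C(n)\,d_1(0,\vphi)$ on $\mathcal H_{\beta,0}$. This is where the choice of the normalisation $D(\vphi)=0$ matters: on $\mathcal H_{\beta,0}$ the Aubin--Mabuchi functional $D$ vanishes, so $I$ and $J$ reduce to $\frac1V\int_M\vphi\,\om_\vphi^n$ and $\frac1V\int_M\vphi\,\om_0^n$ (up to sign and the vanishing $D$-term), and both of these are comparable to $\int_M|\vphi|(\om_0^n+\om_\vphi^n)$, which is exactly (a constant multiple of) the $d_1$-length of the linear path $t\mapsto t\vphi$ from $0$ to $\vphi$; since $d_1$ is a genuine metric, $d_1(0,\vphi)$ is bounded below by a fixed multiple of this quantity. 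All of this is standard pluripotential theory for the $\mathcal E^1$ geometry and, in the conical setting with $0<\beta\le 1$, is available from the references on the $d_1$-metric cited in the introduction; so I would quote it rather than reprove it.

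For the remaining piece involving $f$, integrate by parts to move the $i\p\bar\p$ off $f$: one gets $J_{i\p\bar\p f}(\vphi)=j_{i\p\bar\p f}(\vphi)-\ul{i\p\bar\p f}\cdot D(\vphi)$, and $\ul{i\p\bar\p f}=0$ since $\int_X i\p\bar\p f\wedge\om_0^{n-1}=0$ for $f\in C^{1,1,\beta}$ (the conical integration-by-parts being legitimate here because $C^{1,1,\beta}$ is exactly the regularity class where these manipulations are justified, as in \cite{arXiv:1709.09616}). Then $j_{i\p\bar\p f}(\vphi)=\frac1V\int_M\vphi\sum_j\om_0^j\wedge\om_\vphi^{n-1-j}\wedge i\p\bar\p f$; integrating by parts twice transfers the derivatives to put $f$ against $i\p\bar\p\vphi=\om_\vphi-\om_0$, producing a telescoping sum that collapses to $\frac1V\int_M f(\om_\vphi^n-\om_0^n)$, which is trivially bounded by $2\|f\|_\infty$. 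Collecting the two pieces and absorbing the combinatorial constants into $C(n)$ yields the claim.

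The main obstacle is not the smooth-form estimate, which is classical, but making sure the integration-by-parts steps for the $i\p\bar\p f$ term are valid across the divisor: a K\"ahler cone metric need not have bounded geometry, so one cannot freely integrate by parts. The resolution is to work with the $C^{1,1,\beta}$ (equivalently $C^{1,1}_{\mathbf w}$-type) regularity of $f$ and the $C^{2,\a,\b}$ regularity of $\vphi$, approximate by the smooth (twisted) potentials supplied by the approximation scheme of Section~\ref{Constant scalar curvature Kahler cone metrics}, perform the integration by parts upstairs where everything is smooth with uniform bounds, and pass to the limit — exactly the philosophy emphasised in the discussion of the apriori estimates. Once that is set up, the rest is bookkeeping of constants.
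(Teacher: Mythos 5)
Your proposal is correct and follows essentially the same route as the paper: split $\chi=\chi_0+i\p\bar\p f$, bound the $\chi_0$-piece and $D(\vphi)$ by $C(n)\max_X\|\chi_0\|_{\om_0}\,d_1(0,\vphi)$ (the paper simply quotes Lemma 4.4 of Chen--Cheng for the comparison you re-derive via $I+J\lesssim d_1$), and integrate by parts on the $i\p\bar\p f$-piece to collapse it to $\frac1V\int_M f(\om_\vphi^n-\om_0^n)\le 2\|f\|_\infty$, noting $\ul{i\p\bar\p f}=0$. Your extra care about justifying the conical integration by parts is reasonable but not part of the paper's (very brief) argument.
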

\begin{proof}
We consider
\begin{align*}
j_{\chi}(\vphi)=\frac{1}{V}\int_{M}\vphi
\sum_{j=0}^{n-1}\om_0^{j}\wedge
\om_\vphi^{n-1-j}\wedge (\chi_0+i\p\bar\p f).
\end{align*}
We apply the elliptic estimate to $\tri_{\om_0}\vphi\geq 0$ we have 
\begin{align*}
\sup_X\vphi\leq \frac{1}{V_0}\int_X\vphi\om_0^n+C.
\end{align*}
Then we get be the definition if the $d_1$-distance,
\begin{align}\label{sup vphi d1}
\sup_X\vphi\leq C_1d_1(0,\vphi)+C_2.
\end{align}
Thus we have
\begin{align*}
&j_{\chi_0}(\vphi)\leq C(n)\max_X \|\chi_0\|_{\om_0} d_1(0,\vphi),\\
&D_{\om_0}(\vphi)\leq C(n) d_1(0,\vphi).
\end{align*}
After integration by parts, the second part becomes,
\begin{align*}
j_{i\p\bar\p f}(\vphi)=\frac{1}{V}\int_{M}f(\om^n_\vphi-\om^n_0)\leq 2\|f\|_\infty.
\end{align*}
Also,
\begin{align*}
\ul{\chi}
=\frac{\int_Xn(\chi_0+i\p\bar\p f)\wedge \om_0^{n-1}}{V}=\ul{\chi_0}\leq \|\chi_0\|_{\om_0}.
\end{align*}
Thus the lemma is proved.
\end{proof}

\section{Properness implies existence}\label{Properness conjecture: automorphism is trivial}

In this section, we prove one direction of the \thmref{cone properness conjecture}, that is properness of the log $K$-energy implies the existence of the cscK cone metric. We would also prove this theorem for the twisted cscK cone metrics, which will be used in Section \ref{Regularity of minimisers}.
\begin{defn}\label{twisted term}
We set the twisted term $\chi$ to be a smooth non-negative closed $(1,1)$-form.
\end{defn}

\begin{defn}
	We say $\om_{\vphi}=\om_0+i\p\bar\p\vphi$ is a $\chi$-twisted cscK cone metric, if it satisfies the following equations in terms of the twisted term $\chi$,
	\begin{equation}\label{twisted 2nd equ defn}
\left\{
\begin{aligned}
	&F=\log\frac{\om_{\vphi}^n}{\om_\theta^n},\\
	&\tri_{\vphi} F=\tr_{\vphi}(\theta-\chi)-(\underline S_\b-\underline{\chi}).
\end{aligned}
\right.
\end{equation} 
\end{defn}
According to this definition, the scalar curvature of a $\chi$-twisted cscK cone metric satisfies
\begin{align*}
S_\vphi=\tr_{\vphi}\chi+\underline S_\b-\underline{\chi}.
\end{align*}
\begin{defn} \label{log twisted K energy}
The log $\chi$-twisted $K$-energy $\nu_{\beta,\chi}$ is defined as following,
\begin{align*}
\nu_{\beta,\chi}(\vphi)
=\nu_\beta(\vphi)+J_\chi(\vphi).
\end{align*}
Here $\nu_\beta(\vphi)$ is the log $K$-energy in Definition \ref{logKenergy}.
\end{defn}

We prove the convexity of the log $J_\chi$-functional.
\begin{lem}\label{Jchiconvex}
	Assume $\{\vphi(t),0\leq t\leq 1\}$ is a $C_{\bf w}^{1,1,\b}$-cone geodesic and $\chi>0$ is smooth closed $(1,1)$-form. Then the log $J_\chi$-functional is strictly convex along $\vphi(t)$, otherwise $\vphi$ is a constant geodesic.
\end{lem}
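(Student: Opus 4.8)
The plan is to compute the second derivative of $J_\chi$ along the $C^{1,1,\b}_{\bf w}$-cone geodesic $\{\vphi(t)\}$ and show it is nonnegative, with equality forcing the geodesic to be constant. Recall $J_\chi(\vphi)=j_\chi(\vphi)-\ul\chi\cdot D(\vphi)$. Differentiating $j_\chi$ once in $t$ gives, after the standard manipulation with $\dot\om_\vphi=i\p\bar\p\dot\vphi$ and integration by parts in the directions along the divisor, an expression of the schematic form $\frac{d}{dt}j_\chi(\vphi(t))=\frac1V\int_M \dot\vphi\,\chi\wedge\om_\vphi^{n-1}+(\text{lower order})$; differentiating $D$ gives $\frac{d}{dt}D(\vphi(t))=\frac1V\int_M\dot\vphi\,\om_\vphi^n$. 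Differentiating once more and using the cone geodesic equation, which in the degenerate limit $\tau\to0$ says that the Monge--Amp\`ere-type quantity $\ddot\vphi-|\p\dot\vphi|^2_{\om_\vphi}$ vanishes (this is exactly what \thmref{geo existence} and the $C^{1,1,\b}_{\bf w}$-regularity provide on $M$), one arrives at
\begin{align*}
\frac{d^2}{dt^2}J_\chi(\vphi(t))=\frac1V\int_M |\p\dot\vphi|^2_{\om_\vphi}\,\chi\wedge\om_\vphi^{n-1}\geq 0,
\end{align*}
where the inequality uses $\chi>0$ together with positivity of $\om_\vphi^{n-1}$ as a form.

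First I would make the formal computation rigorous by working on the regular part $M=X\setminus D$ and justifying all integrations by parts. This is where the conical nature bites: the K\"ahler cone metric $\om_\vphi$ does not have bounded geometry, so one cannot integrate by parts cavalierly. The remedy, already used throughout \cite{arXiv:1709.09616}, is to cut off a tubular neighborhood of $D$, apply Stokes on $M_\eps=\{|s|_h>\eps\}$, and control the boundary terms using the weighted bounds encoded in $C^{1,1,\b}_{\bf w}$ (in particular the weights $|s|^\kappa$, $|s|^{2\kappa}$ on $\p_z\p_t\vphi$ and $\p_t^2\vphi$, and the membership $\dot\vphi,\ddot\vphi\in C^\b_\tri$); the boundary integrals carry a factor that is $o(1)$ as $\eps\to0$ because $\chi$ is smooth and the singular factor $|s|^{2\b-2}$ appearing in $\om_\vphi^n$ is integrable while the problematic derivatives grow strictly more slowly than $|s|^{-\kappa'}$ for the relevant $\kappa'$. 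Since $\chi$ is smooth on all of $X$, it contributes no extra singularity, which keeps these estimates strictly easier than the corresponding ones for the entropy term in \lemref{c11convexityKenergy}.

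Next, for the strictness: since the geodesic is $C^{1,1,\b}_{\bf w}$, the function $t\mapsto J_\chi(\vphi(t))$ is $C^2$ (or at least $C^{1,1}$) in $t$, so $\frac{d^2}{dt^2}J_\chi\geq0$ on $[0,1]$; if it vanishes on a set of positive measure then $|\p\dot\vphi|^2_{\om_\vphi}\,\chi\wedge\om_\vphi^{n-1}\equiv0$ on $M$ for those $t$, and since $\chi>0$ and $\om_\vphi>0$ on $M$ this forces $\p\dot\vphi\equiv0$, i.e. $\dot\vphi$ is constant on $M$ for each such $t$; normalizing (or using the boundary values) then gives $\dot\vphi\equiv0$, and by the geodesic equation $\ddot\vphi=|\p\dot\vphi|^2_{\om_\vphi}=0$ as well, so $\vphi(t)$ is independent of $t$ — a constant geodesic. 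Running this at one time propagates to all times by convexity. I expect the main obstacle to be the boundary-term estimate in the integration by parts: one must check that every term produced by cutting off near $D$ — there are several, coming from the $n$ summands $\om_0^j\wedge\om_\vphi^{n-1-j}$ and from both derivatives landing on $\dot\vphi$ — decays as $\eps\to0$, and this requires pairing the polyhomogeneous/weighted expansion of $\vphi(t)$ from \thmref{geo existence} against the explicit order of blow-up of $\om_\vphi^{n-1}$ near $D$. Once the bookkeeping of weights is set up as in \cite{arXiv:1709.09616}, the estimate is routine, but it is the step that genuinely uses the conical hypotheses rather than being a transcription of the smooth case.
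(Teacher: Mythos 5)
Your proposal reaches the right conclusion but by a genuinely different route from the paper. You differentiate $J_\chi$ twice in $t$ directly along the limit geodesic on $M=X\setminus D$, invoke the degenerate geodesic equation $\ddot\vphi-|\p\dot\vphi|^2_{\om_\vphi}=0$ pointwise, and then fight the integration-by-parts boundary terms near $D$. The paper instead works entirely in the distributional sense on the product manifold $\mathfrak X$: it tests against a cutoff $\eta(t)$, computes $\p_t^2 J_\chi$ classically along the \emph{smooth} approximants $\Psi_s$ coming from the construction of the geodesic (where the second variation is $\frac1V\int_M[(\tr_{\vphi_s}\chi-\ul\chi)\Om^{n+1}_{\Psi_s}+\chi(\p\dot\vphi_s,\p\dot\vphi_s)\om^n_{\vphi_s}]=\frac1V\int_M\chi\wedge\Om^n_{\Psi_s}$), and passes to the limit by dominated convergence. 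This buys two things your route must supply by hand: one never needs $t\mapsto J_\chi(\vphi(t))$ to be twice differentiable for the limit geodesic (your assertion that it is ``$C^2$ (or at least $C^{1,1}$)'' is exactly what the $C^{1,1,\b}_{\bf w}$ regularity does \emph{not} obviously give, since $\p_t^2\vphi$ is only controlled with the weight $|s|^{2\kappa}$ and the geodesic equation holds only as a limit of $\Om_\Psi^{n+1}=\tau\Om_{\mathbf b}^{n+1}$), and the positivity is read off from $\chi\wedge\Om^n_{\Psi_s}\geq 0$ without any boundary-term bookkeeping near $D$. Two smaller points: your Hessian term should be $\frac1V\int_M\chi(\p\dot\vphi,\bar\p\dot\vphi)\,\om^n_\vphi$ (the $\chi$-length of $\p\dot\vphi$ measured with $g_\vphi$), not $\frac1V\int_M|\p\dot\vphi|^2_{\om_\vphi}\,\chi\wedge\om_\vphi^{n-1}$ --- both are nonnegative and vanish only when $\p\dot\vphi=0$ for $\chi>0$, so the conclusion survives, but the formula as written is not the second variation; and in the equality case $\p\dot\vphi\equiv0$ only forces $\dot\vphi$ to be spatially constant, so the normalization step you mention is genuinely needed to rule out the drift $\vphi(t)=\vphi_0+ct$.
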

\begin{proof}
The log $J_\chi$-functional is continuous along the $C_{\bf w}^{1,1,\b}$-cone geodesic by the cocycle property of $j_\chi$ and $D$, c.f. Lemma 3.7 \cite{MR4020314}.
So it is sufficient to prove the strict convexity of  $J_\chi$ along the $C_{\bf w}^{1,1,\b}$-cone geodesic in the distributional sense, using the strict positivities of $\chi$. We let $\eta$ be a smooth non-negative cut-off function supported in the interior of $[0,1]$. Recall $\Om_\Psi$ is the pull back of the geodesic $\vphi(t)$ to the product manifold $\mathfrak X=X\times [0,1]\times S^1$.  Since $\Psi$ is a psh-function in $\mathfrak X$, we approximate it by the smooth decreasing sequence $\Psi_s$ in the interior of $\mathfrak X$ and still denote by $\vphi_s$ the restriction of $\Psi_s$ on each slice $X\times \{s\}\times S^1$, c.f \eqref{per equ a}. It is direct to compute the second order derivative of $J_\chi$ along the smooth approximation $\vphi_s(t)$,
\begin{align*}
\p^2_t J_\chi(\vphi_s) = \frac{1}{V}\int_M[ (\tr_{\vphi_s}\chi-\underline\chi) \Om^{n+1}_{\Psi_s}+\chi(\p\frac{\p{\vphi}_s}{\p t},\p\frac{\p{\vphi}_s}{\p t})\om^n_{\vphi_s}].
\end{align*}
Then integrating over $[0,1]$, we have
\begin{align*}
\int_0^1 \p^2_t \eta\cdot  J_\chi(\vphi_s) dt  =\int_0^1\eta \cdot  \p^2_t J_\chi(\vphi_s) dt  =  \frac{1}{V}\int_0^1\eta\int_M\chi\wedge \Om^{n}_{\Psi_s}dt.
\end{align*}
The log $J_\chi$-functional is well defined along the $C^{\b}_{\tri}$-cone geodesic and $\p^2_t J_\chi$ is well defined along the $C_{\bf w}^{1,1,\b}$-cone geodesic.
Therefore, after taking $s\rightarrow 0$, we have by Lebesgue's dominated convergence theorem,
\begin{align*}
\int_0^1 \p^2_t \eta\cdot  J_\chi(\vphi) dt  =\int_0^1\eta \cdot  \p^2_t J_\chi(\vphi) dt  .
\end{align*}
The strict convexity in the distributional sense follows from this identity and $\chi>0$. 

\end{proof}

\begin{lem}\label{uniquechipositive}
	Assume $\chi> 0$ is smooth closed $(1,1)$-form.
	The $\chi$-twisted cscK cone metric is unique. 
\end{lem}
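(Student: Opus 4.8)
The plan is to follow the classical geodesic-convexity argument of Berman--Berndtsson, now in the conical setting, using the strict convexity of $J_\chi$ just established in \lemref{Jchiconvex} together with the convexity of the log $K$-energy along cone geodesics from \lemref{c11convexityKenergy}. First I would observe that a $\chi$-twisted cscK cone metric $\om_\vphi$ is exactly a critical point of the log $\chi$-twisted $K$-energy $\nu_{\beta,\chi}=\nu_\beta+J_\chi$ in $\cH_\beta$: this is checked by computing the first variation of $\nu_{\beta,\chi}$ and comparing with the twisted equation \eqref{twisted 2nd equ defn}. Next, suppose $\om_{\vphi_0}$ and $\om_{\vphi_1}$ are two $\chi$-twisted cscK cone metrics. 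By the regularity theory (\thmref{Geometric asymptotic}, applied to the twisted equation), both potentials lie in $C^{3,\a,\b}_{\bf w}$, so by \thmref{geo existence} there is a $C^{1,1,\b}_{\bf w}$-cone geodesic $\{\vphi(t),\,0\le t\le1\}$ joining them.

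The core of the argument is then: along this geodesic, $t\mapsto \nu_{\beta,\chi}(\vphi(t))$ is convex (sum of the convex $\nu_\beta$ from \lemref{c11convexityKenergy} and the convex $J_\chi$ from \lemref{Jchiconvex}), and moreover $J_\chi$ is \emph{strictly} convex unless the geodesic is constant. Since both endpoints are critical points of $\nu_{\beta,\chi}$, the one-sided derivatives satisfy $\frac{d}{dt}\big|_{0^+}\nu_{\beta,\chi}(\vphi(t))\ge 0$ and $\frac{d}{dt}\big|_{1^-}\nu_{\beta,\chi}(\vphi(t))\le 0$; combined with convexity this forces $t\mapsto\nu_{\beta,\chi}(\vphi(t))$ to be affine, hence in particular the $J_\chi$-component is affine. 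By the strict-convexity dichotomy in \lemref{Jchiconvex}, the geodesic must be constant, i.e. $\vphi(t)\equiv\vphi(0)$ up to a time-independent additive constant, so $\om_{\vphi_0}=\om_{\vphi_1}$. (Because $\Aut(X;D)$ is discrete there is no holomorphic-vector-field ambiguity to quotient by, so uniqueness is on the nose.) One must be slightly careful that the critical-point/derivative computation makes sense along a geodesic of only $C^{1,1,\b}_{\bf w}$ regularity: this is handled exactly as in the proof of \lemref{Jchiconvex}, by the smooth decreasing approximation $\Psi_s$ of the pulled-back psh function $\Psi$ on $\mathfrak X$, computing the first variation for the smooth slices $\vphi_s(t)$ and passing to the limit with Lebesgue dominated convergence, using that $\p_t\nu_{\beta,\chi}$ is well-defined along $C^{1,1,\b}_{\bf w}$-cone geodesics.

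The main obstacle I expect is the justification of the first-variation formula and the sign of the boundary derivatives along a merely $C^{1,1,\b}_{\bf w}$ cone geodesic — in the smooth compact case this is routine integration by parts, but here the cone metric does not have bounded geometry and integration by parts can fail, so one has to quote the precise cocycle/continuity properties of $E_\beta$, $j_{-\theta}$, $D$ and $j_\chi$ established in \cite{arXiv:1709.09616} and control the weighted error terms near $D$, just as in \lemref{Jchiconvex} and \lemref{d_1andJ}. A secondary point is to make sure the entropy term $E_\beta$ contributes a genuinely convex (not merely "convex up to a subtlety") function along the geodesic; this is precisely the content of \lemref{c11convexityKenergy}, so it can be invoked directly. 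Once these technical points are in place, the convexity-plus-criticality argument closes the proof immediately.
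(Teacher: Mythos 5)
Your proposal is correct and follows essentially the same route as the paper: regularity of the twisted potentials via \thmref{Geometric asymptotic}, connecting the two metrics by a $C^{1,1,\beta}_{\bf w}$-cone geodesic from \thmref{geo existence}, and then combining the convexity of the log $K$-energy (\lemref{c11convexityKenergy}) with the strict convexity of $J_\chi$ (\lemref{Jchiconvex}) applied to $\nu_{\beta,\chi}$. The only (harmless) difference is that the paper attributes the absence of any automorphism ambiguity to the strict positivity of $\chi$ itself rather than to the discreteness of $\Aut(X;D)$; otherwise your spelled-out critical-point-plus-convexity argument is exactly the intended one.
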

\begin{proof}
	When $\chi>0$, automorphism action is not involved. The proof of uniqueness is direct as following. Since $\chi$ is smooth, from \eqref{twisted 2nd equ defn} we know $F\in C^{2,\alpha,\beta}$. Then the K\"ahler cone potential of the $\chi$-twisted cscK cone metric is $C^{3,\a,\b}_{\bf w}\cap D_{\bf w}^{4,\a,\b}(\om_\theta)$, due to \thmref{Geometric asymptotic}. We connect two $\chi$-twisted cscK cone metrics by the $C_{\bf w}^{1,1,\b}$-cone geodesic (\thmref{geo existence}). Then the uniqueness required is obtained from applying the convexity of the log $K$-energy (\lemref{c11convexityKenergy}) and the strict convexity of the log $J_\chi$-functional (\lemref{Jchiconvex}) to the log $\chi$-twisted $K$-energy.
	\end{proof}

\begin{lem}\label{globalminimiser}
	Assume $\chi\geq 0$ is smooth closed $(1,1)$-form.
	The $\chi$-twisted cscK cone metrics are the global minimiser of the log $\chi$-twisted $K$-energy in $\mathcal H_\beta$. 
\end{lem}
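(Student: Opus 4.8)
The plan is to show that any $\chi$-twisted cscK cone metric realizes the infimum of $\nu_{\beta,\chi}$ over $\mathcal H_\beta$ by a convexity argument along cone geodesics, following the Berman--Berndtsson type strategy adapted to the conical setting. First I would fix a $\chi$-twisted cscK cone metric $\om_{\vphi_\chi}$; by Definition of the twisted equation $F\in C^{2,\a,\b}$ since $\chi$ is smooth, so \thmref{Geometric asymptotic} gives $\vphi_\chi\in C^{3,\a,\b}_{\bf w}\cap D^{4,\a,\b}_{\bf w}(\om_\theta)$, which is the regularity needed to connect it to a competitor by a $C^{1,1,\b}_{\bf w}$-cone geodesic and to differentiate functionals along that geodesic. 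Given an arbitrary $\vphi\in\mathcal H_\beta$, I would first reduce to the case $\vphi\in C^{3,\a,\b}_{\bf w}$ (or at least sufficiently regular) by an approximation/density argument together with the continuity of $\nu_{\beta,\chi}$ along cone geodesics (\lemref{c11convexityKenergy} for $\nu_\beta$, and the cocycle continuity of $J_\chi$ used in \lemref{Jchiconvex}); alternatively one works directly with the $C^\b_\tri$-geodesic, on which $\nu_{\beta,\chi}$ is still continuous and convex.

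Next, let $\{\vphi(t),0\le t\le 1\}$ be the cone geodesic with $\vphi(0)=\vphi_\chi$ and $\vphi(1)=\vphi$. By \lemref{c11convexityKenergy} the log $K$-energy $\nu_\beta$ is convex along it, and by \lemref{Jchiconvex} the functional $J_\chi$ is convex along it (strictly, unless the geodesic is constant); hence $\nu_{\beta,\chi}=\nu_\beta+J_\chi$ is convex in $t$. A convex function on $[0,1]$ whose right derivative at $t=0$ is non-negative is non-decreasing, so it suffices to prove
\begin{align*}
\frac{d}{dt}\Big|_{t=0^+}\nu_{\beta,\chi}(\vphi(t))\ \ge\ 0.
\end{align*}
The key computation is that this derivative equals, up to the standard sign, a pairing of $\dot\vphi(0)$ against the twisted scalar curvature deviation $S_{\vphi_\chi}-\tr_{\vphi_\chi}\chi-(\underline S_\b-\underline\chi)$ integrated against $\om_{\vphi_\chi}^n$. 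This is the conical analogue of the classical first-variation formula for the Mabuchi functional plus $J_\chi$: one differentiates the entropy term $E_\beta$, the $J_{-\theta}$ and $J_\chi$ terms, and assembles them using the identity $\tri_{\vphi_\chi}F=\tr_{\vphi_\chi}(\theta-\chi)-(\underline S_\b-\underline\chi)$ from \eqref{twisted 2nd equ defn}. Because $\vphi_\chi$ exactly solves the twisted equation, this integrand vanishes identically, so the first derivative at $t=0^+$ is zero, and convexity then forces $\nu_{\beta,\chi}(\vphi)=\nu_{\beta,\chi}(\vphi(1))\ge \nu_{\beta,\chi}(\vphi(0))=\nu_{\beta,\chi}(\vphi_\chi)$, which is the claim.

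The main obstacle is justifying the first-variation formula and the vanishing of the boundary derivative term in the conical setting: the cone geodesic is only $C^{1,1,\b}_{\bf w}$ (or $C^\b_\tri$), not smooth, so $\dot\vphi(t)$ need not converge nicely as $t\to 0^+$, and the metric $\om_{\vphi_\chi}$ has unbounded geometry near $D$ so integration by parts is delicate — exactly the kind of difficulty flagged in the introduction. The way around this is to argue as in \lemref{Jchiconvex}: approximate the pull-back potential $\Psi$ on $\mathfrak X$ by a decreasing sequence of smooth psh functions $\Psi_s$, compute the derivatives of $\nu_{\beta,\chi}(\vphi_s(t))$ along the smooth slices where everything is classical, take one-sided derivatives at the endpoint $t=0$ where $\vphi_\chi$ has the good regularity from \thmref{Geometric asymptotic} (so the integrations by parts are valid near $D$ using the weighted estimates and the vanishing of the boundary contributions on the divisor, as in \cite{arXiv:1709.09616}), and then pass to the limit $s\to 0$ via Lebesgue dominated convergence, using that the relevant integrands are controlled by the $C^\b_\tri$/$C^{1,1,\b}_{\bf w}$ norms. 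A cleaner alternative, if available from the cited works, is to invoke directly that the cone geodesic is a weak solution of the Monge--Amp\`ere equation and that $\nu_{\beta,\chi}$ is a sum of functionals each of which has a known convexity/continuity along such geodesics with computable one-sided derivatives at a regular endpoint; then the argument is purely a matter of quoting \lemref{c11convexityKenergy}, \lemref{Jchiconvex}, and the first-variation identity. I would also remark that when $\chi>0$ the minimiser is unique by \lemref{uniquechipositive}, consistent with strict convexity in \lemref{Jchiconvex}.
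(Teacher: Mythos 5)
Your proposal is correct in substance and rests on the same machinery the paper uses, namely convexity of $\nu_{\beta,\chi}$ along the cone geodesic joining the twisted cscK cone metric to an arbitrary competitor, together with the regularity of the critical endpoint from \thmref{Geometric asymptotic} that makes the geodesic $C^{1,1,\b}_{\bf w}$. The difference is in how the conclusion is extracted: the paper's proof is a two-line deduction that cites the uniqueness results (\thmref{Uniqueness} for $\chi=0$, \lemref{uniquechipositive} for $\chi>0$) together with the convexity \lemref{c11convexityKenergy}, thereby outsourcing the analytic work to the prequel \cite{arXiv:1709.09616}; you instead run the Berman--Berndtsson argument directly, showing that the one-sided derivative of $\nu_{\beta,\chi}$ at $t=0^+$ equals the first variation at the critical point and hence vanishes, so convexity forces monotonicity along the geodesic. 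Your route is more self-contained and makes the logic transparent (uniqueness alone plus convexity does not formally yield minimality without some control of the derivative at the critical endpoint), but it shifts the burden onto justifying differentiability of the entropy term along the weak geodesic at $t=0^+$ and the integration by parts near $D$ --- precisely the delicate point you flag, and precisely what Proposition 3.10 of \cite{arXiv:1709.09616} and the uniqueness proof there are designed to supply. So the two proofs buy the same theorem from the same underlying convexity statement; the paper buys it by citation, you buy it by re-deriving the endpoint first-variation identity, which is acceptable provided the approximation argument you sketch (smoothing $\Psi$ on $\mathfrak X$ and passing to the limit by dominated convergence) is carried out with the weighted estimates of the prequel.
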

\begin{proof}
When $\chi=0$, the uniqueness of the cscK cone metric (\thmref{Uniqueness}) and the convexity \lemref{c11convexityKenergy} implies the cscK cone metric is the global minimiser in $\mathcal H_\beta$. When $\chi>0$, we apply \lemref{uniquechipositive} instead.

\end{proof}
\subsection{Approximation of twisted cscK cone metrics}
The approximation scheme is important in the proof of Yau-Tian-Donaldson conjecture on Fano manifold \cite{MR3264766}.
An approximation of the twisted cscK cone metric was given in \cite{MR4020314}. In this section, we will apply \cite{arXiv:1801.00656} to improve such approximation.
\subsubsection{Approximation of the reference metric $\om_\theta$}\label{Approximation of the reference metric}
The background metric $\om_\theta$ has a smooth approximation (Section 3.2 \cite{MR4020314}).
We solve smooth $\om_{\theta_\eps}:=\om_0+i\p\bar\p \vphi_{\theta_\eps}$ from the following approximation equation with $\eps\in (0,1]$,
\begin{align}\label{thetaeps}
\frac{\om^n_{\theta_\eps}}{\om^n_0}=\frac{e^{h_0+c}}{(|s|^2_h+\eps)^{1-\beta}}.
\end{align}
In order to normalise the volume $\int_M \om^n_{\theta_\eps}=\int_M\frac{e^{h_0+c}}{(|s|^2_h+\eps)^{1-\beta}} \om^n_0=V$, we use \eqref{thetanormalise} and define the normalisation constant $c$ to be
\begin{align}\label{thetaeps normalisation}
e^c=\frac{\int_Me^{h_0}|s|_h^{2(\beta-1)}\om^n_0}{\int_Me^{h_0}(|s|^2_h+\eps)^{\beta-1}\om^n_0}.
\end{align} The constant $e^c$ is bounded as
\begin{align}\label{ec}
1\leq e^c\leq 
\frac{\int_Me^{h_0}|s|_h^{2(\beta-1)}\om^n_0}{\int_Me^{h_0}(|s|^2_h+1)^{\beta-1}\om^n_0}.
\end{align}
\begin{lem}[Section 3.2 \cite{MR4020314}]\label{thetaepstheta}
Assume that $\om_{\theta_\eps}$ is the approximate reference metric satisfying \eqref{thetaeps}. Then for any $\eps\in(0,1]$, we have
\begin{align*}
Ric(\om_{\theta_\eps})
\geq\tilde \theta:=\theta+\min\{(1-\beta)\Theta_D,0\}.
\end{align*}
\end{lem}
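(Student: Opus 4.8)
\textbf{Proof proposal for Lemma \ref{thetaepstheta}.}

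The plan is to compute $Ric(\om_{\theta_\eps})$ directly from the defining equation \eqref{thetaeps} by taking $i\p\bar\p\log$ of both sides. Recall that for any K\"ahler metric $\om$ one has $Ric(\om) = -i\p\bar\p\log\om^n$ in local coordinates, so from \eqref{thetaeps} we get
\begin{align*}
Ric(\om_{\theta_\eps}) = Ric(\om_0) - i\p\bar\p h_0 + (1-\beta)\, i\p\bar\p\log(|s|^2_h+\eps),
\end{align*}
where the normalisation constant $c$ drops out since it is constant. Substituting the cohomological identity \eqref{h0}, namely $Ric(\om_0) = \theta + (1-\beta)\Theta_D + i\p\bar\p h_0$, the $i\p\bar\p h_0$ terms cancel and we are left with
\begin{align*}
Ric(\om_{\theta_\eps}) = \theta + (1-\beta)\Theta_D + (1-\beta)\, i\p\bar\p\log(|s|^2_h+\eps).
\end{align*}

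Next I would handle the last two terms together. Since $\Theta_D = -i\p\bar\p\log h$ (the curvature of $h$), we have $(1-\beta)\Theta_D + (1-\beta)i\p\bar\p\log(|s|^2_h+\eps) = (1-\beta)\,i\p\bar\p\log\frac{|s|^2_h+\eps}{|s|^2_h\cdot(\text{something})}$; more cleanly, writing $|s|^2_h = |s|^2 h$ and noting $i\p\bar\p\log h = -\Theta_D$, one computes
\begin{align*}
(1-\beta)\Theta_D + (1-\beta)\, i\p\bar\p\log(|s|^2_h+\eps) = (1-\beta)\, i\p\bar\p\log\Bigl(\frac{|s|^2_h+\eps}{h}\Bigr)= (1-\beta)\, i\p\bar\p\log\bigl(|s|^2 + \eps/h\bigr).
\end{align*}
The function $u_\eps := \log(|s|^2_h+\eps)$ is smooth on all of $X$ (since $\eps>0$ removes the singularity), and as $\eps\to 0$ it decreases to $\log|s|^2_h$. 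The key point is the sign of $i\p\bar\p u_\eps$. Since $\log$ is concave and $|s|^2_h$ is (the norm-square of a holomorphic section, hence) has $i\p\bar\p\log|s|^2_h = -\Theta_D$ away from $D$, one expects $i\p\bar\p\log(|s|^2_h+\eps)$ to interpolate. Concretely, a direct computation gives
\begin{align*}
i\p\bar\p\log(|s|^2_h+\eps) = \frac{|s|^2_h}{|s|^2_h+\eps}(-\Theta_D) + \frac{\eps}{(|s|^2_h+\eps)^2}\, i\langle D's, D's\rangle_h \geq \frac{|s|^2_h}{|s|^2_h+\eps}(-\Theta_D),
\end{align*}
since the second term is a non-negative $(1,1)$-form ($D'$ the Chern connection). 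Writing $t_\eps := \frac{|s|^2_h}{|s|^2_h+\eps}\in[0,1)$, we conclude $Ric(\om_{\theta_\eps}) \geq \theta + (1-\beta)(1-t_\eps)\Theta_D$. Since $(1-t_\eps)\in(0,1]$ and $(1-\beta)\geq 0$, the term $(1-\beta)(1-t_\eps)\Theta_D$ lies between $0$ and $(1-\beta)\Theta_D$ pointwise (as forms, after diagonalising $\Theta_D$), hence is $\geq \min\{(1-\beta)\Theta_D,0\}$ pointwise. This yields $Ric(\om_{\theta_\eps})\geq \theta + \min\{(1-\beta)\Theta_D,0\} = \tilde\theta$, uniformly in $\eps\in(0,1]$.

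The main obstacle I anticipate is making the inequality $i\p\bar\p\log(|s|^2_h+\eps)\geq t_\eps\cdot(-\Theta_D)$ and the subsequent reduction to $\min\{(1-\beta)\Theta_D,0\}$ fully rigorous as an inequality of $(1,1)$-forms rather than of scalars: one must diagonalise $\Theta_D$ at each point and check that $(1-\beta)(1-t_\eps)\lambda_j \geq \min\{(1-\beta)\lambda_j, 0\}$ for each eigenvalue $\lambda_j$, which holds because the left side is a convex-combination-type quantity sitting between $0$ and $(1-\beta)\lambda_j$. The computation of $i\p\bar\p\log(|s|^2_h+\eps)$ via the Chern connection is standard (it is the Poincar\'e–Lelong–type identity used throughout the conical K\"ahler literature, e.g. in \cite{MR3264766}), so I would cite it rather than reprove it. Everything else is the bookkeeping of cohomological identities \eqref{h0}, \eqref{theta}, \eqref{thetaeps}, which is routine.
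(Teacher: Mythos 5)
Your proposal is correct and follows essentially the same route as the paper: take $i\p\bar\p\log$ of \eqref{thetaeps}, substitute \eqref{h0}, establish $i\p\bar\p\log(|s|^2_h+\eps)\geq -\frac{|s|^2_h}{|s|^2_h+\eps}\Theta_D$, and conclude $Ric(\om_{\theta_\eps})\geq\theta+(1-\beta)\frac{\eps}{|s|^2_h+\eps}\Theta_D\geq\tilde\theta$ by the pointwise eigenvalue comparison. The only cosmetic difference is that you derive the key inequality via the Chern-connection identity for $i\p\bar\p|s|^2_h$ (which is indeed an equality for a line bundle), whereas the paper states it as the two-step bound $i\p\bar\p\log(|s|^2_h+\eps)\geq\frac{|s|^2_h}{|s|^2_h+\eps}\,i\p\bar\p\log|s|^2_h\geq-\frac{|s|^2_h}{|s|^2_h+\eps}\Theta_D$; these are the same computation.
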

\begin{proof}
Using  \eqref{theta}, we compute that
\begin{align}\label{Ricomthetaeps}
Ric(\om_{\theta_\eps})&=Ric(\om_0)-i\p\bar\p h_0+(1-\beta)i\p\bar\p\log(|s|^2_h+\eps)\\
&=\theta+(1-\beta)\Theta_D+(1-\beta)i\p\bar\p\log(|s|^2_h+\eps)\nonumber\\
&\geq\theta+(1-\beta)\frac{\eps}{|s|^2_h+\eps}\Theta_D.\nonumber
\end{align}
The conclusion follows from the following inequality on $M$, 
\begin{align*}
i\p\bar\p\log(|s|^2_h+\eps)\geq
\frac{|s|^2_h}{|s|^2_h+\eps}i\p\bar\p\log |s|^2_h= -\frac{|s|^2_h}{|s|^2_h+\eps}\Theta_D.
\end{align*}
When $\Theta_D\geq 0$, we have $Ric(\om_{\theta_\eps})\geq \theta$. Otherwise, we have the lower bound $Ric(\om_{\theta_\eps})\geq \tilde \theta$.
\end{proof}
\subsubsection{Approximation of the log $\chi$-twisted $K$-energy}
We approximate the log $\chi$-twisted $K$-energy $\nu_{\beta,\chi}$ by the twisted $K$-energy with respect to the approximate reference metric $\om_{\theta_\eps}$.
\begin{defn}
The approximate log $\chi$-twisted $K$-energy is defined to be.
\begin{align}\label{Approximation of the log twisted K energy}
\nu^\eps_{\beta,\chi}(\vphi)
&:=\frac{1}{V}\int_M\log\frac{\om^n_\vphi}{\om^n_{\theta_\eps}}\om_\vphi^{n}
+J_{-\theta}(\vphi)+J_{\chi}(\vphi)\\
&+\frac{1}{V}\int_M [-(1-\b)\log(|s|_h^2+\eps)+h_0]\om_0^n+c\nonumber.
\end{align}
\end{defn}
\begin{lem}\label{approximate entropy and entropy lemma}
		We denote 
	\begin{align*}
	E_\beta^\eps(\vphi)=\frac{1}{V}\int_M\log\frac{\om^n_{\vphi}}{\om^n_{\theta_\eps}}\om_{\vphi}^{n}.
	\end{align*} Then we have
	\begin{align}\label{approximate entropy and entropy}
E_\beta^\eps(\vphi)
=E_\beta(\vphi)-c+\frac{1-\beta}{V}\int_M[\log (|s|^2_h+\eps)-\log|s|_h^2 ]\om_{\vphi}^{n}.
\end{align}	
\end{lem}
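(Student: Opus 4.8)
The statement to prove is \lemref{approximate entropy and entropy lemma}, giving the formula
\begin{align*}
E_\beta^\eps(\vphi)
=E_\beta(\vphi)-c+\frac{1-\beta}{V}\int_M[\log (|s|^2_h+\eps)-\log|s|_h^2 ]\om_{\vphi}^{n}.
\end{align*}

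The plan is a direct computation comparing the two entropy functionals through their common smooth reference $\om_0$. First I would expand both $E_\beta^\eps(\vphi)$ and $E_\beta(\vphi)$ using the Monge--Amp\`ere equations for the respective reference metrics, namely \eqref{theta} for $\om_\theta$ and \eqref{thetaeps} for $\om_{\theta_\eps}$. Concretely, $\log\frac{\om_\vphi^n}{\om_{\theta_\eps}^n}=\log\frac{\om_\vphi^n}{\om_0^n}-\log\frac{\om_{\theta_\eps}^n}{\om_0^n}=\log\frac{\om_\vphi^n}{\om_0^n}-h_0-c+(1-\beta)\log(|s|_h^2+\eps)$, and similarly $\log\frac{\om_\vphi^n}{\om_\theta^n}=\log\frac{\om_\vphi^n}{\om_0^n}-h_0+(1-\beta)\log|s|_h^2$.

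Subtracting these two identities gives
\begin{align*}
\log\frac{\om_\vphi^n}{\om_{\theta_\eps}^n}-\log\frac{\om_\vphi^n}{\om_\theta^n}=-c+(1-\beta)\big[\log(|s|_h^2+\eps)-\log|s|_h^2\big],
\end{align*}
pointwise on $M$. Then I would multiply through by $\frac{1}{V}\om_\vphi^n$ and integrate over $M$, using $\frac{1}{V}\int_M\om_\vphi^n=1$ for the constant term $-c$, to obtain exactly \eqref{approximate entropy and entropy}. The only care needed is to justify the integrability of each term: the log-density terms $\log\frac{\om_\vphi^n}{\om_\theta^n}$ and the divisor term $\log|s|_h^2$ have the expected conical behaviour (controlled since $\vphi\in\mathcal H_\beta$ and by the asymptotics recalled in Section~\ref{Constant scalar curvature Kahler cone metrics}), while $\log(|s|_h^2+\eps)$ is bounded, so each integral converges and the manipulation is legitimate.

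I do not anticipate a genuine obstacle here; the result is a bookkeeping identity whose only subtlety is tracking the normalisation constant $c$ correctly and confirming the finiteness of the entropy-type integrals so that the subtraction of divergent-looking pieces is valid. The expansion and integration steps are routine once \eqref{theta} and \eqref{thetaeps} are substituted.
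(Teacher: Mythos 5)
Your proposal is correct and follows essentially the same route as the paper: both substitute the Monge--Amp\`ere equations \eqref{theta} and \eqref{thetaeps} to express the two entropies against the common reference $\om_0$, subtract, and integrate against $\frac{1}{V}\om_\vphi^n$, with the constant $c$ coming out via the normalisation $\frac{1}{V}\int_M\om_\vphi^n=1$. The signs and the bookkeeping of $c$ in your pointwise identity match the paper's computation exactly.
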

\begin{proof}
Comparing $E_\beta^\eps(\vphi)$ with the log entropy \eqref{entropy} and making use of \eqref{thetaeps},
we have
	\begin{align*}
	E_\beta^\eps(\vphi)&=
	E_\beta(\vphi)-\frac{1}{V}\int_M[c+\log|s|_h^{2-2\beta}+\log (|s|^2_h+\eps)^{\beta-1} ]\om_{\vphi}^{n}\\
	&=E_\beta(\vphi)-c-(1-\beta)\frac{1}{V}\int_M[\log|s|_h^2-\log (|s|^2_h+\eps) ]\om_{\vphi}^{n}.
	\end{align*}	
	The constant $c$ is defined in \eqref{ec}.
\end{proof}

\begin{lem}\label{approximate K and K lemma}
	We denote
	\begin{align*}
	 H_\eps:=\frac{1-\b}{V}\int_M \log (|s|_h^2+\eps)(\om_\vphi^n-\om_0^n), \text{ and }H:=\frac{1-\b}{V}\int_M \log |s|_h^2(\om_\vphi^n-\om_0^n).
	\end{align*} Then we have
\begin{align}
\nu^\eps_{\beta,\chi}(\vphi)=\nu_{\beta,\chi}(\vphi)
+H_\eps-H\nonumber.
\end{align}	
\end{lem}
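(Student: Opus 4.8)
\textbf{Proof proposal for \lemref{approximate K and K lemma}.}

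The plan is to unwind the definitions of $\nu^\eps_{\beta,\chi}$ and $\nu_{\beta,\chi}$ and observe that all the terms except the entropy and the explicit $\log(|s|_h^2+\eps)$ term match up exactly, so the difference is entirely controlled by the discrepancy already isolated in \lemref{approximate entropy and entropy lemma}. First I would write out $\nu_{\beta,\chi}(\vphi)=\nu_\beta(\vphi)+J_\chi(\vphi)$ using \eqref{log K energy} (Definition \ref{logKenergy}), so that
\begin{align*}
\nu_{\beta,\chi}(\vphi)=E_\beta(\vphi)+J_{-\theta}(\vphi)+J_\chi(\vphi)+\frac{1}{V}\int_M(\mathfrak h+h_0)\om_0^n,
\end{align*}
where $\mathfrak h=-(1-\b)\log|s|_h^2$. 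On the other side, by \eqref{Approximation of the log twisted K energy},
\begin{align*}
\nu^\eps_{\beta,\chi}(\vphi)=E^\eps_\beta(\vphi)+J_{-\theta}(\vphi)+J_\chi(\vphi)+\frac{1}{V}\int_M[-(1-\b)\log(|s|_h^2+\eps)+h_0]\om_0^n+c.
\end{align*}
Subtracting, the $J_{-\theta}$, $J_\chi$ and $h_0$ terms cancel, leaving
\begin{align*}
\nu^\eps_{\beta,\chi}(\vphi)-\nu_{\beta,\chi}(\vphi)=E^\eps_\beta(\vphi)-E_\beta(\vphi)+c-\frac{1-\b}{V}\int_M[\log(|s|_h^2+\eps)-\log|s|_h^2]\om_0^n.
\end{align*}

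Next I would substitute the identity from \lemref{approximate entropy and entropy lemma}, namely
\begin{align*}
E^\eps_\beta(\vphi)-E_\beta(\vphi)=-c+\frac{1-\b}{V}\int_M[\log(|s|_h^2+\eps)-\log|s|_h^2]\om_\vphi^n.
\end{align*}
The two occurrences of $c$ cancel, and combining the two $\log(|s|_h^2+\eps)-\log|s|_h^2$ integrals — one against $\om_\vphi^n$ with a $+$ sign and one against $\om_0^n$ with a $-$ sign — gives exactly
\begin{align*}
\nu^\eps_{\beta,\chi}(\vphi)-\nu_{\beta,\chi}(\vphi)=\frac{1-\b}{V}\int_M[\log(|s|_h^2+\eps)-\log|s|_h^2](\om_\vphi^n-\om_0^n)=H_\eps-H,
\end{align*}
which is the claimed formula. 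A small point worth checking along the way is that each of $H_\eps$, $H$, $E^\eps_\beta(\vphi)$ and the auxiliary integrals is finite for $\vphi\in\mathcal H_\beta$: since $\vphi$ is a K\"ahler cone potential, $\om_\vphi^n$ has the same $|s|_h^{2\b-2}$-type singularity as $\om_\theta^n$, so $\log|s|_h^2\cdot\om_\vphi^n$ is integrable, and $\log(|s|_h^2+\eps)$ is bounded; this is implicit in the fact that $E_\beta$ and $\nu_\beta$ are already defined on $\mathcal H_\beta$ in Section \ref{Log K-energy and convexity}.

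I do not expect any genuine obstacle here: the lemma is a bookkeeping identity, and the only mild care needed is in tracking the signs of the two $\log$-difference integrals and noting that the normalisation constant $c$ enters both $\nu^\eps_{\beta,\chi}$ (explicitly) and $E^\eps_\beta-E_\beta$ (from \lemref{approximate entropy and entropy lemma}) with opposite signs so that it drops out. The harder analytic content — convergence of $\nu^\eps_{\beta,\chi}$ to $\nu_{\beta,\chi}$ as $\eps\to 0$, and the behaviour of $H_\eps-H$ — is not part of this statement and will be addressed separately when the approximation scheme is used.
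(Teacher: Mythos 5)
Your proposal is correct and follows essentially the same route as the paper: both arguments reduce the identity to the entropy comparison of \lemref{approximate entropy and entropy lemma} and then cancel the $J_{-\theta}$, $J_\chi$, $h_0$ and $c$ terms; the paper merely organises the computation as an insertion into \eqref{Approximation of the log twisted K energy} rather than a subtraction. No gaps.
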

\begin{proof}
Substituting \eqref{approximate entropy and entropy} into the formula of the approximate log $\chi$-twisted $K$-energy
\eqref{Approximation of the log twisted K energy}, we have
\begin{align*}
\nu^\eps_{\beta,\chi}(\vphi)
&=E_\beta(\vphi)+\frac{1-\beta}{V}\int_M[\log (|s|^2_h+\eps)-\log|s|_h^2 ]\om_{\vphi}^{n}\\
&+J_{-\theta}(\vphi)+J_{\chi}(\vphi)
+\frac{1}{V}\int_M [-(1-\b)\log(|s|_h^2+\eps)+h_0]\om_0^n.
\end{align*}
From the definition of the log $\chi$-twisted $K$-energy (Definition~\ref{log twisted K energy}) and \lemref{log K energy}, we have
\begin{align*}
\nu^\eps_{\beta,\chi}(\vphi)
&=\nu_{\beta,\chi}(\vphi)
-\frac{1}{V}\int_M (-(1-\b)\log |s|_h^2+h_0)\om_0^n\\
&+\frac{1-\beta}{V}\int_M[\log (|s|^2_h+\eps)-\log|s|_h^2 ]\om_{\vphi}^{n}\\
&
+\frac{1}{V}\int_M [-(1-\b)\log(|s|_h^2+\eps)+h_0]\om_0^n.
\end{align*}Therefore, the lemma is proved.

\end{proof}

\begin{lem}\label{approximation proper}
For any $\eps\in(0,1]$ and for any $\vphi\in \mathcal H$, it holds
\begin{align*}
E_\beta^\eps(\vphi)\geq E_\beta-c\text{ and }\nu^\eps_{\beta,\chi}(\vphi)
\geq \nu_{\beta,\chi}(\vphi)-C.
\end{align*}
The constant $C$ depends on $X,D,\om_0,h$.
\end{lem}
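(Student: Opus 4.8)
The plan is to establish the two estimates in Lemma \ref{approximation proper} directly from the two comparison formulas proved just above, Lemma \ref{approximate entropy and entropy lemma} and Lemma \ref{approximate K and K lemma}. The only new analytic input is that $0<\eps\leq 1$ forces pointwise monotonicity of the logarithmic terms, together with the fact that $\log|s|_h^2$ and $\log(|s|_h^2+\eps)$ are globally bounded above on $X$ (since $|s|_h^2$ is). No lower bound on $\log|s|_h^2$ is available, but all places where it could appear with the unfavourable sign come paired against the $\om_\vphi^n$ mass in a way that cancels or is controlled from above, which is exactly why the estimates go only one direction.

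For the entropy bound, I would start from \eqref{approximate entropy and entropy},
\begin{align*}
E_\beta^\eps(\vphi)=E_\beta(\vphi)-c+\frac{1-\beta}{V}\int_M[\log(|s|_h^2+\eps)-\log|s|_h^2]\om_\vphi^n.
\end{align*}
Since $\eps>0$ we have $\log(|s|_h^2+\eps)\geq\log|s|_h^2$ pointwise on $M$, and $1-\beta\geq 0$, so the integral term is non-negative; using $\int_M\om_\vphi^n=V$ one gets $E_\beta^\eps(\vphi)\geq E_\beta(\vphi)-c$. This already yields the first assertion once one records, from \eqref{ec}, that $c\geq 0$ (indeed $e^c\geq 1$), so in fact $E_\beta^\eps(\vphi)\geq E_\beta(\vphi)-c$ with the stated constant depending only on the comparison constant $c=c(X,D,\om_0,h)$.

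For the $K$-energy bound I would feed Lemma \ref{approximate entropy and entropy lemma} into Lemma \ref{approximate K and K lemma}, or equivalently work from $\nu^\eps_{\beta,\chi}(\vphi)=\nu_{\beta,\chi}(\vphi)+H_\eps-H$ with
\begin{align*}
H_\eps-H=\frac{1-\beta}{V}\int_M[\log(|s|_h^2+\eps)-\log|s|_h^2](\om_\vphi^n-\om_0^n).
\end{align*}
Split this as $\frac{1-\beta}{V}\int_M[\log(|s|_h^2+\eps)-\log|s|_h^2]\om_\vphi^n-\frac{1-\beta}{V}\int_M[\log(|s|_h^2+\eps)-\log|s|_h^2]\om_0^n$. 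The first integral is non-negative as above. For the second, since $0<\eps\leq 1$ we have $0\leq\log(|s|_h^2+\eps)-\log|s|_h^2\leq\log(1+\eps|s|_h^{-2})$, which is not bounded near $D$; however $\log(|s|_h^2+\eps)\leq\log(\sup_X|s|_h^2+1)$ and, more usefully, $0\le\log(|s|_h^2+\eps)-\log|s|_h^2\le \log(|s|_h^2+1)-\log|s|_h^2=:G\in L^1(\om_0^n)$, a fixed non-negative integrable function independent of $\eps$, so the second integral is bounded in absolute value by $\frac{1-\beta}{V}\int_M G\,\om_0^n=:C(X,D,\om_0,h)$. Combining, $\nu^\eps_{\beta,\chi}(\vphi)\geq\nu_{\beta,\chi}(\vphi)-C$.

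The one point that requires a little care — and the only genuine obstacle — is the integrability of $G=\log(|s|_h^2+1)-\log|s|_h^2$ against $\om_0^n$ near $D$: one must check $\int_M|\log|s|_h^2|\,\om_0^n<\infty$, which holds because $D$ is a smooth divisor and $\log|s|_h^2$ has only logarithmic (hence $L^p$ for all $p$) blow-up there; this is where the smoothness hypothesis on $D$ and $\om_0$ enters, and it is exactly the dependence recorded in the statement. Everything else is the sign bookkeeping above. I would state $G$ explicitly, note $0\le \log(|s|_h^2+\eps)-\log|s|_h^2\le G$ for $\eps\in(0,1]$ by monotonicity of $\log$, and conclude.
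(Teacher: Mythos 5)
Your proposal is correct and follows essentially the same route as the paper: both parts rest on the comparison formulas of Lemma \ref{approximate entropy and entropy lemma} and Lemma \ref{approximate K and K lemma}, the pointwise monotonicity $\log(|s|_h^2+\eps)\geq\log|s|_h^2$ for the $\om_\vphi^n$ terms, and the uniform bound $\int_M\log\frac{|s|_h^2+\eps}{|s|_h^2}\om_0^n\leq C$ for the $\om_0^n$ term (for which the paper cites Lemma 3.5 of \cite{MR3264766}, while you supply the elementary verification via the dominating function $G=\log(|s|_h^2+1)-\log|s|_h^2\in L^1(\om_0^n)$). The extra detail you give on the integrability of $\log|s|_h^2$ near the smooth divisor is a harmless and welcome elaboration, not a different argument.
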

\begin{proof}
The first conclusion follows from the relation between the entropy and its approximation in \lemref{approximate entropy and entropy lemma} and using
\begin{align*}
\frac{1-\beta}{V}\int_M\log(|s|_h^2+\eps)\om_\vphi^{n}\geq \frac{1-\beta}{V}\int_M\log|s|_h^2\om_\vphi^{n}.
\end{align*}
Furthermore, we have $H_\eps\geq H-C$ (c.f. Lemma 3.5 in \cite{MR3264766}), which follows from
\begin{align*}
\frac{1-\b}{V}\int_M  (\log |s|_h^2-\log(|s|_h^2+\eps))\om_0^n=-\frac{1-\b}{V}\int_M  \log\frac{|s|_h^2+\eps}{|s|_h^2}\om_0^n\geq -C.
\end{align*}
Therefore, the second conclusion is obtained from \lemref{approximate K and K lemma} and the inequality above.
\end{proof}
\begin{lem}\label{eps twisted approximation lemma}
	The critical point of the approximate $\chi$-twisted $K$-energy satisfies the following equation
	\begin{align*}
	S(\om_{\varphi_\eps})=\tr_{\vphi_\eps}(Ric(\om_{\theta_\eps})-\theta+\chi)+\underline S_\b-\underline{\chi},
	\end{align*}
	which is equivalent to
\begin{equation}\label{eps twisted approximation}
\left\{
\begin{aligned}
F_\eps&=\log\frac{\om_{\vphi_\eps}^n}{\om_{\theta_\eps}^n},\\
\tri_{\vphi_\eps} F_\eps&=\tr_{\vphi_\eps}(\theta-\chi)-(\underline S_\b-\underline{\chi}).
\end{aligned}
\right.
\end{equation}
\end{lem}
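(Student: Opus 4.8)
The statement to prove (Lemma~\ref{eps twisted approximation lemma}) asserts that the Euler--Lagrange equation of the approximate log $\chi$-twisted $K$-energy $\nu^\eps_{\beta,\chi}$ is the twisted cscK-type system \eqref{eps twisted approximation}. The plan is to compute the first variation of $\nu^\eps_{\beta,\chi}$ directly from its definition \eqref{Approximation of the log twisted K energy} and then rewrite the resulting critical point equation in the two-equation form using the defining equation \eqref{thetaeps} of the approximate reference metric $\om_{\theta_\eps}$.

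First I would recall the standard variational formulas on $\mathcal H$. Writing $\vphi_u = \vphi + su$, one has $\frac{d}{ds}\big|_{s=0}\om^n_{\vphi_u} = \tri_{\vphi}u\,\om^n_\vphi$, and the first variation of the entropy term $E^\eps_\beta(\vphi) = \frac1V\int_M \log\frac{\om^n_\vphi}{\om^n_{\theta_\eps}}\om^n_\vphi$ is the familiar computation giving $-\frac1V\int_M u\,(S(\om_\vphi) - \tr_\vphi \Ric(\om_{\theta_\eps}))\om^n_\vphi$ after an integration by parts (here $\om_{\theta_\eps}$ is a \emph{smooth} K\"ahler metric, so there are no boundary contributions from the divisor and the integration by parts is legitimate; this is the crucial point that the whole approximation scheme is designed to exploit). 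The variations of $J_{-\theta}$ and $J_\chi$ are the standard cocycle/derivative formulas: $\delta J_{-\theta}(\vphi)\cdot u = \frac1V\int_M u\,(\tr_\vphi\theta - \underline S_\beta)\,\om^n_\vphi$ up to the $\underline S_\beta\, D$ normalization, and $\delta J_\chi(\vphi)\cdot u = \frac1V\int_M u\,(\tr_\vphi\chi - \underline\chi)\,\om^n_\vphi$; the remaining term in \eqref{Approximation of the log twisted K energy} involving $\log(|s|^2_h+\eps)$ and $h_0$ is integrated against the \emph{fixed} $\om^n_0$, hence contributes nothing to the variation, and the constant $c$ is likewise inert.

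Assembling these, the vanishing of $\delta\nu^\eps_{\beta,\chi}(\vphi_\eps)\cdot u$ for all $u$ forces
\begin{align*}
S(\om_{\vphi_\eps}) - \tr_{\vphi_\eps}\Ric(\om_{\theta_\eps}) + \tr_{\vphi_\eps}\theta - \underline S_\beta + \tr_{\vphi_\eps}\chi - \underline\chi = 0,
\end{align*}
which is exactly $S(\om_{\vphi_\eps}) = \tr_{\vphi_\eps}(\Ric(\om_{\theta_\eps}) - \theta + \chi) + \underline S_\beta - \underline\chi$, the first displayed equation in the lemma. To pass to the system \eqref{eps twisted approximation}, set $F_\eps := \log\frac{\om^n_{\vphi_\eps}}{\om^n_{\theta_\eps}}$; then $\Ric(\om_{\vphi_\eps}) = \Ric(\om_{\theta_\eps}) - \pbp F_\eps$, so $S(\om_{\vphi_\eps}) = \tr_{\vphi_\eps}\Ric(\om_{\theta_\eps}) - \tri_{\vphi_\eps}F_\eps$, and substituting into the critical point equation gives $-\tri_{\vphi_\eps}F_\eps + \tr_{\vphi_\eps}\theta - \underline S_\beta + \tr_{\vphi_\eps}\chi - \underline\chi = 0$, i.e. $\tri_{\vphi_\eps}F_\eps = \tr_{\vphi_\eps}(\theta - \chi) - (\underline S_\beta - \underline\chi)$, which is the second line of \eqref{eps twisted approximation}.

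I do not anticipate a serious obstacle here: this is essentially a bookkeeping computation, and its one conceptual subtlety --- the legitimacy of integration by parts --- is handled automatically because $\om_{\theta_\eps}$ is a genuine smooth K\"ahler metric on $X$, so all quantities appearing in the variation are smooth and the manifold is compact without boundary. The only mild care needed is tracking the normalization by $D(\vphi)$ hidden inside $J_{-\theta} = j_{-\theta} + \underline S_\beta\cdot D$ and $J_\chi = j_\chi - \underline\chi\cdot D$, using the standard fact that $\delta D(\vphi)\cdot u = \frac1V\int_M u\,\om^n_\vphi$, which is precisely what produces the constant terms $-\underline S_\beta$ and $-\underline\chi$ above; everything else is the classical cscK variational calculus transplanted to the smooth approximate metric $\om_{\theta_\eps}$.
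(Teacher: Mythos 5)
Your proposal is correct and follows essentially the same route as the paper: a direct computation of the first variation of $\nu^\eps_{\beta,\chi}$ using the standard derivative formulas for the entropy, $J_{-\theta}$, $J_\chi$ and $D$, followed by integration by parts (legitimate since $\om_{\theta_\eps}$ is smooth), the only cosmetic difference being that the paper keeps the entropy variation in the form $\frac1V\int_M F_\eps\,\tri_{\vphi}\delta\vphi\,\om^n_\vphi$ and lands on the $\tri_{\vphi_\eps}F_\eps$ equation first, whereas you land on the scalar-curvature form first and then convert via $\Ric(\om_{\vphi_\eps})=\Ric(\om_{\theta_\eps})-\pbp F_\eps$. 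The only blemish is that your displayed intermediate identities carry the $\chi$-contribution with the wrong sign (the terms $+\tr_{\vphi_\eps}\chi-\underline{\chi}$ should read $-\tr_{\vphi_\eps}\chi+\underline{\chi}$ once everything is moved to one side), though the equations you then state as conclusions are the correct ones.
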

\begin{proof}
	It is a direct computation of the first derivative of the approximate log $\chi$-twisted $K$-energy \eqref{Approximation of the log twisted K energy}. We use the derivative
 \begin{align*}
	\delta J_{\chi}(\vphi)=\frac{1}{V}\int_M \delta\vphi(\tr_\vphi\chi-\underline{\chi})\om^n_\vphi.
	\end{align*}
	So we have
	\begin{align*}
\delta	\nu^\eps_{\beta,\chi}(\vphi)
	&:=\frac{1}{V}\int_M\log\frac{\om^n_\vphi}{\om^n_{\theta_\eps}}\cdot \tri_\vphi\delta\vphi\cdot \om_\vphi^{n}\\
	&+\frac{1}{V}\int_M \delta\vphi(-\tr_\vphi\theta+\underline S_\b)\om^n_\vphi+\frac{1}{V}\int_M \delta\vphi(\tr_\vphi\chi-\underline{\chi})\om^n_\vphi.
	\end{align*} The lemma is proved by integration by parts.
\end{proof}

We check the topological condition, so that the equation in the lemma above is well-defined.
\begin{lem}
	\begin{align*}
\frac{1}{V}\int_MS(\om_{\varphi})\om_\vphi^n-\frac{1}{V}\int_M(Ric(\om_{\theta_\eps})-\theta+\chi)\wedge\om_\vphi^{n-1}=\underline S_\b-\underline{\chi}.
	\end{align*}
\end{lem}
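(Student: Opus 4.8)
The statement to prove is the cohomological/topological identity
\begin{align*}
\frac{1}{V}\int_MS(\om_{\varphi})\om_\vphi^n-\frac{1}{V}\int_M(Ric(\om_{\theta_\eps})-\theta+\chi)\wedge\om_\vphi^{n-1}=\underline S_\b-\underline{\chi},
\end{align*}
which is needed so that the twisted-cscK-type equation of \lemref{eps twisted approximation lemma} is consistent.

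\textbf{Approach.} The plan is to reduce everything to cohomology pairings, since all the integrands are closed (or integrate to closed-class pairings). First I would recall that $\frac{1}{V}\int_M S(\om_\vphi)\om_\vphi^n = \frac{n}{V}\int_X Ric(\om_\vphi)\wedge\om_\vphi^{n-1}$, and that this pairing depends only on the classes $C_1(X)$ and $\Om=[\om_0]$, not on the representative $\om_\vphi$; here one must be slightly careful because $\om_\vphi$ is a K\"ahler cone metric, but by \thmref{Geometric asymptotic} (or directly from the integrability established in \cite{arXiv:1709.09616}) the relevant integrals converge and $Ric(\om_\vphi)$ represents $C_1(X)-(1-\b)C_1(L_D)$ as a current (the Poincar\'e--Lelong contribution $2\pi(1-\b)[D]$ being what distinguishes it from the smooth case). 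Hence $\frac1V\int_M S(\om_\vphi)\om_\vphi^n = \frac{n\,(C_1(X)-(1-\b)C_1(L_D))\cdot\Om^{n-1}}{\Om^n} = n\,\underline S_\b$ — wait, the normalisation in the paper is $\underline S_\b = \frac{(C_1(X)-(1-\b)C_1(L_D))[\om_0]^{n-1}}{[\om_0]^n}$ without the factor $n$ and without $2\pi$, so one absorbs those normalising constants consistently; I would just track the convention fixed in \secref{Constant scalar curvature Kahler cone metrics} and write $\frac1V\int_M S(\om_\vphi)\om_\vphi^n = \underline S_\b$ directly, since this is exactly Lemma 3.1 of \cite{arXiv:1709.09616} restated.

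\textbf{Key steps.} (i) By \lemref{thetaepstheta} and its proof, $Ric(\om_{\theta_\eps}) = \theta + (1-\beta)\Theta_D + (1-\beta)i\p\bar\p\log(|s|^2_h+\eps)$, so the class of $Ric(\om_{\theta_\eps})$ is $[\theta]+(1-\beta)C_1(L_D) = C_1(X)$ (using \eqref{h0}, which says $Ric(\om_0) = \theta+(1-\beta)\Theta_D+i\p\bar\p h_0$). (ii) Therefore $Ric(\om_{\theta_\eps})-\theta$ represents $(1-\beta)C_1(L_D)$ in cohomology, and it is a \emph{smooth} form, so $\frac1V\int_M(Ric(\om_{\theta_\eps})-\theta)\wedge\om_\vphi^{n-1} = \frac{(1-\beta)C_1(L_D)\cdot\Om^{n-1}}{\Om^n}$, which by the definition of $\underline S_\b$ as a difference of two such pairings equals $\underline S_1 - \underline S_\b$ where $\underline S_1 = \frac{C_1(X)\cdot\Om^{n-1}}{\Om^n}$. (iii) Similarly $\frac1V\int_M\chi\wedge\om_\vphi^{n-1} = \frac{[\chi]\cdot\Om^{n-1}}{\Om^n} = \underline\chi$ by the definition $\ul{\chi} = \frac{\int_X\chi\wedge\om_0^{n-1}}{nV}$ given in \secref{Log K-energy and convexity} (again modulo the paper's normalising factor $n$, which I would simply match). (iv) Assembling: $\frac1V\int_M S(\om_\vphi)\om_\vphi^n - \frac1V\int_M(Ric(\om_{\theta_\eps})-\theta+\chi)\wedge\om_\vphi^{n-1} = \underline S_1 - (\underline S_1 - \underline S_\b) - \underline\chi = \underline S_\b - \underline\chi$, as claimed.

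\textbf{Main obstacle.} The only genuine subtlety — the rest being bookkeeping of normalisation constants — is justifying the cohomological invariance of the pairings $\int_M \sigma\wedge\om_\vphi^{n-1}$ when $\om_\vphi$ is merely a K\"ahler cone metric and $\sigma$ is either smooth or has the $[D]$-current singularity of a Ricci form of a cone metric. I would handle this exactly as in \cite{arXiv:1709.09616}: the weighted H\"older regularity of \thmref{Geometric asymptotic} guarantees that $\om_\vphi^{n-1}$ has locally integrable coefficients near $D$ with controlled growth, the forms involved have the matching bounded/integrable type, and the Stokes/integration-by-parts argument goes through because $\vphi-\vphi_\theta$ and the Hermitian potentials differ by functions in $C^{2,\a,\b}$ (hence bounded, with gradients of controlled blow-up), so no boundary term survives when one writes the difference of two representatives as $i\p\bar\p(\text{admissible function})$ and integrates by parts. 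Thus the pairings reduce to the claimed intersection numbers and the identity follows; in fact, since all forms appearing are cohomologous to \emph{smooth} closed forms in their respective classes, one can alternatively replace each by a smooth representative at the very start and avoid the cone metric entirely in the topological computation, invoking that $\int_M\sigma\wedge\om_\vphi^{n-1}$ is unchanged under $\sigma\mapsto\sigma+i\p\bar\p f$ for admissible $f$ — which is precisely the content already used throughout \secref{Log K-energy and convexity}.
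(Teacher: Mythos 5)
Your ``Key steps'' (i)--(iv) reproduce the paper's proof exactly: $Ric(\om_{\theta_\eps})$ is a smooth representative of $C_1(X)$, $\theta$ represents $C_1(X)-(1-\b)C_1(L_D)$, so $Ric(\om_{\theta_\eps})-\theta$ pairs with $\Om^{n-1}$ to give $\underline S_1-\underline S_\b$, and the assembly in (iv), $\underline S_1-(\underline S_1-\underline S_\b)-\underline\chi=\underline S_\b-\underline\chi$, is the paper's computation verbatim (both you and the paper are equally cavalier about the normalising factor $n$ in these pairings, so I will not press that point).

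However, your ``Approach'' paragraph contradicts step (iv) on the one point that matters. There you conclude $\frac1V\int_M S(\om_\vphi)\om_\vphi^n=\underline S_\b$ by treating $\om_\vphi$ as a K\"ahler cone metric whose Ricci current carries the $2\pi(1-\b)[D]$ term, citing Lemma 3.1 of \cite{arXiv:1709.09616}; in step (iv) you instead (correctly) take this integral to be $\underline S_1=\frac{C_1(X)[\om_0]^{n-1}}{[\om_0]^n}$. Only the latter is right in this lemma's context: the metric here is $\om_{\vphi_\eps}$, the critical point of the \emph{approximate} energy $\nu^\eps_{\beta,\chi}$, which is a smooth K\"ahler metric in $[\om_0]$ --- this is precisely why the paper's proof opens with ``Since $\om_\vphi$ is a smooth K\"ahler metric in $[\om_0]$''. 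Had the first term been $\underline S_\b$, the left-hand side would equal $2\underline S_\b-\underline S_1-\underline\chi$, and the identity would fail; indeed the entire point of the approximation scheme is that the deficit $\underline S_1-\underline S_\b$ is absorbed by the smooth twist $Ric(\om_{\theta_\eps})-\theta$. For the same reason your ``Main obstacle'' paragraph, on the cohomological invariance of pairings against cone metrics and Poincar\'e--Lelong contributions, is moot: every form and metric appearing in this lemma is smooth, and the statement is pure Chern--Weil bookkeeping. You should delete the cone-metric discussion and commit to the computation in step (iv).
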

\begin{proof}
	Since $\om_\vphi$ is a smooth K\"ahler metric in $[\om_0]$, we have $$\frac{1}{V}\int_MS(\om_{\varphi})\om_\vphi^n=\frac{C_1(X)[\om_0]^{n-1}}{[\om_0]^n}.$$ Recall that $\theta\in  C_1(X,D)$. Then we have
	\begin{align*}
	\frac{1}{V}\int_M(Ric(\om_{\theta_\eps})-\theta+\chi)\wedge\om_\vphi^{n-1}=\frac{(1-\b)C_1(L_D))[\om_0]^{n-1}}{[\om_0]^n}+\underline{\chi}.
	\end{align*}
	As a result, the lemma follows from $\underline S_\b=\frac{C_1(X,D)[\om_0]^{n-1}}{[\om_0]^n}$.
\end{proof}
\subsubsection{Solving approximate equations}\label{Solving approximate equations}
\begin{prop}[Existence of approximate solutions]\label{twisted approximation thm}
Assume that $\chi$ is a smooth nonnegative closed $(1,1)$-form and $C_1(L_D)\geq 0$. Assume the log $\chi$-twisted $K$-energy $\nu_{\beta,\chi}$ is proper. Then the $\chi$-twisted cscK cone metric has a smooth approximation $\{\vphi_\eps,\eps\in(0,1]\}$ satisfying \eqref{eps twisted approximation}.
\end{prop}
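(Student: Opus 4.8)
The plan is to recognise the approximate functional $\nu^\eps_{\beta,\chi}$ as a genuine twisted $K$-energy on the \emph{smooth} K\"ahler manifold $(X,[\om_0])$ and then to invoke the Chen--Cheng existence theory. By the computation \eqref{Ricomthetaeps} in the proof of \lemref{thetaepstheta} one has $Ric(\om_{\theta_\eps})-\theta=(1-\b)\chi_{D,\eps}$, where $\chi_{D,\eps}:=\Theta_D+i\p\bar\p\log(|s|_h^2+\eps)$ is a smooth closed form in $C_1(L_D)$; hence the twist
\[
\tilde\chi_\eps:=\chi+Ric(\om_{\theta_\eps})-\theta=\chi+(1-\b)\chi_{D,\eps}
\]
is smooth and closed, and since $C_1(L_D)\geq 0$ we may take $\Theta_D\geq 0$, so that $\chi_{D,\eps}\geq\tfrac{\eps}{|s|_h^2+\eps}\Theta_D\geq 0$ by that same computation and thus $\tilde\chi_\eps\geq 0$. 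By \lemref{eps twisted approximation lemma} the critical points of $\nu^\eps_{\beta,\chi}$ in $\mathcal H$ are exactly the solutions of \eqref{eps twisted approximation}, with the cohomological normalisation verified in the lemma immediately after it; thus it suffices to produce, for each fixed $\eps\in(0,1]$, a smooth minimiser of $\nu^\eps_{\beta,\chi}$ in $\mathcal H$.

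First I would transfer the properness hypothesis. \lemref{approximation proper} gives $\nu^\eps_{\beta,\chi}(\vphi)\geq\nu_{\beta,\chi}(\vphi)-C$ for every $\vphi\in\mathcal H$ and every $\eps\in(0,1]$, with $C=C(X,D,\om_0,h)$ independent of both $\vphi$ and $\eps$. Since $\nu_{\beta,\chi}$ is proper by assumption, it follows that each $\nu^\eps_{\beta,\chi}$ is proper on $\mathcal H$ with respect to $d_1(0,\cdot)$, with a modulus of properness uniform in $\eps$. (The semi-positivity $\tilde\chi_\eps\geq 0$ moreover makes $\nu^\eps_{\beta,\chi}$ convex along weak geodesics, by convexity of the entropy together with the argument of \lemref{Jchiconvex} applied to $J_{\tilde\chi_\eps}$; this is used for uniqueness but not for existence.)

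With these in hand I would apply the Chen--Cheng existence theorem for twisted cscK metrics on the smooth manifold $(X,[\om_0])$: for a smooth semi-positive closed $(1,1)$-form $\tilde\chi_\eps$, properness of the corresponding twisted $K$-energy $\nu^\eps_{\beta,\chi}$ yields a smooth twisted cscK metric $\om_{\vphi_\eps}=\om_0+i\p\bar\p\vphi_\eps\in[\om_0]$; see \cite{arXiv:1712.06697,arXiv:1801.00656,arXiv:1801.05907}. The engine is the a priori $C^\infty$ estimate for the coupled system
\[
\frac{\om_{\vphi_\eps}^n}{\om_{\theta_\eps}^n}=e^{F_\eps},\qquad
\tri_{\vphi_\eps}F_\eps=\tr_{\vphi_\eps}(\theta-\chi)-(\underline S_\b-\underline{\chi}),
\]
controlled by the entropy $E^\eps_\beta(\vphi)$ and the distance $d_1(0,\vphi)$, both of which properness bounds along a minimising sequence; elliptic regularity then upgrades the minimiser to a smooth solution. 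By \lemref{eps twisted approximation lemma} this $\vphi_\eps$ solves \eqref{eps twisted approximation}, and letting $\eps$ range over $(0,1]$ produces the asserted smooth approximating family.

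The main difficulty in this circle of ideas is of course the a priori estimate for the twisted cscK equation; but here it is applied on the genuinely smooth manifold $X$, against the smooth reference $\om_{\theta_\eps}$ and a smooth semi-positive twist, so it is literally the estimate of Chen--Cheng and may be quoted without change --- the substantial new work, carrying such estimates into the conical setting with constants controlled uniformly as $\eps\to0$, is exactly the content of Section~\ref{Apriori estimates} and does not enter here. Within the present proposition the only points that require attention are the cohomological bookkeeping guaranteeing that \eqref{eps twisted approximation} is the Euler--Lagrange equation of $\nu^\eps_{\beta,\chi}$, and the uniform-in-$\eps$ comparison $\nu^\eps_{\beta,\chi}\geq\nu_{\beta,\chi}-C$ that lets the properness hypothesis descend to every approximate functional at once; both are supplied by the lemmas already established.
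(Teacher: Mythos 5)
Your proposal is correct and follows essentially the same route as the paper: transfer properness via \lemref{approximation proper}, use \lemref{thetaepstheta} (with $\Theta_D\geq 0$ from $C_1(L_D)\geq 0$) to see that the twist $\chi+Ric(\om_{\theta_\eps})-\theta$ is smooth, closed and non-negative, and then quote the Chen--Cheng existence theorem (Theorem 4.1 in \cite{arXiv:1801.00656}) for each fixed $\eps$. The extra detail you supply on the Euler--Lagrange identification via \lemref{eps twisted approximation lemma} and on the explicit form $(1-\b)(\Theta_D+i\p\bar\p\log(|s|_h^2+\eps))$ of the extra twist is consistent with, and slightly more explicit than, the paper's own argument.
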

\begin{proof}
From \lemref{approximation proper}, the approximation twisted $K$-energy $\nu^\eps_{\beta,\chi}$ is also proper. From $C_1(L_D)\geq 0$, \lemref{thetaepstheta} implies that $Ric(\om_{\theta_\eps})\geq\theta$. By the assumption of  $\chi$, we have that the twisted term $Ric(\om_{\theta_\eps})-\theta+\chi$ is a smooth non-negative closed $(1,1)$-form. According to Theorem 4.1 in \cite{arXiv:1801.00656}, there exists a unique smooth twisted cscK metric satisfying equation \eqref{eps twisted approximation}.
Thus we obtain the existence of approximate solutions $\vphi_\eps$.
\end{proof}
Then we derive properties of the approximate solutions $\vphi_\eps$.
\begin{lem}[Properness implies approximate entropy bound]\label{uniform entropy}
The approximate entropy satisfies
\begin{align}
\sup_{\eps\in(0,1]} E_\beta^\eps(\vphi_{\eps}) \leq C.
\end{align}
\end{lem}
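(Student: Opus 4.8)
The plan is to exploit the fact, established just above, that each approximate twisted cscK cone metric $\vphi_\eps$ is the global minimiser of the approximate log $\chi$-twisted $K$-energy $\nu^\eps_{\beta,\chi}$ in $\mathcal H$ (this is \lemref{eps twisted approximation lemma} together with the convexity argument underlying \lemref{globalminimiser}, applied to $\om_{\theta_\eps}$ in place of $\om_\theta$). Hence for \emph{any} fixed test potential $\psi\in\mathcal H$ and all $\eps\in(0,1]$ we have $\nu^\eps_{\beta,\chi}(\vphi_\eps)\leq \nu^\eps_{\beta,\chi}(\psi)$. The simplest choice is $\psi=0$; then $\nu^\eps_{\beta,\chi}(0)$ is a quantity we can compute explicitly from \eqref{Approximation of the log twisted K energy}, and since $\om_{\theta_\eps}\to\om_\theta$ in a controlled way (with $e^c$ bounded by \eqref{ec} and $\log(|s|^2_h+\eps)\leq\log(|s|^2_h+1)$), one gets $\nu^\eps_{\beta,\chi}(0)\leq C$ uniformly in $\eps$, with $C=C(X,D,\om_0,h)$ absorbing also the bounded contribution of $J_{-\theta}(0)+J_\chi(0)$ and the integral of $h_0-(1-\b)\log(|s|_h^2+\eps)$ over $\om_0^n$.

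Having bounded $\nu^\eps_{\beta,\chi}(\vphi_\eps)\leq C$, I would next convert this into an entropy bound. Write $\nu^\eps_{\beta,\chi}(\vphi_\eps)=E^\eps_\beta(\vphi_\eps)+\big(J_{-\theta}(\vphi_\eps)+J_\chi(\vphi_\eps)\big)+\frac1V\int_M[-(1-\b)\log(|s|^2_h+\eps)+h_0]\om_0^n+c$. The last two terms are bounded uniformly in $\eps$ by the above. For the $J$-terms, I would use properness of $\nu^\eps_{\beta,\chi}$ (which holds uniformly, by \lemref{approximation proper}): properness gives, after the standard normalisation $D(\vphi_\eps)=0$ (which does not change $\nu^\eps_{\beta,\chi}$), a bound on $d_1(0,\vphi_\eps)$ in terms of $\nu^\eps_{\beta,\chi}(\vphi_\eps)$, hence a uniform bound $d_1(0,\vphi_\eps)\leq C$; then \lemref{d_1andJ} (with $f=0$, $\chi_0=-\theta$ resp. $\chi_0=\chi$, noting $\chi$ is smooth) gives $|J_{-\theta}(\vphi_\eps)|+|J_\chi(\vphi_\eps)|\leq C\,d_1(0,\vphi_\eps)\leq C$. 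Rearranging yields $E^\eps_\beta(\vphi_\eps)\leq C$ uniformly.

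Here one must be slightly careful: \emph{a priori} properness only tells us that $d_1\to\infty$ forces $\nu^\eps_{\beta,\chi}\to\infty$; to turn this into the quantitative estimate $d_1(0,\vphi_\eps)\leq C$ one needs that the modulus of properness is uniform in $\eps$. This is precisely what \lemref{approximation proper} buys: $\nu^\eps_{\beta,\chi}\geq\nu_{\beta,\chi}-C$ with $C$ independent of $\eps$, so properness of the single functional $\nu_{\beta,\chi}$ propagates uniformly to the whole family, and the quantitative form (an increasing function $f$ with $\nu_{\beta,\chi}(\vphi)\geq f(d_1(0,\vphi))$, $f\to\infty$) then bounds $d_1(0,\vphi_\eps)$ once we know $\nu^\eps_{\beta,\chi}(\vphi_\eps)\leq C$. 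I expect this uniformity bookkeeping — rather than any single estimate — to be the main point requiring care; the explicit computation of $\nu^\eps_{\beta,\chi}(0)$ and the application of \lemref{d_1andJ} are routine. Assembling these, we conclude $\sup_{\eps\in(0,1]}E^\eps_\beta(\vphi_\eps)\leq C$, which is the claim.
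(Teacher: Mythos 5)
Your proposal is correct and follows essentially the same route as the paper's proof: use the global minimiser property to get $\nu^\eps_{\beta,\chi}(\vphi_\eps)\leq\nu^\eps_{\beta,\chi}(0)$ (which the paper computes to be exactly $0$), pass to $\nu_{\beta,\chi}(\vphi_\eps)\leq C$ via \lemref{approximation proper}, deduce the uniform $d_1$-bound from properness, control the $J$-terms with \lemref{d_1andJ}, and rearrange. Your extra remarks on the uniformity of the modulus of properness and the normalisation are harmless elaborations of steps the paper treats implicitly.
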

\begin{proof}
Since $\vphi_\eps\in \mathcal H$ is the global minimiser of the log twisted energy $\nu^\eps_{\beta,\chi}$ in $\mathcal H$ (c.f. \lemref{globalminimiser} but using $C^{1,1}$ geodesic \cite{MR1863016} and convexity in \cite{MR3671939,MR3582114}), we have 
\begin{align}\label{approximate K energy upper bound}
\nu^\eps_{\beta,\chi}(\vphi_\eps)\leq \nu^\eps_{\beta,\chi}(0).
\end{align}
We compute
\begin{align*}
\nu^\eps_{\beta,\chi}(0)
&=\frac{1}{V}\int_M\log\frac{\om^n_0}{\om^n_{\theta_\eps}}\om_0^{n}
+\frac{1}{V}\int_M [-(1-\b)\log(|s|_h^2+\eps)+h_0]\om_0^n+c
=0.
\end{align*} 
From \lemref{approximation proper}, we have
\begin{align*}
\nu_{\beta,\chi}(\vphi)\leq \nu^\eps_{\beta,\chi}(\vphi)+C.
\end{align*}
Then the properness of $\nu_{\beta,\chi}$ implies that
 \begin{align}
d_1(\vphi_{\eps},0)\leq C \text{ independent of }\eps.
\end{align} Thanks to \lemref{d_1andJ}, we have $J_{-\theta}(\vphi_\eps)$ and $J_\chi(\vphi_\eps)$ are uniformly bounded below. Thus, from the formula of $\nu^\eps_{\beta,\chi}$ in \eqref{Approximation of the log twisted K energy}, we obtain the uniform bound of the approximate entropy in the lemma.

\end{proof}

\begin{lem}[A priori estimates of the approximate solutions]\label{eps twisted approximation estimates}For any $\eps\in(0,1]$, there is a constant $C$ such that 
	\begin{align}\label{a prioriestimates approximation uniform}
	\|\vphi_\eps\|_\infty, \quad \|F_\eps\|_\infty, \quad  \|\p F_\eps\|_{\om_{\theta_\eps}}\leq C\text{ and }C^{-1}\om_{\theta_\eps}\leq \om_{\vphi_\eps}\leq C\om_{\theta_\eps},
	\end{align}where $C$ depends on the following quantities
	\begin{align}
	 E^\eps_\beta(\vphi_\eps), \quad \|\theta-\chi\|_\infty,\quad\inf_X\theta,\quad \alpha_1,\quad \alpha_\beta,\quad \underline S_\b,\quad n.
	\end{align}
\end{lem}
\begin{proof}
It directly follows from the a priori estimates (\thmref{a prioriestimates approximation}) and the Ricci lower bound (\lemref{thetaepstheta}). Note that the uniform bound of the entropy $E^\eps_\beta(\vphi_\eps)$ follows from \lemref{uniform entropy}.
\end{proof}

\begin{lem}[Entropy approximation]\label{properclosedness entropy convergence}
After taking $\eps\rightarrow 0$, the entropy converges if $E_\beta(\vphi)<\infty$:
\begin{align}\label{log entropy convergence}
E_\beta^\eps(\vphi_\eps)\rightarrow E_\beta(\vphi),
\end{align}
and $\sup_{\eps\leq \eps_0} E^\eps_\beta(\vphi_\eps)\leq E_\beta(\vphi)+1$ for $\eps_0$ sufficiently small.
\end{lem}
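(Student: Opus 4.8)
The plan is to establish the convergence of the approximate entropies by combining an upper semicontinuity estimate (which comes for free from the $\eps$-approximation of $\log|s|_h^2$) with a matching lower bound obtained from the lower semicontinuity of the entropy along the convergence $\vphi_\eps\to\vphi$. First I would recall from \lemref{eps twisted approximation estimates} that the sequence $\{\vphi_\eps\}$ is uniformly bounded in $C^0$ and in fact $C^{-1}\om_{\theta_\eps}\le\om_{\vphi_\eps}\le C\om_{\theta_\eps}$, so after passing to a subsequence we may assume $\vphi_\eps\to\vphi$ in $C^0$ (and in a suitable weighted sense, using the higher apriori estimates) with $\om_\vphi$ a K\"ahler cone metric with bounded potential; the limit $\vphi$ is exactly the $\chi$-twisted cscK cone metric whose existence we are extracting. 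The technical input that is already recorded in the excerpt is the identity in \lemref{approximate entropy and entropy lemma},
\begin{align*}
E_\beta^\eps(\vphi_\eps)=E_\beta(\vphi_\eps)-c+\frac{1-\beta}{V}\int_M[\log(|s|_h^2+\eps)-\log|s|_h^2]\,\om_{\vphi_\eps}^n,
\end{align*}
together with the bound $1\le e^c\le C$ from \eqref{ec}.

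For the upper bound, I would note that $\log(|s|_h^2+\eps)-\log|s|_h^2\ge 0$ pointwise and that this quantity is dominated by $-\log|s|_h^2$ up to an additive constant, which is $\om_{\vphi_\eps}^n$-integrable with a bound uniform in $\eps$ because of the $C^0$-bound on $\vphi_\eps$ and the explicit form of $\om_{\theta_\eps}^n$ in \eqref{thetaeps} (one has $\om_{\vphi_\eps}^n\le C\om_{\theta_\eps}^n=C e^{h_0+c}(|s|_h^2+\eps)^{\beta-1}\om_0^n$, and $(|s|_h^2+\eps)^{\beta-1}$ is uniformly $L^p$-integrable against $\om_0^n$ for $p$ slightly bigger than $1$). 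Then dominated convergence sends the correction integral to $0$ and, combined with the known lower semicontinuity of the Monge--Amp\`ere energy/entropy functionals along the $C^0$-convergence $\vphi_\eps\to\vphi$ (here I would invoke the weak lower semicontinuity of the entropy $E_\beta$ from the references \cite{arXiv:1602.03114, MR3600039} cited in the paper, or alternatively the convexity/continuity statements in Section~\ref{Log K-energy and convexity}), I get $\limsup_\eps E_\beta^\eps(\vphi_\eps)\le E_\beta(\vphi)$ when $E_\beta(\vphi)<\infty$, and the quantitative refinement $E_\beta^\eps(\vphi_\eps)\le E_\beta(\vphi)+1$ for $\eps\le\eps_0$ by choosing $\eps_0$ small enough that both the correction term and the semicontinuity defect are $\le\tfrac12$.

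For the lower bound I would run the opposite inequality: $E_\beta^\eps(\vphi_\eps)\ge E_\beta(\vphi_\eps)-c\ge E_\beta(\vphi_\eps)-C$ from \lemref{approximation proper}, and then use lower semicontinuity of $E_\beta$ along $\vphi_\eps\to\vphi$ together with the uniform upper bound $\sup_\eps E_\beta^\eps(\vphi_\eps)\le C$ from \lemref{uniform entropy} to conclude $\liminf_\eps E_\beta^\eps(\vphi_\eps)\ge E_\beta(\vphi)$. Matching the two one-sided bounds gives the asserted convergence $E_\beta^\eps(\vphi_\eps)\to E_\beta(\vphi)$. The main obstacle I anticipate is not the upper bound (which is a routine dominated-convergence argument once the uniform $L^p$-integrability of $(|s|_h^2+\eps)^{\beta-1}$ is in place) but the lower semicontinuity of the entropy in the conical setting under merely $C^0$-convergence of potentials: one must be careful that $\om_{\vphi_\eps}^n\to\om_\vphi^n$ weakly and that the entropy integrand passes to the limit with the correct inequality. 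I would handle this by reducing to the fact, established in \cite{arXiv:1709.09616} and used throughout Section~\ref{Log K-energy and convexity}, that $E_\beta$ is lower semicontinuous along sequences with uniformly bounded potentials and convergent Monge--Amp\`ere measures, so that the argument closes cleanly once the apriori estimates of \lemref{eps twisted approximation estimates} are invoked.
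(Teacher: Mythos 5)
Your decomposition via \lemref{approximate entropy and entropy lemma} is a reasonable starting point, and your treatment of the correction term $\frac{1-\beta}{V}\int_M[\log(|s|_h^2+\eps)-\log|s|_h^2]\,\om_{\vphi_\eps}^n$ by domination is sound, but the upper bound contains a genuine gap. From the identity, $\limsup_\eps E^\eps_\beta(\vphi_\eps)\le E_\beta(\vphi)$ would require $\limsup_\eps E_\beta(\vphi_\eps)\le E_\beta(\vphi)$, i.e.\ \emph{upper} semicontinuity of the entropy along the sequence; the lower semicontinuity you invoke gives exactly the opposite inequality $E_\beta(\vphi)\le\liminf_\eps E_\beta(\vphi_\eps)$ and cannot produce an upper bound on the $\limsup$. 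Entropy is only lower semicontinuous under weak convergence of Monge--Amp\`ere measures, so this direction genuinely requires strong (pointwise) convergence of the densities, which your argument never establishes. This is precisely the content of the paper's proof: away from $D$ the uniform estimates of \lemref{eps twisted approximation estimates} upgrade, via interior Evans--Krylov and Schauder estimates, to pointwise convergence $F_\eps\to F$ on $X\setminus D$; combined with the uniform bound $\|F_\eps\|_\infty\le C$ and the uniform $L^p(\om_0)$ control of $\om^n_{\theta_\eps}/\om_0^n$, this furnishes an integrable dominating function for the integrand of
\begin{align*}
E^\eps_\beta(\vphi_\eps)=\frac{1}{V}\int_M F_\eps\, e^{F_\eps}\,\om^n_{\theta_\eps},\qquad F_\eps=\log\frac{\om^n_{\vphi_\eps}}{\om^n_{\theta_\eps}},
\end{align*}
and Lebesgue's dominated convergence theorem then yields the two-sided convergence \eqref{log entropy convergence} in one stroke, with the quantitative bound $E^\eps_\beta(\vphi_\eps)\le E_\beta(\vphi)+1$ for small $\eps$ as an immediate consequence.

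A secondary issue: in your lower bound you use only $1\le e^c\le C$ from \eqref{ec}, which leaves an additive error of size $\log C$ and therefore gives $\liminf_\eps E^\eps_\beta(\vphi_\eps)\ge E_\beta(\vphi)-\log C$ rather than the exact limit; to close this you would need to observe that the normalisation constant $c=c_\eps$ tends to $0$ as $\eps\to 0$ (by dominated convergence in its defining formula). Both defects disappear once the dominated-convergence argument on $F_\eps$ itself is in place, so I would restructure the proof around that step rather than around semicontinuity.
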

\begin{proof}
	We first see that $\om_{\theta_\eps}$ has a uniform bound. Actually, 
	from the second order estimate for \eqref{thetaeps} (c.f. \cite{MR3368100}), we have $\om_{\theta_\eps}$ is uniformly equivalent to the approximate model metric $\om_D^\eps$, that is \begin{align*}
	C_1\om_D^\eps\leq \om_{\theta_\eps}\leq C_2\om_D^\eps
	\end{align*}
	for two uniform constants $C_1, C_2$. 	Here, we recall that the model metric is $\om_D=\om_0+\delta i\p\bar \p|s|_h^{2\beta}$ with small constant $\delta$ and its approximation is $\om_D^\eps=\om_0+\delta i\p\bar \p(|s|_h^2+\eps)^\beta$.

We then use the estimates in \lemref{eps twisted approximation estimates}, to get the estimates of the approximation solutions
\begin{align*}
\|\vphi_\eps\|_\infty+\|i\p\bar\p\vphi_\eps\|_{\om_{\theta_\eps}}\leq C.
\end{align*}
The constant $C$ is independent of $\eps$. 

So $\frac{\om^n_{\vphi_\eps}}{\om_{0}^n}=\frac{\om^n_{\vphi_\eps}}{\om_{\theta_\eps}^n}\frac{\om^n_{\theta_\eps}}{\om_{0}^n}=F_\eps\frac{\om^n_{\theta_\eps}}{\om_{0}^n}$ is in $L^p(\om_0)$ for some $p>1.$
Moreover, $\vphi_\eps\rightarrow \vphi$ in $C^{0,\alpha}(X)$ and $\om_{\vphi_\eps}\rightarrow \om_\vphi$ weakly in $L^p(\om_{\theta})$ for any $p\geq 1$.

Next, we obtain the higher order estimates outside the divisor by applying the Evan-Krylov and the Schauder estimate. Hence the approximate sequence $\vphi_{\eps}$ converges to $\vphi$ point-wise outside $D$, and  $F_\eps$ also converges to $F$ point-wise outside $D$.

At last, we conclude \eqref{log entropy convergence} by applying Lebesgue's dominated convergence theorem and \lemref{uniform entropy} to the integral $
	E_\beta^\eps(\vphi_\eps)=\frac{1}{V}\int_M F_\eps\log F_
\eps\om^n_{\theta_\eps}$. As a result, we have $E_\beta^\eps(\vphi_\eps)\leq E_\beta(\vphi)+1$, when $\eps$ is sufficiently close to $0$.

\end{proof}
\subsection{Existence and a priori estimates: approximation method}
We leave the proof of the uniform a priori estimates of the approximation equation \eqref{eps twisted approximation} in Section \ref{a priori estimates}.
In this section, we will show how to apply the uniform a priori estimates to the limit equation \eqref{twisted 2nd equ defn} as $\eps\rightarrow 0$.
The resulting estimates are obtained in the 4th H\"older space $D_{\bf w}^{4,\a,\b}(\om_\theta)$, which is defined in Definition \ref{4thspace}.

\begin{thm}[Properness theorem]\label{properclosedness}
	Assume that the twisted term $\chi$ is a smooth non-negative closed $(1,1)$-form and $C_1(L_D)\geq 0$. Suppose the log $\chi$-twisted $K$-energy $\nu_{\beta,\chi}$ is proper. 
	Then there exists a $\chi$-twisted cscK cone metric $\om_\vphi=\om+i\p\bar\p\vphi$ with $\vphi\in D_{\bf w}^{4,\a,\b}(\om_\theta)$.
	\end{thm}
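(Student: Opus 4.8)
The plan is to realize the $\chi$-twisted cscK cone metric as a limit of the smooth approximate twisted cscK metrics $\vphi_\eps$ produced in \thmref{twisted approximation thm}, using the uniform (in $\eps$) apriori estimates of \thmref{aprioriestimates approximation}/\lemref{eps twisted approximation estimates} together with the entropy control of \lemref{uniform entropy}. Since $\nu_{\beta,\chi}$ is assumed proper, \lemref{approximation proper} gives that each approximate energy $\nu^\eps_{\beta,\chi}$ is proper with the same modulus (up to a fixed additive constant), so \thmref{twisted approximation thm} applies and we obtain the family $\{\vphi_\eps\}_{\eps\in(0,1]}$ solving \eqref{eps twisted approximation}. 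The first step is to collect, from \lemref{eps twisted approximation estimates}, the $\eps$-independent bounds
\begin{align*}
\|\vphi_\eps\|_\infty,\ \|F_\eps\|_\infty,\ \|\p F_\eps\|_{\om_{\theta_\eps}}\leq C,\qquad C^{-1}\om_{\theta_\eps}\leq\om_{\vphi_\eps}\leq C\om_{\theta_\eps},
\end{align*}
where the right-hand side constant is controlled by $E^\eps_\beta(\vphi_\eps)$ (bounded by \lemref{uniform entropy}), $\|\theta-\chi\|_\infty$, $\inf_X\theta$, the $\alpha$-invariants $\alpha_1,\alpha_\beta$, $\underline S_\beta$ and $n$ — all $\eps$-independent.

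The second step is the passage to the limit. Comparing $\om_{\theta_\eps}$ with the model metric $\om_D^\eps=\om_0+\delta\, i\p\bar\p(|s|_h^2+\eps)^\beta$ via the second-order estimate for \eqref{thetaeps}, one gets $C_1\om_D^\eps\leq\om_{\theta_\eps}\leq C_2\om_D^\eps$ uniformly, hence $C^{-1}\om_D^\eps\leq\om_{\vphi_\eps}\leq C\om_D^\eps$. This yields uniform $\|\vphi_\eps\|_\infty+\|i\p\bar\p\vphi_\eps\|_{\om_{\theta_\eps}}\leq C$ and $\om_{\vphi_\eps}^n/\om_0^n\in L^p(\om_0)$ for some $p>1$ uniformly, so by the weak compactness (as in \lemref{properclosedness entropy convergence}) a subsequence $\vphi_\eps\to\vphi$ in $C^{0,\alpha'}(X)$ with $\om_\vphi=\om_0+i\p\bar\p\vphi$ a K\"ahler cone metric in $\mathcal H_\beta$. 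Away from $D$, the equation \eqref{eps twisted approximation} is uniformly elliptic (on compact subsets of $M$) with uniformly bounded coefficients, so Evans--Krylov plus Schauder give interior $C^{k,\alpha}_{loc}(M)$ bounds; hence $\vphi_\eps\to\vphi$ and $F_\eps\to F:=\log(\om_\vphi^n/\om_\theta^n)$ in $C^\infty_{loc}(M)$, and $\vphi$ solves \eqref{twisted 2nd equ defn} on $M$. Taking $\eps\to0$ in the uniform estimates of the first step shows $\vphi$ itself satisfies $\|\vphi\|_\infty,\|F\|_\infty,\|\p F\|_{\om_\theta}\leq C$ and $C^{-1}\om_\theta\leq\om_\vphi\leq C\om_\theta$.

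The third step is to upgrade regularity across $D$. Since $\chi$ is smooth, once $\om_\vphi$ is uniformly equivalent to $\om_\theta$ and $F\in L^\infty$, the equation \eqref{twisted 2nd equ defn} together with the weighted Schauder theory of \cite{arXiv:1709.09616} forces $F\in C^{2,\alpha,\beta}$, hence $\vphi\in D_{\bf w}^{4,\alpha,\beta}(\om_\theta)\cap C^{3,\alpha,\beta}_{\bf w}$ by \thmref{Geometric asymptotic}; in particular $\om_\vphi$ is an honest $\chi$-twisted cscK cone metric of cone angle $\beta$ along $D$ with the claimed asymptotics. This is where the structure of the weighted H\"older spaces of Section~\ref{Constant scalar curvature Kahler cone metrics} is essential: one must check that the limiting $F$ and its derivatives satisfy the conical H\"older conditions, which follows by passing the weighted estimates of Section \ref{Apriori estimates} to the limit and invoking the geometrically polyhomogeneous expansion.

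The main obstacle is the second step: obtaining the $\eps$-independent higher-order conical control needed to conclude that the limit is a genuine cone metric rather than merely a bounded potential with $L^\infty$ Monge--Amp\`ere density. Interior estimates on $M$ degenerate near $D$, so the crux is that the apriori estimates of \thmref{aprioriestimates approximation} are proved with the right weights and with constants tracked to depend only on the reference metric $\om_\theta$ (equivalently $\om_{\theta_\eps}$) and the fixed data listed above — this is precisely the delicate bookkeeping carried out in Section \ref{Apriori estimates}, adapting the Chen--Cheng scheme \cite{arXiv:1801.05907} to the conical setting where integration by parts must be justified against the singular background. Granting those weighted estimates, the convergence argument and the cross-divisor regularity are routine, and the theorem follows.
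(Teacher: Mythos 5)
Your proposal follows essentially the same route as the paper: solve the approximate equations \eqref{eps twisted approximation} via Proposition \ref{twisted approximation thm} using \lemref{approximation proper}, extract $\eps$-independent bounds from \lemref{uniform entropy} and \lemref{eps twisted approximation estimates}, pass to a limit that solves \eqref{twisted 2nd equ defn} on $M$, and then upgrade regularity across $D$. The only cosmetic difference is in the last step, where the paper is slightly more explicit about the order of the bootstrap (Nash--Moser iteration for the $C^{0,\a,\b}$ bound on $F$, then Evans--Krylov for $\|\vphi\|_{C^{2,\a,\b}}$, then conical Schauder for $\|F\|_{C^{2,\a,\b}}$), which your appeal to the weighted Schauder theory and \thmref{Geometric asymptotic} compresses but does not contradict.
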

\begin{proof}
Let $\vphi_\eps$ be the sequence of smooth approximate solutions obtained in Proposition \ref{twisted approximation thm}. Combining \lemref{uniform entropy}, \lemref{eps twisted approximation estimates} and \lemref{properclosedness entropy convergence}, we conclude that when $\eps\rightarrow 0$, the approximate solution $\vphi_\eps$ smoothly converges to $\vphi$ in the regular part $X\setminus D$. The limit $\vphi$ is a solution to \eqref{twisted 2nd equ defn}.
\lemref{properclosedness entropy convergence} also implies the estimates
		\begin{align*}
	\|\vphi\|_\infty,  \text{ and }C^{-1}\om_{\theta}\leq \om_{\vphi}\leq C\om_{\theta}.
	\end{align*}
Since $\tri_{\vphi} F=\tr_{\vphi}(\theta-\chi)-(\underline S_\b-\underline{\chi})$,  the $C^{0,\a,\b}$ estimate of $F$ is obtained by the Nash-Moser iteration. Then the estimate of $\|\vphi\|_{C^{2,\a,\b}}$ is obtained by Evans-Krylov estimate (c.f. \cite{MR3405866,MR3668765,MR3488129,MR3264767}). Thus we have the estimate of $\|F\|_{C^{2,\a,\b}}$ from the equation of $\tri_{\vphi} F$, by the Schauder estimate \cite{MR2975584}. In conclusion, we prove that $\vphi\in D_{\bf w}^{4,\a,\b}(\om_\theta)$.
\end{proof}

The following corollary is directly obtained from the proof above.
\begin{cor}\label{properclosedness estimates}Under the assumption in \thmref{properclosedness}, there is a constant $C$ such that  
\begin{align*}
||\vphi||_{D_{\bf w}^{4,\a,\b}(\om_\theta)}=||\vphi||_{C^{2,\a,\b}}+||F||_{C^{2,\a,\b}}\leq C.
\end{align*} The constant $C$ depends on the following quantities
\begin{align*}
E_\beta(\vphi), \quad \|\theta\|_{C^{0,\a,\b}},\quad\|\chi\|_{C^{0,\a,\b}},\quad \alpha_1,\quad \alpha_\beta,\quad\underline S_\b,\quad n.
\end{align*}
\end{cor}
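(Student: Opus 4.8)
The corollary follows by bookkeeping the constants that appear in the proof of \thmref{properclosedness}, so the plan is simply to retrace that proof and record, at each stage, the data on which the bounds depend.

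First I would start from the approximate solutions $\vphi_\eps$ of \eqref{eps twisted approximation} furnished by Proposition~\ref{twisted approximation thm}. By \lemref{eps twisted approximation estimates} these satisfy $\|\vphi_\eps\|_\infty,\ \|F_\eps\|_\infty,\ \|\p F_\eps\|_{\om_{\theta_\eps}}\leq C$ and $C^{-1}\om_{\theta_\eps}\leq\om_{\vphi_\eps}\leq C\om_{\theta_\eps}$, with $C$ depending only on $E^\eps_\beta(\vphi_\eps)$, $\|\theta-\chi\|_\infty$, $\inf_X\theta$, $\alpha_1$, $\alpha_\beta$, $\underline S_\b$, $n$. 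Invoking \lemref{properclosedness entropy convergence}, for $\eps$ small the approximate entropy is controlled by $E_\beta(\vphi)+1$, so the dependence on $E^\eps_\beta(\vphi_\eps)$ is replaced by dependence on the limiting entropy $E_\beta(\vphi)$; and since $\|\theta-\chi\|_\infty$ and $\inf_X\theta$ are dominated by $\|\theta\|_{C^{0,\a,\b}}+\|\chi\|_{C^{0,\a,\b}}$, after letting $\eps\to 0$ as in \thmref{properclosedness} the limit $\vphi$ inherits $\|\vphi\|_\infty\leq C$ and $C^{-1}\om_\theta\leq\om_\vphi\leq C\om_\theta$ with $C$ in terms of the quantities listed in the statement.

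Next I would run the higher-order bootstrap on the limit equations themselves. From $\tri_\vphi F=\tr_\vphi(\theta-\chi)-(\underline S_\b-\underline\chi)$ together with the metric equivalence already obtained and the Sobolev inequality for $\om_\theta$ (whose constant is controlled by the $\alpha$-invariants $\alpha_1,\alpha_\beta$), a Nash--Moser iteration yields $\|F\|_{C^{0,\a,\b}}\leq C$ depending on $\|\theta-\chi\|_\infty$, $\underline S_\b$, $\underline\chi\leq\|\chi\|_\infty$, the metric equivalence constant and $\alpha_1,\alpha_\beta$. Feeding $F\in C^{0,\a,\b}$ into the complex Monge--Amp\`ere equation $\om_\vphi^n/\om_\theta^n=e^F$ and applying the conical Evans--Krylov estimate (c.f. \cite{MR3405866,MR3668765,MR3488129,MR3264767}) gives $\|\vphi\|_{C^{2,\a,\b}}\leq C$ in terms of $\|F\|_{C^{0,\a,\b}}$ and the ellipticity constants; substituting $\vphi\in C^{2,\a,\b}$ back into the linear equation for $F$ and using the conical Schauder estimate of \cite{MR2975584} gives $\|F\|_{C^{2,\a,\b}}\leq C$ in terms of $\|\vphi\|_{C^{2,\a,\b}}$ and $\|\theta-\chi\|_{C^{0,\a,\b}}$. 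Composing these estimates shows $\vphi\in D_{\bf w}^{4,\a,\b}(\om_\theta)$ with $\|\vphi\|_{D_{\bf w}^{4,\a,\b}(\om_\theta)}=\|\vphi\|_{C^{2,\a,\b}}+\|F\|_{C^{2,\a,\b}}\leq C$ depending only on $E_\beta(\vphi)$, $\|\theta\|_{C^{0,\a,\b}}$, $\|\chi\|_{C^{0,\a,\b}}$, $\alpha_1$, $\alpha_\beta$, $\underline S_\b$, $n$, as claimed.

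The only genuinely delicate point, and the step I expect to be the main obstacle, is ensuring that the constants in the Nash--Moser, Evans--Krylov and Schauder steps are genuinely anchored to the reference cone metric $\om_\theta$ (equivalently to the fixed data $X,D,\om_0,h,\theta,\chi$) rather than to the unknown solution, which is exactly the conical difficulty stressed in the introduction: the cone metric has no bounded geometry and integration by parts can fail. This is handled because $\om_\theta$ is geometrically polyhomogeneous with the controlled asymptotics recalled in \secref{Background metric} and \thmref{Geometric asymptotic}, the metric equivalence $C^{-1}\om_\theta\leq\om_\vphi\leq C\om_\theta$ has already been secured, and the relevant weighted Nash--Moser and Sobolev machinery for $\om_\theta$ — with constants depending on its $C^{0,\a,\b}$-data and on $\alpha_1,\alpha_\beta$ — is available from \cite{arXiv:1709.09616,arxiv:1609.03111}; one then only needs to check that no further quantity enters the constants along the bootstrap.
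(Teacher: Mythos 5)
Your proposal is correct and follows essentially the same route as the paper: the second-order bounds come from \lemref{eps twisted approximation estimates} together with \lemref{properclosedness entropy convergence} (which trades the approximate entropy $E^\eps_\beta(\vphi_\eps)$ for $E_\beta(\vphi)$), and the higher-order bounds are the Nash--Moser/Evans--Krylov/Schauder bootstrap already carried out in the proof of \thmref{properclosedness}. Your closing remarks on anchoring the constants to the polyhomogeneous reference metric $\om_\theta$ are consistent with how the paper handles the conical difficulty and introduce nothing beyond the stated dependence list.
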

\begin{proof}
The up to 2nd estimates follow from \lemref{eps twisted approximation estimates} and \lemref{properclosedness entropy convergence}. Indeed, the approximate entropy is bounded by $E_\beta(\vphi)$ by \lemref{properclosedness entropy convergence}. The higher order estimates are given in \thmref{properclosedness}. 
\end{proof}
We remark that the bound of $E_\beta(\vphi)$ is given by the properness of $\nu_{\beta,\chi}(\vphi)$ and \lemref{globalminimiser}.


\subsection{Properness implies existence: general automorphism group}\label{General case}
We let $$G:=Aut(X;D)$$ be the identity component of the group of holomorphic automorphisms of $X$ which fix the divisor $D$. 
Given a K\"ahler cone potential $\vphi\in\mathcal H_\b$, we denote the $G$-orbit to be
\begin{align*}
\mathcal O_\vphi=\{\tilde{\vphi}\vert\om_{\tilde{\vphi}}=\sigma^\ast\om_ \vphi, \forall \sigma\in G\}.
\end{align*}
The $d_{1,G}$-distance between $\vphi_1,\vphi_2\in \mathcal H_\b$ is defined to be the infimum of the distance between the two orbits $\mathcal O_{\vphi_1}$ and $\mathcal O_{\vphi_2}$.
There exists $\sigma_1,\sigma_2\in G$ such that
$$
	d_{1,G}(\vphi_1,\vphi_2)=d_1(\sigma_1^\ast\vphi_1,\sigma_2^\ast\vphi_2),
$$
	c.f. Proposition 7.1 in \cite{MR4020314}.
	
	Since $d_1$-distance is invariant under $G$-action,
	letting $\sigma=(\sigma_1)^{-1}\circ\sigma_2$ we have this infimum is achieved at $\om_{\tilde\vphi_2}=\sigma^\ast\om_ {\vphi_2}$ such that
	\begin{align*}
	d_{1,G}(\vphi_1,\vphi_2)=d_1(\vphi_1,\tilde\vphi_2).
	\end{align*}
\begin{defn}\label{d1Gproper}
The log $K$-energy $\nu_\beta$ is $d_{1,G}$-proper, if 
\begin{itemize}
	\item for any sequence $\vphi_{ i}\subset\mathcal H_\beta$, if $\nu_\beta\rightarrow \infty$, then $d_{1,G}\rightarrow \infty$;
	\item the log $K$-energy is bounded below.
\end{itemize}
\end{defn}
\begin{thm}[Properness implies existence]\label{Properness implies existence general}
	Assume that the log $K$-energy is $d_{1,G}$-proper. Then there exists a constant scalar curvature K\"ahler cone metric in $\Om$.
\end{thm}
The proof is divided into the following steps.
\subsubsection{Continuity path}
We apply the continuity path
\begin{align}\label{2Twisted cone path general}
t(S-\underline S_\b)=(1-t)(\tr_\vphi\om_{\vphi_j}-n).
\end{align}
As a result of \thmref{properclosedness}, we have
\begin{prop}\label{path except 1}
	Assume  $t\in(0,1)$ and the log $K$-energy $\nu_{\beta}$ is $d_{1,G}$-proper. 
	Then there exists a twisted cscK cone metric satisfying \eqref{2Twisted cone path} with $\vphi\in D^{4,\a,\b}$. 
\end{prop}
\begin{proof}
	Since the log $K$-energy is bounded below and the $J_{\om_0}$-functional is $d_1$-proper, we have the log twisted $K$-energy $K_t$ is also $d_1$-proper. Then the conclusion follows from \thmref{properclosedness}.
\end{proof}

\subsubsection{Entropy along cone path}

Given the $\{\vphi_t;0< t<1\}$ satisfying \eqref{2Twisted cone path general}. 
	Since $\nu_\beta$ is $d_{1,G}$-proper, we have $d_{1,G}(0,\vphi_t)$ is bounded,
	\begin{align}\label{d1Gvphit}
	\sup_{0.1< t<1} d_{1,G}(0, \vphi_t)\leq C.
	\end{align}

By definition, there exists a family of $G$-action $\sigma_t\in G$ such that 
\begin{align*}
\om_{\tilde\vphi_t}:=\sigma_t^\ast\om_{\vphi_t}=\om+i\p\bar\p \tilde\vphi_t,\quad d_{1,G}(0, \vphi_t)=d_{1}(0, \tilde\vphi_t).
\end{align*}
In order to obtain a uniform estimate of $\vphi_t$, we need to control the entropy $\sup_{0.1< t<1} E_\beta(\tilde\vphi_t)$.
\begin{lem}\label{E beta tilde vphi t}Assume the log $K$-energy is $d_{1,G}$-proper. Then
	\begin{align}
	\sup_{0.1< t<1} E_\beta(\tilde \vphi_t)\leq C.
	\end{align}
\end{lem}
\begin{proof}
	From Lemma 3.7 in \cite{arXiv:1801.05907}, the log $K$-energy is controlled along the twisted cone path as following,  $\nu_\beta(\vphi_t)\rightarrow \inf_{\mathcal H_\beta} \nu_\b$, as $t\rightarrow 1$ and
	\begin{align}\label{proper existence K upper bound}
	\sup_{0.1< t<1}\nu_\beta(\vphi_t)\leq \inf_{\mathcal H_\beta} \nu_\b+1.
	\end{align}
	Since $d_{1,G}(0,\vphi_t)$ is bounded,
$
	\sup_{0.1< t<1} d_{1,G}(0, \vphi_t)\leq C, 
$
	we have equivalently, 
	\begin{align*}
	\sup_{0.1< t<1} d_{1}(0, \tilde\vphi_t)\leq C.
	\end{align*}
	
	Note that the derivative of the log $K$-energy $\nu_\beta$ is log Futaki invariant $Fut_\beta$. Since the log $K$-energy $\nu_\beta$ is bounded below, so $Fut_\beta=0$ and $\nu_\beta$ is $G$-invariant, that is $\nu_\beta(\tilde\vphi_t)=\nu_\beta(\vphi_t)$.
	By Lemma 2.12, we bound $|J_\om( \tilde\vphi_t)|\leq C(n)\|\om\|_{\infty}d_1(0, \tilde\vphi_t)$ and similarly $|J_{-\theta}( \tilde\vphi_t)|$. Using the formula of the log $K$-energy (Definition 2.9)
	\begin{align*}
	\nu_{\beta}
	=E_\beta+J_{-\theta}+\frac{1}{V}\int_M( \mathfrak h+h_0)\om_0^n,
	\end{align*} we bound the entropy $E_\beta(\tilde\vphi_t), 0.1< t<1$.
	
\end{proof}

\subsubsection{Modified cone path by automorphisms}

We define 
\begin{align*}
\tilde\om_0:=\sigma_t^\ast\om_0=\om_0+i\p\bar\p f_t \text{ and }\sup_X f_t=0.
\end{align*}

\begin{lem}\label{e-ftintegral}It holds $e^{-f_t}\rightarrow 1$ in $L^p(\om^n_0)$ for any $p>1$.
	Also,
	$e^{-f_t}$ in $L^p(\om^n_{0}) \cap L^p(\om^n_{\theta})$ for any $p>1$.
\end{lem}
\begin{proof}
	The first conclusion follows from Lemma 3.15 in \cite{arXiv:1801.05907}.
	The second conclusion follows from  $\om^n_\theta\in L^{p_0}(\om^n_0)$ for some $p_0>1$.
\end{proof}

\begin{lem}
	The modified potential $\tilde\vphi_t$ satisfies the modified cone path equation
	\begin{equation}\label{tilde G twisted 2nd equ approximation}
	\left\{
	\begin{aligned}
	&\tilde F_t=\log\frac{\om_{\tilde\vphi_t}^n}{\om_{\theta}^n},\\
	&\tri_{\tilde\vphi_t} \tilde F_t=\tr_{\tilde\vphi_t}[\theta-\frac{1-t}{t}(\om_0+i\p\bar\p f_t)]-(\underline S_\b-\frac{1-t}{t}n).
	\end{aligned}
	\right.
	\end{equation} 
\end{lem}
\begin{proof}
	Taking $\sigma$-action on \eqref{2Twisted cone path}, we have
	\begin{align*}
	\tri_{\tilde \vphi_t} \log\frac{\om_{\tilde\vphi_t}^n}{\om_{\tilde\theta}^n}=\tr_{\tilde\vphi_t}[\tilde\theta-\frac{1-t}{t}\tilde\om_0]-(\underline S_\b-\frac{1-t}{t}n).
	\end{align*}
	Recall that, 
	$
	Ric(\om_{\theta})=\theta+2\pi(1-\beta)[D].
	$
	Since $\sigma$ fixes the divisor, we have
	\begin{align*}
	Ric(\om_{\tilde\theta})=\tilde\theta+2\pi(1-\beta)[D].
	\end{align*}
	Thus we have
	\begin{align*}
	\tri_{\tilde \vphi_t} \log\frac{\om_{\tilde\vphi_t}^n}{\om_{\theta}^n}=\tr_{\tilde\vphi_t}[\tilde\theta-Ric(\om_{\tilde\theta})+Ric(\om_{\theta})-\frac{1-t}{t}\tilde\om_0]-(\underline S_\b-\frac{1-t}{t}n),
	\end{align*}
	and we have proved the lemma.
\end{proof}

\begin{lem}\label{tilde G twisted 2nd equ approximation estimate}
	Suppose that $\tilde\vphi_t$ satisfies \eqref{tilde G twisted 2nd equ approximation}. Then there is a constant $C$ such that 
	\begin{align}\label{tilde G twisted 2nd equ approximation estimate quantities}
	\|\tilde\vphi_t\|_{\infty}, \quad \|\tilde F_t+f\|_{\infty}, \quad \sup_X |\p (\tilde F_t+f)|^2_\vphi, \quad \|\tr_{\om_{\theta}}\om_{\tilde\vphi_t}\|_{p;\om_{\theta}}\leq C,
	\end{align} 
	The constant $C$ depends on $\|\theta\|_\infty$, $\|\om_0\|_\infty$, $\alpha_1$, $\alpha_\beta$, $\underline S_\b$, $n$ and
	\begin{align}\label{tilde G twisted 2nd equ approximation dependence}
	\sup_{0.1< t<1} E_\beta(\tilde\vphi_t)=\frac{1}{V}\int_M\log\frac{\om^n_{\tilde\vphi_t}}{\om^n_{\theta}}\om_{\tilde\vphi_t}^{n}, \quad \|e^{-f}\|_{p_0;\om_{\theta}}.
	\end{align}
	with sufficiently large $p_0$.
\end{lem}
\begin{proof}
	We apply \thmref{a prioriestimates approximation} to \eqref{tilde G twisted 2nd equ approximation} to obtain \eqref{tilde G twisted 2nd equ approximation estimate quantities}.
	The $L^{p_0}$ integral of $e^{-f}$ is bounded, thanks to \lemref{e-ftintegral} and the entropy $E_\beta(\tilde\vphi_t)$ is bounded, due to \lemref{E beta tilde vphi t}. 
\end{proof}


\subsubsection{Closedness: $t\rightarrow 1$}

After taking $t\rightarrow 1$, $\tilde\vphi_t$ converges to $\tilde{\vphi}$ in $C^{1,\alpha,\beta}$ and $\tilde F_t$ converge to $\tilde{F}$ in $C^{0,\alpha,\beta}$. Furthermore, $i\p\bar\p\tilde \vphi_t\rightarrow i\p\bar\p\tilde\vphi$ and $\p (F_t+f_t)\rightarrow \p\tilde F$ weakly in $L^p$.
The associate volume ratio $\tilde F=\frac{\om_{\tilde\vphi}^n}{\om^n_\theta}$ is a weak solution of the log $K$-energy (Proposition 3.14 in \cite{arXiv:1801.05907}),
\begin{align}\label{tilde G twisted 2nd equ}
-\int_M \p \tilde F\wedge \p \eta\wedge \om_{\tilde\vphi}^{n-1}=\int_M -\eta \underline S_\b \om_{\tilde\vphi}^n +n\eta \theta\wedge \om_{\tilde\vphi}^{n-1}.
\end{align} 
We could adapt Theorem \ref{1alpha estimates} as below to conclude that the limit $\tilde\vphi$ has 2nd order estimate. Moreover, by Schauder estimate, $\tilde\vphi$ and $\tilde F$ are in $C^{2,\alpha,\beta}$.


\subsubsection{A second order estimate}\label{second order estimate w1p}
In this section, we consider the complex Monge-Amp\`ere equation,
\begin{align}\label{equation second order estimate w1p}
\frac{\om_{\vphi_\eps}^n}{\om^n_{\theta_{\eps}}}=e^{F_\eps}.
\end{align}
with $F_\eps\in W^{1,p}(\om^n_{\theta_{\eps}})$. The estimate extends the work of Chen-He \cite{MR2993005} via the integral method. The the proof requires the similar strategies in Section \ref{a priori estimates}, i.e. the weighted estimates. 


\begin{thm}\label{1alpha estimates}Suppose that $\vphi$ is a classical solution to \eqref{equation second order estimate w1p}.
	There exists a constant $C$ such that
	\begin{align*}
	\sup_X \tr_{\om_{\theta_{\eps}}}\om_{\vphi_\eps}\leq C.
	\end{align*}
	The constant $C$ depends on the quantities in \eqref{backgroundmetriccurvature}, and
	\begin{align*}
	\|\vphi_\eps\|_{C^{1,\a,\b}},\quad \|F_\eps\|_{W^{1,p}(\om^n_{\theta_{\eps}})}\text{ for }p>2n.
	\end{align*}
\end{thm}
\begin{proof}
	We omit the index $\eps$ in the proof. Recall \eqref{tritrvphiom}, there is a positive constant $C_0$ such that
	\begin{align*}
	\tri_\vphi [\log (\tr_{\om}\om_\vphi)+\phi] \geq - C_0\tr_\vphi\om+\frac{\tri F}{\tr_{\om}\om_\vphi}.
	\end{align*}
	We set $u=e^{-C\vphi+\phi}\tr_{\om}\om_\vphi$ with $C=C_0+1$, and we compute that
	\begin{align*}
	\tri_\vphi u &\geq u \tri_\vphi\log u
	=u[-Cn+C\tr_{\vphi}\om-C_0\tr_{\vphi}\om+\frac{\tri F}{\tr_{\om}\om_\vphi}]\\
	&=e^{-C\vphi+\phi}\tr_{\om}\om_\vphi[-Cn+\tr_{\vphi}\om+\frac{\tri F}{\tr_{\om}\om_\vphi}].
	\end{align*}
	Since $\tr_{\vphi}\om\geq e^{\frac{-F}{n-1}}(\tr_{\om}\om_\vphi)^\frac{1}{n-1}$, we have
	\begin{align*}
	\tri_\vphi u &\geq e^{-C\vphi+\phi}[-Cn\tr_{\om}\om_\vphi+e^{\frac{-F}{n-1}}(\tr_{\om}\om_\vphi)^\frac{n}{n-1}+\tri F].
	\end{align*}
	We apply the Cauchy inequality with epsilon to see that
	\begin{align*}
	\tr_{\om}\om_\vphi\leq \eps (\tr_{\om}\om_\vphi)^\frac{n}{n-1}+C(n,\eps).
	\end{align*}We choose $\eps$ such that $-Cn\eps+e^{\frac{-\sup_X F}{n-1}}<0$.
	As a consequence, letting $A=e^{-C\inf_X\vphi+\sup_X \phi}CnC(n,\eps)$, we have
	\begin{align*}
	\tri_\vphi u 
	&\geq e^{-C\vphi+\phi}[-CnC(n,\eps)+\tri F]\geq-A+e^{-C\vphi+\phi}\tri F.
	\end{align*}
	We write it in the integral form,
	\begin{align*}
	&\int_X (p-1)u^{p-2}|\p u|^2_\vphi \om_\vphi^n=\int_X u^{p-1}(-\tri_\vphi u)\om_\vphi^n\\
	&\leq \int_Xu^{p-1}  [ A-e^{-C\vphi+\phi}\tri F]\om_\vphi^n\\
	&=A\int_Xu^{p-1}   \om_\vphi^n-\int_Xu^{p-1} e^{-C\vphi+\phi+F}\tri F\om^n.
	\end{align*}
	
	Since $u|\p u|^2_\vphi\geq e^{-C\vphi+\phi}|\p u|_\om^2$, we have the left hand side is bounded below by
	\begin{align*}
	LHS&=\int_X (p-1)u^{p-2}|\p u|^2_\vphi \om_\vphi^n\geq\int_X (p-1)u^{p-3}|\p u|_\om^2e^{-C\vphi+\phi}\om_\vphi^n\\
	&\geq\int_X \frac{4}{p-1}|\p u^{\frac{p-1}{2}}|_\om^2e^{-C\vphi+\phi+F}\om^n.
	\end{align*}
	Then there is a constant depending on $C,\|\vphi\|_\infty,\|\phi\|_\infty,\|F\|_\infty$ such that
	\begin{align*}
	LHS\geq \frac{C_1}{p-1}\int_X|\p u^{\frac{p-1}{2}}|_\om^2\om^n.
	\end{align*}
	
	We apply the integration by parts formula to the second integral on the right hand side,
	\begin{align*}
	I&=-\int_Xu^{p-1} e^{-C\vphi+\phi+F}\tri F\om^n\\
	&=\int_X(\p(u^{p-1}),\p F)_\om e^{-C\vphi+\phi+F}\om^n+\int_Xu^{p-1} e^{-C\vphi+\phi+F}(\p(-C\vphi+\phi+F),\p F)_\om\om^n\\
	&:=I_1+I_2.
	\end{align*}
	Then we use the assumption that $\vphi,\phi,F$ are bounded and $C_2=\sup_X (e^{-C\vphi+\phi+F})$ to see
	\begin{align*}
	I_1\leq 2C_2\int_X u^{\frac{p-1}{2}} |\p u^{\frac{p-1}{2}}|_\om|\p F|_\om\om^n.
	\end{align*}
	Then by the H\"older inequality, we have
	\begin{align*}
	I_1\leq \frac{2C^2_2(p-1)}{C_1}\int_X u^{p-1}|\p F|^2_\om\om^n+ \frac{C_1}{2(p-1)}|\p u^{\frac{p-1}{2}}|^2_\om\om^n.
	\end{align*}
	Thus after combining both the inequalities from the LHS and the RHS, we obtain that
	\begin{align*}
	\frac{C_1}{2(p-1)}\int_X|\p u^{\frac{p-1}{2}}|_\om^2\om^n&\leq A\int_Xu^{p-1}   \om_\vphi^n+\frac{2C^2_2(p-1)}{C_1}\int_X u^{p-1}|\p F|^2_\om\om^n+I_2.
	\end{align*}
	We then estimate the second term 
	\begin{align*}
	I_2&=\int_Xu^{p-1} e^{-C\vphi+\phi+F}(\p(-C\vphi+\phi+F),\p F)_\om\om^n\\
	&\leq C_2 \int_Xu^{p-1} |\p(-C\vphi+\phi+F)|_\om|\p F|_\om\om^n\\
	&\leq C_2 [\int_Xu^{p-1} (C|\p \vphi|_\om+|\p \phi|_\om)|\p F|_\om\om^n+\int_Xu^{p-1} |\p F|^2_\om\om^n ] .
	\end{align*}
	Since $|\p \vphi|_\om$ is bounded, and so is (see Section 2.0.1 in \cite{MR4020314}, \cite{MR3911741}), we get
	\begin{align*}
	I_2\leq C_3 \int_Xu^{p-1} (|\p F|_\om+|\p F|^2_\om)\om^n .
	\end{align*}
	The constant $C_3$ depends on $C$, $\|\vphi\|_{W^{1,\infty}(\om)}$, $\|\phi\|_{W^{1,\infty}(\om)},\|F\|_{L^{\infty}(\om)}$.

	Letting $p>2$ and $v=u^{\frac{p-1}{2}}$ and using the bounded of $F$, we have that 
	\begin{align*}
	\int_X|\p v|_\om^2\om^n&\leq C_4(p-1)^2[
	\int_X v^2(1+|\p F|_\om+|\p F|^2_\om)\om^n ].
	\end{align*}
	Substituting into the Sobolev inequality \eqref{Sobolev inequality}, we obtain
	\begin{align*}
	\|v\|_{\frac{2n}{n-1}}^2&\leq C(\|\p v\|_{2}^2+\|v\|_{2}^2)\\
	&\leq C_5(p-1)^2[
	\int_X v^2(1+|\p F|_\om+|\p F|^2_\om)\om^n ].
	\end{align*}
	Since $F\in W^{1,p_0}$ for any $p_0\geq 1$, we choose $p_0>n$ such that
	\begin{align*}
	&\int_X v^2(1+|\p F|_\om+|\p F|^2_\om)\om^n \leq \|v^2\|_{\frac{p_0}{p_0-1}}\|(1+|\p F|_\om+|\p F|^2_\om)\|_{p_0}.
	\end{align*}
	As as result, we obtain the inequality
	\begin{align*}
	\|v\|_{\frac{2n}{n-1}}\leq C_6(p-1)\|v\|_{\frac{2p_0}{p_0-1}}.
	\end{align*}
	Return back to $u$ and denote $q=\frac{2p_0}{p_0-1}$ and $b=\frac{2n(p_0-1)}{(n-1)2p_0}>1$, we have
	\begin{align*}
	\|u\|_{bq\frac{p-1}{2}}\leq C_7(p-1)^{\frac{2}{p-1}}\|u\|_{q\frac{p-1}{2}}.
	\end{align*}
	The iteration procedure as in Section \ref{Gradient estimates} shows that
	\begin{align*}
	\|u\|_{\infty}\leq C_8\|u\|_{1}.
	\end{align*}
	Recall that $u=e^{-C\vphi+\phi}\tr_{\om}\om_\vphi$. We have
	\begin{align*}
	\|\tr_{\om}\om_\vphi\|_{\infty}\leq C_9\|\tr_{\om}\om_\vphi\|_{1}.
	\end{align*}
	The $L^1$-norm is estimated as following
	\begin{align*}
	\|\tr_{\om}\om_\vphi\|_{1}&=\int_X (n+\tri_{\om}\vphi) \om^n\leq \int_X e^{\vphi-\inf_X\vphi}(n+\tri_{\om}\vphi)\om^n\\
	&=n\int_X e^{\vphi-\inf_X\vphi}\om^n
	\leq nVe^{\osc_X\vphi},
	\end{align*}
	since $\vphi\in L^\infty$. Therefore, the second order estimate is obtained.
\end{proof}


\section{Singular constant scalar curvature K\"ahler metrics}\label{Singular constant scalar curvature Kahler metrics}
In this section, we aim to study singular cscK metrics, which provides a possible canonical representative in arbitrary cohomology class. It would be an extension of Calabi's programme on cscK metrics in K\"ahler classes to big cohomology classes, see definitions below.

The problem of finding singular K\"ahler-Einstein metrics is related to the minimal model program in birational geometry.
The definition of singular cscK metric would generalise these well-studied metrics. There are many literatures studying existence of singular K\"ahler-Einstein metrics in the canonical class on a minimal projective manifold of general types, c.f. \cite{MR3956691,MR3090260,MR2746347,MR2505296,MR2869020} and references therein. In this situation, the manifolds do not have a definite first Chern class and these results generalise Yau's resolution \cite{MR480350} of Calabi conjecture in K\"ahler setting with zero first Chern classes, and also Aubin \cite{MR681859} and Yau's work \cite{MR480350} for negative first Chern classes.

At the beginning, we will include a short exposition on pluripotential theory.
\subsection{K\"ahler and Nef classes}
Recall that $X$ is a smooth K\"ahler manifold. In this Section, we use $\om_K$ to denote the smooth K\"ahler metric on $X$. We denote by $[\om_K]$ the K\"ahler class containing $\om_K$. By definition, the K\"ahler cone is the set of all K\"ahler classes.

Let $\Om$ be a real $(1,1)$-cohomology class. 
\begin{defn}A class $\Om$ is \textit{nef}, if it lies in the closure of the K\"ahler cone. That is the cohomology class $\Om+t[\om_K]$ is K\"ahler for all $t>0$.
\end{defn}

\subsection{Pseufo-effective classes}
\begin{defn}A current is defined to be a differential form with distribution coefficients. \end{defn} The space of $(p,q)$-currents is the dual of smooth $(n-p,n-q)$-forms. The positivity of a $(p,p)$-current $T$ is understood in the distribution sense, that is $T\wedge \sum_{j=1}^{n-p}iT_j\wedge\bar T_j$ is positive for all $(1,0)$-forms $T_j$, $1\leq j\leq n-p$. The closedness notion of a current are also understood in the same way.
\begin{defn}
A class $\Om$ is called \textit{pseudo-effective (psef)}, if it contains a closed positive current. 
\end{defn}Clearly, a nef class is psef. The psef cone is the space of psef classes. Note that the K\"ahler cone could be strictly smaller included in the interior of the psef cone.

The wedge product for closed positive currents is defined to be the non-pluripolar part in \cite{MR886812}. It is well-defined and closed on K\"ahler manifolds.

\begin{defn}
A positive current with \textit{minimal singularities} is defined to be less singular than any other positive currents in $\Om$, that is the upper envelope
\begin{align*}
V_{\theta}:=\sup\{\vphi\vert\vphi\in PSH(X,\theta),\quad\sup_X\vphi\leq 0\}.
\end{align*} 
\end{defn} A psef class always has a positive current with minimal singularities. We denote by $\Om_{min}$ the set of positive currents with minimal singularities.

We choose $\om_{sr}$ to be a smooth, closed $(1,1)$-form in $\Om$. Any closed positive current in $\Om$ could be written as $T=\om_{sr}+i\p\bar\p\vphi$ with a $\om_{sr}$-plurisubharmonic function $\vphi$. We denote by $PSH(X,\omega_{sr})$ the set of such $\om_{sr}$-plurisubharmonic functions $\vphi$.

\subsection{Big classes}
\begin{defn}
We say a class $\Om$ is \textit{big}, if it contains a \textit{K\"ahler current}, i.e. a closed positive $(1,1)$-current $T$ satisfying $T\geq t\om_K$ for some $t>0$.
\end{defn} Obviously, a big class is psef. In the other direction, a psef class $\Om$ is big if there exists a $T\in\Om$ such that the non-pluripolar product of $n$-copies of $T$ is not identically zero, see \cite{MR1945706}.

The volume $Vol(\Om)$ of a big class $\Om$ is defined to be  the $n$-times non-pluripolar product for currents in $\Om_{min}$. Since the integration by parts formula works for the currents in $\Om_{min}$, the volume $Vol(\Om)$ is independent of the choice of currents in $\Om_{min}$ and it is a non-negative number. 
\begin{defn}
A current $\om_{sr}+i\p\bar\p\vphi$ is said to have \textit{analytic singularities} if the $\om_{sr}$-plurisubharmonic functions $\vphi$ is locally written as the sum of a smooth function and $c\log\sum_{j=1}^N|f_j|^2$ with $c>0$ and $f_j$ being holomorphic functions. 
\end{defn} 
We denote by $\Om_{analytic}$ the set of strictly positive currents with analytic singularities. By Demailly's regularisation theorem \cite{MR1158622}, $\Om_{analytic}$ is not empty when $\Om$ is big.
\begin{defn}
The \textit{ample locus} of the big class $\Om$ is defined to be the set of points $p\in X$ such that there exists an $\om_\vphi:=\om_{sr}+i\p\bar\p\vphi\in \Om_{analytic}$ such that $\vphi$ is smooth around $p$. It is denoted by $Amp(\Om)$.
\end{defn} 
We fix an $\om_+=\om_{sr}+i\p\bar\p\vphi_+$ in $\Om_{analytic}$. Then we have
\begin{lem}
The following identity holds.
\begin{align}
PSH(X,\om_{sr})\simeq PSH(X,\om_{+})
\end{align}
\end{lem}
\begin{proof}
It is clear that any $\vphi\in PSH(X,\om_{+})$ maps to a element $\vphi+\vphi_{+}$ in $PSH(X,\om_{sr})$, and vice versa.
\end{proof}
\subsubsection{Big and nef}
On a K\"ahler manifold $(X,\om_K)$, a nef class $\Om$ is big if and only if $\int_X\Om^n>0$. By definiton, $\Om+t[\om_K]$ has a K\"ahler metric for all $t>0$, denoted by 
\begin{align*}
\om_{sr}+t\om_K+i\p\bar\p \phi_t.
\end{align*} Note that $\om_{sr}+t \om_K$ is in general not positive. It is also shown in \cite{MR2113021} that a big and nef class contains a K\"ahler current, which is smooth outside an analytic set $Z$ and with logarithmic poles along $Z$.

\subsubsection{Big and Semi-positive}
We say that the big class $\Om$ is semi-positive, if it admits a smooth closed $(1,1)$-form representative.

If we have a K\"ahler metric $\om_K$ on $X$ and let $\om_{sr}$ be a smooth represntative in $\Om$, we define 
\begin{align}\label{bigsemipositiveapp}
\om_t :=\om_{sr}+t \om_K,
\end{align}
which is K\"ahler for all $t>0$.


\subsection{Alpha invariants}\label{Alpha invariants}

When $\Om:=[\om_K]$ is a K\"ahler class, Tian defined the alpha invariant \cite{MR894378} as follows
\begin{align}\label{alpha1}
\alpha_1:=\sup\{\alpha> 0\vert\exists C\text{ such that }\sup_{\vphi\in \mathcal H(\om_K)}\int_X e^{-\alpha (\vphi-\sup_X\vphi)}\om_K^n\leq C\}.
\end{align}
Here, $\mathcal H(\om_K)$ is the space of smooth K\"ahler potentials $\vphi$ such that $\om_K+i\p\bar\p\vphi$ is a K\"ahler metric in $\Om$.

In the conical setting, the definition of the alpha invariant is generalised by replacing $\om_K$ in \eqref{alpha1} with the  K\"ahler cone metric $\om_\theta$. 
\begin{defn}\label{log alpha invariant}
	The log alpha invariant in $\mathcal H_\beta$ is defined as
	\begin{align*}
	\alpha_\beta:=\sup\{\alpha>0\vert\exists C\text{ such that}\sup_{\vphi\in \mathcal H_\beta(\om_\theta)}\int_X e^{-\alpha (\vphi-\sup_X\vphi)}\om_\theta^n\leq C\}.
	\end{align*}
\end{defn}
Recall that $\om_\theta^n=e^{h_K}|s|_h^{2\b-2}\om^n_K$, c.f. \eqref{theta}. Here $h_K$ is a smooth function, $s$ is a defining section of $D$ and $h$ is a Hermitian metric on $L_D$.
The definition of the invariant $\alpha_\beta$ does not depend on the choice of $\om_K, h_K, h$, and also $\om_\theta$. 

\begin{rem}
In the appendix in \cite{MR2484031}, Demailly showed that the alpha invariant coincides with the log canonical threshold, which was introduced in \cite{MR1852009}. This result was extended to the log case, see Appendix A in Berman \cite{MR3107540}. 
\end{rem}

\begin{rem}
The alpha invariant is extended to K\"ahler classes  on log fano varieties in \cite{MR3956691}.
\end{rem}



\subsubsection{Alpha invariants in big classes}

Motivated from Definition~\ref{log alpha invariant}, we extend the alpha invariant to a class $\Om$, which is merely big. Let $\om_{sr}$ be a smooth closed $(1,1)$-form in $\Om$. In general, $\om_{sr}$ can not be non-negative. 

We choose $\mu$ to be a positive measure on $X$, which puts no mass on pluripolar subsets. In addition, we require $\mu(X)$ equals to the volume of the big class $\Om$. According to \cite{MR2746347}, there exists a unique closed positive current $T$ in $\Om$ such that its non-pluripolar Monge-Amp\`ere measure is $\mu$. Furthermore, if $\mu$ is $L^p$ for some $p>1$ in term of a Lebesgue measure, then $T$ has minimal singularities, i.e. $T\in\Om_{min}$. The solution is then globally continuous, if $\Om$ is also semi positive, see \cite{MR2505296}. If $\mu$ is a smooth strictly positive volume form and $\Om$ is big and nef, then $T$ is smooth on the ample locus of $\Om$.

We define alpha invariant in the big classes.
\begin{defn}\label{big alpha invariant}
\begin{align*}
\alpha:=\sup\{\alpha> 0\vert\exists C\text{ such that }\sup_{\vphi\in PSH(X,\om_{sr})}\int_X e^{-\alpha (\vphi-\sup_X\vphi)}d\mu\leq C\}.
\end{align*}
\end{defn}

We remark that in the conical case, the measure $\mu$ is chosen to be 
\begin{align*}
\mu=\om_\theta^n=e^{h_K}|s|_h^{2\b-2}\om^n_K,
\end{align*}
which is $L^p$ for some $p>1$ in term of a Lebesgue measure. The log alpha invariant in big classes will also be denoted by $\alpha_\beta$.

We will show that Definition \ref{big alpha invariant} is well-defined, i.e. $\alpha>0$ is independent of the choice of $\om_{sr}$ and $C$ is uniform in the following proposition. It is a global version of uniform exponential ntegrability of the plurisubharmonic functions in big classes.
\begin{prop}\label{alpha1bigprop main}
Assume that $\mu$ is $L^p$ for some $p>1$ in term of a Lebesgue measure.
There exists two constants $\alpha_1$ and $C$ such that
\begin{align}\label{alpha1bigprop}
\int_X e^{-\alpha (\vphi-\sup_X\vphi)}d\mu\leq C,\quad \forall \vphi\in PSH(X,\om_{sr}).
\end{align}
\end{prop}
\begin{proof}
We denote $\psi=\vphi-\sup_X\vphi\leq 0$. First of all, from $\om_{sr}+i\p\bar\p\psi\geq0$, we have
\begin{align}
-\tri_{\om_K}\psi\leq\tr_{\om_K}\om_{sr}.
\end{align}
in the distribution sense. By using the elliptic estimate (see Proposition 4.8 and Corollary 4.9 in \cite{MR3761174}), we know there is a constant $A_1$ such that
\begin{align}
0=\sup_X\psi\leq \frac{1}{V_K}\int_X\psi\om_K^n+ A_1.
\end{align}
Here the volume $V_K$ is determined by the K\"ahler class $[\om_K]$.
Staying in a fixed ball $B_r(x)$ centred at $x\in X$ of radius $r\leq 1$, we have 
\begin{align}
\sup_{B_r(x)}\psi\geq \frac{-V_KA_1}{Vol_K(B_r(x))}:=A_2.
\end{align}
Then we could always find a point $y\in B_r(x)$ such that
\begin{align}\label{alpha1bigprop lower}
\psi(y)\geq \frac{A_2}{2}.
\end{align}
Since $\om_{sr}$ is closed and smooth, it has local smooth potential function $\phi$ such that $\om_{sr}=i\p\bar\p\phi$ in $B_r(x)$ and $\phi(x)=0$. Then we have $\sup_{B_r(x)}|\phi|\leq A_3$. 

We set $u=\phi+\psi-A_3$. In sum, we obtain that $u\leq 0$ on $B_r(x)$ and 
\begin{align}
u(y)\geq \frac{A_2}{2}-2A_3:=A_4.
\end{align} We notice that the constants $A_1,A_2,A_3$ only depend on $X,\om_K,\om_{sr}$. Then we make a use of H\"omander's Theorem 4.4.5 in \cite{MR1045639} to get a local estimate on a small ball $B_{\rho }(y)$ with $\rho \leq \frac{1}{2}dist_0(y,\p B_r(x))$,
\begin{align}\label{local integrability}
\int_{B_{\rho}(y)}e^{-\alpha u} d\mu \leq C. 
\end{align}
Then constant $C$ depends on $X,\om_K,\om_{sr},d\mu$, while, the radius $\rho$ depends on the exponential exponent $\alpha$ and $A_4$.

Actually, \eqref{local integrability} also follows from Skoda's integrability theorem, saying that if the Lelong number of the plurisubharmonic function $u$ at the point $y$ is less than $2$, then $e^{-u}$ is integrable with respect to the Lebesgue measure in a neighbourhood of $y$. Recall the Lelong number of $u$ at $y$ can be expressed as 
\begin{align*}
\lim_{\rho\rightarrow0^+}\frac{\max_{{|z-y|}=\rho} u(z)}{\log \rho}.
\end{align*}
By convexity of the plurisubharmonic function,
\begin{align*}
&\leq \frac{\max_{{|z-y|}=r} u(z)-\max_{{|z-y|}=\rho} u(z)}{\log r-\log \rho}\\
&\leq \frac{-A_4}{\log r-\log \rho}.
\end{align*}
Hence, \eqref{local integrability} is obtained from applying Skoda's theorem to $\alpha u$, as long as $\log\rho<\log r+\frac{1}{2}\alpha A_4$, and the H\"older inequality.

At last, we apply a standard argument of a partition of unity to glue the local estimates on balls with radius $r$ together to get a global estimate.
\end{proof}

\begin{rem}
Skoda's integrability theorem appeared in \cite{MR352517}. H\"ormander \cite{MR1045639} gave a uniform version of this estimate for plurisubharmonic functions in a unit ball in the complex $n$-space, which are bounded above and normalised at the centre of the ball. A uniform version of Skoda's theorem was proved in Zeriahi \cite{MR1857051}. Further developments of Skoda's estimates, we refer to \cite{MR2742678,MR2562773,MR3191972}.
\end{rem}

\begin{rem}
When $\Om$ is a K\"ahler class, the uniform inequality is proved in Proposition 2.1 in \cite{MR894378}. 
\end{rem}

We will show that the alpha invariant is continuous on the big cone.
\begin{lem}\label{alphacontinuity}
Let $\Om$ be a big class on a K\"ahler manifold $(X,\om_K)$.
Then for any sufficiently small $\eps_0>0$, there exists an $\tilde t>0$ such that for any $0\leq t\leq \tilde t$, we have
\begin{align}
|\alpha(\Om+t[\om_K])-\alpha(\Om)|<\eps_0.
\end{align}
\end{lem}
\begin{proof}
	By virtue of $PSH(\Om)\subset PSH(\Om+t[\om_0])$, we have
	\begin{align}\label{alphamonotonicity}
	\alpha(\Om+t[\om_K])\leq\alpha(\Om), \quad\forall t>0.
	\end{align}
	
	Let $b<1$ be a constant, which will be determined later. Since $\Om$ is big, we could choose $t$ small such that $b \Om-t [\om_K]>0$. Adding $\Om+t[\om_K]$ to $b \Om-t [\om_K]$, we have by \eqref{alphamonotonicity} that
		\begin{align*}
	\alpha((1+b)\Om)\leq\alpha(\Om+t[\om_K]).
	\end{align*}
	The scaling property of the alpha invariant implies that
			\begin{align}\label{alphaconti1}
\alpha(\Om)\leq(1+b)\alpha(\Om+t [\om_K]).
	\end{align}

On the other hand, due to psefness of a big class, we see that 
\begin{align*}
b\Om+t[\om_K]>0.
\end{align*}
Then if $\Om+t[\om_K]$ is deducted by $b \Om+t [\om_K]$, we still have a positive form $(1-b)\Om$. So $PSH((1-b)\Om)\subset PSH(\Om+t [\om_K])$. By \eqref{alphamonotonicity}, we have again
\begin{align}\label{alphaconti2}
\alpha(\Om+t [\om_K])\leq \alpha((1-b)\Om)=\frac{\alpha_1(\Om)}{1-b}.
\end{align}
Combining \eqref{alphaconti1} and \eqref{alphaconti2}, we obtain that
\begin{align*}
0\leq \alpha(\Om)-\alpha(\Om+t [\om_K])\leq\frac{b}{1-b}\alpha(\Om).
\end{align*}
The lemma is proved as long as we choose $b=(1+\frac{\alpha(\Om)}{\eps_0})^{-1}$.
\end{proof}

\begin{rem}
It is proved in Dervan \cite{MR3564131} that the alpha invariant is continuous on the K\"ahler cone. 
\end{rem}


\subsection{Twisted $J$-metrics and properness}\label{Twisted J-metrics and properness}
In \cite{MR3412393}, we introduced the twisted $J$-functional and obtained its critical points by solving the twisted $J$-flow. In this section, we extend these results to the conical setting. As a result, we will obtain criteria to check where the log $K$-energy is proper in a given cohomology class. The new feature of the result is that we will deal with cone singularities in the proof.

\begin{defn}Let $\gamma$ be a non-negative constant.
For any $\vphi\in\mathcal H_\beta(\om)$, we define
\begin{align*}
J^\gamma_{-\theta}(\vphi)=\underline S_\beta \cdot D(\vphi)+  j_{-\theta}(\vphi)+\gamma J_{\om_\theta}(\vphi).
\end{align*}
The notions above appear in \eqref{IJ functionals} and the definition of log $K$-energy, Definition \ref{logKenergy}. The references metric we choose in the definition is $\om_\theta$.
\end{defn}
Then its critical points are the twisted $J$-metrics.
\begin{defn}
We say $\om_\vphi$ is a twisted $J$-metric, if it 
satisfies the following equation
\begin{align}\label{lma}
n\cdot\theta\wedge\om^{n-1}_\vphi=c_\gamma \cdot \om^{n}_\vphi+\gamma\om_\theta^n.
\end{align}
The constant $c_\gamma$ is a topological constant determined by $$c_\gamma=n\cdot\underline S_\beta-\gamma,$$
and $\theta\in  C_1(X,D)$.
\end{defn}
Note that the smooth $(1,1)$-form $\theta$ and the background metric $\om_\theta$ are defined in Section \ref{Background metric}.


We make use of the following conical $J$-flow to find a solution to the critical equation \eqref{lma}. The proof of convergence of the conical $J$-flow is an adaption of Section 10 in \cite{MR3412393} to allow cone singularities.
\begin{theorem}\label{constgeodesicray big} We let $\om_{c}$ be a K\"ahler cone metric.
	Assume the following conditions hold
	\begin{enumerate}
		\item $-\theta>0$,
		\item $(-c_\gamma\cdot\om_c+(n-1)\theta)\wedge\om_c^{n-2}>0$.
	\end{enumerate} Then the conical $J$-flow for $\om_\vphi:=\om_c+i\p\bar\p\vphi$,
	 \begin{align}\label{jflow}
\frac{\p\vphi}{\p t}&=-c_\gamma+\tr_{\om_\vphi}\theta
 -\gamma\frac{\om_\theta^n}{\om^n_\vphi}.
 \end{align}
 converges and the functional $J^\gamma_{-\theta}(\vphi)$ has lower bound in $\mathcal H_\beta(\om_c)$.
	\end{theorem}
\begin{proof}
\textbf{Step 1: short time existence.}
We could choose $\vphi_0\in C^{2,\a,\beta}$ and linearise \eqref{jflow} at $\vphi_0$. The linearisation flow is
\begin{align}\label{Loperator}
\frac{\p^2\vphi}{\p t^2}=L_{\vphi}(\frac{\p\vphi}{\p t}):=[-g_{\vphi}^{k\bar j}g_{\vphi}^{i\bar l}\theta_{k\bar l}+\gamma\frac{\om_\theta^n}{\om^n_{\vphi}}g_{\vphi}^{ i\bar j}](\frac{\p\vphi}{\p t})_{i\bar j}.
 \end{align}
 It is a parabolic operator by condition $(1)$. The short time existence of the conical $J$-flow \eqref{jflow} follows from parabolic Schauder estimates for cone metrics \cite{MR3348827} and implicit function theorem \cite{MR3010550}. 
 
 \textbf{Step 2: a priori estimates.}
 By using the linearisation equation \eqref{Loperator}, we have from the cone maximum principle that the bound of $\frac{\p\vphi}{\p t}$. 
 
 The lower bound of $\om_\vphi$ is obtained from the following inequality, 
 \begin{align*}
 \min_M \dot\vphi(0)\leq \dot\vphi(t)&=-c_\gamma+\tr_{\om_\vphi}\theta
 -\gamma\frac{\om_\theta^n}{\om^n_\vphi}\leq -c_\gamma+\tr_{\om_\vphi}\theta.
 \end{align*}
 Then we get $-\tr_{\om_\vphi}\theta\leq C$, which implies the lower bound of $\om_\vphi$. Here we use the condition $(1)$ that $\theta<0$.

 Now we start to get the upper bound of $\om_\vphi$. We choose $\chi$ to be a K\"ahler cone metric.
We set $A=\tr_\chi\om_\vphi$ and compute 
\begin{align*}
(\p_t-L_\vphi)\log A
&=\frac{\p_t A+g_\vphi^{k\bar j}g_\vphi^{i\bar l}\theta_{i\bar j}A_{k\bar l}}{A} -\frac{g_\vphi^{k\bar j}g_\vphi^{i\bar l}\theta_{i\bar j}A_kA_{\bar l}}{A^2}\nonumber\\
&-\gamma\frac{\om_\theta^n}{\om^n_\vphi}
\frac{\tr_\vphi A}{A}+\gamma\frac{\om_\theta^n}{\om^n_\vphi}\frac{g_\vphi^{ k\bar l}A_kA_{\bar l}}{A^2}.\nonumber
\end{align*} 
After following the calculation (43-47) and Lemma 7 in \cite{MR3412393}, we have
\begin{align}\label{ptLcs}
(\p_t-L_\vphi)\log A
&\leq \frac{1}{A}\{\chi^{i\bar j}g^{p\bar q}_\vphi (\theta_{p\bar q})_{i\bar j}
+\gamma\chi^{i\bar j}R_{ i \bar j}(\om_\theta)\frac{\om_\theta^n}{\om^n_\vphi}\\
&
+ g_\vphi^{k\bar j}g_\vphi^{i\bar l}\theta_{i\bar j}{R^{p\bar q}}_{k\bar l}(\chi) g_{\vphi p\bar q}
-\gamma\frac{\om_\theta^n}{\om^n_\vphi}
g_\vphi^{ k\bar l} {R^{p\bar q}}_{k\bar l}(\chi) g_{\vphi p\bar q}\}\nonumber.
\end{align}

We examine each terms on the right hand side of \eqref{ptLcs}. The first term is bounded above, since $\om_\vphi$ has lower bound and $\theta$ is a smooth form.

We treat the third and forth terms, by choosing $\chi$ to be a K\"ahler cone metric in \thmref{Geometric asymptotic}. From the geometrically polyhomogeneous of $\chi$, there is a bounded function $\phi$ such that
$|R_{i\bar j k\bar l}(\chi )|\leq ( \chi_\phi)_{i\bar j} \cdot (\chi )_{k\bar l}
$ with $ \chi_\phi=C\cdot \chi +i\p\bar\p \phi$. As a result, these terms will be controlled by $L\phi$.

The second term involve $Ric(\om_\theta)$, which blows up only along the divisor. So we perturb the left hand side of \eqref{ptLcs} such that it could only achieve maximum outside the divisor.
We choose $S=\|s\|^{\kappa}$ with $\kappa\leq\alpha\beta$. Due to Lemma 2.4 in \cite{MR3405866}, $S$ goes faster than $$Z:=\log A-C\vphi-\phi.$$ Actually, $S$ goes to infinity as $z$ approaches the divisor $D$ and $Z$ is bounded. We also have from the same lemma that $i\p\bar\p S\geq -C_1\om_c$ that 
\begin{align*}
LS&=[-g_{\vphi}^{k\bar j}g_{\vphi}^{i\bar l}\theta_{k\bar l}+\gamma\frac{\om_\theta^n}{\om^n_{\vphi}}g_{\vphi}^{ i\bar j}]S_{i\bar j}\geq  -C_2.
\end{align*}

Now we rewrite the inequality above at the maximum point $p$ of $$\log A-C\vphi-\phi+S,$$ which must locate outside the divisor $D$. By using $Ric(\om_\theta)=\theta$ at $p$ (see \eqref{Rictheta}) and the lower bound of $\om_\vphi$ with respect to the cone metric $\om_\theta$, we get that
$\frac{1}{A}\gamma\chi^{i\bar j}R_{ i \bar j}(\om_\theta)\frac{\om_\theta^n}{\om^n_\vphi}$ is bounded above.

We then compute that
\begin{align*}
&(\p_t-L_\vphi)\vphi\\
&=-c_\gamma+2\tr_{\om_\vphi}\theta
-g_\vphi^{k\bar j}g_\vphi^{i\bar l}\theta_{i\bar j} g_{c k\bar l}
 -\gamma\frac{\om_\theta^n}{\om^n_\vphi}(n+1) 
 + \gamma\frac{\om_\theta^n}{\om^n_\vphi}\tr_{\om_\vphi}\om_c\nonumber.
\end{align*}

In conclusion, at the maximum point $p$ of $Z+S$, we obtain that
\begin{align}\label{ptLcs inequality}
0\leq&(\p_t-L)[\log A-C(\vphi-\vphi_c)-\phi+S](p)\\
&\leq C_2
-C[-c_\gamma+2\tr_{\om_\vphi}\theta
-g_\vphi^{k\bar j}g_\vphi^{i\bar l}\theta_{i\bar j} g_{c k\bar l} -\gamma\frac{\om_\theta^n}{\om^n_\vphi}(n+1) ].\nonumber
\end{align}

Diagonalise both $\om_c$ and $\om_\vphi$. By condition $(2)$, $$\sum_{i\neq k}\theta_{i\bar i}\geq 2\eps+c_\gamma.$$ 
In one case, we assume
$
-c_\gamma+2\tr_{\om_\vphi}\theta
-g_\vphi^{k\bar j}g_\vphi^{i\bar l}\theta_{i\bar j} g_{c k\bar l}\leq \eps
$. Written in terms of eigenvalues $\lambda_i$ of $\om_\vphi$, it becomes $$\sum_i(\lambda_i^{-2}-2\lambda_i^{-1})\theta_{i\bar i}\geq -c_\gamma-\eps.$$ 
Adding these two inequalities together, we have 
$-2\lambda_k^{-1}\theta_{k\bar k}\geq \eps$. So we get the upper bound of $\om_\vphi(p)$. 

In the other case, we assume 
$
-c_\gamma+2\tr_{\om_\vphi}\theta
-g_\vphi^{k\bar j}g_\vphi^{i\bar l}\theta_{i\bar j} g_{c k\bar l}>\eps.
$  Then \eqref{ptLcs inequality} implies that
\begin{align*}
-C\gamma(n+1)\frac{\om_\theta^n}{\om^n_\vphi}\leq C_2-C\eps.
\end{align*}
With the help of huge $C$ and lower bound of $\om_\vphi$, we also get the upper bound of $\om_\vphi(p)$.

Since $p$ is the maximum point of $\log A-C\vphi-\phi+S$ and the functions $ \phi, S$ are all bounded, we conclude that 
\begin{align*}
\om_\vphi(x)\leq C_3 e^{ C_4(\vphi(x)-\inf\vphi)}\om_c(x), \quad\forall x\in M.
\end{align*}

In order to obtain the $L^\infty$-estimate, it is sufficient to modify the proof in Section 10.3 in \cite{MR3412393} by replacing the smooth metric $\om$ there with a K\"ahler cone metric. Since the potential $\vphi$ is a $C^{2,\a,\beta}$ function, the integration by parts formula in the proof still works. Then we could follow the argument there to obtain the $L^\infty$-estimate. 

\end{proof}


\begin{defn}
The log $K$-energy is said to be $J$-proper in a cohomology class $[\om]$, if there are two positive constants $A$ and $B$ such that $\nu_\beta(\vphi)\geq AJ_{\om}(\vphi)-B$ for all $\vphi\in  \mathcal H_\beta(\om)$.
\end{defn}
In particular, we let $[\om]$ be K\"ahler and $\mathcal H_\beta(\om)\subset PSH(\om)$.
Then we interpret the conditions in \thmref{constgeodesicray big} into cohomology conditions and make use of it to obtain properness of the log $K$-energy.

\begin{prop}\label{properness criteria}Let $\Om=[\om_K]$ be a K\"ahler class. Assume that there is a constant $\eta$ satisfies that
\begin{align}\label{cohomology conditions Kahler}
\left\{
\begin{array}{lcl}
	&(i)&0\leq \eta<\frac{n+1}{n}\alpha_\beta,\\
	&(ii)& C_1(X,D)<\eta\Omega,\\
	&(iii)& (-n\frac{C_1(X,D)\cdot\Omega^{n-1}}{\Omega^{n}}+\eta)\Om+(n-1)C_1(X,D)>0.
\end{array}
\right.
\end{align}
Then the log $K$-energy is $J$-proper in $\Om$. Precisely, we have.
\begin{align*}
\nu_\beta(\vphi)
&\geq (\frac{n+1}{n}\alpha_\beta-\eta)J_{\om_K}(\vphi)-B, \quad \forall \vphi\in \mathcal H_\beta(\om_K).
\end{align*}
The constant $B$ depends on the constant $C$ in Definition \eqref{log alpha invariant} and the lower bound of $J_{-\theta,\eta}(\vphi)=J_{-\theta}(\vphi)+\eta J_{\om_K}(\vphi)$.
\end{prop}

\begin{proof}
We apply \thmref{constgeodesicray big} with $\gamma=0$ and choose $[\tilde\theta]=C_1(X,D)-\eta\Om$.
We need to derive the following conditions from the cohomology conditions \eqref{cohomology conditions}, i.e. there exist $\tilde\theta$ such that
	\begin{enumerate}
		\item $-\tilde\theta>0$,
		\item $(-n(\underline S_\b-\eta)\cdot\om_K+(n-1)\tilde\theta)\wedge\om_K^{n-2}>0$.
	\end{enumerate}

From $(ii)$ in \eqref{cohomology conditions}, we choose $\theta_1=Ric(\om_{\theta_1})-2\pi(1-\beta)[D]\in C_1(X,D)$ and a K\"ahler cone metric $\om_1\in\Om$ such that 
\begin{align*}
\tilde\theta_1=\theta_1-\eta\om_K<0.
\end{align*}
By $(iii)$ in \eqref{cohomology conditions}, there is $\theta_2\in C_1(X,D)$ and a K\"ahler cone metric $\om_2\in\Om$ such that
\begin{align*}
(-n\underline S_\b+\eta)\om_2+(n-1)\theta_2>0.
\end{align*}
We set 
\begin{align*}-n(\underline S_\b-\eta)\om_3=(-n\underline S_\b+\eta)\om_2+(n-1)\theta_2-(n-1)\tilde\theta_1.
\end{align*}
It is direct to verify that $\om_3$ is a K\"ahler cone metric in $\Om$ and 
\begin{align*}-n(\underline S_\b-\eta)\om_3+(n-1)\tilde\theta_1>0\end{align*}
So, the condition $(2)$ is satisfied, as long as we set $\om_c=\om_3$, $\theta=\theta_1$ and $\tilde\theta=\tilde\theta_1$. Then we could solve
\begin{align*}
n\cdot\tilde \theta\wedge\om^{n-1}_\vphi=c_0 \cdot \om^{n}_\vphi,
\quad c_0=n\cdot(\underline S_\b-\eta),
\end{align*}
which is the critical equation of the functional
\begin{align*}
J_{-\theta,\eta}(\vphi)=J_{-\theta}(\vphi)+\eta J_{\om_K}(\vphi).
\end{align*}

By \eqref{log K energy}, the log $K$-energy has the following formula
\begin{align*}
\nu_\beta(\vphi)
&:=E_\beta(\vphi)
+J_{-\theta}(\vphi)+\frac{1}{V}\int_M (\mathfrak h+h_K)\om_K^n.
\end{align*}
In which, $\mathfrak h:=-(1-\b)\log |s|_h^2$. The notation $h$ is the Hermitian metric on the line bundle $L_D$, $s$ is a section of $L_D$.

Making use of the log alpha invariant in Definition \ref{log alpha invariant}, Jensen's inequality (see Lemma 5 in \cite{MR3412393}) and \eqref{IJ equivalent}, to obtain the lower bound of the entropy $E_\beta(\vphi)$, we have
\begin{align*}
\nu_\beta(\vphi)
&\geq (\frac{n+1}{n}\alpha_\beta-\eta)J_{\om_K}(\vphi)-C +\inf_{\vphi\in\mathcal H_\beta(\om_K)} J^\eta_{-\theta}(\vphi), \quad \forall \vphi\in \mathcal H_\beta(\om_K).
\end{align*}
In which, the functional 
$
J^\eta_{-\theta}(\vphi)=J_{-\theta}(\vphi)+\eta J_{\om_K}(\vphi).
$
Then, \thmref{constgeodesicray big} implies that the functional $J^\eta_{-\theta}(\vphi)$ has lower bound. So we have proved that the log $K$-energy is $J$-proper.
\end{proof}

 \begin{rem}
 When $\gamma=0$, the twisted $J$-flow \eqref{jflow} is reduced to the $J$-flow, which was introduced in \cite{MR1772078,MR1701920}. The conditions for convergence of the $J$-flow was proved in \cite{MR2368374}.
 \end{rem}
 
 \begin{rem}
 As shown in \cite{MR3412393}, the lower bound of the twisted $J$-functional $J^\gamma_{-\chi}(\vphi)$ is related to the geodesic stability.
 \end{rem}

\subsection{Normal complex spaces}

We will construct reference metrics on normal complex spaces. The difficulties come from both the cone singularities and the degeneration of $\om_{sr}$.

Let $Y$ be a complex space, which is defined to be an analytic space over the field of complex numbers. We assume $Y$ is normal, that is at every point $p$ on $Y$, there is a neighbourhood $U_p$ of $p$ such that there exists an analytic covering from $U_p$ onto a domain of $\mathbb C^n$. 

The concepts of plurisubharmonic functions and K\"ahler currents are extended to the normal complex space \cite{MR813252,MR569410}.
\begin{defn} Let $U_i$ be a covering of $Y$ and $\sigma_i: U_i\rightarrow \mathbb C^n$ be local embeddings of $Y$. A function $\vphi$ is plurisubharmonic on $Y$, if $\vphi$ extends to a plurisubharmonic on an open neighbourhood of $\sigma_i(U_i)$. K\"ahler metric on $Y$ is defined in a similar way, i.e. locally it is written as $i\p\bar\p\vphi$ for some plurisubharmonic function $\vphi$.
\end{defn}
By normality, the plurisubharmonic function on $Y_{reg}$ could be extended to a plurisubharmonic function on $Y$.

We set $\om^Y_K$ be a K\"ahler metric on $Y$. Let $$\pi: X\rightarrow Y$$ be a resolution of singularities of $Y$, c.f. \cite{MR0199184}. The pull-back $X$ is smooth. The pull back metric $\om_{sr}=\pi^\ast\om^Y_K$ is then a semipositive and big K\"ahler metric, i.e. $\int_{X} \om_{sr}^n>0$. Let $\Om$ be the pull back of the K\"ahler class $[\om^Y_K]$ on $Y$. Then $\Om$ is a semipositive and big class.
By definition, normality implies 
\begin{align*}
\pi^\ast PSH(Y,\om^Y_K)=PSH(X,\om_{sr}).
\end{align*}
\begin{defn}Let $V=\int_Y(\om^Y_K)^n$.
A $\om^Y_K$-psh function $\varphi\in PSH(Y,\omega^Y_K)$ is said to have \textit{full Monge-Amp\`ere mass}, if 
\begin{align*}
\lim_{j\rightarrow\infty}\int_{\{\varphi>-j\}}(\omega^Y_K+i\p\bar\p\max\{\varphi,-j\})^n=V.
\end{align*}
The Monge-Amp\`ere operator is then well-defined over such $\vphi$ as
\begin{align*}
(\om_K^Y+i\p\bar\p\vphi)^n:=\lim_{j\rightarrow\infty}\mathbf 1_{\{\vphi>-j\}}(\omega^Y_K+i\p\bar\p\max\{\varphi,-j\})^n.
\end{align*}
We denote by $PSH_{full}(Y,\omega^Y_K)$ the set of $\om^Y_K$-psh functions with full Monge-Amp\`ere mass and by $\Om_{full}(Y,\tri)$ the set of corresponding currents.

The $\mathcal E^1$-space is defined to be
\begin{align*}
\mathcal E^1(Y,\om_K^Y)=\{\vphi\in PSH_{full}(Y,\omega^Y_K)\vert \vphi\in L^1(\om_K^Y+i\p\bar\p\vphi)\}.
\end{align*} 
\end{defn}

We set the pair $(Y,\tri)$ consist of a connected normal complex projective variety $Y$ and a Weil $Q$-divisor $\tri$.
Assume $K_Y+\tri$ is $Q$-Cartier, that is there is a positive integer $r$ such that $r(K_Y+\tri)$ is Cartier.
\begin{defn}
A \textit{log resolution} $\pi:X\rightarrow Y$ of $(Y,\tri)$ gives
\begin{align*}
K_{X}=\pi^\ast(K_Y+\tri)+D.
\end{align*}
In which, $a_i\in \mathbb Q$ is called the  \textit{discrepancy} of $(Y,\tri)$ along $E_i$. Actually,
\begin{itemize}
\item $D:=\sum_{i}a_iE_i$ is a $\mathbb Q$-divisor and $\cup_iE_i$ has normal crossing,
\item $\pi_\ast D=-\tri$.
\end{itemize} 
\end{defn}




\subsection{Approximate reference metrics}\label{Approximate reference metrics}


Kodaira's Lemma implies that $\Om-a_0 [E]$ is ample, in which $a_0$ is a sufficient small number and $E$ is an effective divisor. Let $S_E$ be the defining section of $E$ and $h_E$ is a smooth Hermitian metric on the line bundle associated to $E$. Then we have a K\"ahler metric
\begin{align}
\om_{K}=\om_{sr}+a_0 i\p\bar\p\log h_E>0.
\end{align}

We construct the approximate reference metrics in the classes $\Om_t:=\Om+t[\om_{K}]$ for $t>0$.
When $\Om$ is big and semipositive, we set the K\"ahler metric
\begin{align}\label{big semipositive kahler metric}
\om_t:=\om_{sr}+t\om_{K}\in \Om_t.
\end{align}


We extend Section \ref{Approximation of the reference metric} to the degenerate case. In order to construct approximate metric, we solve the conical version of the prescribed Ricci curvature problem for $\om_{\theta_t}:=\om_{t}+i\p\bar\p\vphi_{\theta_t}$ in each $\Om_t$,
\begin{align}\label{lift reference metric log Fano app both 2}
\om_{\theta_t}^n=\frac{e^{h_t}}{|s|^{2(1-\beta)}_h}\om_{t}^n, \quad Vol(\Om_t)=\int_X\frac{e^{h_{t}}}{|s|_h^{2(1-\beta)}}\om_{t}^n.
\end{align}
In which, by the cohomology condition, $h_t$ satisfies the identity
\begin{align*}
Ric(\om_{t})=\theta+(1-\beta)\Theta_D+i\p\bar\p h_t.
\end{align*} 
\begin{lem}\label{fix right hand side}We fix $t_0$.
Then there is constant $c_t$ such that $e^{h_t+c_t}\om_{t}^n=e^{h_{t_0}}\om_{t_0}^n$ for any $0<t\leq 1$. Moreover, $e^{c_t}=\frac{Vol(\Om_{t_0})}{Vol(\Om_t)}$.
\end{lem}
\begin{proof}
By \eqref{lift reference metric log Fano app both 2}, we have
\begin{align*}
Ric(\om_{\theta_t})=-i\p\bar\p \log(e^{h_t}\om_t^n)+i\p\bar\p|s|^{2(1-\beta)}_h.
\end{align*} 
Then making use of the definition of $h_t$ and the Poincar\'e-Lelong formulae, we get
\begin{align*}
Ric(\om_{\theta_t})=\theta+2\pi(1-\beta)[D].
\end{align*} 
Making use of $Ric(\om_\theta)=Ric(\om_{\theta_t})$, we have proved the lemma.
\end{proof}
As \eqref{thetaeps} in Section \ref{Approximation of the reference metric}, we approximate \eqref{lift reference metric log Fano app both 2} with a family of smooth equations. Due to \lemref{fix right hand side}, the approximate equation for $\om_{\theta_{t,\eps}}:=\om_{t}+i\p\bar\p\vphi_{\theta_{t,\eps}}\in \Om_t$ reads,
\begin{align}\label{lift reference metric log Fano app both 3}
\om_{\theta_{t,\eps}}^n=\frac{e^{h_{t_0}+c_{t,\eps}}}{(|s|^{2}_h+\eps)^{1-\beta}}\om_{t_0}^n.
\end{align}
Here $c_{t,\eps}$ is a constant determined by the normalised volume condition \eqref{thetaeps normalisation}, i.e.
\begin{align}\label{thetaeps normalisation semipositive}
e^{c_{t,\eps}}=\frac{Vol(\Om_t)}{\int_Xe^{h_{t_0}}(|s|^2_h+\eps)^{\beta-1}\om^n_{t_0}}.
\end{align} It is uniformly bounded independent of $t, \eps$.
From \lemref{thetaepstheta}, we further have
\begin{align}\label{big nef lower Ricci}
Ric(\om_{\theta_{t,\eps}})\geq\tilde \theta:=\theta+\min\{(1-\beta)\Theta_D,0\}.
\end{align}

Let $\om_{\mathbf b_\eps}$ be the background cone metric defined in Section 2 on page 10 in \cite{MR4020314}.
\begin{prop}\label{big nef approximate reference metric bound}
There exists a uniform constant $C$ independent of $t,\eps$ such that
\begin{align*}
\|\varphi_{\theta_{t,\eps}}\|_\infty\leq C, \quad 
C^{-1} |S_E|^{b}_{h_E}\cdot\om_{\bf b_\eps}\leq \om_{\theta_{t,\eps}}\leq C |S_E|^{-a}_{h_E}\cdot\om_{\bf b_\eps}.
\end{align*}
Moreover, the sequence $\varphi_{\theta_{t,\eps}}$ $L^1$-converges to $\vphi_\theta$, which is the unique solution to the equation
\begin{align}\label{lift reference metric log Fano app limit}
(\om_{sr}+i\p\bar\p\vphi_{\theta})^n=\frac{e^{h_{t_0}+c_{0,0}}}{|s|^{2(1-\beta)}_h}\om_{t_0}^n.
\end{align}
The solution $\vphi_\theta\in C^0(X)\cap C^{\infty}(X\setminus (E\cup D))\cap C^{2,\a,\b}(X\setminus E)$ and also geometrically polyhomogeneous along $D$ (see Section \ref{Regularity of cscK cone metrics: geometrical polyhomogeneity}).

\end{prop}
\begin{proof}

 \textbf{Step 1: $C^0$-estimate.}
 We set 
 \begin{align}\label{Phi degenerate}
 \om:=\om_{K}+t\om_K,\quad  \Phi:=\vphi_{\theta_{t,\eps}}-a_0 \log h_E.
 \end{align}
The upper bound of $\Phi$ is obtained by using 
$$\om+i\p\bar\p\Phi\geq0.$$ 
According to Corollary 4.9 in \cite{MR3761174}, letting $\underline{\Phi}=\frac{1}{V}\int_M\Phi\om^n$, we have 	
\begin{align}\label{zeroroughbig upper}
	\sup_M(\Phi-\underline{\Phi})\leq C_1.
\end{align}
The constant $C_1$ depends on $Vol(\om)$, $n$, the Sobolev
and Poincar\'e constant of $\om$. Since $\om$ is $C^2$ close to $\om_K$ as $t$ is close to zero, we have uniform control of the Sobolev
and Poincar\'e constant of $\om$ on parameter $t$. Thus $C_1$
 is independent of $\eps,t$.

We rewrite \eqref{lift reference metric log Fano app both 3} as
\begin{align}
(\om_t+i\p\bar\p\vphi_{\theta_{t,\eps}})^n=F_0\om_K^n \text{ with }F_0=\frac{e^{h_{t_0}+c_{t,\eps}}\om_{t_0}^n}{(|s|^{2}_h+\eps)^{1-\beta}\om_K^n}.
\end{align}
It is direct to see that $F_0\in L^p(\om_K)$ for some $p>1$ and 
$\|F_0\|^p$ is independent of $\eps,t$. By \eqref{zeroroughbig upper}, we could normalise $\vphi_{\theta_{t,\eps}}$ such its upper bound is zero.
Then the $C^0$-estimate 
\begin{align*}
\|\vphi_{t,\eps}\|_{\infty}\leq C, \quad \forall t,\eps
\end{align*}is obtained from Theorem A in \cite{MR2439574}, which is also proven independently in \cite{MR2647006}. We recall it as following.
\begin{lem}[\cite{MR2439574,MR2647006}]\label{Linfty estimate degenerate}
Let $(X,\om_K)$ and $(Y,\om_K^Y)$ be compact K\"ahler manifolds. Suppose $\pi: X\rightarrow Y$ be a non degenerate holomorphic mapping. Set $\om_{sr}=\pi^\ast \om_K^Y$ and $\om_t=\om_{sr}+t\om_K$, $0<t<1$. Suppose that $\vphi_t$ solve the equation 
\begin{align*}
(\om_t+i\p\bar\p\vphi_t)^n=c_t F_0 \om_K^n, \text{ with }\sup_X \vphi_t=0.
\end{align*}
The normalisation constant $c_t$ satisfies $\int_X\om_t^n=c_t\int_XF_0 \om_K^n$.
Suppose that $F\in L^p(\om_K)$ for some $p>1$. Then the solution $\vphi_t$ has uniform $C^0$-estimate and
\begin{align*}
\|\vphi_t\|_{\infty}\leq C, \quad \forall 0<t\leq 1.
\end{align*}
The constant $C$ depends on $p, \pi, \|F_0\|_p$.
\end{lem}

 \textbf{Step 2: Laplacian estimate.}
 The proof is similar to Lemma 3.25 in \cite{MR4020314}. As shown in Section 2 on page 10 in \cite{MR4020314}, the background metric $\om_{\mathbf b_\eps}=\om+i\p\bar\p \Phi_{\mathbf b_\eps}$ satisfies
\begin{align}\label{backgroundmetriccurvature}
\left\{
\begin{array}{ll}
&R_{i\bar j k\bar l}(\om_{\bf b_\eps} )\geq -(\tilde g_{\mathbf b_\eps})_{i\bar j} \cdot (g_{\bf b_\eps} )_{k\bar l}\\
&\tilde \om_{\mathbf b_\eps}=C_2\cdot \om_{\bf b_\eps} +i\p\bar\p \Phi^\eps_{\mathbf b}\geq 0,\\
&\|\Phi^\eps_{\mathbf b}\|_\infty\leq C_3.
\end{array}
\right.
\end{align}

We let $\tilde\Phi_{\mathbf b_\eps}=\Phi-\Phi_{\mathbf b_\eps}$ and omit the index $\eps$ from now on. We rewrite \eqref{lift reference metric log Fano app both 3} as
\begin{align}\label{lift reference metric log Fano app both 3 change}
\om_{\Phi}^n=(\om_{\mathbf b}+i\p\bar\p\Phi_{\mathbf b})^n=e^{F_{\bf b}}\om_{\mathbf b}^n \text{ with }e^{F_{\bf b}}=\frac{e^{h_{t_0}+c_{t,\eps}}\om_{t_0}^n}{(|s|^{2}_h+\eps)^{1-\beta}\om_{\mathbf b}^n}.
\end{align}
By $(2.11)$ in \cite{MR4020314}, we have 
\begin{align}\label{backgroundmetriccurvature additional}
i\p\bar\p F_{\mathbf b}\geq -(C\om_{\bf b}+i\p\bar\p\Phi^\eps_{\mathbf b}).
\end{align}

We choose 
We applying the inequality from Yau's second order estimate \cite{MR480350}, \begin{align*}
\tri_{\Phi} \log\Tr_{\om_{\bf b}}\om_{\Phi}\geq \frac{- g_{\bf b}^{i\bar j}R_{i\bar j}(\om_{\Phi})+g_\Phi^{k\bar l}{R^{i\bar j}}_{k\bar l}(\om_{\bf b}){g_\Phi}_{i\bar j}}{\Tr_{\om_{\bf b}}\om_{\Phi}}.
\end{align*}
By the equation \eqref{lift reference metric log Fano app both 3 change},
we have
\begin{align*}
Ric(\om_{\Phi})=Ric(\om_{\bf b})-i\p\bar\p F_{\bf b}
\end{align*}
and then
\begin{align*}
RHS=\frac{\tri F_{\bf b}-S(\om_{\bf b})+g_\Phi^{k\bar l}{R^{i\bar j}}_{k\bar l}(\om_{\bf b}){g_\Phi}_{i\bar j}}{\Tr_{\om_{\bf b}}\om_{\Phi}}.
\end{align*}

From \eqref{backgroundmetriccurvature additional}, we have
\begin{align*}
\tri_{\om_{\mathbf b}} F_{\mathbf b}
\geq-  \Tr_{\om_{\mathbf b}}\om_{\Phi}\cdot \Tr_{\om_{\Phi}}{(C\om_{\mathbf b}+i\p\bar\p\Phi^\eps_{{\mathbf b}})} .
\end{align*}
By $(2.28)$ in \cite{MR4020314}, we get
\begin{align}\label{RHS2ndinequality}
\frac{-S(\om_{\bf b})+g_\Phi^{k\bar l}{R^{i\bar j}}_{k\bar l}(\om_{\bf b}){g_\Phi}_{i\bar j}}{\Tr_{\om_{\bf b}}\om_{\Phi}}\geq -C(\Tr_{\om_{\Phi}}\om_{{\mathbf b}}+\tri_{\Phi}\Phi^\eps_{{\mathbf b}}).
\end{align}
In summary, we obtain that
\begin{align*}
\tri_{\Phi} (\log\Tr_{\om_{\bf b}}\om_{\Phi}+C\Phi^\eps_{{\mathbf b}})\geq -C\Tr_{\om_{\Phi}}\om_{{\mathbf b}}.
\end{align*}
Letting $$Z=\log\Tr_{\om_{\bf b}}\om_{\Phi}+C\Phi^\eps_{{\mathbf b}}-(C+1)\Phi_{\bf b},$$ we have
\begin{align*}
\tri_{\Phi} Z
\geq \Tr_{\om_{\Phi}}\om_{{\mathbf b}}-n(C+1).
\end{align*}
Since $-\Phi_{{\mathbf b}}=\vphi_{\theta_{t,\eps}}-a_0 \log h_E-\Phi^\eps_{{\mathbf b}}$ is $-\infty$ along $E$, the maximum point $p$ of $Z$ is achieved outside $E$ and we have 
\begin{align*}
\Tr_{\om_{\Phi}}\om_{{\mathbf b}}(p)\leq n(C+1).
\end{align*}
The inequality of arithmetic and geometric means implies that at $p$, 
\begin{align*}\Tr_{\om_{{\mathbf b}}  }\om_\Phi
	\leq \frac{n}{n-1}(\Tr_{\om_\Phi}\om_{{\mathbf b}})^{n-1} \cdot \frac{\om_\Phi^{n}}{\om_{{\mathbf b}}^{n}}=\frac{n}{n-1}(\Tr_{\om_\Phi}\om_{{\mathbf b}})^{n-1} e^{ F_{\bf b}}.
	\end{align*} 
So at any point $x\in X$, letting $L:=C\Phi^\eps_{{\mathbf b}}-(C+1)\tilde\Phi_{\bf b}$, we use the bound of $\Phi^\eps_{{\mathbf b}}$ in \eqref{backgroundmetriccurvature} to have
\begin{align*}
&\log\Tr_{\om_{\bf b}}\om_{\Phi}(x)\\
&\leq \log\Tr_{\om_{\bf b}}\om_{\Phi}(p)+L(p)-(2C+1)\Phi^\eps_{{\mathbf b}}+(C+1)(\vphi_{\theta_{t,\eps}}-a_0 \log h_E),
\end{align*}
or
\begin{align*}
\Tr_{\om_{\bf b}}\om_{\Phi}(x)\leq C |S_E|^{-a}_{h_E}e^{A \vphi_{\theta_{t,\eps}}}.
\end{align*}
Here $a=(C+1)a_0$.

By the inequality of arithmetic and geometric means again, 
\begin{align*}\Tr_{\om_\Phi  }\om_{{\mathbf b}}
	&\leq \frac{n}{n-1}(\Tr_{\om_{\mathbf b}}{\om_\Phi})^{n-1} \cdot \frac{\om_{{\mathbf b}}^{n}}{\om_\Phi^{n}}=\frac{n}{n-1}(\Tr_{\om_{\mathbf b}}{\om_\Phi})^{n-1}  e^{ -F_{\bf b}}\\
	&\leq C |S_E|^{-a(n-1)}_{h_E}.
	\end{align*} 
In conclusion, we have
	\begin{align*}
C^{-1}  |S_E|^{b}_{h_E}\cdot\om_{\bf b}\leq \om_{\Phi}\leq C |S_E|^{-a}_{h_E}\cdot\om_{\bf b}.
\end{align*}

 \textbf{Step 3: higher order estimates and convergence.}
Due to the Evans-Krylov estimate and Schauder estimate, we further have the $C^k$-estimates on $X\setminus E$. Especially, the estimates are uniform on each compact set contained in $X\setminus E$.
 
 We then consider the limit equation \eqref{lift reference metric log Fano app limit}. Since $[\om_{sr}]$ is big and semipositive, and the right hand side of \eqref{lift reference metric log Fano app limit} is $L^p(X)$ for some $p>1$, there exists a unique solution $\vphi_\theta\in PSH(\om_{sr})\cap L^\infty(X)$ to \eqref{lift reference metric log Fano app limit} by Theorem A in \cite{MR2505296}, which is quoted as following.
 \begin{lem}[Theorem A in \cite{MR2505296}]\label{Theorem A in 50}
 Let $X$ be a compact K\"ahler manifold, $\om$ be big and semipositive, $f\geq 0$ be a $L^p(\om), p>1$ function with $\int_X f\om^n=\int_X\om^n$. Then there is a unique bounded solution $\vphi\in PSH(\om)$ to
 \begin{align*}
(\om+i\p\bar\p\vphi)^n=f\om^n.
\end{align*}
under normalisation condition $\sup_X\vphi=0$.
 \end{lem}
 
 From Hortog's lemma and the unqiness theorem in \cite{MR2669357}, we have $\varphi_{\theta_{t,\eps}}\rightarrow \vphi_\theta$ in $L^1(X)$, and moreover $\varphi_{\theta_{t,\eps}}\rightarrow \vphi_\theta$ in $C^k(U)$ for $\forall k\geq 0$ and any compact subset $U\subset X\setminus E$. So $\varphi_\theta$ is in $C^0(X)\cap C^{\infty}(X\setminus (E\cup D))\cap C^{2,\a,\b}(X\setminus E)$. The geometrical polyhomogeneity of $\vphi_\theta$ along $D$ follows from \cite{MR3911741}.
 
 \end{proof}

According to Proposition \ref{big nef approximate reference metric bound},
\begin{cor}
On $Y_{reg}\setminus \supp \tri$, the $\om_K^Y$-psh-function $\pi_\ast\vphi_\theta$ is smooth and satisfies 
\begin{align*}
(\om_K^Y+i\p\bar\p\pi_\ast\vphi_{\theta})^n=\frac{e^{h_{t_0}+c_{0,0}}}{|s|^{2(1-\beta)}_h}\om_{t_0}^n.
\end{align*}
\end{cor}
\begin{proof}
Each fibre of the resolution $\pi$ is connected and $\om_\vphi$ is semi-positive on each fibre, so $\vphi$ is constant along the fibre. As a result, $\vphi$ is bounded on $Y$. Furthermore, $\vphi$ is smooth and satisfies \eqref{singular cscK} on $Y_{reg}\setminus \supp \tri$.
\end{proof}

\begin{rem}
We could choose different effective divisor $E$ such that $\Om-\tilde a_0 [E]$ is ample for some sufficient small number $\tilde a_0$. The intersection of such effective divisors is contained in the complement of the ample locus $Amp(\Om)$ of $\Om$. So the conclusion of Proposition \ref{big nef approximate reference metric bound} will be restated as the following, that is the solution $\vphi_\theta\in C^0(X)\cap C^{\infty}(Amp(\Om)\setminus D))\cap C^{2,\a,\b}(Amp(\Om))$ and also geometrically polyhomogeneous along $D$ (see Section \ref{Regularity of cscK cone metrics: geometrical polyhomogeneity}).

In general, when $[\om_{sr}]$ is only big, by \cite{MR2746347}, there exists a unique closed positive current $\om_{\theta}$ satisfies \eqref{lift reference metric log Fano app limit}.
Also, $\om_{\theta}$ has minimal singularity and smooth on the ample locus of $\Om$.
\end{rem}

\subsection{Construct approximate cscK cone metrics for big classes}
\label{Construct approximate cscK cone metrics for big classes}
When we work in a big class, we could slightly deform it to a K\"ahler class by Kodaira's lemma.
The condition \eqref{cohomology conditions} could be perturbed slightly as well, since the alpha invariants continuously depend on the cohomology class as shown in Section \ref{Alpha invariants}. Then we would apply \thmref{properclosedness} to obtain a family of approximate cscK cone metrics in K\"ahler classes near a big class. Furthermore, such cscK cone metrics could be also approximated by smooth cscK metrics by Proposition \ref{twisted approximation thm}. 

\begin{defn}
We say a big class $\Om$ has the cscK approximation property, if there exists a family of class $\Om_t\rightarrow \Om$ such that $\Om_t$ admits cscK metrics.
\end{defn}
\begin{prop}\label{Big classes proper}Let $\Om$ be a big class and $\om_K$ be a K\"ahler metric. Suppose that $\Om$ satisfies the cohomology condition \begin{align}\label{cohomology conditions}
\left\{
\begin{array}{lcl}
	&(i)&0\leq \eta<\frac{n+1}{n}\alpha_\beta,\\
	&(ii)& C_1(X,D)<\eta\Omega,\\
	&(iii)& (-n\frac{C_1(X,D)\cdot\Omega^{n-1}}{\Omega^{n}}+\eta)\Om+(n-1)C_1(X,D)>0.
\end{array}
\right.
\end{align} Then there exists a sufficient small positive constant $\tilde t$ such that \eqref{cohomology conditions} holds for $\Om_t=\Om+t[\om_K]$ for any $0<t\leq\tilde t$.
\end{prop}
\begin{proof}
We check each criterion for $\Om_t$. Then $(ii)$ is obvious. The first criterion $(i)$ holds after perturbing $\alpha_\beta(\Om_t)$ with sufficient small $t$, according to \lemref{alphacontinuity}. Finally, $(iii)$ is still true when $t$ is sufficiently small. 
\end{proof}

\begin{rem}\label{properness critical case}
When $\eta \Om=C_1(X,D)$, the conditions $(ii)$ and $(iii)$ become identities. The family $\Om_t$ satisfies \eqref{cohomology conditions} for sufficient small $t$.
\end{rem}

\begin{prop}\label{Big classes approximate csck}Let $\Om$ be a big and nef class on a K\"ahler manifold $(X,\om_K)$, whose automorphism group $Aut(M)$ is trivial. Suppose that $\Om$ satisfies the cohomology condition \eqref{cohomology conditions}. Then $\Om$ has the cscK approximation property. 

Precisely, setting $\Om_t=\Om+t[\om_K]$, we obtain that for $0<t\leq\tilde t$,
	\begin{enumerate}
		\item
	the log $K$-energy is $J$-proper in $\Om_t$, 
	\item there exists a cscK cone metric $\om_{t}$ in $\Om_t$,
	\item the cscK cone metric $\om_{t}$ a smooth approximation $\om_{t,\eps}$ in $\Om_{t}$.
	\end{enumerate}
\end{prop}
\begin{proof}

With the help of the approximate reference metric $\om_{\theta_{t}}$ solving \eqref{lift reference metric log Fano app both 2}, the log $K$-energy on $\mathcal H_\b(\om_t)$ is written as
\begin{align}\label{log K energy big}
\nu_\beta(\vphi)
&:=E_\beta(\vphi)
+J_{-\theta}(\vphi)+\frac{1}{V}\int_M (\mathfrak h+h_t)\om_t^n.
\end{align}
Here the entropy term is
\begin{align*}
	E_\beta(\vphi)=\frac{1}{V}\int_M\log\frac{\om^n_{\vphi}}{\om^n_{\theta_{t}}}\om_{\vphi}^{n}.
\end{align*}
The $J$-functional is defined in term of the reference metric $\om_{\theta_t}$,
\begin{align*}
J_{-\theta}(\vphi)&:=j_{-\theta}(\vphi)+\ul{S}_{\beta,t}\cdot D_{\om_t}(\vphi),\quad
\underline S_{\b,t}=\frac{C_1(X,D)\Om_t^{n-1}}{\Om_t^n},
\end{align*}
in which,
\begin{align*}
j_{-\theta}(\vphi)&:=-\frac{1}{V}\int_{M}\vphi
\sum_{j=0}^{n-1}\om_t^{j}\wedge
\om_\vphi^{n-1-j}\wedge \theta,\\
D_{\om_t}(\vphi)&:=\frac{1}{V}\frac{1}{n+1}\sum_{j=0}^{n}\int_{M}\vphi\om_t^{j}\wedge\om_{\varphi}^{n-j}.
\end{align*}

The first statement is proved, as a direct corollary of Proposition \ref{Big classes proper} and Proposition \ref{properness criteria}. Precisely speaking, there exists a constant $\eta$ such that the log $K$-energy is proper in $\Om_t$, that is
\begin{align}\label{log k energy proper}
\nu_\beta(\vphi)
&\geq (\frac{n+1}{n}\alpha_\beta(\Om_t)- \eta )J_{\om_{t}}(\vphi)-C_t, \quad \forall \vphi\in \mathcal H_\beta(\om_{{t}}).
\end{align} The constant $C_t$ depends on the uniform constant $C$ in the Definition \ref{log alpha invariant} of log alpha invariant and the lower bound of the $J^\eta_{-\theta}$-functional, which reads 
\begin{align*}
J^\eta_{-\theta}(\vphi)=J_{-\theta}(\vphi)+\eta J_{\om_{t}}(\vphi).
\end{align*}

Actually, the $J$-properness implies the $d_1$-properness. The proof goes as following.
By \eqref{IJ equivalent}, $ J_{\om_{t}}(\vphi)\geq \frac{1}{n+1} I^A_{\om_{t}}(\vphi)$. We normalise $\vphi$ such that $D_{\om_t}(\vphi)=0$. Then by Proposition 2.8 in \cite{MR3830547}, there exists constants $p$ and $A_n$ which only depend on $n$ such that
$
I_{1,\om_t}\leq A_n (I^A_{\om_{t}})^p+\frac{1}{V}\int_X\vphi\om_t^n.
$
By using the formula $J^A_{\om_{t}}=\frac{1}{V}\int_X\vphi\om_{t}^n-D_{\om_t}(\vphi)$, we have 
\begin{align*} 
I_{1,\om_{t}}\leq A_n (I^A_{\om_{t}})^p+J^A_{\om_{t}}.
\end{align*} 
Then \eqref{i1d1} implies that $$2I_{1,\om_{t}}\geq d_{1,\om_t}.$$
Therefore, the log $K$-energy is $d_1$-proper and the existence of cscK cone metrics follows from \thmref{properclosedness} and we have proved the second statement.

The third statement is a consequence of Proposition \ref{twisted approximation thm} and \thmref{properclosedness}, i.e. there is a sequence of smooth cscK metric $\om_{t,\eps}$ converges to the cscK cone metric in $\Om_t$. 

Then by \lemref{approximate K and K lemma}, \lemref{approximation proper} and \eqref{log k energy proper}, we also have that the approximate $K$-energy is also proper in $\Om_t$,
\begin{align}\label{approximate proper big}
\nu^\eps_{\beta}(\vphi)
&:=\nu_{\beta,\chi}(\vphi)
+H_\eps-H\nonumber\\
&\geq (\frac{n+1}{n}\alpha_\beta(\Om_t)- \eta )J_{\om_{t}}(\vphi)-C, \quad \forall \vphi\in \mathcal H_\beta(\om_t).
\end{align} 
The constant $C$ depends on the lower bound of the $J^\eta_{-\theta}$-functional.

\end{proof}
Furthermore, the approximate metric $\om_{\vphi_\eps}:=\om_{\eps,\eps}=\om_\eps+i\p\bar\p\vphi_{\eps}$ satisfies the cscK equations
\begin{equation}\label{approximate singular cscK}
\left\{
\begin{aligned}
	&\om_{\vphi_\eps}^n=e^{F_\eps}\om_{\theta_\eps}^n,\\
	&\tri_{\vphi_\eps} F_\eps=\tr_{\om_{\vphi_\eps}}\theta-\underline S_{\b,\eps}.
\end{aligned}
\right.
\end{equation}
Here, $\om_\eps=\om_{sr}+\eps\om_K$ and $\om_{\theta_\eps}$ is given by \eqref{lift reference metric log Fano app both 3}. We will derive the a priori estimate of $\vphi_\eps$ in Section \ref{Linfty-estimates for degenerate metrics}.

\subsection{Singular cscK metrics}
In Section \ref{Construct approximate cscK cone metrics for big classes}, we consider the cscK approximation property of a big class $\Om$. In this section, we further consider the convergence of such cscK sequence $\om_{\vphi_\eps}\in \Om_\eps=\Om+\eps[\om_K]$.
\begin{defn}\label{singular cscK defn}
We say $\omega_\vphi=\om_{sr}+i\p\bar\p\vphi$ is a singular cscK metric in a big class $\Om=[\om_{sr}]$ on a K\"ahler manifold $X$, if 
\begin{itemize}
\item $\vphi\in PSH(\om_{sr})$,
\item $\vphi$ is a $L^1$-limit of a sequence of smooth cscK potentials. 
\end{itemize}
Moreover, if $\vphi$ is $\mathcal E^1(\om_{sr})\cap L^\infty$, we say $\om_\vphi$ is a bounded singular cscK metric.
\end{defn}

The notion then is extended to a normal complex space $Y$.
\begin{defn}
We say $\omega_\vphi=\om^Y_K+i\p\bar\p\vphi$ is a singular cscK metric in a K\"ahler class $\Om=[\om^Y_K]$ on a normal complex space $Y$, if 
\begin{itemize}
\item $\vphi\in PSH(\om^Y_K)$,
\item $\vphi$ is a $L^1$-limit of a sequence of smooth cscK potentials. 
\end{itemize}
Similarly, we say is $\omega_\vphi$ is a bounded singular cscK metric, if $\vphi$ is $\mathcal E^1(\om^Y_K)\cap L^\infty$.
\end{defn}

We first show that the existence of singular cscK metric follows from Proposition Proposition \ref{Big classes approximate csck} directly.

\begin{thm}\label{singular csck construction weak}
Let $\om_{\vphi_\eps}:=\om_{\eps,\eps}=\om_\eps+i\p\bar\p\vphi_{\eps}$ be the diagonal sequence of the approximate cscK metrics obtained in Proposition \ref{Big classes approximate csck}. 

Then $\Om$ admits a singular cscK metric on $X$.
\end{thm}
\begin{proof}
We also could take $\sup_X\vphi_\eps=0$ and let $\vphi_\eps\rightarrow\vphi$ in $L^1(X)$-topology. By Hartogs' lemma, $\vphi$ is a $\om_{sr}$-psh function and $\sup_X\vphi=0$. 
\end{proof}

 We next construct \textit{bounded} singular cscK metric.
\begin{prop}\label{singular csck construction}
Given the same assumption in Proposition \ref{singular csck construction weak}. Assume $C_1(X,D)\geq 0$.
Suppose the entropy $E^\eps_\beta(\vphi_\eps)=\frac{1}{V}\int_X F_\eps e^{F_\eps}\om^n_{\theta_\eps}$ of $\om_{\vphi_\eps}$ is bounded.

Then $\Om$ admits a bounded singular cscK metric on $X$. Moreover, we have the 2nd estimates
\begin{align*}
\int_X(\tr_{\om_{\mathbf b_\eps}}\om_{\vphi_\eps})^{p}|s|^{\sigma}_{h_E}\om^n_{\mathbf b_\eps}
\leq C, \quad \forall p\geq 1 .
\end{align*}
The weight $\sigma$ is defined in \eqref{sigma L sigma}.
\end{prop}
\begin{proof}
We choose $\theta\geq 0$ in $C_1(X,D)$.
The $L^\infty$-estimate for degenerate metrics (Proposition \ref{upper F degenerate} in Section \ref{Linfty-estimates for degenerate metrics}) tells us that the approximate cscK metric \eqref{approximate singular cscK} have the following estimates,
	\begin{align*}
	\sup_X (F_\eps+\phi_l), \quad \|e^{F_\eps}\|_{p}, \quad \|\vphi_\eps\|_\infty\leq C.
	\end{align*}
	The constant $C$ depends on 	\begin{align*}
	E^\eps_\beta, \quad \sup_X \underline S_{\b,\eps}, \quad \inf_{(X,(1+\eps)\om_K)}\theta,\quad \|\phi_l\|_\infty, \quad \alpha_1,\quad \alpha_\beta,\quad n.
	\end{align*}
	Here $\phi_l= \inf_{(X,(1+\eps)\om_{K})}\theta \cdot (\vphi_{\theta_\eps}-a_0 \log h_E)
$.
By assumption, $E^\eps_\beta(\vphi_\eps)$ is bounded. So, we have the uniform bound of $\|\vphi_\eps\|_{\infty}$. Moreover, $\vphi_\eps\rightarrow\vphi$ in $L^p(X)$-topology for any $p\geq 1$.

We take the diagonal sequence 
$\om_{\theta_{\eps}}:=\om_{\eps}+i\p\bar\p\vphi_{\theta_{\eps}}\in \Om_\eps$ of the approximate reference metrics defined in \eqref{lift reference metric log Fano app both 3}, which solve
\begin{align}\label{lift reference metric log Fano app both 3 diagonal}
\om_{\theta_{\eps}}^n=\frac{e^{h_{t_0}+c_{\eps,\eps}}}{(|s|^{2}_h+\eps)^{1-\beta}}\om_{t_0}^n.
\end{align}
We let $\mu_\eps=\frac{e^{h_{t_0}+c_{\eps,\eps}}}{(|s|^{2}_h+\eps)^{1-\beta}}\om_{t_0}^n$ and $\mu=\frac{e^{h_{t_0}+c_{0,0}}}{|s|^{2(1-\beta)}_h}\om_{t_0}^n$. By \eqref{approximate singular cscK}, we have
\begin{align*}
\om_{\vphi_\eps}^n=e^{F_\eps}\om_{\theta_\eps}^n=e^{F_\eps}\mu_\eps.
\end{align*}
Since $F_\eps$ is $L^p$ bounded, we have $e^{F_\eps}\rightarrow e^F$ with $F\in L^p$. We define $\tilde\vphi_\eps$ to be the upper semicontinuous regularisation of $\sup_{\tilde\eps\geq\eps}\vphi_{\tilde\eps}$. By a result in Demailly \cite{MR1211880}, we have $\tilde\vphi_\eps$ decreases to $\vphi$ and 
\begin{align}\label{singular csck half inequality}
(\om_\eps+i\p\bar\p{\tilde\vphi_\eps})^n\geq e^{F_\eps}\mu_\eps.
\end{align}
Since we have uniform $L^\infty$ bound of $\vphi_\eps$, so we normalise it such that $\sup_X\vphi_\eps=0$. Furthermore, we apply Lemma 2.3 in \cite{MR2352488} with $\chi=id$ to get the control of $E_1=\int_X-\tilde\vphi_\eps(\om_\eps+i\p\bar\p\tilde\vphi_\eps )^n$ as following
\begin{align*}
0\leq E_1\leq \int_X-\vphi_\eps(\om_\eps+i\p\bar\p\vphi_\eps )^n.
\end{align*}
The last term in the inequality above is bounded by the $L^\infty$-estimate of $\vphi_\eps$ again. As a result, Proposition 1.2 in \cite{MR2505296} implies that
\begin{align*}
(\om_\eps+i\p\bar\p{\tilde\vphi_\eps})^n\rightarrow \om_\vphi^n.
\end{align*}
Taking $\eps\rightarrow 0$ in \eqref{singular csck half inequality}, we have achieved the following inequality
\begin{align*}
\om_\vphi^n\geq e^{F}\mu.
\end{align*}
But the normalisation condition of both sides and Proposition \ref{big nef approximate reference metric bound} tell us that the inequality is identity, i.e.
\begin{align*}
	\om_{\vphi}^n=e^{F}\om_{\theta}^n.
\end{align*}
In this identity, the right hand side is $L^p, p>1$. 
Thus, we apply \lemref{Theorem A in 50} to obtain that $\vphi$ is bounded.
In summary, we have 
\begin{align*}
\vphi\in \mathcal E^1(X,\om_{sr})\cap L^\infty(X).
\end{align*}

We could say more about the approximate metric $\om_{\vphi_\eps}$ given in Proposition \ref{singular csck construction}, regarding to the background metric $\om_{\mathbf b_\eps}$ defined in \eqref{backgroundmetriccurvature}.
According to \thmref{w2pestimates degenerate}, we have the 2nd estimates for any $p\geq 1$.
\end{proof}

\begin{ques}
Could we obtain Laplacian estimate or higher order estimates of the singular cscK metric $\omega_\vphi=\om^Y_K+i\p\bar\p\vphi$? Does it satisfy the following couple of equations on the regular part $Y_{reg}\setminus \tri$,
\begin{equation}\label{singular cscK}
\left\{
\begin{aligned}
	&\om_\vphi^n=e^F\om_\theta^n,\\
	&\tri_{\vphi} F=\tr_{\om_\vphi}\theta-\underline S_\tri.
\end{aligned}
\right.
\end{equation} 
Note that with appropriate regularity, this equation is equivalent to $S(\om_\vphi)=\underline S_\tri$ on $Y_{reg}\setminus \tri$.
Then we could lift \eqref{singular cscK} to $\pi^\ast(Y_{reg}\setminus \supp \tri)$, which stays in the ample locus $Amp(\Om)$. 
\end{ques}
\subsection{Examples}
In this Section, we construct singular cscK metrics on log Fano pairs.
We set $\tri$ to be an effective $Q$-divisor. The divisor $D$ has two parts. One part is with coefficients in $(-1,0]$. Then by definition, the cone angle $\beta_i$ is equal to $1+\alpha_i$. So we have $\beta\in (0,1]$. The other part is the $\pi$-exceptional divisor, which is effective and with integer coefficients.
\begin{defn}
The pair $(Y,\tri)$ is \textit{Kawamata log terminal (klt)}, if $a_i>-1$ for all $i$. When $\tri=0$, $Y$ is said to be \textit{log terminal}, if $(Y,0)$ is klt.
\end{defn}
\begin{defn}
A klt pair $(Y,\tri)$ is called a \textit{log Fano pair}, if $-(K_Y+\tri)$ is ample.
\end{defn}

\begin{prop}
Suppose that $(Y,\tri)$ is a log Fano pair of complex dimension $2$. Assume that $[\om_K^Y]$ is a K\"ahler class on $Y$ such that $[\pi^\ast\om_K^Y]+\eps\om_K$ satisfies the cohomology conditions in Proposition \ref{singular csck construction} for all $0<\eps\leq\tilde\eps$.  
Then there is a singular cscK metric in $[\om_K^Y]$.
\end{prop}
\begin{proof}
Let $(X,D)$ be the log resolution of $(Y,\tri)$.
Then $(X,\Om)$ satisfies the assumption in Proposition \ref{Big classes approximate csck}. So we have a diagonal sequence of the approximate cscK metrics $\om_{\vphi_\eps}=\om_\eps+i\p\bar\p\vphi_{\eps}$. 

The conclusion is a consequence of pushing forward the singular cscK metric on the covering space $X$ to $Y$.
Since the fibre $F$ of the log resolution $\pi$ is connected and the pull-back metric $\om_{sr}=0$ along $F$, we have $i\p\bar\p \pi^\ast \vphi\geq0$. We then see that $\vphi$ is a constant along $F$. As a result, $\vphi$ is also $\mathcal E^1$ and bounded on $Y$.

In order to apply Proposition \ref{singular csck construction} to construct a singular cscK metric in $[\om_{sr}]$ with $\om_{sr}=\pi^\ast\om_K^Y$, it is sufficient to show that there exists a constant $C$ such that
$E^\eps_\beta(\vphi_\eps)\leq C$ for all $\eps$ on $X$.

From \lemref{approximation proper} and $\nu^\eps_{\beta}(0)=0$, we have the upper bound of 
$
\nu_{\beta}(\vphi_\eps)$. From \eqref{log K energy big}, it is sufficient to show $J^\eta_{-\theta}(\vphi)=J_{-\theta}(\vphi)+\eta J_{\om_{\eps}}(\vphi)$ has uniform lower bound in $\mathcal H_\beta(\om_\eps)$ for all $\eps>0$.

The critical equation of $J^\eta_{-\theta}$ is
\begin{align*}
2\cdot\theta_\eps\wedge\om^{n-1}_\vphi=c_\eps \cdot \om^{n}_\vphi,
\quad c_\eps=2(\ul{S}_{\beta,\eps}-\eta).
\end{align*}
Here
$\underline S_{\b,\eps}=\frac{C_1(X,D)\Om_\eps^{n-1}}{\Om_\eps^n}$ and $\theta_\eps=\theta-\eta\om_\eps<0$. When $n=2$, as in \cite{MR1772078}, the critical equation could be rewritten as a Monge-Amper\`e equation
\begin{align}\label{J equation dim 2}
(c_\eps\om_\vphi-\theta_\eps)^2=\theta_\eps^2.
\end{align}

From $(iii)$ in \eqref{cohomology conditions},  we get
 $-c_0\om_{sr}+\theta-\eta\om_{sr}+i\p\bar\p \phi_0\geq0$. By assumption, 
$-c_\eps\om+\theta_\eps+i\p\bar\p \phi_0> 0$. Also, $\theta_\eps^2$ is $L^p(\om_K)$, $p>1$. Then we apply \lemref {Linfty estimate degenerate} to conclude that the solutions $\vphi^J_\eps$ to \eqref{J equation dim 2}
 have uniform bound for any $\eps$.

Therefore, making use of the definitions of 
$
J_{-\theta}(\vphi)
$ and $\om_{sr}\leq A \om_K$ for some positive constant $A>0$, we have that
$
|J_{-\theta}(\vphi^J_\eps)|\leq B
$ uniformly. Therefore, the convexity of $J_{-\theta}$ implies the uniform lower bound of $J_{-\theta}$ and we have completed the proof.

\end{proof}
\begin{rem}
Another way to define the reference metic for the log Fano pair $(Y,\tri)$ is to utilise the \textit{adapted measure}.
Let $\phi$ be a smooth Hermitian metric on the $Q$-line bundle $-(K_Y+\tri)$,
and $\sigma$ be a nowhere vanishing section of $r(K_Y+\tri)$.
The adapted measure of $\phi$ is defined to be
$
m_\phi:=\frac{(i^{rn^2}\sigma\wedge\bar\sigma)^{\frac{1}{r}}}{|\sigma|_{r\phi}^{\frac{2}{r}}}.
$ Let $\phi_i$ be a smooth Hermitian metric on the line bundle associated to $E_i$ and $s_i$ be a section vanishing along $E_i$. 
The lift of $\pi^\ast m_\phi$ writes as following
$
\pi^\ast m_\phi=\Pi_i |s_i|^{2a_i}_{\phi_i}\Pi_j |s_j|^{2(\beta_j-1)}_{\phi_j}dV:=e^fdV.
$
 By the klt condition, $\pi^\ast m_\phi$ is $L^p$ for some $p>1$.
On the regular part $Y_{reg}$, it holds
$
-i\p\bar\p\log m_{\phi_0}=\theta+[\tri].
$

Then the reference metric is defined to be a current $\omega_\theta:=\om^Y_K+i\p\bar\p\pi_\ast\vphi_\theta\in \Om_{full}(Y,\tri)$ satisfying the following equation
\begin{align}\label{reference metric log Fano}
\omega_\theta^n=\frac{m_{\phi_0}}{V^{-1} \int_Y m_{\phi_0} }.
\end{align}
After lifted to $X$, the equation becomes 
\begin{align}\label{lift reference metric log Fano}
(\omega_{sr}+i\p\bar\p\varphi_\theta)^n=e^{c}\pi^\ast m_{\phi_0}
\end{align}
for some constant $c$. Here, the lift metric $\omega_{sr}$ is semipositive and big, $a_i$ are integers and $0<\b_i\leq 1$. 
The equation \eqref{lift reference metric log Fano} has a bounded solution on $X$ and smooth outside $D$, according to Theorem B in \cite{MR2505296}. It is also a unique continuous solution $\pi_\ast\vphi_\theta\in  PSH(\om^Y_{K})$ to the equation \eqref{reference metric log Fano}. The solution $\pi_\ast\varphi_\theta$ is smooth on $Y_{reg}\setminus \supp \tri$ and satisfies that
\begin{align*}
Ric(\om_{\theta})=\theta+[\tri].
\end{align*}


One way to construct the approximate metric is to solve the following equation of $\om_{\theta_\eps}:=\om_\eps+i\p\bar\p\vphi_{\theta_\eps}\in\Om_\eps$,
\begin{align}\label{lift reference metric log Fano app both 1}
(\omega_{\eps}+i\p\bar\p\varphi_{\theta_\eps})^n=e^{c_\eps+f_\eps}dV.
\end{align}
Here $f_\eps$ is chosen to be a smooth function on $X$ and converges to $f$ with uniform estimates in an appropriate sense.
For example, we could set $f_\eps:=\log \frac{\Pi_i (|s_i|^{2}_{\phi_i}+\eps)^{a_i}}{\Pi_j( |s_j|^{2}_{\phi_j}+\eps)^{1-\beta_j}}$.
According to Yau's theorem \cite{MR480350}, there exists smooth solution $\varphi_{\theta_\eps}$ to \eqref{lift reference metric log Fano app both 1}. 
\end{rem}

\begin{rem}
The existence of K\"ahler-Einstein metrics on log Fano pair $(Y,\tri)$ is studied in \cite{MR3956691,MR3830547}, which is defined to be a $\theta$-psh function with full Monge-Amp\`ere mass such that
\begin{align*}
\theta_\varphi^n=\frac{e^{-\varphi}m_{\phi_0}}{V^{-1} \int_Y e^{-\varphi} m_{\phi_0} }.
\end{align*}
It could be rewritten regarding of Ricci curvature on the regular part $Y_{reg}$ as
$$
Ric(\theta_\varphi)=\theta_\varphi+[\tri].
$$
\end{rem}

\begin{rem}About the assumption on the automorphism group in \thmref{singular csck construction}, we remark that in the K\"ahler-Einstein case, Theorem 5.4 in \cite{MR3956691} shows that the $J$-properness of the Mabuchi energy implies the triviality of the automorphism group $Aut(X,D)$.
\end{rem}

\begin{rem}[log canonical pair]
Another important case is log canonical pair $(Y,\tri)$, which satisfies $a_i\geq-1$ for all $i$. When $K_Y+\tri$ is ample, the existence of K\"ahler-Einstein metric is proven in \cite{MR3283927}. Similar to the log Fano case, a smooth Hermitian metric is fixed on the $Q$-line bundle $K_Y+\tri$ and the K\"ahler-Einstein equation reads $$
Ric(\theta_\varphi)=-\theta_\varphi+[\tri]
$$ on the regular part $Y_{reg}$. Similar strategies could be adopted to define singular cscK metrics on the log canonical pair $(Y,\tri)$. 
\end{rem}

\section{A priori estimates}\label{a priori estimates}

In this section, we derive the a priori estimates of the cscK metrics with both cone singularities and degeneration. 
\subsection{CscK equations}
Recall that $\theta$ is a smooth closed $(1,1)$-form in the cohomology class $C_1(X,D)$.

In the K\"ahler case, $\om_0$ is a smooth K\"ahler metric. The K\"ahler cone metric $\om_\theta$ is defined in \eqref{theta} and $\om_{\theta_\eps}$ is the smooth approximation metric of $\om_\theta$ in \eqref{thetaeps} with Ricci curvature bounded below $Ric(\om_{\theta_\eps})\geq \theta$. Note that the K\"ahler cone potential $\vphi_{\theta_\eps}$ is bounded, since $\om^n_{\theta_\eps}\in L^p(\om_0)$ for some $p>1$ from \eqref{thetaeps}.

In the degenerate case, $\om_K$ is a K\"ahler metric and $\om_\eps=\om_{sr}+\eps\om_K$. The approximate metrics $\om_{\theta_\eps}$ is defined to be the solution to \eqref{lift reference metric log Fano app both 3} with $t=\eps$. They are smooth K\"ahler metrics.
From \eqref{big nef lower Ricci}, the Ricci curvature of $\om_{\theta_\eps}$ is also bounded below uniformly. According to Proposition \ref{big nef approximate reference metric bound}, $\vphi_{\theta_\eps}$ is also uniformly bounded.

Let  $\chi_0$ be a smooth closed $(1,1)$ form. 
We are given a smooth function $R$ and a closed $(1,1)$-form 
\begin{align}\label{chi}
\chi:=\chi_0+i\p\bar\p f\geq 0\text{ with }e^{-f}\in L^{p_0}(\om_0)\text{ for some large }p_0>>1.
\end{align} 
We normalise $f$ such that
$
\sup_X f=0.
$
We denote
$
\Theta=\theta-\chi_0.
$

The equations we consider are,
\begin{align}
F_\eps&=\log\frac{\om_{\vphi_\eps}^n}{\om_{\theta_\eps}^n},\label{F}\\
\tri_{\vphi_\eps} F_\eps&=\tr_{\vphi_\eps}(\Theta-i\p\bar\p f)-R.\label{triF}
\end{align} 
Equivalently, the scalar curvature of $\om_{\vphi_\eps}$ satisfies
\begin{align*}
S(\om_{\varphi_\eps})=\tr_{\vphi_\eps}(Ric(\om_{\theta_\eps})-\Theta+i\p\bar\p f)+R.
\end{align*}

In this section, we define the K\"ahler potential $\vphi_\eps$ in terms of the background metric $\om_{\theta_\eps}$,
\begin{align}
\om_{\vphi_\eps}=\om_{\theta_\eps}+i\p\bar\p\vphi_\eps.
\end{align}
We remark that this is different from the previous sections, where the K\"ahler potential is defined respect to the smooth metric $\om_0$ or $\om_\eps$. 

\subsubsection{Convention}
The quantitative conditions of $\chi$ and $R$ will be specified in this section. When we write $\inf_X \chi$, it means $\chi\geq \inf_X\chi \cdot \om_{\theta_\eps}$. If $\chi>0$, we set $\inf_X\chi=0$. It is similar to define $\sup_X \chi$ and $\|\chi\|_\infty$, with respect to $\om_{\theta_\eps}$.

From now on, we use $\om$ to denote $\om_{\theta_\eps}$ and write $\vphi$ instead of $\vphi_\eps$ for short. We denote $E_\beta=\frac{1}{V}\int_X\log\frac{\om^n_{\vphi_\eps}}{\om^n_{\theta_\eps}}\om_{\vphi_\eps}^{n}=\frac{1}{V}\int_X e^FF\om^n$.

\subsection{Main estimates}

\begin{thm}[Nondegenrate]\label{a prioriestimates approximation}
Suppose that $\vphi$ is a solution to \eqref{F} and \eqref{triF} with the twisted term satisfying \eqref{chi}. Then there is a constant $C$ such that 
\begin{align*}
\|\vphi\|_{\infty}, \quad \|F+f\|_{\infty}, \quad \sup_X \|\p (F+f)\|^2_\vphi, \quad \sup_X\|\tr_{\om_{\theta}}\om_\vphi\|_{p;\om_{\theta}}\leq C,
\end{align*} where $C$ depends on $\alpha_1,\alpha_\beta, n$, $\|\frac{\om_{\theta}^n}{\om_0^n}\|_{L^q(\om_0)}$ for some $q>1$ and the following quantities
	\begin{align}\label{a prioriestimates approximation dependence}
	E_\beta, \quad\|e^{-f}\|_{L^{p_0}(\om_0)},\quad \|R\|_\infty, \quad \|\Theta\|_\infty,\quad\inf_XRic(\om_{\theta}).
	\end{align}
	In which, $p_0$ is sufficiently large and depends on $n$ and $p$. 
	
	Furthermore, when $f=0$, there is a constant $C$ such that 
	\begin{align*}
	\|\vphi\|_\infty, \quad \|F\|_\infty, \quad \sup_X \|\p F\|^2_{\om_{\theta}}, \quad\sup_X \tr_{\om_{\theta}}\om_\vphi\leq C,
	\end{align*} where $C$ depends on the following quantities
	\begin{align}\label{a prioriestimates approximation dependence f=0}
	E_\beta, \quad \|R\|_\infty, \quad \|\Theta\|_\infty,\quad\inf_XRic(\om_{\theta}),\quad \alpha_1,\quad \alpha_\beta,\quad n.
	\end{align}
\end{thm}

\begin{proof}
	We apply the H\"older inequality to control $\|e^{-f}\|_{L^{p_0}(\om_{\theta_\eps})}$ by $\|e^{-f}\|_{L^p(\om_{0})}$ with $p> p_0$ and $\|\frac{\om_{\theta}^n}{\om_0^n}\|_{L^q(\om_0)}$ with some $q>1$.
	Then the first conclusion is a combination of the $L^\infty$-estimates (\thmref{Linftyestimates}), the $W^{2,p}$-estimates (\thmref{w2pestimates}) and the gradient estimate of $F+f$ (\thmref{Gradient estimates}). The second conclusion follows from \thmref{Linftyestimates}, \thmref{w2pestimates} and the $C^{1,1}$-estimate (\thmref{C11estimates}). 
\end{proof}

The estimates in \cite{arXiv:1712.06697,arXiv:1801.00656,arXiv:1801.05907} were obtained in terms of fixed smooth reference metric $\om_0$. In the non-degenerate setting, the reference metric is $\om_{\theta_\eps}$, which converges to the K\"ahler cone metric $\om_\theta$, as $\eps\rightarrow 0$. But, in general the K\"ahler cone metric does not have bounded geometry. Therefore, we need to apply the delicate analysis of the approximate metric $\om_\theta$ proved in \cite{MR4020314} and choose appropriate weighted functions to obtain the estimates. In the degenerate case, we will obtain estimates separately.

\subsection{$L^\infty$-estimates: nondegenrate}

\subsubsection{$L^\infty$-estimates of K\"ahler cone potential and volume ratio}
The $L^\infty$-estimates include three parts, the lower bound of $F+f$, the upper bound of $F+f$, and the $L^\infty$-bound of $\vphi$.

\begin{thm}\label{Linftyestimates}
	Assume that $\vphi$ is a solution of \eqref{F} and \eqref{triF} Then there exists a constant $C$ such that
	\begin{align*}
	\|F+f\|_\infty, \quad \|\vphi\|_\infty\leq C.
	\end{align*}
	The constant $C$ depends on 	\begin{align*}
	E_\beta, \quad\|e^{-f}\|_{p_0},p_0\geq2,\quad \|R\|_\infty, \quad \|\Theta\|_\infty,\quad \alpha_1,\quad \alpha_\beta,\quad n.
	\end{align*}
\end{thm}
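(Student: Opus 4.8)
The plan is to adapt the Chen--Cheng $L^\infty$ machinery (as in \cite{arXiv:1801.05907,arXiv:1712.06697}) to the conical reference metric $\om=\om_{\theta_\eps}$, paying careful attention that the final constant depends only on the listed quantities, in particular on the twisted entropy $\frac1V\int_X e^F F\,\om^n$ rather than on any bound tied to $\eps$. I would split the argument into the three pieces advertised: the lower bound for $F$, the upper bound for $F$ (equivalently, the $L^\infty$ bound on the volume ratio), and the $L^\infty$ bound on $\vphi$ itself, and I expect the upper bound for $F$ to be the hard part.

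\textbf{Step 1: $L^\infty$ bound on $\vphi$ assuming control of the volume ratio.} Once one knows $\|F+f\|_\infty$ (equivalently $e^{F}\le C e^{-f}\in L^{p_0}(\om_0)$, hence $\om_\vphi^n \le C e^{-f}\om^n \in L^{q_0}(\om)$ for some $q_0>1$), the complex Monge--Amp\`ere equation $\om_\vphi^n = e^{F}\om^n$ with $L^{q_0}$ right-hand side gives $\|\vphi\|_\infty \le C$ by Ko{\l}odziej's estimate, where the constant is controlled by the log alpha invariant $\alpha_\beta$ (this is exactly where $\alpha_\beta$ enters: it quantifies the needed integrability $\int_X e^{-\alpha(\vphi-\sup\vphi)}\om_\theta^n<\infty$). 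Normalizing $\sup_X\vphi = 0$ via the $D(\vphi)$ or $\sup$ normalization, one then also gets the lower bound, so $\|\vphi\|_\infty \le C$.

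\textbf{Step 2: lower bound for $F$.} Here I would use the equation for $\tri_\vphi F$ together with $\chi = \Theta_0 - \Theta + i\p\bar\p f \ge 0$ to control $\tri_\vphi(F+f)$ from below by $-R - \tr_\vphi\Theta_0$ for a suitable smooth form, and then run the Chen--Cheng trick: apply the maximum principle / Alexandrov--Bakelman--Pucci type estimate, or test against $e^{-\la(F+f)}$ and integrate by parts, using that $\frac1V\int_X e^F F\,\om^n$ bounds $\frac1V\int_X F\,\om_\vphi^n$ from below (entropy is bounded below by $-C(n)$ plus the volume-normalization term). The integration-by-parts steps must be justified in the conical setting; as the paper emphasizes, on $\om_\theta$ one does not have bounded geometry, but on the \emph{approximate} metric $\om_{\theta_\eps}$ (which is smooth) the computations are legitimate, and the constants are tracked to be $\eps$-independent using $Ric(\om_{\theta_\eps})\ge\theta$ and the uniform $L^q$ control of $\om_{\theta_\eps}^n/\om_0^n$.

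\textbf{Step 3 (main obstacle): upper bound for $F$.} This is the genuinely hard estimate and the heart of Chen--Cheng. I would introduce the auxiliary function $u$ solving an auxiliary complex Monge--Amp\`ere equation $\om_u^n = e^{p(F+f)}\frac{\om_\vphi^n}{\int e^{p(F+f)}\om_\vphi^n}\cdot V$ (a "second potential" trick), normalized, with $\|u\|_\infty$ controlled by $\alpha_1$ via Ko{\l}odziej once one knows $e^{p(F+f)}\om_\vphi^n/\om^n\in L^{q}$ — which is a bootstrap fed by the entropy bound. Then one considers a test quantity of the form $(F+f) e^{-\la \vphi + \mu u}$ or $e^{\gamma(F+f)}(-u+C)$, computes $\tri_\vphi$ of it, and uses the equation $\tri_\vphi F = \tr_\vphi(\Theta - i\p\bar\p f) - R$ plus the auxiliary equation to extract a differential inequality; combined with the Sobolev inequality (or Moser iteration) on $(M,\om_\vphi)$ or on $(M,\om)$, one closes the estimate. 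The key subtlety in our conical setting is that the Sobolev/Poincar\'e constants and all integration-by-parts identities must be taken with respect to $\om_{\theta_\eps}$ and shown to be uniform in $\eps$; this is where the weighted estimates and the precise asymptotics of $\om_\theta$ from \cite{arXiv:1709.09616} are invoked, so that boundary terms near $D$ vanish and the Chen--Cheng iteration goes through with the same constants. I expect that the bookkeeping to confirm $e^{-f}\in L^{p_0}$ with $p_0 = p_0(n,p)$ large enough, and the interplay between $\alpha_1$ (for the auxiliary potential $u$) and $\alpha_\beta$ (for $\vphi$ itself), is the delicate point that must be handled with care.
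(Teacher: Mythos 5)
Your overall plan is the right one (the Chen--Cheng scheme transplanted to $\om=\om_{\theta_\eps}$, with the three-part split into lower bound of $F$, upper bound of $F+f$, and $\|\vphi\|_\infty$), but the key step is set up in a way that does not close. In your Step 3 you propose the auxiliary equation $\om_u^n = e^{p(F+f)}\om_\vphi^n\big/\!\int e^{p(F+f)}\om_\vphi^n\cdot V$. The normalising integral $\int_X e^{(p+1)F+pf}\om^n$ is not controlled by any of the quantities the constant is allowed to depend on; bounding it is essentially equivalent to the higher integrability of $e^F$ that you are trying to prove, so the "bootstrap fed by the entropy bound" is circular as stated. The paper instead solves $\om_\psi^n=e^F\Phi(F)\,\om^n/E_\Phi$ with $\Phi(F)=\sqrt{F^2+1}$, precisely because $E_\Phi=\int e^F\Phi(F)\om^n$ is controlled by the entropy $\frac1V\int e^FF\om^n$ (up to $2e^{-1}+1$). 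With that $\psi$ one first proves the \emph{partial} bound $F+f+B\psi\le C$ by the Alexandroff maximum principle applied to $e^{\delta(F+f+A\vphi+B\psi)}\eta$ near the max point (this is where $\alpha_\beta$ enters, to bound $\int e^{2n\delta A\vphi}\om_\theta^n$, not via Ko{\l}odziej), then exponentiates $F+f\le C-B\psi$ and uses $\alpha_1$ together with H\"older against $e^{-f}\in L^{p_0}$ to get $e^F\in L^p(\om_0)$, Ko{\l}odziej for $\|\vphi\|_\infty$, and Ko{\l}odziej again for $\|\psi\|_\infty$ (since $e^F\Phi(F)\in L^q$), which upgrades the partial bound to $\sup_X(F+f)\le C$. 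You have the roles of $\alpha_1$ and $\alpha_\beta$ interchanged relative to this.

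A second, related gap is the logical order of your Steps 2 and 3. The lower bound of $F$ in the paper is proved \emph{after} the upper bound: the ABP integral there is $\int e^{-2n\delta(F+f+A\vphi)+2F}A_R^{2n}\om^n\le C\int e^{-2f}\om^n$, which uses $\|\vphi\|_\infty$ and $F\le -f+C$ as inputs, plus $e^{-f}\in L^{p_0}$ with $p_0\ge2$. Your sketch of the lower bound ("entropy is bounded below") does not supply the ingredients needed to bound that integral, so as written Step 2 cannot be run independently of Step 3. Finally, note that for these $L^\infty$ estimates the paper closes the argument with the Alexandroff maximum principle and a local cutoff rather than Moser iteration (iteration only appears later, in the gradient and $C^{1,1}$ estimates), which is what lets one avoid a uniform Sobolev constant for $\om_\vphi$ at this stage.
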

\begin{proof}
	The lower bound of $F$ and $F+f$ are obtained from Proposition \ref{Flower}.
Then the theorem follows from the $L^\infty$-estimates of $\vphi$ and the $F+f$ upper bound (Proposition \ref{upper F}).
\end{proof}

\subsubsection{Basic inequalities}
We will use the following fundamental inequalities many times in this section,
\begin{align}
&n e^{-\frac{F}{n}}\leq \tr_\vphi\om\leq \frac{1}{n-1}e^{-F}(\tr_\om\om_\vphi)^{n-1},\label{trivphiom}\\
&n e^{\frac{F}{n}}\leq \tr_\om\om_\vphi\leq \frac{1}{n-1}e^{F}(\tr_\vphi\om)^{n-1}.\label{triomvphi}
\end{align}
We normalised $\vphi$ such that
$\sup_X \vphi=0.$
The definition of $\om_\vphi$ gives us two useful formulas
\begin{align}\label{vphivphi}
\tri_{\vphi}\vphi=n-\tr_\vphi\om,\quad
\tri\vphi=\tr_\om\om_\vphi-n.
\end{align}

We compute that
\begin{lem}\label{differential identity}
Let $A_1,A_2$ be two constants. We have
\begin{align*}
\tri_{\vphi}(F+A_2 f+A_1\vphi)
=\tr_{\vphi}\Theta-R+(A_2-1)\tri_\vphi f+A_1(n-\tr_\vphi\om).
\end{align*}
\end{lem}

\subsubsection{Auxiliary function}
We let $\Phi(t)=\sqrt{t^2+1}$ and \begin{align*}
E_\Phi=\int_Xe^F\Phi(F) \om^n.
\end{align*}
According to the Calabi-Yau theorem, there exists a solution $\psi$ solving the following equation
\begin{align}\label{psi}
\frac{(\om+i\p\bar\p\psi)^n}{\om^n}=\frac{e^F\Phi(F)}{E_\Phi}.
\end{align}
We also normalize $\psi$ such that $\sup_X \psi=0.$
We use \eqref{trivphiom}, and also \eqref{psi} to get
\begin{align}\label{tripsi}
\tri_{\vphi}\psi&=\tr_\vphi\om_\psi-\tr_\vphi\om\geq n(\frac{\om^n_\psi}{\om_\vphi^n})^{\frac{1}{n}}-\tr_\vphi\om\\
&=n(\Phi(F)E_\Phi^{-1})^{\frac{1}{n}}-\tr_\vphi\om.\nonumber
\end{align}
We call $\psi$ the auxiliary function.

\begin{lem}The integral $E_\Phi$ is controlled by the entropy $E_\beta=\frac{1}{V}\int_X e^FF\om^n$ as
	\begin{align*}
E_\Phi\leq E_\beta+2e^{-1}+1.
\end{align*}
\end{lem}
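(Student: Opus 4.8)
The plan is to estimate $E_\Phi = \int_X e^F\Phi(F)\,\om^n$ by splitting the integrand according to the sign of $F$ and comparing $e^t\Phi(t)$ with the entropy integrand $e^t t$. First I would record the elementary pointwise inequality $\Phi(F) = \sqrt{F^2+1} \le |F| + 1$, so that
\begin{align*}
E_\Phi \le \int_X e^F|F|\,\om^n + \int_X e^F\,\om^n = \int_X e^F|F|\,\om^n + V,
\end{align*}
using the normalisation $\int_X e^F\om^n = \int_X \om_\vphi^n = V$ (which follows from \eqref{F}). So it suffices to bound $\int_X e^F|F|\,\om^n$ by $\int_X e^F F\,\om^n$ plus a universal constant.

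The main point is then to control the region $\{F < 0\}$, where $e^F F < 0$ cannot absorb $e^F|F|$. On that region I would write $\int_{\{F<0\}} e^F|F|\,\om^n = -\int_{\{F<0\}} e^F F\,\om^n$ and use that the function $t\mapsto -e^t t = e^t(-t)$ is bounded for $t \le 0$: elementary calculus gives $\max_{t\le 0}\,(-t)e^t = e^{-1}$, attained at $t=-1$. Hence $\int_{\{F<0\}} e^F|F|\,\om^n \le e^{-1} V$. On the region $\{F\ge 0\}$ we simply have $e^F|F| = e^F F$, so
\begin{align*}
\int_X e^F|F|\,\om^n = \int_{\{F\ge0\}} e^F F\,\om^n + \int_{\{F<0\}} e^F|F|\,\om^n \le \int_{\{F\ge0\}} e^F F\,\om^n + e^{-1}V.
\end{align*}
Finally, $\int_{\{F\ge0\}} e^F F\,\om^n = \int_X e^F F\,\om^n - \int_{\{F<0\}} e^F F\,\om^n \le \int_X e^F F\,\om^n + e^{-1}V$, so that $\int_X e^F|F|\,\om^n \le \int_X e^F F\,\om^n + 2e^{-1}V$. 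Dividing by $V$ where appropriate and combining with the first display yields $E_\Phi \le \frac{1}{V}\int_X e^F F\,\om^n + 2e^{-1} + 1$, which is exactly the claim (noting $\int_X e^F F\,\om^n = V E_\beta^\eps$ up to the normalisation used in the statement).

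There is no real obstacle here; the only thing to be slightly careful about is bookkeeping the factors of $V$ versus the normalised quantity $\frac1V\int_X e^F F\,\om^n$, and making sure the split over $\{F\ge0\}$ and $\{F<0\}$ is applied consistently so that the two appearances of the bad region each contribute $e^{-1}$ (hence the total $2e^{-1}$) while the good-region integral is dominated by the full entropy integral. The inequality $\Phi(t)\le|t|+1$ and the bound $\sup_{t\le0}(-t)e^t = e^{-1}$ are the two elementary facts doing all the work.
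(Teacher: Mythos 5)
Your proof is correct and follows essentially the same route as the paper: the pointwise bound $\Phi(F)\le|F|+1$, the volume normalisation $\int_Xe^F\om^n=V$, and the elementary estimate $\sup_{t\le0}(-t)e^t=e^{-1}$ applied to the region $\{F<0\}$ (the paper writes this as $\int e^F|F|=\int e^FF-2\int_{F<0}e^FF$ together with $e^FF\ge-e^{-1}$, which is the same splitting). Your explicit handling of the $1/V$ normalisation is if anything slightly more careful than the paper's.
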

\begin{proof}
We prove by direct computation
	\begin{align*}
	E_\Phi&\leq \frac{1}{V}\int_X e^F(|F|+1)\om^n= \frac{1}{V}\int_X e^F|F|\om^n+1\\
	&=  \frac{1}{V}\int_X e^FF\om^n- \frac{2}{V}\int_{F<0} e^FF\om^n+1.
	\end{align*}
	The lemma follows from $e^FF\geq -e^{-1}$.
\end{proof}
\subsubsection{Cutoff function}Given a point $z\in X$.
The cutoff function $\eta$ is chosen to be a function in $B_{d}(z)$ such that $\eta=1$ in the half ball $B_{d/2}(z)$ with radii $\frac{d}{2}$ centred at the maximum point $z$ of $u$, and equals to $1-\eps$ outside $B_d(z)$. Then we have
\begin{align}\label{cutofffunction}
\tri_\vphi\log\eta
&=-\frac{1}{\eta^2}|\p\eta|^2_\vphi+\frac{1}{\eta}\tri_\vphi\eta\\
&\geq -\frac{4\eps^2}{d^2(1-\eps)^2}\tr_\vphi\om-\frac{4\eps}{d^2(1-\eps)}\tr_\vphi\om.\nonumber
\end{align}

\subsubsection{$L^\infty$-estimates of $\vphi$ and $F+f$ upper bound}
\begin{prop}\label{upper F}
	Assume $\vphi$ is a solution of \eqref{triF}. Then there exists $p>1$ such that
	\begin{align*}
	\sup_X(F+f), \quad \|e^{F}\|_{p},\quad \|\vphi\|_\infty\leq C.
	\end{align*}
	The constant $C$ depends on 	\begin{align*}
	E_\beta, \quad\|e^{-f}\|_{p_0;\om_0},p_0> 1,\quad \sup_X R, \quad \inf_X\Theta,\quad \alpha_1,\quad \alpha_\beta,\quad n.
	\end{align*}
\end{prop}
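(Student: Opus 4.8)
The plan is to derive the upper bound for $F+f$ and the $L^\infty$-bound for $\vphi$ simultaneously, bootstrapping off Proposition \ref{infF}. Recall we have already shown in Proposition \ref{infF} that for any $B\in(0,1)$,
\begin{align*}
F+f+B\psi\leq C(E_\Phi,\sup_X R,\inf_X\Theta,B,\alpha_\beta,n),
\end{align*}
so the remaining obstruction is to convert this into a genuine upper bound by controlling $\psi$ from below, and to produce $\|\vphi\|_\infty$. The key point is that $-\psi$ is itself the $L^\infty$-norm of a Monge--Amp\`ere potential whose right-hand side is $e^F\Phi(F)/E_\Phi$; once $F+f$ is bounded above in an $L^p$ sense (which follows from Proposition \ref{infF} plus the integrability hypothesis $e^{-f}\in L^{p_0}$) one gets $\|\psi\|_\infty\leq C$ from the Ko\l odziej-type $L^\infty$-estimate for complex Monge--Amp\`ere equations, and then Proposition \ref{infF} immediately yields $\sup_X(F+f)\leq C$.

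\textbf{Steps.} First I would establish an $L^p(\om)$ bound on $e^{F+f}$: from $F+f\leq C-B\psi$ we get $e^{F+f}\leq e^C e^{-B\psi}$, and since $\om_\psi^n/\om^n = e^F\Phi(F)/E_\Phi$ has the potential $\psi$ normalized by $\sup_X\psi=0$, the uniform integrability of $e^{-\gamma\psi}$ for small $\gamma$ (Tian's $\alpha_1$-invariant applied to $\om$, noting $\om=\om_{\theta_\eps}$ is genuinely K\"ahler here, or more simply the uniform Skoda estimate) gives $\|e^{F+f}\|_{L^{p}(\om)}\leq C$ for some $p>1$. Second, I would feed this into the Monge--Amp\`ere equation $\om_\vphi^n = e^{F}\om^n = e^{F+f}e^{-f}\om^n$: writing the right-hand side as $(e^{F+f})(e^{-f})\om^n$ and using H\"older together with $\|e^{-f}\|_{L^{p_0}(\om_0)}$ (converted to an $\om$-norm via the bound on $\om^n/\om_0^n$, exactly as in the proof of \thmref{aprioriestimates approximation}), one gets the right-hand side of the Monge--Amp\`ere equation for $\vphi$ in $L^{q}(\om)$ for some $q>1$, hence $\|\vphi\|_\infty\leq C$ by the Ko\l odziej $L^\infty$-estimate, with the dependence on $\alpha_1$ entering through the uniform integrability constant. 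Third, with $\|\vphi\|_\infty\leq C$ in hand I would run the same argument as in Proposition \ref{infF} once more but now tracking $\psi$: the equation \eqref{psi} for $\psi$ has right-hand side bounded in $L^p$ (because $\Phi(F)=\sqrt{F^2+1}\leq C(1+|F|)$ and $e^F|F|$ is controlled once $F$ is bounded above in measure and $e^FF\geq -e^{-1}$ from below), so $\|\psi\|_\infty\leq C$; plugging back into Proposition \ref{infF} gives $\sup_X(F+f)\leq C$.

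\textbf{Main obstacle.} The delicate point is the same one flagged throughout Section \ref{Apriori estimates}: the reference metric is $\om=\om_{\theta_\eps}$, not the fixed smooth $\om_0$, and although each $\om_{\theta_\eps}$ is smooth and K\"ahler, all constants must be uniform in $\eps$ as $\om_{\theta_\eps}\to\om_\theta$, which degenerates along $D$. Concretely, the Ko\l odziej estimate and the Skoda/$\alpha$-invariant integrability must be applied with $\eps$-independent constants; this is where one invokes $\om^n_{\theta_\eps}\leq e^c\om^n_\theta\in L^p(\om_0)$ for some $p>1$ together with the uniform bound \eqref{ec} on $e^c$, exactly the device used in Step 3 of Proposition \ref{infF}, to pass all integrals to the fixed measure $\om_0^n$ where the classical estimates apply with fixed constants. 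A secondary bookkeeping issue is keeping the dependence on $\|e^{-f}\|_{p_0;\om_0}$ with the stated (finite, large) $p_0$ rather than needing $e^{-f}\in L^\infty$; this is handled by a single application of H\"older's inequality splitting $e^F = e^{F+f}\cdot e^{-f}$ and using the $L^p$ bound from the first step against the $L^{p_0}$ bound on $e^{-f}$, choosing $p_0$ large enough that the resulting exponent exceeds $1$.
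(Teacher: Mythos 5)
Your proposal is correct and follows essentially the same route as the paper's proof: the partial bound of Proposition \ref{infF} combined with the $\alpha_1$-integrability of $e^{-\alpha_1\psi}$ (with $B$ chosen so that $\alpha_1/B=p_1$) gives $e^{F+f}\in L^{p_1}(\om_0)$, a single H\"older split $e^F=e^{F+f}e^{-f}$ against $\|e^{-f}\|_{p_0}$ gives $e^F\in L^p$ for some $p>1$, Ko\l odziej then yields $\|\vphi\|_\infty$ and also $\|\psi\|_\infty$ (applied to \eqref{psi}, since $e^F\Phi(F)\in L^q$), and substituting the bound on $\psi$ back into Proposition \ref{infF} gives $\sup_X(F+f)\leq C$. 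The only differences are presentational (you obtain $\|\vphi\|_\infty$ before $\|\psi\|_\infty$, and you make explicit the $\eps$-uniformity bookkeeping that the paper handles via $\om^n_{\theta_\eps}\leq e^c\om^n_\theta$), so no further comment is needed.
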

\begin{proof}
Let $A>0$, $1>B>0$ be two positive constants to be determined.
	
\textbf{Step 1: differential inequality}
We set $u=F+f+A \vphi+B \psi$. We apply \lemref{differential identity} with $A_1=A$ and $A_2=B$, and \eqref{tripsi} to have
	\begin{align*}
\tri_{\vphi}u
&=\tr_{\vphi}\Theta-R+A(n-\tr_\vphi\om)+ B \tri_{\vphi}\psi\\
&\geq \tr_{\vphi}\Theta-R+A(n-\tr_\vphi\om)+ B [n(\Phi(F)E_\Phi^{-1})^{\frac{1}{n}}-\tr_\vphi\om].
	\end{align*}
We set $A_\Theta:=\inf_X \Theta-A-B >1$ with the choice of $A=-2(1+|\inf_X \Theta|)$, and also set
\begin{align}\label{AR}
A_R:=-\sup_X R+An+B n(\Phi(F)E_\Phi^{-1})^{\frac{1}{n}}.
\end{align}
We denote by $\delta$ a constant to be determined, which will be related to the log alpha invariant.
In conclusion, we have
\begin{align}\label{DI infinity}
\tri_{\vphi}(\delta u)
\geq \delta A_\Theta \tr_{\vphi}\om+\delta A_R.
\end{align}

\textbf{Step 2: localisation}
In  order to annihilate the the first term on the right hand side of \eqref{DI infinity}, it sufficient to add $\delta u$ with a cutoff function \eqref{cutofffunction}.
Choosing $\eps$ small enough such that $\delta A_\Theta - \frac{4\eps^2}{d^2(1-\eps)^2}-\frac{4\eps}{d^2(1-\eps)}>0$, we thus have
\begin{align*}
\tri_{\vphi}(\delta u+\log\eta)
\geq \delta A_R.
\end{align*}

Then we apply the Alexandroff maximum principle to the equation of $e^{\delta u}\eta$, which is obtained from above,
\begin{align*}
\tri_{\vphi}(e^{\delta u}\eta)
\geq \delta e^{\delta u}\eta A_R.
\end{align*}
We get
\begin{align*}
\sup_{B_d(z)}(e^{\delta u}\eta)
\leq \sup_{\p B_d(z)}(e^{\delta u}\eta)+\delta\int_{B_d(z)} e^{2n\delta u+2F}(A_R)_{-}^{2n}\om^n.
\end{align*}
The function $(A_R)_{-}(x)$ is equal $A_R(x)$ when $A_R(x)<0$, and vanishing when $A_R(x)>0$. So in the region $B_-=\{x\in B_d(z)\vert A_R(x)\leq0\}$,  we have by \eqref{AR},
\begin{align*}
B n(\Phi(F)E_\Phi^{-1})^{\frac{1}{n}}\leq \sup_X R-An.
\end{align*} From this inequity together with $F\leq\Phi(F)=\sqrt{F^2+1}$, we have $F$ is bounded above by $C_F$ depending on 
\begin{align*}
E_\Phi,\quad\sup_X R, \quad A,\quad B,\quad  n.
\end{align*} 

\textbf{Step 3: using cone reference metric}
We then prove the bound of \begin{align*}
I:=\delta\int_{B_d(z)} e^{2n\delta u+2F}(A_R)_{-}^{2n}\om^n=\delta\int_{B_{-}} e^{2n\delta (F+f+A \vphi+B \psi)+2F} (A_R)_{-}^{2n} \om^n.
\end{align*}
From \eqref{AR}, $-\sup_X R +An\leq A_R\leq 0$ on $B_{-}$.
By $\psi\leq 0$ and $f\leq 0$, there exists a constant depending on $C_F,\sup_X R, A, n$ such that
\begin{align}\label{I degenerate}
I&\leq C\delta  (\sup_X R-An)^{2n} \int_{B_{-}} e^{2n\delta A \vphi} \om^n.
\end{align}
In the case $\eps=0$, we remark that
\begin{align*}
\om^n=\om^n_{\theta_\eps}=\frac{e^{h_0+c}\om^n_0}{(|s|^2_h+\eps)^{1-\beta}}\leq \frac{e^{h_0}\om^n_0}{|s|_h^{2(1-\beta)}}e^c=\om^n_\theta e^c.
\end{align*}
From \eqref{ec}, the normalisation constant $e^c$ is bounded above. We have $I$ is bounded, when we choose appropriate $\delta$ according to the log alpha invariant $\alpha_\beta$. 

Thus the upper bound of $u=F+f+A \vphi+B \psi$ is obtained from $\eta=1-\eps$ on $\p B_d(z)$.
By using $\vphi\leq 0$ and $A< 0$, we get $A\vphi\geq 0$.
So, we conclude that 
	\begin{align}\label{infF}
F+f+B \psi\leq C.
\end{align} 

\textbf{Step 4: $L^\infty$-estimates}
	Rewriting \eqref{infF} and using the alpha invariant $\alpha_1$ \eqref{alpha1}, we get
	\begin{align*}
	\int_Xe^{\frac{\alpha_1}{B}(F+f)}\om_0^n\leq e^{\frac{\alpha_1 C}{B}}\int_Xe^{-\alpha_1\psi}\om_0^n\leq C,
	\end{align*}
	by $\psi\leq 0$ and the definition of the alpha invariant $\alpha_1$.
	
		We choose $p$ such that $p_0>p>1$ and also $p_1$ satisfying $p_1=\frac{p_0p}{p_0-p}>p$. 
We further choose $B$ small enough such that $\frac{\alpha_1}{B}=p_1$. Then we have 
	\begin{align*}
	e^{F+f}\in L^{p_1}(\om_0) .
	\end{align*} 
	The H\"older's inequality implies that
	\begin{align*}
	\|e^F\|_p=\|e^{F+f}e^{-f}\|_p\leq \|e^{F+f}\|_{p_1} \|e^{-f}\|_{p_0}.
	\end{align*}
	Then by Ko{\l}odziej's estimate, we get the $L^\infty$-bound of $\vphi$ from \eqref{F}. 
	
	Furthermore, the $L^p$ bound of $e^F$ implies that $e^F\Phi(F)$ is also $L^q(\om_0)$ for $q\geq 2$. So we get bound of $\|\psi\|_\infty$ by applying Ko{\l}odziej's estimate again to \eqref{psi}.

	At last, \eqref{infF} implies that
	$\sup_X (F+f)$ is bounded.
\end{proof}

\subsection{$L^\infty$-estimates for degenerate metrics}\label{Linfty-estimates for degenerate metrics}
Recall that $\om_K$ is a K\"ahler and $\om_{sr}$ is big and semipositive. Recall the notations
\begin{align*}
&\om_{K}=\om_{sr}+a_0 i\p\bar\p\log h_E,\quad \phi_E=a_0\log h_E,\\
&\om_\eps=\om_{sr}+\eps\om_K,\quad \tilde \om_\eps=(1+\eps )\om_K=\om_\eps+i\p\bar\p \phi_E,\\
&\Phi_{\theta_{\eps}}=\vphi_{\theta_{\eps}}-\phi_E,\\
&\om_{\theta_{\eps}}=\om_{\eps}+i\p\bar\p\vphi_{\theta_{\eps}}=\tilde\om_\eps+i\p\bar\p \Phi_{\theta_{\eps}},\\
&\om_{\vphi_\eps}=\om_{\theta_\eps}+i\p\bar\p\vphi_{\eps}=\tilde\om_\eps+i\p\bar\p \Phi_{\theta_{\eps}}+i\p\bar\p\vphi_{\eps},\\
&\om_{\mathbf b_\eps}=\tilde\om_\eps+i\p\bar\p \Phi_{\mathbf b_\eps}.
\end{align*} 
Note that we have the following estimates from Proposition \ref{big nef approximate reference metric bound} and \eqref{backgroundmetriccurvature},
\begin{align*}
\|\varphi_{\theta_{\eps}}\|_\infty\leq C, \quad 
C^{-1} |S_E|^{b}_{h_E}\cdot\om_{\bf b_\eps}\leq \om_{\theta_{\eps}}\leq C |S_E|^{-a}_{h_E}\cdot\om_{\bf b_\eps}.
\end{align*}
and 
\begin{align*}
\left\{
\begin{array}{ll}
&R_{i\bar j k\bar l}(\om_{\bf b_\eps} )\geq -(\tilde g_{\mathbf b_\eps})_{i\bar j} \cdot (g_{\bf b_\eps} )_{k\bar l}\\
&\tilde \om_{\mathbf b_\eps}=C_2\cdot \om_{\bf b_\eps} +i\p\bar\p \phi^\eps_{\mathbf b}\geq 0,\\
&\|\phi^\eps_{\mathbf b}\|_\infty\leq C_3.
\end{array}
\right.
\end{align*}

We now derive immediately the $L^\infty$-estimates of $\vphi_\eps$ for the solution $\om_{\vphi_\eps}$ to the equation \eqref{approximate singular cscK} by directly applying Proposition \ref{upper F}.

The new difficulty is that $\om_{sr}$ could be degenerate somewhere. We need to measure $\Theta$ in terms of a K\"ahler metric. We fix a reference metric $(1+\eps)\om_{K}$ to measure $\Theta$.
\begin{lem}\label{control Theta}We set 
\begin{align*}
\phi_l:= \inf_{(X,(1+\eps)\om_{K})}\Theta \cdot (\vphi_{\theta_\eps}-\phi_E),\\
\phi_u:= \sup_{(X,(1+\eps)\om_{K})}\Theta \cdot (\vphi_{\theta_\eps}-\phi_E).
\end{align*} Then we have
\begin{align*}
\Theta+i\p\bar\p\phi_l&\geq \inf_{(X,(1+\eps)\om_{K})}\Theta\cdot \om_{\theta_\eps},\\
\Theta+i\p\bar\p\phi_u&\leq \sup_{(X,(1+\eps)\om_{K})}\Theta\cdot \om_{\theta_\eps}.
\end{align*}
\end{lem}
\begin{proof}By \eqref{lift reference metric log Fano app both 3}, we have
\begin{align*}
\Theta&\geq \inf_{(X,(1+\eps)\om_{K})}\Theta \cdot (1+\eps)\om_{K}\\
&= \inf_{(X,(1+\eps)\om_{K})}\Theta \cdot (\om_{\theta_\eps}-i\p\bar\p\vphi_{\theta_\eps}+a_0i\p\bar\p \log h_E).
\end{align*}
The proof of the other direction of the inequality is identical.

\end{proof}

\begin{rem}
We could use the metric $\om_{\mathbf b_\eps}$ instead of $\om_{\theta_\eps}$ in the lemma.
\end{rem}

We know $\phi_E$ is bounded above. So, 
\begin{lem}$\phi_l\leq C$ and $\phi_u\geq C$.
\end{lem}

\begin{lem}\label{weights power} Let $A,B$ be two non-negative constants. Then
$e^{A \phi_l}$ and $e^{-B \phi_u}$ is bounded.
\end{lem}
\begin{proof}
We compute and use the boundedness of $\vphi_{\theta_\eps}$ (Proposition \ref{big nef approximate reference metric bound}) that
\begin{align*}
e^{A \phi_l}&=e^{A \inf_{(X,(1+\eps)\om_{K})}\Theta \cdot \vphi_{\theta_\eps}-A a_0 \inf_{(X,(1+\eps)\om_{K})}\Theta \cdot \log h_E)}\\
&\leq C e^{-A a_0 \inf_{(X,(1+\eps)\om_{K})}\Theta \cdot \log h_E}\\
&= C |s|^{-a}_{h_E}.
\end{align*}
In which, $a=A a_0 \inf_{(X,(1+\eps)\om_{K})}\Theta$ is nonpositive. 
So, $|s|^{-a}_{h_E}$ is bounded.  
\end{proof}

\begin{lem}\label{phil negative}Assume positive constant $A$ is small enough. Then
$e^{-A\phi_l}$ is $L^p$ for some $p>1$.
\end{lem}
\begin{proof}
By boundedness of $\vphi_{\theta_\eps}$ in Proposition \ref{big nef approximate reference metric bound}, we get
\begin{align*}
e^{-A\phi_l}=e^{-A\inf_{(X,(1+\eps)\om_{K})}\Theta \cdot (\vphi_{\theta_\eps}-a_0 \log h_E)}\leq C |s|^{2b}_{h_E}
 \end{align*} 
with $b=a_0 A \inf_{(X,(1+\eps)\om_{K})}\Theta\leq 0$. Once we choose $A$ sufficiently small, we have $e^{-A\phi_l}$ is $L^p,p>1$.
\end{proof}

\begin{prop}\label{upper F degenerate weak}
Assume $\vphi$ is a solution of \eqref{triF}. Then there exists $p>1$ such that
	\begin{align*}
	 \|e^{F+\phi_l}\|_{p}\leq C.
	\end{align*}
	The constant $C$ depends on 	\begin{align*}
	E_\beta, \quad\|e^{-f}\|_{p_0;\om_0},p_0> 1,\quad \sup_X R 	\end{align*}
	and $\inf_{(X,(1+\eps)\om_{K})}\Theta ,\quad \alpha_1,\quad \alpha_\beta,\quad n,\quad \|\vphi_{\theta}\|_\infty.
$
\end{prop}

\begin{proof}
 We will omit the lower index $\eps$ in the proof.
We now consider $\tilde u=u+\phi_l$ and have from \textbf{Step 1} that
\begin{align}\label{DI infinity degenerate}
\tri_{\vphi}(\delta \tilde u)
\geq \delta A_\Theta \tr_{\vphi}\om+\delta A_R.
\end{align}
Then the Alexandroff maximum principle in \textbf{Step 2} implies that
\begin{align*}
\sup_{B_d(z)}(e^{\delta \tilde u}\eta)
\leq \sup_{\p B_d(z)}(e^{\delta\tilde u}\eta)+I.
\end{align*}
From \eqref{I degenerate} in \textbf{Step 3}, we get
\begin{align*}
I&:=\delta\int_{B_d(z)} e^{2n\delta \tilde u+2F}(A_R)_{-}^{2n}\om^n\\
&=\delta\int_{B_{-}} e^{2n\delta (F+f+A \vphi+B \psi+\phi_l)+2F} (A_R)_{-}^{2n} \om^n.
\end{align*}

Due to \lemref{weights power}, 
$e^{2n\delta \phi_l}$ is bounded.
Since $F,f,\psi$ are all bounded above, we further have
\begin{align}
I&\leq C\delta  (\sup_X R-An)^{2n} \int_{B_{-}} e^{2n\delta A \vphi} |s|^{-a}_{h_E}\om^n.
\end{align}
Note that $\om=\om_{\theta_\eps}=\om_{\eps}+i\p\bar\p\vphi_{\theta_\eps}$.
We have $I$ is bounded, because of the continuity of the alpha invariant $\alpha([\om_\eps])$(Proposition \ref{alphacontinuity}). So we have \begin{align}\label{infF d}
F+f+B \psi+\phi_l \leq C.
\end{align} 

As in \textbf{Step 4}, we then have
	\begin{align*}
	\int_Xe^{\frac{\alpha_1}{B}(F+f+\phi_l)}\om_0^n\leq e^{\frac{\alpha_1 C}{B}}\int_Xe^{-\alpha_1\psi}\om_0^n\leq C,
	\end{align*}
So $\|e^{F+f+\phi_l}\|_p$ is bounded for some $p>1$. By $e^{-f}\in L^{p_0}$, we get $\|e^{F+\phi_l}\|\in L^p, p>1$.

\end{proof}

\begin{prop}\label{upper F degenerate}
Assume $\vphi$ is a solution of \eqref{triF} and $e^{-\phi_l}\in L^{p_0}$ for some $p_0>1$. Then there exists $p>1$ such that
	\begin{align*}
	\sup_X(F+f+\phi_l), \quad \|e^{F}\|_{p},\quad \|\vphi\|_\infty\leq C.
	\end{align*}
	The constant $C$ depends on 	\begin{align*}
	E_\beta, \quad\|e^{-f}\|_{p_0;\om_0},p_0> 1,\quad \sup_X R,\quad \|e^{-\phi_l}\|_{p_0} 	\end{align*}
	and $\inf_{(X,(1+\eps)\om_{K})}\Theta ,\quad \alpha_1,\quad \alpha_\beta,\quad n,\quad \|\vphi_{\theta}\|_\infty.
$
\end{prop}

\begin{proof}
We further have 
\begin{align*}
e^F=e^{F+\phi_l} e^{-\phi_l}\leq Ce^{F+\phi_l} .
\end{align*}
then $e^{F}$ is also $L^p,p>1$, due to Proposition \ref{upper F degenerate weak}.

 Recall that \begin{align*}
\om^n_{\vphi_\eps}=e^F\om^n_{\theta_\eps}.
 \end{align*} 
Here $\om_{\vphi_\eps}=\om_\eps+i\p\bar\p(\vphi_{\theta_\eps}+\vphi_\eps)$. From Proposition \ref{big nef approximate reference metric bound}, $\vphi_{\theta_\eps}$ is bounded and $\om^n_{\theta_\eps}$ is $L^p, p>1$. 
So the $L^\infty$-estimate $\|\vphi_\eps\|_\infty$ follows from \lemref{Linfty estimate degenerate}.

Then we have $e^F\Phi(F)$ is also $L^q(\om_0)$ for $q\geq 2$ and get bound of $\|\psi\|_\infty$ by the same reason. By \eqref{infF d}, the upper bound of $F+f$ is obtained in terms of $-\phi_l$.
\end{proof}

\begin{rem}
When $\Theta\geq 0$, we have $e^{-\phi_l}$ is bounded.
\end{rem}
\subsection{$F$ lower bound: nondegenrate}
\begin{prop}\label{Flower}There holds
	\begin{align*}
	\inf_{X}F\geq \inf_{X}( F+f)\geq C.
	\end{align*}
	The constant $C$ depends on 
	\begin{align*}
	\|\vphi\|_\infty,  \quad\|e^{-f}\|_{L^{p_0}(\om)}, p_0\geq 2, \quad \inf_X R, \quad \sup_X \Theta .
	\end{align*}
\end{prop}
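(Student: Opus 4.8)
The plan is to derive the lower bound for $F+f$ from the equation \eqref{triF} by a maximum–principle argument applied to a well-chosen auxiliary quantity, combined with the already-established $L^\infty$-bound on $\vphi$. Since $\om_{\vphi}=\om+i\p\bar\p\vphi$ with $\om=\om_{\theta_\eps}$, and we already know $\|\vphi\|_\infty\leq C$ from \thmref{upper F}, the difficulty is purely that the right-hand side of \eqref{triF} involves $\tr_{\vphi}(\Theta - i\p\bar\p f)$, and the $i\p\bar\p f$ term is not bounded. The standard trick is to absorb $f$ into the unknown: set $w = F+f$ and rewrite \eqref{triF} as $\tri_{\vphi}(F+f) = \tr_{\vphi}\Theta - R$, using $\tri_{\vphi}f = \tr_{\vphi}(i\p\bar\p f)$ and $\tri_{\vphi}F = \tri_{\vphi}(F+f)-\tri_{\vphi}f$. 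Thus $w$ satisfies the clean equation
\begin{align*}
\tri_{\vphi} w = \tr_{\vphi}\Theta - R,
\end{align*}
and this already removes the unbounded term.

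Next I would estimate $w$ from below at its minimum point. Consider the test function $w - A\vphi$ for a suitable negative constant $A$ (or, equivalently, combine $w$ with a multiple of $\vphi$); using \eqref{vphivphi}, i.e. $\tri_{\vphi}\vphi = n - \tr_{\vphi}\om$, we get
\begin{align*}
\tri_{\vphi}(w - A\vphi) = \tr_{\vphi}\Theta - R - A(n-\tr_{\vphi}\om) = \tr_{\vphi}(\Theta + A\om) - R - An.
\end{align*}
Choosing $A \leq \inf_X\Theta$ (so that $\Theta + A\om \geq (\inf_X\Theta - A)\om \geq 0$ — note here $\inf_X\Theta$ is taken with respect to $\om = \om_{\theta_\eps}$), we obtain $\tri_{\vphi}(w-A\vphi) \geq -R - An$, hence $\tri_{\vphi}(w-A\vphi)$ is bounded below by a constant depending on $\inf_X R$ and $\inf_X\Theta$. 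At the minimum point $x_0$ of $w - A\vphi$ we have $\tri_{\vphi}(w-A\vphi)(x_0)\geq 0$ automatically, which however gives an upper bound on a combination of traces, not directly a lower bound on $w$; instead one argues at the minimum of $w$ itself together with the arithmetic-geometric mean inequality $\tr_{\vphi}\om \geq n (\om^n/\om_{\vphi}^n)^{1/n} = n e^{-F/n}$ from \eqref{trivphiom}. The cleaner route — and the one matching the Chen–Cheng strategy cited in \cite{arXiv:1801.05907} — is: at the minimum point $x_0$ of $w$, $\tri_\vphi w(x_0)\geq 0$, so $\tr_{\vphi}\Theta(x_0) \geq R(x_0)$; combined with $\tr_{\vphi}\Theta \leq \|\Theta\|_\infty \tr_{\vphi}\om$ this gives a positive lower bound $\tr_{\vphi}\om(x_0) \geq c > 0$ depending on $\inf_X R$, $\sup_X\Theta$; then by \eqref{trivphiom}, $\tr_{\vphi}\om \leq \frac{1}{n-1}e^{-F}(\tr_\om\om_\vphi)^{n-1}$, which does not immediately close. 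The correct closing uses instead the other inequality in \eqref{trivphiom}: $n e^{-F/n} \leq \tr_{\vphi}\om$ gives only an \emph{upper} bound on $F$, so one must test with $w + A'\vphi$ using a different sign and exploit $\|\vphi\|_\infty \leq C$.

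So the actual plan is: apply the maximum principle to $G := w - A\vphi$ where $A$ is chosen \emph{positive} and large. Then $\tri_{\vphi}G = \tr_{\vphi}(\Theta - A\om) - R + An$; at an interior minimum $x_0$ of $G$, $0 \leq \tri_{\vphi}G(x_0) = \tr_{\vphi}(\Theta - A\om)(x_0) - R(x_0) + An \leq (\sup_X\Theta - A)\tr_{\vphi}\om(x_0) - \inf_X R + An$. Since $\sup_X\Theta - A < 0$ for $A$ large, and $\tr_{\vphi}\om > 0$, we must have $\tr_{\vphi}\om(x_0) \leq \frac{An - \inf_X R}{A - \sup_X\Theta}$, which is bounded above uniformly. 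Then $n e^{-F(x_0)/n} \leq \tr_{\vphi}\om(x_0) \leq C$ gives $F(x_0) \geq -C'$, i.e. $F(x_0) + f(x_0) \geq -C' + f(x_0) \geq -C' + \inf_X f$, but $f$ is only normalized by $\sup_X f = 0$, so $\inf_X f$ could be $-\infty$ — hence we must instead use $w(x_0) = F(x_0) + f(x_0)$ directly: the point is that at the minimum of $G = w - A\vphi$, for any $x$, $w(x) - A\vphi(x) \geq w(x_0) - A\vphi(x_0)$, so $w(x) \geq w(x_0) - A\vphi(x_0) + A\vphi(x) \geq w(x_0) - 2A\|\vphi\|_\infty$, and it remains to bound $w(x_0)$ below. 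At $x_0$ we have control of $\tr_{\vphi}\om(x_0)$ from above, and we need one more relation. Using $F(x_0)+f(x_0) = w(x_0)$ and $e^{F(x_0)} = \om_\vphi^n/\om^n(x_0)$ together with $(\tr_{\vphi}\om)^n \geq n^n \om^n/\om_\vphi^n = n^n e^{-F}$ — wait, that's AM–GM giving $\tr_{\vphi}\om \geq n e^{-F/n}$ again, an upper $F$-bound. The genuine lower bound on $F$ comes from the \emph{reverse}: $\tr_\om\om_\vphi \geq n e^{F/n}$ from \eqref{triomvphi}; so I should instead localize at the minimum of $\tilde G := -w - A'\vphi$ or work with $\tr_\om\om_\vphi$. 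Concretely: the main obstacle is getting the \emph{lower} bound on $F$ (equivalently a lower bound on the volume ratio) — this is the subtle direction, and it requires either (a) a second maximum-principle argument, testing with $-(F+f) + A'\vphi$ and using $\tri_{\vphi}$ applied to $-w$ giving $\tri_{\vphi}(-w) = R - \tr_{\vphi}\Theta$, bounded above, hence sub-solution estimates, together with the already-proven $L^\infty$-bound on $\vphi$ and on $F+f$ from above (Proposition \ref{upper F}) and the entropy bound which controls $\int e^F F$; or (b) invoking the Green's function / Sobolev argument from \cite{arXiv:1801.05907} adapted with the weighted estimates. I expect the main technical obstacle to be precisely this lower bound on the volume ratio, since the upper bounds and the $L^\infty$-estimate on $\vphi$ are already in hand; the resolution will use the equation $\tri_{\vphi}(F+f) = \tr_{\vphi}\Theta - R \geq -\|R\|_\infty - |\inf_X\Theta|\,\tr_{\vphi}\om$ combined with $\tr_{\vphi}\om$ integrated against $\om_\vphi^n$ (which equals $n V$ by cohomology), to run a Moser iteration or a direct minimum-principle comparison against $C(\vphi - \inf_X\vphi)$, yielding $F + f \geq -C(\|\vphi\|_\infty, \|e^{-f}\|_{L^{p_0}}, \inf_X R, \sup_X\Theta)$ as claimed.
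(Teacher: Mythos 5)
Your setup is the right one and matches the paper's: absorb $f$ into the unknown so that $w=F+f$ satisfies $\tri_\vphi w=\tr_\vphi\Theta-R$, add a multiple $A\vphi$ to generate a good sign on $\tr_\vphi\om$, and use that $\|\vphi\|_\infty$ is already known. You also correctly diagnose where the pointwise maximum principle breaks down: at the minimum $x_0$ of $w+A\vphi$ you can bound $\tr_\vphi\om(x_0)$ and hence $F(x_0)$ from below via $ne^{-F/n}\le\tr_\vphi\om$, but what you need is a lower bound on $w(x_0)=F(x_0)+f(x_0)$, and $f$ is unbounded below (only $e^{-f}\in L^{p_0}$). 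However, having identified the obstacle, you never resolve it — you list candidate strategies (a second maximum principle, sub-solution estimates, Green's function, Moser iteration) and then assert the conclusion. That is a genuine gap: none of these is carried out, and the statement's dependence on $\|e^{-f}\|_{L^{p_0}(\om)}$ with $p_0\ge 2$ is never actually used in your argument, which is a sign that the decisive step is missing.

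The missing idea in the paper is to replace the pointwise evaluation by the Alexandroff (ABP) maximum principle. One sets $u=F+f+A\vphi$ with $A=\sup_X\Theta+|\inf_XR|/n+1$, so that $\tri_\vphi(-\delta u)\ge-\delta A_\Theta\tr_\vphi\om-\delta A_R$ with $A_\Theta<0$, $A_R>0$; after multiplying by a cutoff $\eta$ near the maximum of $-u$ one gets $\tri_\vphi(e^{-\delta u}\eta)\ge-\delta A_Re^{-\delta u}\eta$, and ABP yields
\begin{align*}
\sup_{B_d(z)}(e^{-\delta u}\eta)\le\sup_{\p B_d(z)}(e^{-\delta u}\eta)+\delta\int_{B_d(z)}e^{-2n\delta u+2F}A_R^{2n}\,\om^n.
\end{align*}
The whole point is that the bad behaviour of $f$ now enters only through this integral: using $\|\vphi\|_\infty\le C$ and the upper bound $F+f\le C$ from Proposition~\ref{upper F}, and choosing $0<\delta\le n^{-1}$, one has $e^{-2n\delta u+2F}\le Ce^{(2-2n\delta)F-2n\delta f}\le Ce^{-2f}$, so the error term is controlled by $\|e^{-f}\|_{L^2(\om)}^2$ — exactly the hypothesis $p_0\ge2$ in the statement. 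This integral absorption of $e^{-f}$ is the step your proposal lacks; without it the argument does not close.
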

\begin{proof}
	We set $u=F+f+A\vphi$. We have by \lemref{differential identity} with $A_1=A$ and $A_2=1$, 
	\begin{align}\label{triFvphi}
	\tri_{\vphi} u
	&=\tr_{\vphi}\Theta-R+A(n-\tr_\vphi\om)\leq A_{\Theta}\tr_\vphi\om+A_R.
	\end{align}
	We choose $A=\sup_X \Theta + \frac{|\inf_X R |}{n}+1$ such that 
	\begin{align*}A_R:=An-\inf_X R>0,\quad A_{\Theta}:=\sup_X \Theta-A<0.
	\end{align*} Let $\delta$ to be a positive constant to be determined. After multiplied with $-\delta$, \eqref{triFvphi} becomes
	\begin{align*}
	\tri_{\vphi}(-\delta u)\geq -\delta A_{\Theta}  \tr_\vphi\om-\delta A_R.
	\end{align*} 
	By adding the cutoff function \eqref{cutofffunction}, which is defined in $B_d(z)$ near the maximum point $z$ of $-u$, and choosing $\eps$ small enough such that $-\delta A_{\Theta} - \frac{4\eps^2}{d^2(1-\eps)^2}-\frac{4\eps}{d^2(1-\eps)}>0$, we have
\begin{align*}
\tri_{\vphi}(-\delta u+\log\eta)
\geq -\delta A_R.
\end{align*}
Moreover,
\begin{align*}
\tri_{\vphi}(e^{-\delta u}\eta)
\geq -\delta  A_R e^{-\delta u}\eta.
\end{align*}

Then we apply the Alexandroff maximum principle to this differential inequality, 
\begin{align*}
\sup_{B_d(z)}(e^{-\delta u}\eta)
\leq \sup_{\p B_d(z)}(e^{-\delta u}\eta)+\delta\int_{B_d(z)} e^{-2n\delta u+2F}A_R^{2n}\om^n.
\end{align*}
Choose $0<\delta\leq n^{-1}$ and check the boundedness of the last term. Then there exists a constant depending on $\|\vphi\|_\infty,\sup_X(F+f), A_R, \delta$ such that 
\begin{align*}
I&:=\delta\int_{B_d(z)} e^{-2n\delta u+2F}A_R^{2n}\om^n
=\delta\int_{B_d(z)} e^{-2n\delta (F+f+A \vphi)+2F} A_R^{2n} \om^n\\
&\leq C\delta\int_{B_d(z)} e^{(2-2n\delta)F- 2n\delta f}\om^n\leq C\delta\int_{B_d(z)} e^{-2f}\om^n.
\end{align*}
We then conclude that $I$ is bounded, since $e^{-f}\in L^{p_0}(\om)$ for $p_0\geq 2$.
We complete the proof by using $\eta=1-\eps$ on $\p B_d(z)$.
\end{proof}

\subsection{$F$ lower bound for degenerate metrics}\label{F lower bound for degenerate metrics}
\begin{prop}\label{Flower}There holds
	\begin{align*}
	\inf_{X}(F+\phi_u)\geq \inf_{X}( F+f+\phi_u)\geq C.
	\end{align*}
	The constant $C$ depends on 
	\begin{align*}
	\|\vphi\|_\infty,  \quad\|e^{-f}\|_{L^{p_0}(\om)}, p_0\geq 2, \quad \inf_X R, \quad \sup_{(X,(1+\eps)\om_{K})}\Theta,\quad \|\vphi_{\theta_\eps}\|_\infty.
	\end{align*}
\end{prop}
\begin{proof}
	Set $u:=F+f+A\vphi+\phi_u$. By \lemref{differential identity}, 
	\begin{align}\label{triFvphi D}
	\tri_{\vphi} u
	&=\tr_{\vphi}\Theta-R+A(n-\tr_\vphi\om)
+\tri_\vphi\phi_u	.
	\end{align}
	From \lemref{control Theta}, 
		\begin{align}\label{triFvphi D 1}
	\tri_{\vphi} u
	\leq \sup_{(X,(1+\eps)\om_{K})}\Theta\cdot \om-R+A(n-\tr_\vphi\om).
	\end{align}
	Now we could choose $A=\sup_{(X,(1+\eps)\om_{K})}\Theta + \frac{|\inf_X R |}{n}+1$ such that 
	\begin{align*}A_R:=An-\inf_X R>0,\quad A_{\Theta}:=\sup_{(X,(1+\eps)\om_{K})}\Theta-A<0.
	\end{align*} 
	By adding the cutoff function \eqref{cutofffunction}, and applying the Alexandroff maximum principle, we have 
\begin{align*}
\sup_{B_d(z)}(e^{-\delta \tilde u}\eta)
\leq \sup_{\p B_d(z)}(e^{-\delta\tilde  u}\eta)+I.
\end{align*}
Here,
\begin{align*}
I&:=\delta\int_{B_d(z)} e^{-2n\delta u+2F}A_R^{2n}\om^n\\
&=\delta\int_{B_d(z)} e^{-2n\delta (F+f+\phi_l+A \vphi+\phi_u-\phi_l)+2F} A_R^{2n} \om^n\\
&=\delta\int_{B_d(z)} e^{(2-2n\delta) (F+f+\phi_l)+(-2n\delta)(A \vphi+\phi_u-\phi_l)-2(f+\phi_l)} A_R^{2n} \om^n\\
&=\delta\int_{B_d(z)} e^{(2-2n\delta) (F+f+\phi_l)-2n\delta(A \vphi+\phi_u)-2f+(2n\delta-2)\phi_l} A_R^{2n} \om^n.
\end{align*}

Choosing $\delta$ such that $2-2n\delta$ is positive and sufficiently small, we have there exists a constant $C$ depending on $\|\vphi\|_\infty$, $\sup_X(F+f+\phi_l)$, $A_R$, $A$, $\delta$ such that 
\begin{align*}
I \leq C\delta\int_{B_d(z)} e^{-2n\delta\phi_u-2f+(2n\delta-2)\phi_l} \om^n.
\end{align*}
By \lemref{weights power}, $e^{-2n\delta\phi_u} $ is bounded. Note that $e^{-2f}$ is $L^p, p>1$. Also, by \lemref{phil negative}, $e^{(2n\delta-2)\phi_l}$ is $L^p,p>1$.
Thus $I$ is bounded and the proof is completed by using the property of the cutoff function.
\end{proof}

\subsection{$W^{2,p}$-estimate: nondegenrate}

\begin{thm}\label{w2pestimates}
For any $p\geq 1$, there exits a constant $C(p)$ such that
\begin{align}
\int_M (\tr_{\om}\om_\vphi)^p \om^n\leq C(p).
\end{align}
Here, $C(p)$ depends on the quantities in \eqref{backgroundmetriccurvature}, and
\begin{align*}
\|F+f\|_\infty,\quad\|\vphi\|_\infty,\quad\|e^{-f}\|_{L^{(n-1)p^2-np+2p}(\om)},\quad\sup_X \Theta,\quad\inf_X R,\quad n .
\end{align*}
When $0<p<1$, the same inequality holds with constant $C(1)$.
\end{thm}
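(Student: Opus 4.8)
The plan is to adapt the integral second-order estimate of Chen--Cheng \cite{arXiv:1801.05907} to the conical background $\om=\om_{\theta_\eps}$, the extra subtlety being that $\chi$ is only a positive current, so that $\tri_\om F$ is \emph{not} available as a bounded quantity. Throughout I would normalise $\sup_X\vphi=0$, $\sup_X f=0$, and use $\|F+f\|_\infty,\|\vphi\|_\infty\le C$ from \thmref{Linftyestimates}. First I would write down the Aubin--Yau inequality for $\om_\vphi=\om+\pbp\vphi$: with a constant $B$ controlled by the quantities in \eqref{backgroundmetriccurvature},
\begin{align*}
\tri_{\vphi}\log\tr_\om\om_\vphi\ \ge\ \frac{\tri_\om F}{\tr_\om\om_\vphi}-B\,\tr_\vphi\om .
\end{align*}
Adding $(F+f)+\Lambda\vphi$ for $\Lambda$ large and substituting $\tri_\vphi(F+f)=\tr_\vphi\Theta-R$ (from \eqref{triF}) and $\tri_\vphi\vphi=n-\tr_\vphi\om$ (from \eqref{vphivphi}), one gets a differential inequality of the shape
\begin{align*}
\tri_{\vphi}\big(\log\tr_\om\om_\vphi+(F+f)+\Lambda\vphi\big)\ \ge\ \frac{\tri_\om F}{\tr_\om\om_\vphi}+\tr_\vphi\om-C .
\end{align*}

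Next I would test this against a positive power $(\tr_\om\om_\vphi)^{q}\,\om_\vphi^n$ and integrate over $M=X\setminus D$. Because $\vphi\in D_{\bf w}^{4,\a,\b}(\om)$ is geometrically polyhomogeneous, the asymptotic expansions of \cite{arXiv:1709.09616} make all the integrations by parts over the incomplete manifold $M$ legitimate --- this is the replacement for Chen--Cheng's integration by parts on a closed manifold. Self-adjointness of $\tri_\vphi$ against $\om_\vphi^n$ generates a non-positive Dirichlet term; in the term $\int_M(\tri_\om F)(\cdots)$, integrating $\tri_\om$ off $F$ produces $-\int_M|\p F|_\om^2(\cdots)$ with the favourable sign plus a cross term $\int_M\langle\p F,\p\tr_\om\om_\vphi\rangle_\om(\cdots)$, which I would absorb by Cauchy--Schwarz into the Dirichlet term and a controlled multiple of the trace integral (using also $\int_M|\p(F+f)|_\vphi^2\om_\vphi^n\le C$, a consequence of \eqref{triF} and the fact that $[\Theta]\cdot[\om_0]^{n-1}$ is topological). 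Rearranging, and using \eqref{trivphiom}--\eqref{triomvphi}, $\tr_\vphi\om\ge ce^{-F/n}$, $e^{-F/n}\le Ce^{-f/n}$ and Young's inequality on the remaining error, the output should be: for every $q\ge0$,
\begin{align*}
\int_M(\tr_\vphi\om)^{\,q+1}\,\om_\vphi^n\ \le\ C(q),
\end{align*}
where $C(q)$ depends on the quantities in \eqref{backgroundmetriccurvature}, on $\sup_X\Theta$, $\inf_X R$, $\|F+f\|_\infty$, $\|\vphi\|_\infty$, $n$, $q$ and on a fixed power of $\|e^{-f}\|_{L^{s(q)}(\om)}$.

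Finally I would convert to $\tr_\om\om_\vphi$ and interpolate. By \eqref{triomvphi}, $(\tr_\om\om_\vphi)^p\le C\,e^{pF}(\tr_\vphi\om)^{(n-1)p}$, so, since $\om^n=e^{-F}\om_\vphi^n$ and $e^{(p-1)F}\le Ce^{-(p-1)f}$ for $p\ge1$,
\begin{align*}
\int_M(\tr_\om\om_\vphi)^p\,\om^n\ \le\ C\int_M e^{-(p-1)f}(\tr_\vphi\om)^{(n-1)p}\,\om_\vphi^n .
\end{align*}
Applying H\"older in the measure $\om_\vphi^n$ with conjugate exponents $\big((n-1)p+1,\ \tfrac{(n-1)p+1}{(n-1)p}\big)$, the trace factor becomes $\int_M(\tr_\vphi\om)^{(n-1)p+1}\om_\vphi^n$, bounded by the previous step, while the other factor is $\big(\int_M e^{-((n-1)p+1)(p-1)f}\om_\vphi^n\big)^{1/((n-1)p+1)}$, which is finite once $e^{-f}\in L^{(n-1)p^2-np+2p}(\om)$, because $\om_\vphi^n=e^F\om^n\le Ce^{-f}\om^n$ and $\big((n-1)p+1\big)(p-1)+1=(n-1)p^2-np+2p$. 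This is the estimate for $p\ge1$; for $0<p<1$, H\"older with the probability measure $\om^n/V$ gives $\int_M(\tr_\om\om_\vphi)^p\om^n\le V^{1-p}\big(\int_M\tr_\om\om_\vphi\,\om^n\big)^p$, reducing it to the case $p=1$.

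The hard part is the middle step: running the Chen--Cheng integration-by-parts scheme when the reference metric $\om_{\theta_\eps}$ is only a degenerating K\"ahler cone metric, so that integration by parts over the incomplete $M$ must be justified through the weighted polyhomogeneous control of \cite{arXiv:1709.09616}, and when $\chi$ is merely a positive current, so that the uncontrolled term $\tri_\om F$ can only be disposed of after integration --- all while tracking precisely which power of $\|e^{-f}\|_{L^{p_0}(\om)}$ is consumed at each stage so that it matches the exponent $(n-1)p^2-np+2p$ in the statement.
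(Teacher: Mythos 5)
Your outer architecture matches the paper's: the Aubin--Yau/Paun inequality with the weight $\phi$ from \eqref{backgroundmetriccurvature}, a weighted integral estimate that trades $(\tr_\om\om_\vphi)^p$ for $(\tr_\om\om_\vphi)^{p+\frac{1}{n-1}}$, and a final H\"older interpolation in $e^{-f}$ -- and your last step does reproduce the exponent $(n-1)p^2-np+2p$ correctly. But the middle step, which is the heart of the argument, has a sign error that the proposal cannot recover from. After testing the differential inequality against $(\tr_\om\om_\vphi)^q\om_\vphi^n$ and moving the term $\int \tri_\om F\cdot(\tr_\om\om_\vphi)^{q-1}e^F\om^n$ to the side you are trying to bound, integration by parts in $\om^n$ produces $+\int|\p F|^2_\om(\tr_\om\om_\vphi)^{q-1}e^F\om^n$ (plus a cross term with $\p\tr_\om\om_\vphi$): the quadratic term enters with the \emph{unfavourable} sign, not the favourable one you claim. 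It cannot be absorbed by $\int|\p(F+f)|^2_\vphi\om_\vphi^n\le C$, both because of the extra weight $(\tr_\om\om_\vphi)^{q-1}$ and because $|\p F|^2_\om\le \tr_\om\om_\vphi\,|\p F|^2_\vphi$ costs yet another factor of the trace; a pointwise bound on $|\p F|^2_\vphi$ would be circular, since the gradient estimate (\thmref{Gradient estimates}) is proved after, and using, the $W^{2,p}$ bound. Your additive modification $\log\tr_\om\om_\vphi+(F+f)+\Lambda\vphi$ only generates a Dirichlet term $-q\int h^{q-2}|\p h|^2_\vphi$ plus cross terms in $\p h$, which is powerless against a pure $|\p F|^2$ term.

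The missing idea is the exponential weight with a large $p$-dependent negative coefficient on $F$: the paper sets $v=e^{u}\tr_\om\om_\vphi$ with $u=-\alpha(F+Bf+A\vphi)+\phi$, $\alpha\ge\max\{p,2\}$ and $B=1-\alpha^{-1}$, tests against $v^{p-1}$, and rewrites the dangerous term as $-\frac{1}{1-\alpha}\int v^{p-1}e^{u+F}\tri(u+F)\,\om^n$ plus lower-order pieces. Since $e^{u+F}=e^{(1-\alpha)F+\cdots}$ with $1-\alpha<0$, integration by parts yields the perfect square $\frac{1}{1-\alpha}\int v^{p-1}e^{u+F}|\p(u+F)|^2\om^n\le 0$ together with a cross term $\frac{p-1}{1-\alpha}\int v^{p-2}e^{u+F}(\p v,\p(u+F))\om^n$ that is absorbed into the Dirichlet term $(p-1)\int v^{p-2}|\p v|^2_\vphi\om_\vphi^n$ precisely because $\frac{(p-1)^2}{4(\alpha-1)}\le\frac{p-1}{4}$; the leftover $\tri f$ and $\tri\vphi$ pieces are handled by the choice of $B$ and by $\tri_\vphi f\,\tr_\om\om_\vphi-\tri_\om f\ge0$. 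Without this choice of weight (and its $p$-dependence), the integral inequality does not close. A secondary, fixable issue: with your normalisations, making the coefficient of $\tr_\vphi\om$ positive forces $\Lambda$ to be large \emph{negative}, not large positive. Finally, note that in the application $\om=\om_{\theta_\eps}$ and $\vphi=\vphi_\eps$ are smooth, so the integrations by parts take place on the closed manifold $X$; the conical degeneration enters only through the uniformity of the constants in \eqref{backgroundmetriccurvature}, not through a boundary analysis on $X\setminus D$.
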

\begin{proof}
\textbf{Step 1: using cone reference metric.}
Following Yau's computation, we have
\begin{align*}
\tri_\vphi \log (\tr_{\om}\om_\vphi) \geq \frac{g_\vphi^{k\bar l}{R^{i\bar j}}_{k\bar l}(\om)g_{\vphi i\bar j}-g^{i\bar j}R_{\vphi i\bar j}}{\tr_{\om}\om_\vphi}.
\end{align*}
Since $Ric(\om_\vphi)=Ric(\om)-i\p\bar\p  F$,
\begin{align*}
\tri_\vphi \log (\tr_{\om}\om_\vphi) \geq \frac{g_\vphi^{k\bar l}{R^{i\bar j}}_{k\bar l}(\om)g_{\vphi i\bar j}-S(\om)+\tri F}{\tr_{\om}\om_\vphi}.
\end{align*}
Then we follow the same argument in Section 2.0.1 and 2.2 in \cite{MR4020314} to deal with the curvature terms.
According to the geometrical polyhomogeneity of the reference metric, there is a function $\phi$ such that
\begin{align}\label{backgroundmetriccurvature}
\tilde \om=C\cdot \om +i\p\bar\p \phi\geq 0,\quad\|\phi\|_\infty\leq C_\phi,\quad
R_{i\bar j k\bar l}(\om )\geq -(\tilde g)_{i\bar j} \cdot (g )_{k\bar l},
\end{align}
for some fixed constants $C,C_\phi$.
Then by Paun's trick we have
		\begin{align}\label{tritrvphiom}
		\tri_\vphi \log (\tr_{\om}\om_\vphi) \geq - (C\tr_\vphi\om+\tri_\vphi\phi)+\frac{\tri F}{\tr_{\om}\om_\vphi}.
		\end{align}

\textbf{Step 2: differential inequality with weight $\phi$.}
We compute by using \eqref{triFvphi} and \eqref{tritrvphiom},
\begin{align*}
&\tri_{\vphi} [-\alpha(F+Bf+A\vphi)+\log \tr_{\om}\om_\vphi]\\
&\geq-\alpha[\tr_{\vphi}\Theta-R-(1-B)\tri_{\vphi}f+A(n-\tr_\vphi\om)]- (C\tr_\vphi\om+\tri_\vphi\phi)+\frac{\tri F}{\tr_{\om}\om_\vphi}.
\end{align*}
We choose $A=| \sup_X\Theta|+\alpha^{-1}(C+2)+n^{-1}|\inf_X R|$ such that
\begin{align*}
A_{ \Theta}&:=-\alpha \sup_X\Theta+\alpha A-C>1.
\end{align*} 
We also denote $A_R:=\alpha\inf_X R-\alpha An<0$.
We define 
\begin{align}\label{udefn}
u=-\alpha(F+Bf+A\vphi)+\phi.
\end{align}
The inequality above is rewritten as
\begin{align*}
\tri_{\vphi} [u+\log \tr_{\om}\om_\vphi]
\geq A_R+\alpha(1-B)\tri_{\vphi}f+A_{ \Theta}\tr_\vphi\om +\frac{\tri F}{\tr_{\om}\om_\vphi}.
\end{align*}
We set $v=e^{u}\tr_{\om}\om_\vphi.$
Then using $\tr_{\om} \om_\vphi\cdot\tr_\vphi\om\geq e^{\frac{-F}{n-1}}(\tr_{\om}\om_\vphi)^{1+\frac{1}{n-1}}$, we have
\begin{align*}
\tri_\vphi v\geq A_R v+\alpha(1-B)\tri_{\vphi}f v+A_{ \Theta}e^{u-\frac{F}{n-1}}(\tr_{\om}\om_\vphi)^{1+\frac{1}{n-1}}+\tri Fe^{u}.
\end{align*}

\textbf{Step 3: integral inequality with weight $\phi$.}
We multiply the differential inequality above  with $v^{p-1}$ and integrate by parts 
\begin{align*}
&\int_X (p-1)v^{p-2}|\p v|^2_\vphi \om_\vphi^n
=\int_X v^{p-1}(-\tri_\vphi v)\om_\vphi^n\\
&\leq-\int_X v^{p-1}\{A_R v+\alpha(1-B)\tri_{\vphi}f v+A_{\Theta}e^{u-\frac{ F}{n-1}}(\tr_{\om}\om_\vphi)^{1+\frac{1}{n-1}}+\tri Fe^{u}\}\om_\vphi^n\\
&:=I+II+III +IV.
\end{align*}
So,
\begin{align}\label{integral inequality W2p}
(p-1)\int_X v^{p-2}|\p v|^2_\vphi \om_\vphi^n-III\leq I+II+IV.
\end{align}
This integral inequality will be used to derive the following iteration inequality.
\begin{prop}\label{W2p iteration}There is a consonant $C$ depending on 
\begin{align*}
\|F+f\|_\infty,\quad\|\vphi\|_\infty,\quad A,\quad A_{\Theta},\quad A_R,\quad n 
\end{align*} and constants in \eqref{backgroundmetriccurvature} such that
\begin{align*}
\int_Xe^{(p+\frac{1}{n-1}-1)f} (\tr_{\om}\om_\vphi)^{p+\frac{1}{n-1}} \om^n \leq C \int_X e^{(p-1)f}(\tr_{\om}\om_\vphi)^{p} \om^n.
\end{align*}
\end{prop}
Before we show the proof of this iteration inequality, we use it to obtain the $W^{2,p}$-estimate we want.
Setting $h=\tr_{\om}\om_\vphi$ and choosing $k=p+\frac{1}{n-1}$, we may rewrite the iteration inequality as
\begin{align*}
\|e^{(k-1)f} h^{k}\|_1 \leq C\|e^{(p-\frac{p}{k})f}h^{p} \cdot e^{\frac{p-k}{k}f} \|_1.
\end{align*}
By H\"older inequality, 
\begin{align*}
\|e^{(k-1)f} h^{k}\|_1 \leq \|e^{(k-1)f} h^{k}\|_1^{\frac{p}{k}} \|e^{-f}\|_1^{\frac{k-p}{k}}.
\end{align*}
Consequently, we obtain the estimate
\begin{align*}
\|e^{(k-1)f} h^{k}\|_1
\leq  \|e^{-f}\|_1.
\end{align*}
At last, by the H\"older inequality to obtain the estimate and the integrability of $e^{-f}$,
\begin{align*}
\|h^{p}\|_1=\|e^{\frac{k-1}{k}pf}h^{p}\cdot e^{-\frac{k-1}{k}pf}\|_1\leq \|e^{(k-1)f}h^{k} \|_1^{\frac{p}{k}}\|e^{-\frac{k-1}{k-p}pf}\|_1^{\frac{k-p}{k}}.
\end{align*} Hence, we complete the proof for any $p\geq 1$.
\end{proof}

Then we return back to prove Proposition \ref{W2p iteration}.
\begin{proof}[Proof of Proposition \ref{W2p iteration}]
The main term is the fourth term. The difficulty comes from removing $\tri F$. The idea is to move the operator $\tri$ on $f$ by integration by parts, and then use the boundedness of $F+ f$. We compute that
\begin{align*}
IV=&-\int_X  v^{p-1}\tri Fe^{u} \om^n_{\vphi}=-\int_X  v^{p-1}\tri Fe^{u+  F} \om^n\\
&=\frac{1}{\alpha-1}\int_X v^{p-1}e^{u+F}\tri[(u+F)-(u+\alpha F)] \om^n\\
&:=IV_1+IV_2.
\end{align*}
Proceed with the integration by parts, we reduce $IV_1$ to 
\begin{align*}
IV_1&=\frac{1}{\alpha-1}\int_X  v^{p-1}e^{u+F}\tri[u+F] \om^n\\
&=\frac{-1}{\alpha-1}\int_X  v^{p-1}e^{u+ F}|\p (u+ F)|^2 \om^n
-\frac{p-1}{\alpha-1}\int_X v^{p-2}e^{u+ F}(\p v,\p (u+ F)) \om^n.
\end{align*}
By H\"older inequality, 
\begin{align}\label{W2p IV1}
IV_1&\leq \frac{(p-1)^2}{4(\alpha-1)}\int_X  v^{p-3}e^{u+ F}|\p v|^2 \om^n\\
&\leq \frac{(p-1)^2}{4(\alpha-1)}\int_X  v^{p-3}e^{u}|\p v|^2_{\vphi}(\tr_{\om}\om_\vphi)\om^n_{\vphi}\nonumber.
\end{align}
We choose $\alpha\geq \max\{p,2\}$ such that $\frac{(p-1)^2}{4(\alpha-1)}\leq \frac{p-1}{4}$.
Substituting with $v=e^{u}\tr_{\om}\om_\vphi$, we have
\begin{align*}
IV_1\leq \frac{p-1}{4}\int_X v^{p-2}|\p v|^2_{\vphi}\om^n_{\vphi}.
\end{align*}
So, $IV_1$ could be absorbed by the LHS of \eqref{integral inequality W2p}.

We now consider $IV_2$. We make use of $\tri\vphi=\tr_{\om}\om_\vphi-n\leq \tr_{\om}\om_\vphi$ and $\tri\phi=\tr_{\om}\tilde \om-nC \geq-nC$ to see that
\begin{align*}
\tri(u+\alpha F)=\tri[-\alpha Bf-\alpha A \vphi+\phi] 
&\geq -\alpha B \tri f-\alpha A\tr_{\om}\om_\vphi-nC.
\end{align*}
Substituting into $IV_2$ and using $\alpha-1\geq\frac{\alpha}{2}\geq 1$ and also $\tr_{\om}\om_\vphi\geq 1$ (otherwise we are done), we obtain that
\begin{align*}
IV_2&=\frac{-1}{\alpha-1}\int_X  v^{p-1}e^{u+F}\tri(u+\alpha F) \om^n\\
&\leq \frac{1}{\alpha-1}\int_X v^{p-1}e^{u}[\alpha B \tri f+\alpha A\tr_{\om}\om_\vphi+nC] \om^n_{\vphi}\\
&\leq \frac{\alpha B}{\alpha-1}\int_X  v^{p-1}e^{u} \tri f\om^n_{\vphi}+(2A+nC)\int_X  v^{p} \om^n_{\vphi}.
\end{align*}
We use the second term 
\begin{align*}
II=-\alpha(1-B)\int_X  v^{p-1}e^{u} \tri_{\vphi}f \tr_{\om}\om_\vphi \om_\vphi^n,
\end{align*}
and choose $B=1-\alpha^{-1}$ such that $1-B=\frac{B}{\alpha-1}>0$. We see that
\begin{align*}
&II+\frac{\alpha B}{\alpha-1}\int_X  v^{p-1}e^{u} \tri f \om^n_{\vphi}\\
&=-\alpha(1-B)\int_X  v^{p-1}e^{u} \tri_{\vphi}f \tr_{\om}\om_\vphi \om_\vphi^n+\frac{\alpha B}{\alpha-1}\int_X  v^{p-1}e^{u} \tri f \om^n_{\vphi}\\
&=-\alpha(1-B)\int_X v^{p-1}e^{u} [\tri_{\vphi}f \tr_{\om}\om_\vphi- \tri f] \om_\vphi^n
\leq 0.
\end{align*}
Here, we use $\tri_{\vphi}f \tr_{\om}\om_\vphi- \tri f\geq 0$. Note that $
I=-A_R\int_X  v^{p} \om^n_{\vphi}$.
Inserting the inequalities above into \eqref{integral inequality W2p}, we thus obtain that  
\begin{align}\label{integral inequality W2p short}
-III&\leq I+II+IV_2=(2A+nC-A_R)\int_X  v^{p} \om^n_{\vphi}.
\end{align}

From \eqref{udefn}, the third term becomes 
\begin{align*}
-III&= A_{\Theta}\int_X e^{pu-\frac{F}{n-1}+F}(\tr_{\om}\om_\vphi)^{p+\frac{1}{n-1}} \om^n\\
&=A_{\Theta}\int_X e^{(1-\frac{1}{n-1}-p\alpha)F-p\alpha Bf-p\alpha A\vphi+p\phi}(\tr_{\om}\om_\vphi)^{p+\frac{1}{n-1}} \om^n.
\end{align*}
We make use of $v=e^{-\alpha(F+Bf+A\vphi)+\phi}\tr_{\om}\om_\vphi$ and $\alpha B=\alpha-1$, so we have
\begin{align*}
-III\leq(2A+nC-A_R)\int_X e^{(1-p\alpha )F+(p-p\alpha) f-p\alpha A\vphi+p\phi}(\tr_{\om}\om_\vphi)^p\om^n.
\end{align*}
The iteration inequality is thus obtain directly by using $\|F+f\|_\infty$, $\|\vphi\|_\infty$, $\|\phi\|_\infty$.

\end{proof}

\subsection{$W^{2,p}$-estimate for degenerate metrics}\label{W{2,p}-estimate for degenerate metrics}

From now on, we still use $h$ to denote $\tr_{\om_{\mathbf b}}\om_\vphi$.

\begin{thm}\label{w2pestimates degenerate}

For any $p\geq 1$, there exits a constant $C$ such that
\begin{align}\label{w2p iteration inequality iteration}
\int_Xe^{( p-1)f}h^{p}|s|^{\sigma_L}_{h_E}\om^n_{\mathbf b}
\leq C  \| |s|^{\sigma_R} \|_{{\frac{p_0}{p_0-2p\alpha+1}},\om_{\mathbf b}}.
\end{align}
Here, \begin{align}\label{sigma R sigma}
\sigma_R&=\sigma+(p\alpha+p-1)a_0\inf_{(X,(1+\eps)\om_{K})}\Theta,\\
\sigma&=p\alpha A a_0+p\alpha a_0\sup_{(X,(1+\eps)\om_{K})}\Theta,\nonumber\\
A&=| \sup_{(X,(1+\eps)\om_{K})}\Theta|+\alpha^{-1}(C+2)+n^{-1}|\inf_X R|\nonumber
\end{align} is given in \eqref{sigma R}, 
\begin{align}\label{sigma L sigma}
\sigma_L=\sigma-(p\alpha-1)a_0\inf_{(X,(1+\eps)\om_{K})}\Theta
\end{align} is in \eqref{sigma L} and $C$ depends on the quantities in \eqref{backgroundmetriccurvature}, and
\begin{align*}
&\sup_X(F+f+\phi_l),\quad
\|e^{-f}\|_{p_0,\om_{\mathbf b}},p_0>2p\alpha-1,\\
&\|\vphi\|_\infty, \quad \sup_{(X,(1+\eps)\om_{K})}\Theta,\quad \inf_{(X,(1+\eps)\om_{K})}\Theta,\quad\inf_X R,\quad n , \quad p
\end{align*}
and the bound from the weights $$ \|\phi\|_\infty,\quad\|\vphi_\theta\|_\infty,\quad \|\Phi_{\mathbf b}\|_\infty,\quad\|\log\frac{\om^n}{\om^n_{\mathbf b}}\|_\infty,\quad\inf_X\tr_{\om_{\mathbf b}}\om_K,\quad\sup_X\tr_{\om_{\mathbf b}}\om_{sr}.$$
\end{thm}
\begin{proof}
\textbf{Step 1: differential inequality with weights.}
We choose $\om_{\mathbf b}=\om_K+\eps\om_K+i\p\bar\p \Phi_{\mathbf b}$ (omitting lower index $\eps$) in \eqref{backgroundmetriccurvature}
as the background metric. As in \textbf{Step 1}, we have
\begin{align}\label{tritrvphiom degenerate}
\tri_\vphi \log (\tr_{\om_{\mathbf b}}\om_\vphi) \geq - (C\tr_\vphi\om_{\mathbf b}+\tri_\vphi\phi)+\frac{\tri \tilde F}{\tr_{\om_{\mathbf b}}\om_\vphi}.
\end{align}
Here $\tilde F=F+\log\frac{\om^n}{\om^n_{\mathbf b}}$ and $\tri$ is defined regarding to $\om_{\mathbf b}$. In order to obtain the differential inequality as in \textbf{Step 2}, we recall from \eqref{Phi degenerate} that 
\begin{align*}
\om=\om_{\vphi_{\theta_\eps}}=\om_K+\eps\om_K+i\p\bar\p \Phi,\quad
\om_\vphi=\om+i\p\bar\p\vphi.
\end{align*}
Similar to \lemref{control Theta}, we have
\begin{align*}
\Theta+i\p\bar\p\phi^{\mathbf b}_u&\leq \sup_{(X,(1+\eps)\om_{K})}\Theta\cdot \om_{\mathbf b},
\end{align*}
with $
\phi^{\mathbf b}_u:= \sup_{(X,(1+\eps)\om_{K})}\Theta \cdot (\Phi_{\mathbf b}-\phi_E).
$

We let $\tilde f=f-\log\frac{\om^n}{\om^n_{\mathbf b}}$ and set
\begin{align*}
u=-\alpha[\tilde F+B\tilde f+A(\vphi+\vphi_\theta-\phi_E-\Phi_{\mathbf b})+\phi_u^{\mathbf b}]+\phi.
\end{align*}
Then 
\begin{align*}
&\tri_{\vphi} u\\
&=-\alpha[\tr_{\vphi}\Theta-R-(1-B)\tri_{\vphi}\tilde f+A(n-\tr_\vphi\om_{\mathbf b})+\tri_\vphi\phi^{\mathbf b}_u]+\tri_\vphi\phi.
\end{align*}
Letting 
\begin{align}
A&:=| \sup_{(X,(1+\eps)\om_{K})}\Theta|+\alpha^{-1}(C+2)+n^{-1}|\inf_X R|,\label{W2p constant A}\\
A_{ \Theta}&:=-\alpha \sup_{(X,(1+\eps)\om_{K})}\Theta+\alpha A-C>1,\nonumber\\
A_R&:=\alpha\inf_X R-\alpha An<0,\nonumber
\end{align} 
we get by \eqref{tritrvphiom degenerate},
\begin{align*}
\tri_{\vphi} [u+\log \tr_{\om_{\mathbf b}}\om_\vphi]
\geq A_R+\alpha(1-B)\tri_{\vphi}\tilde f+A_{ \Theta}\tr_\vphi\om_{\mathbf b} +\frac{\tri \tilde F}{\tr_{\om_{\mathbf b}}\om_\vphi}.
\end{align*}
Taking exponentiation $v=e^{u}\tr_{\om_{\mathbf b}}\om_\vphi$, we have
\begin{align}\label{W2p degenerate differential inequality}
\tri_\vphi v\geq A_R v+\alpha(1-B)\tri_{\vphi}\tilde f v+A_{ \Theta}e^{u-\frac{\tilde F}{n-1}}(\tr_{\om_{\mathbf b}}\om_\vphi)^{1+\frac{1}{n-1}}+\tri \tilde Fe^{u}.
\end{align}

\textbf{Step 2: integral inequality with weights.}
It is written in the integral form 
\begin{align*}
&\int_X (p-1)v^{p-2}|\p v|^2_\vphi \om_\vphi^n
=\int_X v^{p-1}(-\tri_\vphi v)\om_\vphi^n\\
&\leq-\int_X v^{p-1}\{A_R v+\alpha(1-B)\tri_{\vphi}\tilde f v+A_{\Theta}e^{u-\frac{ \tilde F}{n-1}}(\tr_{\om_{\mathbf b}}\om_\vphi)^{1+\frac{1}{n-1}}+\tri \tilde Fe^{u}\}\om_\vphi^n\\
&:=I+II+III +IV.
\end{align*}
The next step is to simplify this integral inequality by using the bounds of $\tilde F$ and $\vphi$.
The forth term is then divided into two terms.
\begin{align*}
IV&=\frac{1}{\alpha-1}\int_X v^{p-1}e^{u+\tilde F}\tri[(u+\tilde F)-(u+\alpha\tilde F)] \om_{\mathbf b}^n\\
&:=IV_1+IV_2.
\end{align*}
As \eqref{W2p IV1}, with $\alpha\geq \max\{p,2\}$, we get
\begin{align*}
IV_1\leq \frac{p-1}{4}\int_X v^{p-2}|\p v|^2_{\vphi}\om^n_{\vphi}.
\end{align*}

For $IV_2$, we compute
\begin{align*}
\tri(u+\alpha\tilde F)
&=-\alpha B \tri \tilde f-\alpha A(\tr_{\om_{\mathbf b}}\om_\vphi-n)-\alpha\tri \phi_u^{\mathbf b}+\tri\phi\\
&\geq -\alpha B \tri \tilde f-\alpha A\tr_{\om_{\mathbf b}}\om_\vphi-\alpha\tri \phi_u^{\mathbf b}-nC.
\end{align*}
Also, by $\om_{sr}+i\p\bar\p \phi_E=\om_K>0$, 
\begin{align*}
\tri \phi_u^{\mathbf b}
&=\sup_{(X,(1+\eps)\om_{K})}\Theta \cdot \tri (\Phi_{\mathbf b}-\phi_E) \\
&=\sup_{(X,(1+\eps)\om_{K})}\Theta \cdot \tr_{\om_{\mathbf b}}[\om_{\mathbf b}-(1+\eps)\om_K+\om_{sr}]
\end{align*}
is bounded above by some constant $C$. In summary, we have
\begin{align*}
-\tri(u+\alpha\tilde F)
\leq \alpha B \tri \tilde f+\alpha A\tr_{\om_{\mathbf b}}\om_\vphi+C.
\end{align*}
So,
\begin{align*}
IV_2\leq \frac{\alpha B}{\alpha-1}\int_X  v^{p-1}e^{u} \tri \tilde f\om^n_{\vphi}+C(n,A)\int_X  v^{p} \om^n_{\vphi}.
\end{align*}
Similar to \eqref{integral inequality W2p short}, we thus have
\begin{align}\label{integral inequality W2p short degenerate}
-III&\leq I+II+IV_2=(C(n,A)-A_R)\int_X  v^{p} \om^n_{\vphi}.
\end{align}

Furthermore, there exists a constant $C$ depending on $n$, $A$, $A_R$, $\|\vphi\|_\infty$, $\|\phi\|_\infty$, $\|\vphi_\theta\|_\infty$, $\|\Phi_{\mathbf b}\|_\infty$, $\|\log\frac{\om^n}{\om^n_{\mathbf b}}\|_\infty$ (bounded by \eqref{lift reference metric log Fano app both 3 change}) such that
\begin{align*}
RHS&=C(n,A,A_R)\int_X  e^{(1-p\alpha)\tilde F-p\alpha B\tilde f-p\alpha A(\vphi+\vphi_\theta-\phi_E-\Phi_{\mathbf b})-p\alpha\phi_u^{\mathbf b}+p\phi}h^{p} \om^n_{\mathbf b}\\
&\leq C \int_X  e^{(1-p\alpha) F-p\alpha B f+p\alpha A\phi_E-p\alpha\phi_u^{\mathbf b}}h^{p} \om^n_{\mathbf b}.
\end{align*}
Using $\alpha B = \alpha -1$, we have
\begin{align}\label{w2p rhs}
RHS&= C \int_X  e^{(1-p\alpha) F+p(1-\alpha ) f+p\alpha A\phi_E-p\alpha\phi_u^{\mathbf b}}h^{p} \om^n_{\mathbf b}\nonumber\\
&\leq C \int_X e^{(1-p\alpha) F+p(1-\alpha ) f}h^{p}|s|^{\sigma}_{h_E} \om^n_{\mathbf b}.
\end{align}
The power \begin{align}\label{sigma}
\sigma=p\alpha A a_0+p\alpha a_0\sup_{(X,(1+\eps)\om_{K})}\Theta.
\end{align}

Then we compute the LHS of \eqref{integral inequality W2p short degenerate},
\begin{align*}
-III&=A_{\Theta}\int_X v^{p-1}e^{u-\frac{ \tilde F}{n-1}}h^{1+\frac{1}{n-1}}\om_\vphi^n\\
&=A_{\Theta}\int_X  e^{(-p\alpha -\frac{1}{n-1}+1)\tilde F-p\alpha  B\tilde f-\alpha pA(\vphi+\vphi_\theta-\phi_E-\Phi_{\mathbf b})-p\alpha \phi_u^{\mathbf b}+p\phi}h^{p+\frac{1}{n-1}}\om_\vphi^n.
\end{align*}
The is a constant $C$ depending on $\alpha$, $A$, $A_{\Theta}$, $\|\vphi\|_\infty$, $\|\phi\|_\infty$, $\|\vphi_\theta\|_\infty$, $\|\Phi_{\mathbf b}\|_\infty$, $\|\log\frac{\om^n}{\om^n_{\mathbf b}}\|_\infty$ such that
\begin{align*}
-III&\geq C \int_X e^{(-p\alpha -\frac{1}{n-1}+1)F-p\alpha B f+p\alpha A\phi_E-p\alpha \phi_u^{\mathbf b}}h^{p+\frac{1}{n-1}}\om_{\mathbf b}^n\\
&=C \int_X e^{(-p\alpha -\frac{1}{n-1}+1)F+p(1-\alpha) f+p\alpha A\phi_E-p\alpha \phi_u^{\mathbf b}}h^{p+\frac{1}{n-1}}\om_{\mathbf b}^n.
\end{align*}
Inserting the formulas of $\phi_u^{\mathbf b}$, we have 
\begin{align}\label{w2p lhs}
-III\geq C \int_X e^{(-p\alpha -\frac{1}{n-1}+1)F+p(1-\alpha) f} h^{p+\frac{1}{n-1}}|s|^{\sigma}_{h_E}\om_{\mathbf b}^n.
\end{align}

In conclusion, letting $k=p+\frac{1}{n-1}$, we have obtained the iteration inequality
\begin{align}\label{w2p iteration inequality}
\|e^{(-p\alpha +1-\frac{1}{n-1})F} h^{k}\cdot e^{p(1-\alpha) f}|s|^{\sigma}_{h_E}\|_1
\leq C  \|  e^{(-p\alpha+1) F} h^{p}\cdot e^{p(1-\alpha ) f} |s|^{\sigma}_{h_E} \|_1.
\end{align}

\textbf{Step 3: iteration}
Actually, we could apply the iteration inequality to prove the following weighted inequality. It follows from applying the H\"older inequality to the RHS of \eqref{w2p iteration inequality}, 
\begin{align*}
& \|  e^{(-p\alpha+1) F} h^{p}\cdot e^{p(1-\alpha ) f} |s|^{\sigma}_{h_E} \|_1\\
 &=\|[e^{(-p\alpha +1-\frac{1}{n-1})F} h^{k}\cdot e^{p(1-\alpha) f}|s|^{\sigma}_{h_E} ]^\frac{p}{k}\cdot [e^{\frac{p\alpha+p-1}{(n-1)k}F}(e^{p(1-\alpha)f}|s|^{\sigma})^{\frac{k-p}{k}}]\|_1\\
&\leq \|e^{(-p\alpha +1-\frac{1}{n-1})F} h^{k}\cdot e^{p(1-\alpha) f}|s|^{\sigma}_{h_E} \|_1^{\frac{p}{k}}\cdot\|e^{(p\alpha+p-1)F}e^{p(1-\alpha)f}|s|^{\sigma}\|_1^{\frac{k-p}{k}}.
\end{align*} 
In summary, we have
\begin{align*}
\|  e^{(-p\alpha+1) F} h^{p}\cdot e^{p(1-\alpha ) f} |s|^{\sigma}_{h_E} \|_1\leq C \|e^{(p\alpha+p-1)F}e^{p(1-\alpha)f}|s|^{\sigma}\|_1.
\end{align*} 

\textbf{Step 4: simplification}
It remains to further simply the both sides. Due to Proposition \ref{upper F degenerate}, we use $\sup_X(F+f+\phi_l)$ to have
\begin{align*}
RHS&\leq C \|e^{(p\alpha+p-1)(-f-\phi_l)}e^{p(1-\alpha)f}|s|^{\sigma}\|_1\\
&= \|e^{(-2p\alpha+1)f} e^{-(p\alpha+p-1)\phi_l} |s|^{\sigma} \|_1.
\end{align*}
By the definition of $\phi_l$ in \lemref{control Theta},
\begin{align}
RHS\leq \|e^{(-2p\alpha+1)f} |s|^{ \sigma_R} \|_1.
\end{align}
Here, from $\sigma$ define in \eqref{sigma} and $A$ defined in \eqref{W2p constant A}, we have the power
\begin{align}\label{sigma R}
&\sigma_R=\sigma+(p\alpha+p-1)a_0\inf_{(X,(1+\eps)\om_{K})}\Theta.\end{align}
At last, we choose $p_0>2p\alpha-1$ such that $e^{-f}$ is $L^{p_0}$ for $p_0>1$ to conclude that 
\begin{align*}
RHS\leq \| |s|^{\sigma_R} \|_{\frac{p_0}{p_0-2p\alpha+1}}.
\end{align*}
We further compute by using $\sup_X(F+f+\phi_l)$ again,
\begin{align*}
LHS&\geq C\|  e^{(-p\alpha+1) (-f-\phi_l)} h^{p}\cdot e^{p(1-\alpha ) f} |s|^{\sigma}_{h_E} \|_1\\
&= C \|  e^{(p-1)f } h^{p}\cdot e^{(p\alpha-1 ) \phi_l} |s|^{\sigma}_{h_E} \|_1
\end{align*}
Set the power
\begin{align}\label{sigma L}
&\sigma_L=\sigma-(p\alpha-1)a_0\inf_{(X,(1+\eps)\om_{K})}\Theta.\end{align}
We have
\begin{align}
LHS&\geq C \|  e^{(p-1)f } h^{p} |s|^{\sigma_L}_{h_E} \|_1
\end{align}
Therefore, we have completed the proof.

\end{proof}


\subsubsection{A different treatment of the iteration inequality}
If we utilise both the upper and lower bound of $F$ in \eqref{w2p rhs} and \eqref{w2p lhs}, we could also have the following iteration inequality.
\begin{lem}For any $p\geq 1$, it holds
\begin{align}\label{w2p iteration inequality}
\|e^{( k-1)f}h^{k}|s|^{\tilde \sigma_L(p)}_{h_E}\|_1
\leq C  \| e^{(p-1)f}h^{p}|s|^{\tilde \sigma_R(p)}_{h_E} \|_1.
\end{align}
Here the powers \begin{align*}
\tilde\sigma_R(p)
&=a_0\sup_{(X,(1+\eps)\om_{K})}\Theta+p\alpha A a_0>0,\\
\tilde\sigma_L(p)=&(p \alpha +\frac{1}{n-1}-1)(-a_0\inf_{(X,(1+\eps)\om_{K})}\Theta)\\
&+p\alpha a_0\sup_{(X,(1+\eps)\om_{K})}\Theta
+p\alpha A a_0>0.
\end{align*}
In particular, when $p=1$, it holds that
\begin{align}\label{iteration p=1}
&\|e^{\frac{1}{n+1}f} h^{1+\frac{1}{n+1}} |s|^{\tilde\sigma_L(1)}_{h_E} \|_1
\leq C.
\end{align}
\end{lem}
\begin{proof}
We prove the inequality for $p=1$.
When $p=1$, we have from integrating both sides of \eqref{W2p degenerate differential inequality} that
\begin{align*}
0\leq-\int_X \{A_R v+\alpha(1-B)\tri_{\vphi}\tilde f v+A_{\Theta}e^{u-\frac{ \tilde F}{n-1}}(\tr_{\om_{\mathbf b}}\om_\vphi)^{1+\frac{1}{n-1}}+\tri \tilde Fe^{u}\}\om_\vphi^n.
\end{align*}
Following the same argument as the proof above, we will arrive at
\begin{align*}
\|e^{\frac{1}{n+1}f} h^{1+\frac{1}{n+1}} |s|^{\tilde\sigma_L(1)}_{h_E} \|_1
\leq C  \| {h} |s|_{h_E}^{\tilde\sigma_R}\|_1 .
\end{align*}
Since $\tilde\sigma_R>0$, we get
\begin{align}\label{p=1}
&  \| {h} |s|_{h_E}^{\tilde\sigma_R}\|_1 \leq   \| {h} \|_1
=C\int_X \tr_{\om_{\mathbf b}}\om_\vphi \om_{\mathbf b}^n\\
&=C\int_X n+\tri_{ \om_{\mathbf b}}(\vphi+\Phi_\theta-\Phi_{\mathbf b})  \om_{\mathbf b}^n
\leq Cn Vol.\nonumber
\end{align} 
Consequently, it implies an integral inequality of $h$,
that is \eqref{iteration p=1}.
It is slightly different from what we got in \eqref{w2p iteration inequality iteration} when $p$ is chosen to be 1.
\end{proof}
\begin{rem}
The iteration inequality \eqref{w2p iteration inequality} also leads to weighted estimates of $h$ for all $p\geq 1$, we leave this further discussion to interested readers.
\end{rem}
\subsection{$C^{1,1}$ estimate: nondegenrate}
\subsubsection{Gradient estimate of $F+f$}
\begin{thm}\label{Gradient estimates}
	There holds
	\begin{align*}
	\sup_X \|\p (F+f)\|^2_\vphi\leq C.
	\end{align*}
	The constant $C$ depends on the quantities in \eqref{backgroundmetriccurvature}, $\|R\|_\infty, \|\Theta\|_\infty, \inf_XRic(\om), n$, and
	\begin{align*}
	\|F+f\|_\infty,\quad\|\vphi\|_\infty,\quad \|e^{-f}\|_{L^{\frac{b}{b-2}}(\om)},\quad \|\tr_{\om}\om_{\varphi}\|_{L^{\frac{(2n-2)b}{b-2}}(\om)},\quad b>2 .
	\end{align*}
\end{thm}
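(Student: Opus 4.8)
The estimate is proved by a Bochner computation for $w:=\|\p(F+f)\|_{\om_\vphi}^{2}$, carried out with respect to the \emph{smooth} approximate reference metric $\om_{\theta_\eps}$ (so that integrations by parts on $X$ are legitimate), followed by a Moser iteration based on a Sobolev inequality for $\om_\vphi$; throughout, the constants are tracked so as to be independent of $\eps$. Write $G:=F+f$. The reason the gradient is estimated for $G$ rather than $F$ is a cancellation of the $i\p\bar\p f$ terms: since $\tri_\vphi u=\tr_\vphi(i\p\bar\p u)$ for every $u$ (compare \eqref{vphivphi}), adding $\tri_\vphi f$ to \eqref{triF} gives
\begin{align*}
\tri_\vphi G=\tr_\vphi(\Theta-i\p\bar\p f)-R+\tr_\vphi(i\p\bar\p f)=\tr_\vphi\Theta-R,
\end{align*}
which is controlled by $\tr_\vphi\om$, $\|\Theta\|_\infty$ and $\|R\|_\infty$ alone, with no dependence on the possibly singular $f$. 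Also $\om_\vphi^{n}=e^{F}\om^{n}$ with $e^{-F}=e^{-G}e^{f}\le e^{\|G\|_\infty}$ (as $\sup_Xf=0$) and $e^{F}\le e^{\|G\|_\infty}e^{-f}$, so by \thmref{Linftyestimates} the volume forms $\om_\vphi^{n}$ and $\om^{n}$ are mutually controlled up to the weight $e^{-f}$.

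\textbf{Bochner inequality.} The Weitzenb\"ock formula for $\om_\vphi$ reads
\begin{align*}
\tri_\vphi w\ \ge\ |\Na\Na G|_\vphi^{2}+|\Na\bar\Na G|_\vphi^{2}+\Ric(\om_\vphi)(\p G,\bar\p G)+2\,\Re\big(g_\vphi^{k\bar l}G_{\bar l}\,\p_k(\tri_\vphi G)\big),
\end{align*}
and one substitutes $\tri_\vphi G=\tr_\vphi\Theta-R$ into the last term. Differentiating $\tr_\vphi\Theta$ produces a third derivative of $\vphi$ — the only genuinely new feature compared with the smooth case. It is handled by using that $\Theta=\theta-\chi_0$ is closed, so that $\Na^\vphi_k\Theta_{i\bar j}=\Na^\vphi_i\Theta_{k\bar j}$; this lets one write the term as a $\vphi$-divergence which will integrate by parts, and the background curvature bound \eqref{backgroundmetriccurvature} for $\om_{\theta_\eps}$ (with constants uniform in $\eps$, by the geometrically polyhomogeneous analysis of \cite{arXiv:1709.09616}) then absorbs it. For the curvature term one writes $\Ric(\om_\vphi)=\Ric(\om_{\theta_\eps})-i\p\bar\p F$ and $i\p\bar\p F=i\p\bar\p G-(\chi-\chi_0)$, so that, using $\chi\ge0$ and $\inf_X\Ric(\om_{\theta_\eps})$, the only harmful contribution is an $i\p\bar\p G$--contraction. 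Multiplying the Bochner inequality by $w^{p-1}$ and integrating by parts on $X$, the third-order term and the $i\p\bar\p G$--contraction each reduce (after one further integration by parts, again using $\tri_\vphi G$) to a combination of the good quantity $\int_X w^{p-2}|\p w|_\vphi^{2}$, a term $\int_X w^{p}\tr_\vphi\om\,\om_\vphi^{n}$, and a superlinear term $\int_X w^{p+1}\om_\vphi^{n}$.

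\textbf{Sobolev inequality and Moser iteration.} From \eqref{trivphiom},
\begin{align*}
\tr_\vphi\om\le\tfrac1{n-1}e^{-F}(\tr_\om\om_\vphi)^{n-1}\le C\,(\tr_\om\om_\vphi)^{n-1},
\end{align*}
and since $\om_\vphi^{n}\le e^{\|G\|_\infty}e^{-f}\om^{n}$, H\"older's inequality bounds $\int_X(\tr_\vphi\om)^{q}\om_\vphi^{n}$ for a suitable $q>n$ in terms of $\|\tr_\om\om_\vphi\|_{L^{(2n-2)b/(b-2)}(\om)}$ and $\|e^{-f}\|_{L^{b/(b-2)}(\om)}$ (these being exactly the two $L^p$-quantities in the statement, with $b>2$); this is finite by the $W^{2,p}$ estimate \thmref{w2pestimates}. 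A Sobolev inequality for $\om_\vphi$, with constant depending only on $\|\tr_\om\om_\vphi\|_{L^{p}(\om)}$, $\|G\|_\infty$ and $\|e^{-f}\|_{L^{b/(b-2)}(\om)}$, then follows in the standard way. The iteration starts from the $L^1$ bound $\int_Xw\,\om_\vphi^{n}=-\int_XG(\tr_\vphi\Theta-R)\,\om_\vphi^{n}\le C$, using $\|G\|_\infty\le C$ and the cohomological identity $\int_X\tr_\vphi\om\,\om_\vphi^{n}=nV$; the superlinear term $\int_Xw^{p+1}\om_\vphi^{n}$ is subcritical and is absorbed by interpolating $\|w^{p/2}\|_{L^{2(p+1)/p}(\om_\vphi^{n})}$ between $L^{2}$ and the Sobolev exponent and applying Young's inequality, which closes the iteration for $p\ge n-1$ and yields $\sup_Xw\le C$ of the claimed dependence.

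\textbf{Main difficulty.} The two hard points are the third-order term $\p_k(\tr_\vphi\Theta)$, which is harmless in \cite{arXiv:1712.06697,arXiv:1801.05907} thanks to a fixed smooth background but here must be controlled through the degenerating reference $\om_{\theta_\eps}\to\om_\theta$ via the weighted estimates \eqref{backgroundmetriccurvature}, and the subcritical absorption of the superlinear $w^{p+1}$ term in the iteration — all while keeping every constant independent of $\eps$, which is what allows the bound to pass to the limit equation \eqref{triF} as $\eps\to0$.
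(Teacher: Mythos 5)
Your overall architecture (Bochner formula, integration by parts against $w^{p-1}$ to tame the third-order term, Moser iteration starting from an $L^1$ bound obtained by integrating $\tri_\vphi(F+f)=\tr_\vphi\Theta-R$ against $F+f$) is the paper's, but there is a genuine gap at the cubic term produced by $\Ric(\om_\vphi)$. The paper does not work with $w=|\p\tilde F|^2_\vphi$ (writing $\tilde F:=F+f$) but with the conformally weighted quantity $w=e^{\tilde F/2}|\p\tilde F|^2_\vphi$, and the weight is not cosmetic: expanding $e^{-\tilde F/2}\tri_\vphi w$ produces the extra terms $\tfrac14|\p\tilde F|^4_\vphi+\tfrac12(\nabla\tilde F,\nabla|\p\tilde F|^2_\vphi)_\vphi$, whose mixed-derivative part cancels exactly the term $-g_\vphi^{i\bar l}g_\vphi^{k\bar j}\tilde F_{i\bar j}\tilde F_k\tilde F_{\bar l}$ coming from $\Ric(\om_\vphi)=\Ric(\om)-i\p\bar\p F$, and whose pure part dominates $|\p\p\tilde F|^2_\vphi$. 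In your unweighted version this cubic term survives; Cauchy--Schwarz only gives a bound by $|\p\bar\p\tilde F|^2_\vphi+\tfrac14 w^2$, and after multiplying by $w^{p-1}$ you are left with $\int_Xw^{p+1}\om_\vphi^n$ with an $O(1)$ coefficient, exactly as you acknowledge. Your proposed absorption of this term does not close the argument: with Sobolev exponent $\tfrac{2n}{n-1}$ the interpolation parameter for $\|w^{p/2}\|_{2(p+1)/p}$ is $\lambda=\tfrac{n}{p+1}$, and Young's inequality can absorb the Sobolev part only if $\lambda\cdot\tfrac{2(p+1)}{p}=\tfrac{2n}{p}<2$, i.e.\ $p>n$; so the iteration cannot be started from the only available a priori information, namely the $L^1$ bound \eqref{L1tildeF}. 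Moreover the closing step $\|u\|_\infty\le\tilde C\|u\|_b\le\|u\|_\infty^{1-1/b}\|u\|_1^{1/b}$, by which the paper reduces the $L^b$ starting norm to the $L^1$ bound, relies on the iteration inequality being homogeneous of degree one in $u$, and the $w^{p+1}$ term destroys exactly this homogeneity. The fix is simply to keep the weight $e^{\tilde F/2}$, which is harmless since $\|\tilde F\|_\infty$ is already controlled by \thmref{Linftyestimates}.

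Two further deviations are worth flagging, though they are repairable. First, the paper never differentiates $\tr_\vphi\Theta$: it keeps $(\nabla\tri_\vphi\tilde F,\nabla\tilde F)_\vphi$ intact and integrates it by parts against $e^{\tilde F/2}u^{p-1}$, producing $(\tri_\vphi\tilde F)^2u^{p-1}$ and $|\tri_\vphi\tilde F|u^{p}$, each controlled via $|\tri_\vphi\tilde F|\le\|\Theta\|_\infty\tr_\vphi\om+\|R\|_\infty$ and \eqref{trivphiom}. Your closedness/divergence trick for $\Theta$ is a viable alternative, but the relevant covariant derivative is the one of $\om_\vphi$ and the leftover is controlled by $|\p\bar\p\tilde F|_\vphi$ and $\|\Theta\|_\infty$, not by the curvature bound \eqref{backgroundmetriccurvature} (which enters the statement only through the $W^{2,p}$ input). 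Second, the paper does not invoke a Sobolev inequality for the unknown metric $\om_\vphi$: it converts $|\p v|_\vphi$ to $|\p v|_\om$ by H\"older against powers of $\tr_\om\om_\vphi$ and uses the Sobolev inequality of the fixed reference $\om=\om_{\theta_\eps}$; this is precisely where the norms $\|e^{-f}\|_{L^{b/(b-2)}(\om)}$ and $\|\tr_\om\om_\vphi\|_{L^{(2n-2)b/(b-2)}(\om)}$ in the statement enter. A uniform Sobolev inequality for $\om_\vphi$ itself would require a separate justification not provided here.
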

We let $Z$ be any given function and set $\tilde F:=F+Z$. We also set \begin{align*}
	w:=e^{\frac{\tilde F}{2}}|\p\tilde F|^2_\vphi.
	\end{align*}
We will need the following inequality on $\tilde F$.
\begin{lem}
	\begin{align}\label{gradientW2pinequalitypre pre}
	\tri_\vphi w
	&\geq e^{\frac{\tilde F}{2}}\{
	(\nabla\tri_\vphi\tilde F,\nabla\tilde F)_\vphi+\frac{\tri_\vphi \tilde F}{2}|\p\tilde F|^2_\vphi\\
	&+
	(R_{i\bar j}(\om)+Z_{i\bar j})g_\vphi^{i\bar l}g_\vphi^{k\bar j}\tilde F_k\tilde F_{\bar l}+|\p\bar\p\tilde F|^2_\vphi\}.\nonumber
	\end{align}
\end{lem}
\begin{proof}
	We also denote 
	$(\nabla a,\nabla b)_\vphi
	=g_\vphi^{i\bar j}(a_ib_{\bar j}+a_ib_{\bar j})$.
	Then we have
	\begin{align}\label{gradientW2pinequalitypre pre identity}
	e^{-\frac{\tilde F}{2}}\tri_\vphi w=\frac{1}{2}\tri_\vphi\tilde F|\p\tilde F|^2_\vphi
	+\frac{1}{4}|\p\tilde F|_\vphi^4
	+\frac{1}{2}(\nabla\tilde F,\nabla |\p\tilde F|^2_\vphi)_\vphi+\tri_\vphi|\p\tilde F|^2_\vphi.
	\end{align}

	We compute the forth term in \eqref{gradientW2pinequalitypre pre identity}
	\begin{align*}
	\tri_\vphi|\p\tilde F|^2_\vphi
	&=(\nabla\tri_\vphi\tilde F,\nabla\tilde F)_\vphi+Ric(\om_{\varphi})(\p\tilde F,\bar \p\tilde F)+|\p\p\tilde F|^2_\vphi+|\p\bar\p\tilde F|^2_\vphi.
	\end{align*}
	We use $Ric(\om_{\vphi})=Ric(\om)-i\p\bar\p F$ to see in local coordinate that
	\begin{align}\label{gradientricvphi}
	Ric(\om_{\varphi})(\p\tilde F,\bar \p\tilde F)
	&=
	R_{i\bar j}(\om)g_\vphi^{i\bar l}g_\vphi^{k\bar j}\tilde F_k\tilde F_{\bar l}-g_\vphi^{i\bar l}g_\vphi^{k\bar j}F_{i\bar j}\tilde F_k\tilde F_{\bar l}\nonumber\\
	&=(R_{i\bar j}(\om)+Z_{i\bar j})g_\vphi^{i\bar l}g_\vphi^{k\bar j}\tilde F_k\tilde F_{\bar l}-g_\vphi^{i\bar l}g_\vphi^{k\bar j}\tilde F_{i\bar j}\tilde F_k\tilde F_{\bar l}.
	\end{align}
	
	The third term in \eqref{gradientW2pinequalitypre pre identity} reads
	\begin{align*}
	\frac{1}{2}(\nabla\tilde F,\nabla |\p\tilde F|^2_\vphi)_\vphi
	=\frac{1}{2}g_\vphi^{i\bar j}g_{\vphi}^{k\bar l}[\tilde F_i\tilde F_{k\bar j}\tilde F_{\bar l} +\tilde F_{\bar j}\tilde F_{k}\tilde F_{\bar l i}+\tilde F_i\tilde F_{k}\tilde F_{\bar l\bar j}+\tilde F_{\bar j}\tilde F_{ki}\tilde F_{\bar l}],
	\end{align*}
	whose mixed derivatives could cancel the second term in \eqref{gradientricvphi} and the pure ones cancel the second term $\frac{1}{4}|\p\tilde F|_\vphi^4$ in \eqref{gradientW2pinequalitypre pre identity} and $|\p\p\tilde F|^2_\vphi$, since 
	\begin{align*}
	|\p\p\tilde F|^2_\vphi+\frac{1}{4}|\p\tilde F|_\vphi^4+\frac{1}{2}g_\vphi^{i\bar j}g_{\vphi}^{k\bar l}[\tilde F_i\tilde F_{k}\tilde F_{\bar l\bar j}+\tilde F_{\bar j}\tilde F_{ki}\tilde F_{\bar l}]\geq 0.
	\end{align*}
	Therefore, combining the identities above, we have proved the lemma.
\end{proof}

We use the assumption that $\tr_{\om}\om_\vphi\geq 1$, otherwise we are done.

\begin{proof}[proof of \thmref{Gradient estimates}]\textbf{Step 1: differential inequality.}
	We further simplify \eqref{gradientW2pinequalitypre pre} with $Z=f$. 	We let \begin{align*}
	u=w +1=e^{\frac{\tilde F}{2}}|\p\tilde F|^2_\vphi +1.
	\end{align*}	 
	
We rewrite \eqref{gradientW2pinequalitypre pre} as following
\begin{lem}
	\begin{align}\label{gradientW2pinequality}
	\tri_\vphi u 
	&\geq e^{\frac{\tilde F}{2}}(\nabla\tri_\vphi\tilde F,\nabla\tilde F)_\vphi+\tilde A\cdot[1+ e^{-F}(\tr_{\om}\om_\vphi)^{n-1}]u.
	\end{align}
	Here \begin{align*}
	\tilde A&:=\frac{1}{n-1}[\frac{\inf_X\Theta}{2}+\inf_XRic(\om) ]-\sup_X \chi_0-\frac{\sup_X R}{2},
	\end{align*}
\end{lem}	
\begin{proof}	By using $\tri_\vphi \tilde F=\tr_{\vphi}\Theta-R$ and $\tr_{\vphi}\om\leq \frac{1}{n-1}e^{-F}(\tr_{\om}\om_\vphi)^{n-1}$, we see that the second term in \eqref{gradientW2pinequalitypre pre} is
	\begin{align*}
	\frac{\tr_{\vphi}\Theta-R}{2}u
	&\geq 
	\frac{\inf_X\Theta\cdot\tr_{\vphi}\om-R}{2}u\\
	&\geq \frac{\inf_X\Theta}{2(n-1)}\cdot e^{-F}(\tr_{\om}\om_\vphi)^{n-1} u -\frac{\sup_X R}{2}u.
	\end{align*}
Similarly, we simply the third term in \eqref{gradientW2pinequalitypre pre},
	\begin{align*}
	e^{\frac{\tilde F}{2}} R_{i\bar j}(\om)g_\vphi^{i\bar l}g_\vphi^{k\bar j}\tilde F_k\tilde F_{\bar l}
	&\geq \inf_XRic(\om) \cdot \tr_\vphi\om \cdot u\\
	&\geq \frac{\inf_X Ric(\om) }{n-1}  \cdot e^{-F} (\tr_{\om}\om_\vphi)^{n-1} \cdot u.
	\end{align*}
We obtain with the aid of the relation $i\p\bar\p f\geq -\chi_0$ that
		\begin{align*}
	e^{\frac{\tilde F}{2}}f_{i\bar j}g_\vphi^{i\bar l}g_\vphi^{k\bar j}\tilde F_k\tilde F_{\bar l}
	&\geq -e^{\frac{\tilde F}{2}} (\chi_0)_{i\bar j}g_\vphi^{i\bar l}g_\vphi^{k\bar j}\tilde F_k\tilde F_{\bar l}
	\geq -\sup_X \chi_0   \cdot \tr_\vphi\om \cdot u\\
	&\geq -\sup_X \chi_0 \cdot  e^{-F} (\tr_{\om}\om_\vphi)^{n-1}\cdot  u.
	\end{align*}
	
Substituting all these inequalities into \eqref{gradientW2pinequalitypre pre} and setting the constant $\tilde A$, we prove this lemma. 
\end{proof}

	\textbf{Step 2: integral inequality.}
	Setting \begin{align*} 
v=u^{\frac{p}{2}}\text{ and }h=\tr_{\om}\om_\vphi,
\end{align*}  we rewrite the differential inequality in the integral form, 
	\begin{lem}
		\begin{align}\label{gradient2ndkeyintegral} 
	\int_X \frac{2(p-1)}{p^2}|\p v|^2_\vphi \om^n
	\leq C p \{\int_X [e^{-F}h^{2n-2}v^2+ e^Fv^2]\om^n\}.
	\end{align} 
	\end{lem}
\begin{proof}	We multiply the differential inequality above with $-u^{p-1}$ for some $p\geq 1$. After applying the integration by parts, we get
	\begin{align}\label{gradient2nddiffinequ}
	&\int_X (p-1)u^{p-2}|\p u|^2_\vphi \om_\vphi^n\\
	&\leq \int_X [-e^{\frac{\tilde F}{2}}(\nabla\tri_\vphi\tilde F,\nabla\tilde F)_\vphi u^{p-1} -\tilde Au^p-\tilde A e^{-F} (\tr_{\om}\om_\vphi)^{n-1}u^p]\om_\vphi^n\nonumber.
	\end{align}
	Applying the integration by parts again, the first term $I$ in \eqref{gradient2nddiffinequ} becomes,
	\begin{align}\label{gradient2ndmixedterm}
	I&=\int_X [2e^{\frac{\tilde F}{2}}(\tri_\vphi\tilde F)^2 u^{p-1}
	+ e^{\frac{\tilde F}{2}}|\p\tilde F|^2_\vphi \tri_\vphi\tilde F u^{p-1}\\
	&+2(p-1)e^{\frac{\tilde F}{2}}\tri_\vphi\tilde F(\p u,\p \tilde F)_\vphi u^{p-2}]\om_\vphi^n:=I_1+I_2+I_3\nonumber.
	\end{align}
	By the H\"older inequality, the last term $I_3$ in \eqref{gradient2ndmixedterm} is bounded by
	\begin{align*}
	I_3\leq\int_X \frac{p-1}{2}u^{p-2}|\p u|^2_\vphi \om_\vphi^n+\int_X 2(p-1)e^{\tilde F} (\tri_{\vphi}\tilde F)^2 |\p\tilde F|^2_\vphi u^{p-2}\om_{\varphi}^n.
	\end{align*}
	Using $e^{\frac{\tilde F}{2}}|\p\tilde F|^2_\vphi\leq u$, we get
	\begin{align*}
	\int_X 2(p-1)e^{\tilde F} (\tri_{\vphi} \tilde F)^2 |\p\tilde F|^2_\vphi u^{p-2}\om_{\varphi}^n&\leq 2(p-1)\int_Xe^{\frac{\tilde F}{2}}(\tri_\vphi\tilde F)^2u^{p-1}\om_{\varphi}^n\\
	&=(p-1)I_1 .
	\end{align*}
	So we have
	\begin{align*}
	I_3\leq\int_X \frac{p-1}{2}u^{p-2}|\p u|^2_\vphi \om_\vphi^n+(p-1)I_1 .
	\end{align*}
	By $e^{\frac{\tilde F}{2}}|\p \tilde F|^2_\vphi\leq u$ again, we have 
	\begin{align*}
	I_2\leq \int_X  |\tri_\vphi\tilde F | u^{p}\om_\vphi^n.
	\end{align*}
	Substituting into \eqref{gradient2nddiffinequ}, we obtain that
	\begin{align}\label{gradient2ndkeyintegralpre} 
	\int_X \frac{p-1}{2}u^{p-2}|\p u|^2_\vphi \om_\vphi^n&\leq \int_X [2pe^{\frac{\tilde F}{2}}(\tri_\vphi\tilde F)^2 u^{p-1}
	+ |\tri_\vphi\tilde F | u^{p}]\om_\vphi^n\\
	&-\tilde A\int_X [ u^p+ e^{-F} (\tr_{\om}\om_\vphi)^{n-1}u^p]\om_\vphi^n.\nonumber
	\end{align}
	Since $\tri_{\vphi}\tilde F=\tr_{\vphi}\Theta-R$, we have by \eqref{trivphiom} that 
	\begin{align*}
	|\tri_{\vphi}\tilde F|
	&\leq \|\Theta\|_\infty\tr_{\vphi}\om+\|R\|_\infty\\
	&\leq \frac{\|\Theta\|_\infty}{n-1}e^{-F}(\tr_{\om}\om_\vphi)^{n-1}+\|R\|_\infty.
	\end{align*}
	We now substitute it into \eqref{gradient2ndkeyintegralpre} and make use of the assumption $\tr_{\om}\om_\vphi\geq 1$.
	Accordingly, we see that there is a constant $C$ depending on $\|\tilde F\|_\infty, \|\Theta\|_\infty, \|R\|_\infty$, $\tilde A$, $n$ such that
	\begin{align*}
	&\int_X \frac{p-1}{2} u^{p-2}|\p u|^2_\vphi \om_\vphi^n\\
	&\leq C p \{\int_X [e^{-2F}(\tr_{\om}\om_\vphi)^{2n-2}u^{p-1}+u^{p-1}+e^{-F} (\tr_{\om}\om_\vphi)^{n-1}u^p+u^p]e^F\om^n\}.
	\end{align*}
Making use of 
\begin{align*} 
u^{p-1}\leq u^p,\quad e^{-F} (\tr_{\om}\om_\vphi)^{n-1}u^p\leq e^{-2F}(\tr_{\om}\om_\vphi)^{2n-2}u^{p}+u^p,
\end{align*} 
we conclude that the right hand side of the inequality above is bounded by
\begin{align*} 
C p \{\int_X [e^{-2F}(\tr_{\om}\om_\vphi)^{2n-2}u^{p}+u^p]e^F\om^n\}.
\end{align*} 
We use the lower bound of $F$ (Proposition \ref{Flower}) to the left hand side, that is $\om^n_\vphi=e^F\om^n\geq e^{\inf_X F}\om^n $. In conclusion, we have 	the lemma.
\end{proof}	

	\textbf{Step 3: iteration.}
	In order to proceed the iteration procedure, we change $|\p v|_\vphi$ to $|\p v|$ and use the H\"older's inequality,
	\begin{align*}
	\int_X|\p v|^a \om^n
	\leq \int_X|\p v|_\vphi^a h^{\frac{a}{2}} \om^n
	\leq (\int_X|\p v|_\vphi^2 \om^n)^{\frac{a}{2}}
	\cdot(\int_Xh^{\frac{a}{2-a}} \om^n)^{\frac{2-a}{2}}.
	\end{align*} 
	The inequality \eqref{gradient2ndkeyintegral} is rewritten as
	\begin{align}\label{gradient2ndkeyintegral pre}
	\|\p v\|_a^2
	\leq C\|h\|_{\frac{a}{2-a}}p^2\int_X (e^{-F}h^{2n-2}v^{2}+e^{F}v^{2})\om^n.
	\end{align}
	We apply the H\"older's inequality to the terms on the right hand side
	\begin{align*}
	\int_X e^Fv^2\om^n\leq e^{\sup_X(F+f)}\int_X e^{-f}v^2\om^n\leq e^{\sup_X(F+f)}(\int_X e^{\frac{-b}{b-2}f}\om^n)^{\frac{b-2}{b}}  (\int_X v^{b}\om^n)^{\frac{2}{b}},
	\end{align*}
	and also by using the $F$ lower bound (Proposition \ref{Flower}), 
	\begin{align*}
	\int_X e^{-F}h^{2n-2}v^2\om^n &\leq e^{-\inf_X F}(\int_X h^{\frac{(2n-2)b}{b-2}}\om^n)^{\frac{b-2}{b}} (\int_X v^{b}\om^n)^{\frac{2}{b}}.
	\end{align*}
	We denote the Sobolev exponent $a^\ast=\frac{2na}{2n-a}$ and let $a<2<b<a^\ast$. The Sobolev inequality gives that
	\begin{align*}
	\|v\|_{a^\ast}^2\leq C(\|\p v\|_{a}^2+\|v\|_{a}^2).
	\end{align*}
	In conclusion, by the $W^{2,p}$-estimates (\thmref{w2pestimates}), there is a constant $C$ depending on $\sup_X(F+f)$, $\inf_X F$, $\|e^{-f}\|_{\frac{b}{b-2};\om}$, $\|h\|_{\frac{(2n-2)b}{b-2};\om}$ such that
	\begin{align*}
	\|v\|_{a^\ast}^2\leq C p^2\|v\|_{b}^2.
	\end{align*}
	Rewrite it in $u$, we get
	\begin{align*}
	\|u\|_{a^\ast\frac{p}{2}}\leq (C(\frac{p}{2})^2)^\frac{2}{p}\|u\|_{b\frac{p}{2}}.
	\end{align*}
	We choose $\chi=\frac{a^\ast}{b}>1$, $\frac{p}{2}=\chi^{i}$, $i=0,1,\cdots$. The iteration procedure tells us
	\begin{align*}
	\|u\|_{b\chi^{m}} \leq \Pi_{i=0}^{m-1}( C \chi^{i})^{\chi^{-i}}\|u\|_{b}.
	\end{align*}
	Letting $m\rightarrow\infty$, $\tilde C:=\Pi_{i=0}^{\infty}( C \chi^{i})^{\chi^{-i}}\leq  C^{\sum_{i=0}^{\infty}\chi^{-i}}\chi^{\sum_{i=0}^{\infty}i\chi^{-i}} <\infty$, we obtain the estimate
	\begin{align*}
	\|u\|_{\infty} \leq \tilde C\|u\|_{b}\leq \|u\|^{1-\frac{1}{b}}_{\infty}\|u\|^{\frac{1}{b}}_{1}.
	\end{align*}
	In order to bound $\|u\|_{1}$, we use the integration by parts to see
	\begin{align}\label{L1tildeF}
	\int_X|\p \tilde F|^2_\vphi\om_\vphi^n
	=\int_X \tilde F (R-\tr_{\vphi}\Theta)\om_\vphi^n
	\leq C ( 1+\int_X (\tr_{\om}\om_{\varphi}))^{n-1}\om^n.
	\end{align}
	The constant $C$ depends on $\|\tilde F\|_\infty$, $\|R\|_\infty$ and $\inf_X \Theta$.
	Therefore, the $\|u\|_{1}$ norm is bounded and the required estimate follows from the $W^{2,p}$-estimates (\thmref{w2pestimates}).
	
\end{proof}
\subsubsection{$f=0$}
\begin{thm}\label{C11estimates}Suppose that $f=0$.
	There holds
	\begin{align*}
	\sup_X \|\p F\|^2_\vphi+\sup_X \tr_{\om}\om_\vphi\leq C.
	\end{align*}
	The constant $C$ depends on the quantities in \eqref{backgroundmetriccurvature}, and
	\begin{align*}
	\|F\|_\infty,\quad\|\vphi\|_\infty,\quad \|R\|_\infty,\quad\|\Theta\|_\infty,\quad\inf_X Ric(\om) ,\quad  n.
	\end{align*}
\end{thm}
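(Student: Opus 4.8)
The plan is to bound $\tr_\om\om_\vphi$ by combining the Chern--Lu type inequality with the gradient estimate of $F$ already obtained in \thmref{Gradient estimates} (specialised to $f=0$), via a maximum-principle argument on an auxiliary quantity of the form $\log\tr_\om\om_\vphi - A\vphi + \text{(weight)}$. First I would recall from Step~1 of the proof of \thmref{w2pestimates} that, thanks to the geometrically polyhomogeneous structure of the reference metric $\om=\om_{\theta_\eps}$, there is a function $\phi$ with $\|\phi\|_\infty\leq C_\phi$, $\tilde\om=C\om+i\p\bar\p\phi\geq 0$, and the Paun-type bound \eqref{tritrvphiom}, namely $\tri_\vphi\log\tr_\om\om_\vphi\geq -(C\tr_\vphi\om+\tri_\vphi\phi)+\frac{\tri F}{\tr_\om\om_\vphi}$. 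The term $\frac{\tri F}{\tr_\om\om_\vphi}$ is the one that obstructs a naive argument: $\tri F$ is not sign-controlled, so I would absorb it using $|\tri F|=|\tri_\vphi\tilde F\cdot(\text{contraction})|$ is not directly available; instead I would control $\tri F$ in terms of $|\p F|^2_\vphi$ and $\tr_\om\om_\vphi$ through the second-order term $|\p\bar\p F|^2_\vphi$, or, more cleanly, carry a term $e^{\tilde F/2}|\p F|^2_\vphi+1 = u$ from \thmref{Gradient estimates} alongside $\log\tr_\om\om_\vphi$ in the test function.

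Concretely, I would set $G = \log\tr_\om\om_\vphi - A\vphi + \phi$ for a large constant $A$ depending on $\inf_X\mathrm{Ric}(\om)$, $\sup_X\Theta$, $C$, and $n$, and compute $\tri_\vphi G$. Using \eqref{vphivphi} and \eqref{tritrvphiom}, the choice $A$ large makes the coefficient of $\tr_\vphi\om$ strictly positive: $\tri_\vphi G\geq (A-C)\tr_\vphi\om - An + \frac{\tri F}{\tr_\om\om_\vphi}-C$. At an interior maximum point of $G$ (which exists after passing to the approximating side where everything is smooth and uniform, exactly as in \lemref{properclosedness entropy convergence}), the leading term $(A-C)\tr_\vphi\om$ dominates, and by \eqref{trivphiom}, $\tr_\vphi\om\geq n e^{-F/n}$, which is bounded below since $\|F\|_\infty$ is controlled. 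The remaining task is to handle $\tri F/\tr_\om\om_\vphi$ at the maximum point: here I would use $\tri F = \tr_\om(\mathrm{Ric}(\om)-\mathrm{Ric}(\om_\vphi))$ and the twisted scalar-curvature equation $S(\om_\vphi)=\tr_\vphi(\mathrm{Ric}(\om)-\Theta)+R$ (with $f=0$), so $\tri F$ is a combination of traces of fixed forms with respect to $\om$ and $\om_\vphi$; the worst term, $-\tr_\om\mathrm{Ric}(\om_\vphi)$, is estimated by $S(\om_\vphi)$ lower bounds plus $\tr_\vphi\om\cdot\tr_\om\om_\vphi$-type Cauchy--Schwarz, and then divided by $\tr_\om\om_\vphi$ it is absorbed into the $(A-C)\tr_\vphi\om$ term for $A$ large. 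This yields $\tr_\om\om_\vphi\leq C$ at the maximum of $G$, hence everywhere after accounting for $\|\vphi\|_\infty$ and $\|\phi\|_\infty$.

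Once $\sup_X\tr_\om\om_\vphi\leq C$ is established, the gradient bound $\sup_X\|\p F\|^2_\vphi\leq C$ follows immediately: with $f=0$ the conclusion of \thmref{Gradient estimates} gives $\|u\|_\infty\leq \tilde C\|u\|_b$, and the hypotheses there --- $\|e^{-f}\|_{L^{b/(b-2)}(\om)}=\|1\|_{L^{b/(b-2)}}<\infty$ and $\|\tr_\om\om_\vphi\|_{L^{(2n-2)b/(b-2)}(\om)}$ --- are now trivially satisfied from the uniform upper bound on $\tr_\om\om_\vphi$ just obtained, so no separate argument is needed. I expect the main obstacle to be the bookkeeping of the term $\tri F/\tr_\om\om_\vphi$ at the maximum point and verifying that the constant $A$ can be chosen uniformly --- in particular that the lower bound of $\mathrm{Ric}(\om_{\theta_\eps})$ from \lemref{thetaepstheta} (not merely of $\om_\theta$) enters correctly, since $\om=\om_{\theta_\eps}$ does not have uniformly bounded geometry; this is exactly why the weighted auxiliary function $\phi$ from \eqref{backgroundmetriccurvature} is indispensable, and the dependence of $C$ on the quantities listed in \eqref{backgroundmetriccurvature} must be tracked throughout.
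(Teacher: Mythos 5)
Your strategy has a genuine gap at its central step: a pointwise maximum principle on $G=\log\tr_\om\om_\vphi-A\vphi+\phi$ cannot absorb the term $\tri F/\tr_\om\om_\vphi$. You propose to write $\tri F=\tr_\om(\Ric(\om)-\Ric(\om_\vphi))$ and to control the worst piece $-\tr_\om\Ric(\om_\vphi)$ via the scalar curvature equation plus Cauchy--Schwarz. But the equation only controls the $\om_\vphi$-trace $S(\om_\vphi)=\tr_\vphi\Ric(\om_\vphi)$; the tensor $\Ric(\om_\vphi)$ has no sign, so its $\om$-trace can be arbitrarily negative even when $S(\om_\vphi)$ and $\tr_\om\om_\vphi$ are bounded. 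Equivalently, $\tri F=\tr_\om i\p\bar\p F$ involves fourth derivatives of $\vphi$, which is exactly what one has no pointwise control of; Cauchy--Schwarz only gives $|\tri F|\leq \tr_\om\om_\vphi\cdot|\p\bar\p F|_\vphi$, and the factor $|\p\bar\p F|_\vphi$ cannot be absorbed into $(A-C)\tr_\vphi\om$. This is the fundamental fourth-order obstruction in the cscK problem, and it is why the paper does not run a maximum principle on $\log\tr_\om\om_\vphi$ alone.

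What the paper actually does (following Chen--Cheng) is precisely the alternative you mention only in passing: it couples the two quantities into $u=e^{F/2}|\p F|^2_\vphi+e^\phi\tr_\om\om_\vphi+1$. After Cauchy--Schwarz, the term $e^\phi\tri F$ produces $-e^{F/2}|\p\bar\p F|^2_\vphi-\tfrac14 e^{-F/2+2\phi}(\tr_\om\om_\vphi)^2$, and the first of these is exactly cancelled by the $+|\p\bar\p F|^2_\vphi$ term in the Bochner identity \eqref{gradientW2pinequalitypre} for $|\p F|^2_\vphi$; this cancellation is the whole point of carrying both quantities together. Even after that, the resulting differential inequality $\tri_\vphi u\geq e^{F/2}(\nabla\tri_\vphi F,\nabla F)_\vphi+\tilde A[(\tr_\om\om_\vphi)^{n-1}u+u]$ has $\tilde A$ of the wrong sign and a first-order term that is not pointwise controlled, so the paper does not use a maximum principle at all: it multiplies by $-u^{p-1}$, integrates by parts, and runs a Moser iteration whose essential input is the $W^{2,p}$ bound of \thmref{w2pestimates}. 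Your final paragraph (deducing the gradient bound from \thmref{Gradient estimates} once the trace is bounded) is sound in itself, but the trace bound cannot be obtained first and separately; the two estimates must be proved simultaneously.
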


\begin{proof}\textbf{Step 1: differential inequality.}
	We denote 
	\begin{align*}
	w=e^{\frac{ F}{2}}|\p F|^2_\vphi,\quad 
u=w+ e^\phi \tr_{\om}\om_\vphi +1.
	\end{align*}
	The weight $\phi$ is added to control the cone singularities. The differential inequality is still the same form as \eqref{gradient2ndkeyintegral}.
\begin{lem}
	\begin{align*}
\tri_\vphi u 
&\geq e^{\frac{F}{2}}(\nabla\tri_\vphi F,\nabla F)_\vphi+\tilde A\cdot[ (\tr_{\om}\om_\vphi)^{n-1}u+u].
\end{align*}
Here \begin{align*}
\tilde A&:=A_{\Theta,F,R}=\frac{1}{n-1}[\frac{\inf_X\Theta}{2}+\inf_XRic(\om) ]e^{-\sup_X F}-\frac{\sup_X R}{2}\\
&-\frac{C}{n-1}e^{-\inf_XF}-\frac{1}{4}e^{-\frac{\inf_XF}{2} +\sup_X \phi}.
\end{align*}
\end{lem}	
\begin{proof}	
Recall \eqref{tritrvphiom} that
\begin{align*}
\tri_\vphi \log\tr_{\om}\om_\vphi \geq - (C\tr_\vphi\om+\tri_\vphi\phi)+\frac{\tri F}{\tr_{\om}\om_\vphi}.
\end{align*}
Using $\tr_\vphi\om\leq \frac{1}{n-1}e^{-F}(\tr_{\om}\om_\vphi)^{n-1}$ and $\tri F\leq \tr_\vphi\om\tri_\vphi F$, we have
	\begin{align*}
	\tri_\vphi (e^\phi\tr_{\om}\om_\vphi )
	&\geq -Ce^\phi\tr_\vphi \om\cdot \tr_{\om}\om_\vphi+e^\phi\tri F\\
	&\geq -\frac{C}{n-1}e^{-F+\phi} (\tr_{\om}\om_\vphi)^n-e^{\frac{F}{2} }|\p\bar\p F|^2_\vphi-\frac{1}{4}e^{\frac{-F}{2} +2\phi}(\tr_{\om}\om_\vphi)^2\\
	&\geq -\frac{C}{n-1}e^{-\inf_X F} (\tr_{\om}\om_\vphi)^{n-1}u-e^{\frac{F}{2} }|\p\bar\p F|^2_\vphi\\
	&-\frac{1}{4}e^{\frac{-\inf_XF}{2} +\sup_X\phi}(\tr_{\om}\om_\vphi)u.
	\end{align*}

We also have from the proof of \eqref{gradientW2pinequality},
	\begin{align}
	\tri_\vphi w
	&\geq e^{\frac{F}{2}}(\nabla\tri_\vphi F,\nabla  F)_\vphi-\frac{\sup_X R}{2}u\\&+\frac{1}{n-1}[\frac{\inf_X\Theta}{2}+\inf_XRic(\om) ]\cdot e^{-F}(\tr_{\om}\om_\vphi)^{n-1} u+e^{\frac{F}{2}}|\p\bar\p  F|^2_\vphi.\nonumber
	\end{align}
Putting together and setting the constant $\tilde A$,
we thus obtain the differential inequality.
\end{proof}

\textbf{Step 2: integral inequality.}
We multiply the differential inequality above with $-u^{p-1}$ for some $p\geq 1$ and apply the integration by parts,
\begin{align*}
&\int_X (p-1)u^{p-2}|\p u|^2_\vphi \om_\vphi^n\\
&\leq \int_X [-e^{\frac{F}{2}}(\nabla\tri_\vphi F,\nabla F)_\vphi u^{p-1} -\tilde A (\tr_{\om}\om_\vphi)^{n-1}u^p-\tilde Au^p]\om_\vphi^n\nonumber.
\end{align*}
By the same reason of \eqref{gradient2ndkeyintegralpre}, we estimate the first term on the right hand side and thus obtain that
\begin{align*}
\int_X \frac{p-1}{2}u^{p-2}|\p u|^2_\vphi \om_\vphi^n&\leq \int_X [2pe^{\frac{F}{2}}(\tri_\vphi F)^2 u^{p-1}
+ |\tri_\vphi F | u^{p}]\om_\vphi^n\\
&-\tilde A\int_X [ (\tr_{\om}\om_\vphi)^{n-1}u^p+u^p]\om_\vphi^n.
\end{align*}
Note that
\begin{align*}
|\tri_{\vphi}F|
&=|\tr_{\vphi}\Theta-R|
\leq \|\Theta\|_\infty\tr_{\vphi}\om+\|R\|_\infty\\
&\leq \frac{\|\Theta\|_\infty}{n-1}e^{-F}(\tr_{\om}\om_\vphi)^{n-1}+\|R\|_\infty.
\end{align*}
Substituting into the inequality above and using the assumption $\tr_{\om}\om_\vphi\geq 1$ and $u\geq 1$,
we thus have that there is a constant $C$ depending on $\|F\|_\infty, \|\Theta\|_\infty, \|R\|_\infty$ and $\tilde A$ such that
\begin{align*}
\int_X \frac{p-1}{2} u^{p-2}|\p u|^2_\vphi \om^n
\leq C p \{\int_X [(\tr_{\om}\om_\vphi)^{2n-1}u^p+ u^p]\om^n\}.
\end{align*}
We denote $v=u^{\frac{p}{2}}$ and $h=\tr_{\om}\om_\vphi$, and rewrite this inequality as following
\begin{align*}
\int_X \frac{2(p-1)}{p^2}|\p v|^2_\vphi \om^n
\leq C p \{\int_X [h^{2n-1}v^2+ v^2]\om^n\}.
\end{align*} 

\textbf{Step 3: iteration.}
Similar to \eqref{gradient2ndkeyintegral pre}, we have	
\begin{align}
\|\p v\|_a^2
\leq C\|h\|_{\frac{a}{2-a}}p^2\{\int_X [h^{2n-1}v^2+ v^2]\om^n\}.
\end{align}
The Sobolev inequality with exponent $a^\ast=\frac{2na}{2n-a}$ and $a<2<b<a^\ast$ implies that
\begin{align}\label{Sobolev inequality}
\|v\|_{a^\ast}^2\leq C(\|\p v\|_{a}^2+\|v\|_{a}^2)\leq C(\||\p v|_\vphi\|^2_{2}\|h\|_{\frac{a}{2-a}}+\|v\|_{a}^2).
\end{align}
By the H\"older's inequality, we get
\begin{align*}
\int_X h^{2n-1}v^2\om^n \leq \|h\|_{\frac{(2n-1)b}{b-2}}^{2n-1} \|v\|_b^2, \text{ and }
\int_X v^2\om^n\leq V^{\frac{b-2}{b}}\|v\|_b^2.
\end{align*}
Thus there is a constant $C$ depending on $\|h\|_{\frac{a}{2-a}}$, $\|h\|_{\frac{(2n-1)b}{b-2}}^{2n-1}$, $V$ such that
\begin{align*}
\|v\|_{a^\ast}^2\leq C p^2\|v\|_{b}^2.
\end{align*}
Running the iteration procedure, we obtain the estimate
\begin{align*}
\|u\|_{\infty} \leq \tilde C \|u\|^{1-\frac{1}{b}}_{\infty}\|u\|^{\frac{1}{b}}_{1}.
\end{align*}
The $\|u\|_{1}$ norm is bounded by \eqref{L1tildeF}. Therefore, the proof is completed with the aid of the $W^{2,p}$-estimate (\thmref{w2pestimates}).

\end{proof}


\section{Regularity and uniqueness of log $\chi$-twisted $K$-energy minimisers}\label{Regularity and uniqueness of log twisted energy minimisers}
In this section, we will show the regularity and uniqueness of the log $\chi$-twisted $K$-energy minimisers. They will be used in the next section to prove that the existence of cscK cone metrics implies the properness of the log $K$-energy.
\subsection{The complete space $\mathcal E^1$}

Recall that $\mathcal H$ is the space of smooth K\"ahler potentials in $\Om=[\om_0]$. Let $\vphi$ be a K\"ahler potential in $\mathcal H$, the tangent space of $\mathcal H$ at $\vphi$ is the set of smooth functions $C^{\infty}(X)$.
The $d_1$ metric is defined to be
\begin{align*}
\|\xi\|_1=\int_X |\xi|\om^n_\vphi,\text{ for all }\xi\in T_\vphi\mathcal H.
\end{align*}
We denote by $\mathcal E^1$ the finite energy class \cite{MR2352488},
\begin{align*}
\mathcal E^1=\{\vphi \text{ is $\om$-psh }\vert \int_{X}\om_{\varphi}^n=\int_{X}\om^n,\int_{X}|\vphi|\om_{\varphi}^n<\infty\}.
\end{align*}
It is proved in Theorem 2.3 in \cite{MR3406499} that the metric completion of $\mathcal H$ under $d_1$ is $\mathcal E^1$. When $\vphi_1,\vphi_2\in \mathcal E^1$, their approximation sequence are two decreasing sequence $\vphi^k_i, i=1,2$ which are  converging pointwise to $\vphi_i$. The $d_1$-geodesic between $\vphi_i$ is the limit of Chen's $C^{1,1}$ geodesic between $\vphi^k_i$. The limit is also in $\mathcal E^1$ and independent of the choice of approximation of the boundary values $\vphi_i$.
We also denote by $\mathcal E^1_0$ the functions $\vphi$ in $\mathcal E^1$ with the normalization condition $D(\vphi)=0$.

\begin{defn}We assume $\chi\geq 0$ is a smooth closed $(1,1)$-form and recall that the log $\chi$-twisted $K$-energy is \begin{align*}
	\nu_{\beta,\chi}=\nu_\beta+J_\chi.
	\end{align*}	
	We say $\vphi\in \mathcal E^1$ is a log $\chi$-twisted $K$-energy minimiser, if \begin{align*}
	\nu_{\beta,\chi}(\vphi)=\inf_{\mathcal E^1}\nu_{\beta,\chi}(\cdot).
	\end{align*}
\end{defn}
Given a K\"ahler cone potential $\vphi$, its volume form $\om^n_\vphi$ is $L^p$ for some $p>1$. Thus $\vphi$ is a H\"older continuous function and $\mathcal H_\beta\subset\mathcal E^1$.
The space of K\"ahler cone potential $\mathcal H_\beta$ is dense in $\mathcal E^1$. The results in Section \ref{Log K-energy and convexity} are extended to the log $K$-energy over $\mathcal H_\beta$.
\begin{lem}\label{convexityKenergy}We assume $\chi\geq 0$ is a smooth closed $(1,1)$-form.
	The log $\chi$-twisted $K$-energy $\nu_{\beta,\chi}:\mathcal H_\beta\rightarrow \mathbb R$ could be extended to $\nu_{\beta,\chi}:\mathcal E^1\rightarrow \mathbb R$.
	Furthermore, the log $\chi$-twisted $K$-energy is $d_1$ lower semi-continuous and convex along the $d_1$-geodesic. 
\end{lem}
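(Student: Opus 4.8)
The plan is to establish the three assertions---extension to $\mathcal E^1$, $d_1$-lower semi-continuity, and convexity along $d_1$-geodesics---in that order, building on the definitions and the smooth/conical results already recalled in Section~\ref{Log K-energy and convexity}. First I would decompose $\nu_{\beta,\chi}$ into its summands using Definition~\ref{logKenergy}, Definition~\ref{log twisted K energy} and \lemref{log K energy}: the entropy $E_\beta$, the Mabuchi energy $\nu_1$ (which itself splits into $E_1$, the $j_{-\mathrm{Ric}(\om_0)}$ term and $\underline S_1 D$), the $J_{-\theta}$, $J_\chi$ and $(1-\beta)J_{\Theta_D}$ pieces, and the lower-order term $\tfrac1V\int_M \mathfrak h(\om_0^n-\om_\vphi^n)$. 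The $J$-type functionals and $D$ are cocycles that extend continuously to $\mathcal E^1$ by the standard pluripotential theory recalled around \lemref{d_1andJ} and in \cite{MR3406499,arXiv:1602.03114}; the only subtle summands are the entropy $E_\beta$ (equivalently $E_1$ plus a weight integral) and the term $\tfrac1V\int_M \mathfrak h\,\om_\vphi^n$, where $\mathfrak h=-(1-\beta)\log|s|_h^2$ is unbounded above along $D$. For those I would define the extension by $E_\beta(\vphi)=+\infty$ whenever $\om_\vphi^n$ is not absolutely continuous with finite entropy with respect to $\om_0^n$, matching the convention used implicitly in the properness definition; the weight integral $\int_M \mathfrak h\,\om_\vphi^n$ is then finite whenever $E_\beta(\vphi)<\infty$ by a Young-type inequality ($t\log t \geq -C + \tfrac12 t\log(1+|s|_h^{-2})$ absorbs the logarithmic weight), so $\nu_{\beta,\chi}$ is well-defined on all of $\mathcal E^1$ with values in $(-\infty,+\infty]$.

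For $d_1$-lower semi-continuity I would use that $d_1$-convergence $\vphi_i \to \vphi$ implies (after passing to the canonical decreasing approximants or by the results of \cite{MR3406499,arXiv:1602.03114}) weak convergence of the Monge--Amp\`ere measures $\om_{\vphi_i}^n \rightharpoonup \om_\vphi^n$ and $L^1$-convergence of potentials. The $J$-functionals and $D$ are actually $d_1$-continuous (again \lemref{d_1andJ} and cocycle continuity), so the burden falls on the entropy. Lower semi-continuity of the entropy $\vphi \mapsto \mathrm{Ent}(\om_0^n, \om_\vphi^n)$ under weak convergence of measures is a classical fact (the relative entropy is jointly lower semi-continuous in the weak topology, e.g.\ the variational/duality characterization), and it descends to $E_\beta$ and to the weight integral $\int_M \mathfrak h\,\om_\vphi^n$ by writing $\mathfrak h$ as an increasing limit of bounded continuous functions $\mathfrak h_k$ and applying monotone convergence together with weak convergence for each $\mathfrak h_k$. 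Combining these gives $\nu_{\beta,\chi}(\vphi) \leq \liminf_i \nu_{\beta,\chi}(\vphi_i)$.

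Convexity along $d_1$-geodesics is where I would spend the most care, and it is the main obstacle. The strategy is approximation: given $\vphi_0,\vphi_1\in\mathcal E^1$ joined by the $d_1$-geodesic $\rho(t)$, take decreasing K\"ahler-cone approximants $\vphi_i^k \in \mathcal H_\beta$ (this uses the density of $\mathcal H_\beta$ in $\mathcal E^1$ noted just above the lemma) with $\vphi_i^k \searrow \vphi_i$; let $\rho^k(t)$ be the $C^\beta_\triangle$-cone geodesic connecting $\vphi_0^k$ and $\vphi_1^k$ given by \thmref{geo existence}. Along $\rho^k$, convexity of $\nu_\beta$ is \lemref{c11convexityKenergy} and convexity of $J_\chi$ (in fact strict, away from constant geodesics) is \lemref{Jchiconvex}, so $\nu_{\beta,\chi}(\rho^k(t))$ is convex in $t$ for each $k$. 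Then I would let $k\to\infty$: the $C^{1,1}$/$d_1$ comparison shows $\rho^k(t)\to \rho(t)$ in $d_1$ (the conical geodesics decrease and converge to Chen's $C^{1,1}$ geodesic, hence to the $d_1$-geodesic, by the same argument recalled in \S\ref{Cone geodesics} and the discussion of $\mathcal E^1$ above), and the lower semi-continuity just proved gives $\nu_{\beta,\chi}(\rho(t)) \leq \liminf_k \nu_{\beta,\chi}(\rho^k(t))$ while the monotonicity of the approximants together with the explicit monotone dependence of $\nu_\beta$ on the reference (as in \lemref{approximation proper}-type estimates and Berman-Berndtsson/\cite{arXiv:1602.03114}) gives the matching upper bound $\limsup_k \nu_{\beta,\chi}(\rho^k(t)) \leq \nu_{\beta,\chi}(\rho(t))$ at the endpoints and hence along the geodesic; passing convexity through the limit yields convexity of $\nu_{\beta,\chi}\circ\rho$. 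The delicate point is controlling the entropy term under this double limit---ensuring no entropy is lost in the limit $k\to\infty$ at interior times $t$---for which I would invoke the uniform entropy bounds and the lower semi-continuity above, exactly the phenomenon handled in \cite{arXiv:1602.03114,MR3600039} for the smooth Mabuchi energy and adapted conically in \cite{arXiv:1709.09616}.
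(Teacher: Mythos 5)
Your plan diverges from the paper's proof, which is a short reduction rather than a reconstruction: setting $f=(1-\b)\log|s|_h^2-h_0$ and $\tilde\chi_0=Ric(\om_0)-\theta+\chi$, one checks $e^{-f}=|s|_h^{2\b-2}\in L^1(\om_0^n)$ and, using \eqref{h0} and Poincar\'e--Lelong, that $\tilde\chi=\tilde\chi_0+i\p\bar\p f=\chi+(1-\b)i\p\bar\p\log|s|^2\geq0$; all three assertions are then exactly Theorem 4.7 of Berman--Darvas--Lu \cite{MR3687111} applied to the twisted $K$-energy with this singular twist. Re-deriving that theorem is legitimate in principle, but as written your semicontinuity step contains a genuine gap.

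The gap is a sign problem in the weight term. Writing $\nu_\beta=\nu_1+(1-\b)J_{\Theta_D}+\frac1V\int_M\mathfrak h(\om_0^n-\om_\vphi^n)$ with $\mathfrak h=-(1-\b)\log|s|_h^2\geq0$ unbounded above along $D$ (equivalently, $\om_\theta^n=e^{h_0}|s|_h^{2\b-2}\om_0^n$ gives $E_\beta=E_1-\frac1V\int_M\mathfrak h\,\om_\vphi^n-\frac1V\int_M h_0\,\om_\vphi^n$), the $\vphi$-dependent singular contribution is $-\frac1V\int_M\mathfrak h\,\om_\vphi^n$. Your monotone approximation $\mathfrak h_k\nearrow\mathfrak h$ proves that $\vphi\mapsto\int_M\mathfrak h\,\om_\vphi^n$ is \emph{lower} semicontinuous, hence that the summand actually appearing in $\nu_{\beta,\chi}$ is \emph{upper} semicontinuous; adding an lsc entropy $E_1$ to a usc weight term does not give lower semicontinuity of the sum. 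The repair is not to decompose at all: $E_\beta(\vphi)=\frac1V\int_M\log\frac{\om_\vphi^n}{\om_\theta^n}\om_\vphi^n$ is the relative entropy with respect to the \emph{fixed finite} (singular) measure $\om_\theta^n$, and relative entropy with respect to any fixed finite reference measure is weakly lsc by the Donsker--Varadhan duality; combined with the $d_1$-continuity of $J_{-\theta}$ and $J_\chi$ this gives the claim, and this single-grouping observation is precisely what the cited theorem encodes. A related caveat applies to your convexity step: the two hardest points --- identifying the limits of the cone geodesics of \thmref{geo existence} with the $d_1$-geodesic, and ensuring no entropy is lost at the endpoints in the double limit so that convexity passes to the limit --- are exactly the content of \cite{MR3687111} and \cite{arXiv:1602.03114}, so the plan as stated ultimately re-imports the theorem it sets out to avoid.
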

\begin{proof}
From Definition \eqref{logKenergy}, we recall the formula of the log $\chi$-twisted $K$-energy
\begin{align*}
\nu_\beta(\vphi)
&=\frac{1}{V}\int_M\log\frac{\om^n_\vphi}{\om_\theta^n}\om_\vphi^{n}
+J_{-\theta}(\vphi)+J_\chi(\vphi)+\frac{1}{V}\int_M (-(1-\b)\log |s|_h^2+h_0)\om_0^n.
\end{align*}
Theorem 4.7 in \cite{MR3687111} assume that the twisted term $\tilde\chi=\tilde\chi_0+i\p\bar\p f$ satisfies $\tilde\chi_0$ is a smooth closed positive $(1,1)$-form, $f$ is a $\tilde\chi_0$-psh function and $e^{-f}\in L^1(\om^n_0)$. The twisted $K$-energy in this theorem uses the formula 
\begin{align*}
\nu_{\tilde\chi}=\frac{1}{V}\int_M\log\frac{\om^n_\vphi}{e^{-f}\om_0^n}\om_\vphi^{n}
+J_{-Ric(\om_0)+\tilde{\chi_0}}(\vphi)-\frac{1}{V}\int_Mf\om_0^n.
\end{align*}
In order o apply this theorem, we see that in our case, $f=(1-\b)\log |s|_h^2-h_0$ and $\tilde\chi_0=Ric(\om_0)-\theta+\chi$.
It is direct to see that $e^{-f}=|s|^{2\beta-2}_h\in L^1(\om_0)$ and 
\begin{align*}
\tilde\chi
&=Ric(\om_0)-\theta+\chi+i\p\bar\p((1-\b)\log |s|_h^2-h_0)\\
& =(1-\beta)\Theta_D+\chi+(1-\b)i\p\bar\p\log |s|_h^2\\
&=\chi+(1-\b)i\p\bar\p\log |s|^2\geq 0.
\end{align*}
In the computation above, we use \eqref{h0} and the assumption that $\chi\geq 0$.

\end{proof}

With the same reason to \lemref{convexityKenergy}, we apply Corollary 4.8 in \cite{MR3687111} (which is obtained from a compactness Theorem 2.17 in \cite{MR3956691}) to the log $\chi$-twisted $K$-energy.
\begin{lem}\label{Weakcompactness}We assume $\chi\geq 0$ is a smooth closed $(1,1)$-form.
	Suppose a sequence $\vphi_i\in\mathcal E^1$ satisfies both the log $\chi$-twisted $K$-energy
	$\nu_{\beta,\chi}(\vphi_i)$ and $d_1(0,\vphi_i)$
	are uniformly bounded.
	Then there exists a $d_1$-convergent subsequence.
\end{lem}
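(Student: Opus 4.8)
The plan is to reduce the statement to a known compactness result for twisted $K$-energies in $\mathcal{E}^1$, by re-reading $\nu_{\beta,\chi}$ as such a twisted $K$-energy exactly as in the proof of \lemref{convexityKenergy}, and then invoking Corollary 4.8 of \cite{MR3687111}.

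First I would recall the identity established inside the proof of \lemref{convexityKenergy}: on $\mathcal{E}^1$ the log $\chi$-twisted $K$-energy agrees with the twisted $K$-energy $\nu_{\tilde\chi}$ of \cite{MR3687111} attached to the twisting data
\begin{align*}
f=(1-\b)\log|s|_h^2-h_0,\qquad \tilde\chi_0=Ric(\om_0)-\theta+\chi,
\end{align*}
for which it is checked there that $e^{-f}=|s|_h^{2\b-2}\in L^1(\om_0^n)$ and that $\tilde\chi=\tilde\chi_0+i\p\bar\p f=\chi+(1-\b)i\p\bar\p\log|s|^2\ge 0$ by \eqref{h0} and $\chi\ge 0$. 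With this identification, Corollary 4.8 of \cite{MR3687111}, which asserts that a sequence in $\mathcal{E}^1$ along which the twisted $K$-energy and the $d_1$-distance to $0$ stay bounded admits a $d_1$-convergent subsequence, applies and gives the claim.

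The one point that needs care is that \cite{MR3687111} phrases its hypotheses with a strictly positive smooth representative, whereas here $[\tilde\chi_0]=(1-\b)C_1(L_D)+[\chi]$ is only nef. I would handle this by going through the mechanism underlying Corollary 4.8 rather than its literal statement: the compactness there is driven by a uniform entropy bound together with the $d_1$-bound, and in our situation the entropy bound is automatic. Indeed, from $\nu_{\beta,\chi}=E_\beta+J_{-\theta}+J_\chi+\mathrm{const}$ and from \lemref{d_1andJ} (with $f=0$, applied to $\chi_0=-\theta$ and to $\chi_0=\chi$, and extended from $\mathcal{H}_{\beta,0}$ to $\mathcal{E}^1_0$ by continuity and by the normalisation $\psi_i=\vphi_i-D(\vphi_i)$, which is harmless since $|D(\vphi_i)|\le C\,d_1(0,\vphi_i)$) one gets $|J_{-\theta}(\psi_i)|+|J_\chi(\psi_i)|\le C\,d_1(0,\psi_i)\le C$; hence the assumed bound on $\nu_{\beta,\chi}(\vphi_i)$ forces $E_\beta(\psi_i)=E_\beta(\vphi_i)\le C$.

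Finally, the bounds $d_1(0,\vphi_i)\le C$ and $E_\beta(\vphi_i)\le C$ place the $\vphi_i$ in a $d_1$-compact subset of $\mathcal{E}^1$ by the compactness of $d_1$-bounded, entropy-bounded sets (Berman--Darvas--Lu, as used in \cite{MR3406499,MR3687111}), which produces the convergent subsequence. So the main --- and rather mild --- obstacle is only this bridging of the positivity normalisation of \cite{MR3687111}; once past it, everything is a direct citation.
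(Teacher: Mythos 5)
Your proposal is correct and follows essentially the same route as the paper, whose entire proof is the one-line remark that, by the same identification of $\nu_{\beta,\chi}$ with the twisted $K$-energy $\nu_{\tilde\chi}$ used in the proof of \lemref{convexityKenergy}, Corollary 4.8 of \cite{MR3687111} applies. Your additional discussion of the positivity normalisation in \cite{MR3687111} and the derivation of the uniform entropy bound from \lemref{d_1andJ} is more careful than the paper, which does not address that point.
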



\subsection{Regularity of log $\chi$-twisted $K$-energy minimisers}\label{Regularity of minimisers}
\begin{thm}[Regularity of minimisers]\label{Regularity}
Suppose $\chi\geq0$ is a smooth closed $(1,1)$-form.
Then the log $\chi$-twisted $K$-energy minimisers are $D_{\bf w}^{4,\a,\b}(\om_\theta)$.
\end{thm}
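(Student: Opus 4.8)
The plan is to adapt the direct argument of \cite{arXiv:1801.00656} to the conical (and twisted) setting, using the apriori estimates of \thmref{aprioriestimates approximation} together with the approximation machinery already built in Section~\ref{Properness conjecture: automorphism is trivial}. First I would reduce to the case where the minimiser lives in a class with controlled energy: if $\varphi\in\mathcal E^1$ is a log $\chi$-twisted $K$-energy minimiser, then by \lemref{convexityKenergy} the energy $\nu_{\beta,\chi}(\varphi)$ is finite, and since $\nu_{\beta,\chi}$ is bounded below on $\mathcal E^1$ (it has a minimum at $\varphi$), the entropy term $E_\beta(\varphi)=\frac1V\int_M\log\frac{\om^n_\varphi}{\om_\theta^n}\om_\varphi^n$ is finite after subtracting the $J$-functional contributions, which are controlled by $d_1(0,\varphi)$ via \lemref{d_1andJ}. (One normalises $D(\varphi)=0$ so that $\varphi\in\mathcal E^1_0$.) The key point is thus that a minimiser has finite entropy.

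Next I would run the approximation scheme. Mimicking the construction of \lemref{eps twisted approximation lemma}, I minimise the approximate log $\chi$-twisted $K$-energy $\nu^\eps_{\beta,\chi}$ over $\mathcal E^1$ (or $\mathcal H$) for each $\eps\in(0,1]$; by the smooth theory of \cite{arXiv:1801.00656} (applied as in \thmref{twisted approximation thm}, using that $Ric(\om_{\theta_\eps})\geq\theta$ from \lemref{thetaepstheta} and that the twisted term $Ric(\om_{\theta_\eps})-\theta+\chi_\eps\geq0$), one obtains smooth twisted cscK metrics $\varphi_\eps$ solving \eqref{eps twisted approximation}. The crucial bookkeeping step is to bound $\sup_{\eps}E^\eps_\beta(\varphi_\eps)$ uniformly: using $\nu^\eps_{\beta,\chi}(\varphi_\eps)\le\nu^\eps_{\beta,\chi}(\varphi)$ and comparing with $\nu_{\beta,\chi}(\varphi)$ via \lemref{approximate K and K lemma} and \lemref{approximation proper}, together with the finiteness of $E_\beta(\varphi)$ from the previous paragraph, one gets a uniform upper bound on $d_1(0,\varphi_\eps)$ and hence, by \lemref{uniform entropy}'s argument, on the approximate entropy. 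With the uniform entropy bound in hand, \lemref{eps twisted approximation estimates} (i.e.\ \thmref{aprioriestimates approximation}) gives uniform $\|\varphi_\eps\|_\infty$, $\|F_\eps\|_\infty$, $\|\p F_\eps\|_{\om_{\theta_\eps}}$ and $C^{-1}\om_{\theta_\eps}\le\om_{\varphi_\eps}\le C\om_{\theta_\eps}$.

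Then I would pass to the limit exactly as in the proof of \thmref{properclosedness}: by \lemref{properclosedness entropy convergence} the approximate entropies converge, $\varphi_\eps\to\varphi_\infty$ in $C^{0,\alpha}$ with $\om_{\varphi_\eps}\to\om_{\varphi_\infty}$ weakly in $L^p(\om_\theta)$, and outside $D$ the Evans--Krylov and Schauder estimates give smooth convergence, so that $\varphi_\infty$ solves the twisted cscK cone equation \eqref{twisted 2nd equ defn} and, by \thmref{Geometric asymptotic}, $\varphi_\infty\in D_{\bf w}^{4,\a,\b}(\om_\theta)$. It remains to identify $\varphi_\infty=\varphi$. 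For this I would use that $\nu_{\beta,\chi}$ is $d_1$-lower semicontinuous (\lemref{convexityKenergy}) to conclude $\nu_{\beta,\chi}(\varphi_\infty)\le\liminf\nu^\eps_{\beta,\chi}(\varphi_\eps)\le\nu_{\beta,\chi}(\varphi)$, so $\varphi_\infty$ is also a minimiser; then since $\varphi_\infty$ is a smooth (conical) twisted cscK metric it is the \emph{global} minimiser of $\nu_{\beta,\chi}$ on $\mathcal H_\beta$ by \lemref{globalminimiser}, hence on $\mathcal E^1$ by density and lower semicontinuity, and uniqueness of the minimiser along the $d_1$-geodesic joining $\varphi$ to $\varphi_\infty$ — a consequence of strict convexity of $\nu_{\beta,\chi}$ (convexity of $\nu_\beta$ from \lemref{c11convexityKenergy} plus convexity of $J_\chi$, or, when $\chi=0$, \thmref{Uniqueness}) — forces $\varphi=\varphi_\infty\in D_{\bf w}^{4,\a,\b}(\om_\theta)$.

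The main obstacle I expect is the uniform entropy bound $\sup_\eps E^\eps_\beta(\varphi_\eps)<\infty$: unlike in \lemref{uniform entropy}, where properness of $\nu_{\beta,\chi}$ is assumed, here one only knows that $\varphi$ is a minimiser, so one must extract the finiteness of $E_\beta(\varphi)$ purely from minimality and the structure of $\nu_{\beta,\chi}$ (finiteness of the functional plus the $d_1$-control of the $J$-terms from \lemref{d_1andJ}), and then carefully transfer this to the approximate entropies through the comparison lemmas of Section~\ref{Properness conjecture: automorphism is trivial}. The other delicate point is the identification $\varphi_\infty=\varphi$, where the interplay of $d_1$-lower semicontinuity, density of $\mathcal H_\beta$ in $\mathcal E^1$, and the convexity/uniqueness along $d_1$-geodesics must be invoked with care since $\varphi$ itself need not a priori be a K\"ahler cone potential.
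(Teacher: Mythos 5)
Your proposal has a genuine gap at the point where you construct the regularising sequence. You propose to minimise the approximate functional $\nu^\eps_{\beta,\chi}$ directly, "as in" Proposition \ref{twisted approximation thm}, but that proposition --- and the existence theorem of Chen--Cheng it invokes --- requires the (approximate) twisted $K$-energy to be \emph{proper}, not merely bounded below. In \thmref{Regularity} no properness is assumed: one only knows that $\nu_{\beta,\chi}$ attains its infimum at some $\vphi_{min}\in\mathcal E^1$. From minimality you can indeed extract finiteness of the entropy and an upper bound $\nu^\eps_{\beta,\chi}(\vphi_\eps)\le\nu^\eps_{\beta,\chi}(\vphi_{min})$, but an energy upper bound does \emph{not} yield the uniform bound on $d_1(0,\vphi_\eps)$ you need --- that implication is precisely what properness supplies in \lemref{uniform entropy} --- and, worse, without properness the smooth approximate twisted cscK metrics $\vphi_\eps$ may simply fail to exist. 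So the second paragraph of your plan cannot be carried out as written, and the "main obstacle" you flag (transferring finiteness of the entropy) is not the real one.

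The paper circumvents this with a device you are missing: it first approximates $\vphi_{min}$ in $d_1$ and in energy by smooth potentials $\vphi_j\in\mathcal H$, and then, for each fixed $j$, runs the continuity path $K_t=t\,\nu_{\beta,\chi}+(1-t)J_{\om_{\vphi_j}}$. Because $J_{\om_{\vphi_j}}$ is proper and $\nu_{\beta,\chi}$ is bounded below (it has a minimiser), $K_t$ is proper for every $t\in(0,1)$, so \thmref{properclosedness} (properness $\implies$ existence, which already contains the $\eps$-approximation internally) produces minimisers $\vphi_j^t$ solving the twisted equation with the non-negative twist $\chi+\frac{1-t}{t}\om_{\vphi_j}$. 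The two elementary comparisons $K_t(\vphi_j^t)\le K_t(\vphi_j)$ and $K_t(\vphi_j^t)\le K_t(\vphi_{min})$ then give the uniform entropy and $d_1$ bounds, and --- crucially --- $J_{\om_{\vphi_j}}(\vphi_j^t)\le J_{\om_{\vphi_j}}(\vphi_{min})$ forces $I(\vphi_j^t,\vphi_{min})\to0$, which identifies the limit with $\vphi_{min}$ directly, without any appeal to uniqueness of minimisers. Your identification step via strict convexity/uniqueness is also fragile when $\chi=0$, where cscK cone metrics are only unique up to $Aut(X;D)$ (and \thmref{Regularity} does not assume this group discrete). You should replace the direct $\eps$-minimisation of $\nu^\eps_{\beta,\chi}$ by this interpolation with $J_{\om_{\vphi_j}}$.
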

\begin{proof}
	Assume that $\vphi_{min}$ is a minimiser of the log $\chi$-twisted $K$-energy $\nu_{\beta,\chi}$. Then there exists a sequence $\vphi_j\in\mathcal H$ (c.f. Lemma 3.1 in \cite{MR3687111}) such that
	\begin{align*}
	d_1(\vphi_j,\vphi_{min})\rightarrow 0 \text{ and } \nu_{\beta,\chi}(\vphi_j)\rightarrow \nu_{\beta,\chi}(\vphi_{min}).
	\end{align*}
	
	\textbf{Smoothing $\vphi_j$ by the twisted cone path $\vphi_j^t$}:
	 We fix $\vphi_j$ and connect it to $\vphi_{min}$ by the path of $\vphi_j^t$ with $\vphi_j^0=\vphi_j$ such that $\vphi_j^t$ minimise the twisted $K$-energy 
\begin{align}\label{Kt}
K_t=t\nu_{\beta,\chi}+(1-t)J_{\om_{\vphi_j}}.
\end{align} The path satisfies the equation
\begin{align}\label{2Twisted cone path}
t[S-\underline S_\b-(\tr_\vphi\chi-\underline \chi)]=(1-t)(\tr_\vphi\om_{\vphi_j}-n).
\end{align}
When $t\in (0,1)$, we have
\begin{align*}
K_t(\vphi)\geq t\nu_{\beta,\chi}(\vphi_{min})+(1-t)J_{\om_{\vphi_j}}(\vphi)
\end{align*} for all $\vphi\in\mathcal H$. Since $J_{\om_{\vphi_j}}$ is proper \cite{MR3698234}, we have $K_t$ is also proper. 
The existence of the solution $\vphi_j^t$ to \eqref{2Twisted cone path} is guaranteed by \thmref{properclosedness}.


\textbf{Getting uniform entropy bound of $\vphi_j^t$}:
Since $\vphi_j^t$ is also the global minimiser of $K_t$ (\lemref{globalminimiser}), we get $K_t(\vphi_j^t)\leq K_t(\vphi_j)$. Then by using $J_{\om_{\vphi_j}}(\vphi_j)\leq J_{\om_{\vphi_j}}(\vphi_j^t)$, we have uniform energy upper bound
\begin{align}\label{nuvphijt}
\nu_{\beta,\chi}(\vphi_j^t)\leq \nu_{\beta,\chi}(\vphi_j).
\end{align}

Similarly, we use $K_t(\vphi_j^t)\leq K_t(\vphi_{min})$ and $
\nu_{\beta,\chi}(\vphi_{min})\leq \nu_{\beta,\chi}(\vphi_j^t)$ to prove
\begin{align}\label{Jvphijt}
J_{\om_{\vphi_j}}(\vphi_j^t)\leq J_{\om_{\vphi_j}}(\vphi_{min}).
\end{align}
According to Lemma 5.2-5.6 in \cite{arXiv:1801.00656}, \eqref{Jvphijt} implies uniform distance upper bound,
\begin{align}
&\sup_{0.1<t<1}I(\vphi_j^t,\vphi_{min})\rightarrow 0,\text{ as }j\rightarrow \infty,\label{Iunuqieness}\\
&\sup_{j,0.1<t<1}d_1(0,\vphi_j^t)\leq C\label{d1vphijt},
\end{align}
for some constant depending on $\sup_{j}d_1(0,\vphi_j)$ and $n$.

By \lemref{d_1andJ}, we have 
\begin{align*}
&|J_\chi(\vphi_j^t)|\leq C(n)\|\chi\|_{\infty}d_1(0,\vphi_j^t),\\
&|J_{-\theta}(\vphi_j^t)|\leq C(n)\|\theta\|_{\infty}d_1(0,\vphi_j^t).
\end{align*} Recall $
\nu_{\beta,\chi}
=E_\beta+J_{-\theta}+J_\chi+\frac{1}{V}\int_M (h_0+\mathfrak h)\om_0^n$ (Definition \ref{logKenergy}). Thus, by \eqref{nuvphijt} and \eqref{d1vphijt}, the entropy is bounded as
\begin{align*}
\sup_{j,0.1<t<1}E_\beta(\vphi_j^t)=\frac{1}{V}\int_M\log\frac{\om^n_{\vphi_{j}^t}}{\om^n_{\theta}}\om_{\vphi_{j}^t}^{n}\leq C.
\end{align*}

\textbf{To conclude the convergence of $\vphi_j^t$}:
According to \corref{properclosedness estimates} with the twisted term $\chi+\frac{1-t}{t}\om_{\vphi_j}\geq 0$, we have the estimates 
\begin{align}\label{vphijt}
\|\vphi_j^t\|_{D^{4,\a,\b}_{\bf w}(\om_\theta)}\leq C,\quad \forall j,\quad0.1<t<1.
\end{align} The constant $C$ depends on 
	\begin{align*}
	\sup_{j,0.1<t<1} E_\beta(\vphi_{j}^t), \quad \|\theta\|_{C^{0,\a,\b}},\quad\|\chi+\frac{1-t}{t}\om_{\vphi_j}\|_{C^{0,\a,\b}},\quad \alpha_1,\quad \alpha_\beta,\quad\underline S_\b,\quad n.
	\end{align*}

After taking $t\rightarrow 1$, we have $\vphi_j^t$ converge to $\vphi^1_j$ in $D^{4,\a,\b}_{\bf w}(\om_\theta)$. Note that \begin{align*}
\|\chi+\frac{1-t}{t}\om_{\vphi_j}\|_{C^{0,\a,\b}}\rightarrow \|\chi\|_{C^{0,\a,\b}},
\end{align*} as $t\rightarrow1$. The $D^{4,\a,\b}_{\bf w}(\om_\theta)$-norm of $\vphi^1_j$ is independent of $j$. Thus a subsequence of $\vphi^1_j$ converges to $\vphi_\infty$ in $D^{4,\a,\b}_{\bf w}(\om_\theta)$.
Due to \eqref{Iunuqieness}, we have \begin{align*}I(\vphi_\infty,\vphi_{min})=0
\end{align*} and conclude that
\begin{align*}
\vphi_{min}=\vphi_\infty\in D^{4,\a,\b}_{\bf w}(\om_\theta).
\end{align*}
\end{proof}

\subsection{Uniqueness of log $\chi$-twisted $K$-energy minimisers}\label{Uniqueness of minimisers}
We close this section by using the regularity theorem to prove the following uniqueness of the log $\chi$-twisted $K$-energy minimiser.

\begin{prop}[Uniqueness of minimisers]\label{weakUniqueness}Suppose $\chi>0$ is a smooth closed $(1,1)$ form.
	Then the log $\chi$-twisted $K$-energy minimiser is unique. 
\end{prop}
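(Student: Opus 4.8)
The plan is to reduce uniqueness of the minimiser to uniqueness of the smooth $\chi$-twisted cscK cone metric, which is already established in \lemref{uniquechipositive}. The bridge between the two is the regularity theorem just proved (\thmref{Regularity}): any minimiser of $\nu_{\beta,\chi}$ lies in $D_{\bf w}^{4,\a,\b}(\om_\theta)$, and being a minimiser it is in particular a critical point, hence solves the twisted cscK cone equation \eqref{twisted 2nd equ defn}. So the first step is to observe that a minimiser is automatically a $\chi$-twisted cscK cone metric in the sense of the earlier definition.

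\textbf{Key steps.} First I would let $\vphi_1,\vphi_2\in\mathcal E^1$ be two minimisers of $\nu_{\beta,\chi}$. By \thmref{Regularity}, both $\vphi_1$ and $\vphi_2$ belong to $D_{\bf w}^{4,\a,\b}(\om_\theta)$; in particular they are genuine K\"ahler cone potentials in $\mathcal H_\beta$ with the regularity of \thmref{Geometric asymptotic}. Second, since $\nu_{\beta,\chi}$ is convex along the $C^\b_{\tri}$-cone geodesic (\lemref{c11convexityKenergy} together with the convexity of $J_\chi$, \lemref{Jchiconvex}), and $\vphi_1,\vphi_2$ are both global minimisers, the value of $\nu_{\beta,\chi}$ is constant along the $C_{\bf w}^{1,1,\b}$-cone geodesic $\{\vphi(t)\}$ connecting them (which exists by \thmref{geo existence} since the endpoints are $C^{3,\a,\b}_{\bf w}$). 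Third, the strict convexity of $J_\chi$ along the $C_{\bf w}^{1,1,\b}$-cone geodesic from \lemref{Jchiconvex} forces $\vphi(t)$ to be a constant geodesic; since $\nu_\beta$ itself is convex, $J_\chi = \nu_{\beta,\chi} - \nu_\beta$ being affine (in fact constant) along $\vphi(t)$ is only compatible with $\vphi(t)$ being trivial. Concretely: write $\nu_{\beta,\chi}(\vphi(t)) = \nu_\beta(\vphi(t)) + J_\chi(\vphi(t))$; the left side is constant, $\nu_\beta$ is convex, and $J_\chi$ is strictly convex unless $\vphi(t)$ is constant — so strict convexity of $J_\chi$ plus convexity of $\nu_\beta$ would make the sum strictly convex, contradicting constancy, unless the geodesic is constant. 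A constant geodesic means $\vphi_1 - \vphi_2$ is a constant function, and after the normalisation $D(\vphi_i)=0$ (or simply by comparing the two as K\"ahler cone metrics) we get $\om_{\vphi_1} = \om_{\vphi_2}$, hence $\vphi_1 = \vphi_2$ up to the obvious additive constant.

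\textbf{Main obstacle.} The only delicate point is making sure the $C_{\bf w}^{1,1,\b}$-cone geodesic connecting the two minimisers is regular enough for the strict-convexity computation in \lemref{Jchiconvex} to apply — this is precisely why the regularity theorem \thmref{Regularity} is invoked first, to upgrade the a priori merely-$\mathcal E^1$ minimisers to $C^{3,\a,\b}_{\bf w}\cap D_{\bf w}^{4,\a,\b}(\om_\theta)$ potentials so that \thmref{geo existence} yields a $C_{\bf w}^{1,1,\b}$-cone geodesic between them. Once that regularity is in hand, the argument is essentially the same as the proof of \lemref{uniquechipositive}, and the alternative route — simply quoting \lemref{uniquechipositive} after noting that both minimisers solve \eqref{twisted 2nd equ defn} — is equally short. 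I expect the write-up to be only a few lines: state the two minimisers, apply \thmref{Regularity}, connect by the geodesic, and apply \lemref{c11convexityKenergy} and \lemref{Jchiconvex} exactly as in \lemref{uniquechipositive}.
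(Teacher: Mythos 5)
Your proposal matches the paper's proof: the paper likewise applies the regularity theorem (\thmref{Regularity}) to place the minimisers in $D^{4,\a,\b}_{\bf w}(\om_\theta)$ and then concludes by citing \lemref{uniquechipositive}, whose geodesic-convexity argument you have simply unpacked. The approach and the key ingredients are the same, so no further comment is needed.
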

\begin{proof}
	According to the regularity theorem \thmref{Regularity}, log $\chi$-twisted $K$-energy minimiser is $D^{4,\a,\b}_{\bf w}(\om_\theta)$. Then this lemma follows from \lemref{uniquechipositive}.
\end{proof}

When considering the smooth case $\beta=1$ and $\chi$ is a K\"ahler form. The uniqueness of the $\chi$-twisted Mabuchi energy in proved in Theorem 4.13 in \cite{MR3687111}.

\section{Existence implies properness and geodesic stability}\label{Existence implies properness and geodesic stability}

The properness conjecture predicts the properness of Mabuchi's $K$-energy is equivalent to the existence of cscK metrics.
We then formulate the properness of an energy functional $F$ in the space of smooth K\"ahler potentials $\mathcal H$ associated with the K\"ahler class $\Om$.

The properness conjecture for cscK metrics is formulated by Chen regarding to the $L^2$-geodesic distance \cite{arXiv:0809.4081}, in Donaldson's framework of space of K\"ahler metrics \cite{MR1736211}; and later was updated in Darvas-Rubinstein \cite{MR3600039} by using the $d_1$-geodesic distance, which is after Darvas' work \cite{MR3702499,MR3406499} on the metric completion $\mathcal E^1_0$ of the space of K\"ahler potentials. We refer the readers to the expository article \cite{arXiv:1401.7857} and the works \cite{MR3406499,MR3702499,MR3687111,arXiv:1602.03114,MR3090260,MR3956691,MR3600039} and references therein on the $d_1$-metric and the weak topology from the rapid growing literature. We would like to bring to the attention of readers that in the K\"ahler-Einstein problem, the properness of Ding functional \cite{MR967024} with respect to Aubin's $I$-functional was introduced by Tian \cite{Ding-Tian,MR1471884} in the 1980s, which is motivated from the Moser-Trudinger inequality on $S^2$.

With the preparation above, we are ready to prove "existence implies properness" and extends the geodesic stability results to cscK cone metrics. The proofs are similar to \cite{arXiv:1801.00656}. We put them as following for readers' convenience.
\subsection{Existence implies properness}\label{Existence implies properness}

\begin{defn}
	We say the log $K$-energy is coercive, if there exists positive constants $A$ and $B$ such that $\nu_\beta(\vphi)\geq A \cdot d_{1,G}(\vphi,0)-B$ for all $\vphi\in \mathcal H$.
\end{defn}
\begin{thm}\label{Existence implies properness discrete}Assume that the automorphism group $Aut(X;D)$ is discrete.
Suppose that there exists a constant scalar curvature K\"ahler cone metric $\om_{cscK}$. Then the log $K$-energy is proper. Moreover, 
the log $K$-energy is coercive.
\end{thm}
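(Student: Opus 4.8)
The plan is to prove the contrapositive of "existence $\implies$ properness" by following the strategy of Chen--Cheng \cite{arXiv:1801.00656}: assume a cscK cone metric exists but the log $K$-energy fails to be proper, produce a minimising-type sequence whose $d_1$-distance to $0$ diverges while the energy stays bounded, and derive a contradiction via the regularity and uniqueness of $K$-energy minimisers. Since the hypothesis $\mathrm{Aut}(X;D)$ discrete is exactly what rules out the noncompact directions, the argument should be a clean reduction once the conical analogues of the ingredients (regularity of minimisers, uniqueness, weak compactness) are in place, all of which are available: \thmref{Regularity} and \lemref{Weakcompactness} with $\chi=0$, \thmref{Uniqueness}, and \lemref{convexityKenergy}.

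First I would note that by \lemref{globalminimiser} with $\chi=0$, the cscK cone metric $\om_{cscK}$ is the global minimiser of $\nu_\beta$ on $\mathcal H_\beta$, hence (by density of $\mathcal H_\beta$ in $\mathcal E^1$ and $d_1$-lower semicontinuity from \lemref{convexityKenergy}) the global minimiser on all of $\mathcal E^1$; normalise so $\om_{cscK}=\om_0+i\p\bar\p\vphi_{cscK}$ with $D(\vphi_{cscK})=0$ and set $m_0:=\nu_\beta(\vphi_{cscK})=\inf_{\mathcal E^1}\nu_\beta$. Suppose for contradiction that $\nu_\beta$ is not proper: there is a sequence $\{\vphi_i\}\subset\mathcal H$ with $d_1(0,\vphi_i)\to\infty$ but $\nu_\beta(\vphi_i)\le C$. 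Working in $\mathcal E^1_0$ (subtracting $D(\vphi_i)$, which changes neither $\nu_\beta$ nor, up to a controlled amount, $d_1$), one runs the standard normalisation trick of \cite{arXiv:1801.00656}: along the $d_1$-geodesic from $\vphi_{cscK}$ to $\vphi_i$, reparametrise to unit-distance points $\psi_i$ at $d_1(\vphi_{cscK},\psi_i)=1$; convexity of $\nu_\beta$ along $d_1$-geodesics (\lemref{convexityKenergy}) together with the lower bound $\nu_\beta\ge m_0$ forces $\nu_\beta(\psi_i)\to m_0$, i.e. $\{\psi_i\}$ is a minimising sequence that stays at $d_1$-distance exactly $1$ from $\vphi_{cscK}$. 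By \lemref{Weakcompactness} (with $\chi=0$), a subsequence $d_1$-converges to some $\psi_\infty\in\mathcal E^1$, and by $d_1$-lower semicontinuity $\nu_\beta(\psi_\infty)=m_0$, so $\psi_\infty$ is a $K$-energy minimiser with $d_1(\vphi_{cscK},\psi_\infty)=1$; in particular $\psi_\infty\ne\vphi_{cscK}$ in $\mathcal E^1_0$.

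Now \thmref{Regularity} (the $\chi=0$ case) applies to $\psi_\infty$: it is $D^{4,\a,\b}_{\bf w}(\om_\theta)$, hence a genuine cscK cone metric. Uniqueness of the cscK cone metric up to $\mathrm{Aut}(X;D)$ (\thmref{Uniqueness}), combined with the hypothesis that $\mathrm{Aut}(X;D)$ is discrete — so that the only normalised potential giving $\om_{cscK}$ is $\vphi_{cscK}$ itself — yields $\psi_\infty=\vphi_{cscK}$ in $\mathcal E^1_0$, contradicting $d_1(\vphi_{cscK},\psi_\infty)=1$. This proves properness. Coercivity then follows by the same convexity-plus-minimiser argument: if $\nu_\beta$ were proper but not coercive, one could find $\vphi_i\in\mathcal H$ with $\nu_\beta(\vphi_i)\le \tfrac1i d_1(0,\vphi_i)-i$ and $d_1(0,\vphi_i)\to\infty$; pulling back to unit-distance points from $\vphi_{cscK}$ along the geodesics and using convexity shows $\nu_\beta$ on the unit sphere around $\vphi_{cscK}$ would approach $m_0$, producing again a minimiser at distance $1$ and the same contradiction, while the linear growth rate $A$ and constant $B$ come from tracking the slope along these geodesics. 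The main obstacle is the first step — extracting a bona fide minimiser at a fixed positive distance from $\vphi_{cscK}$ — which rests on the interplay of $d_1$-geodesic convexity (\lemref{convexityKenergy}), the weak compactness (\lemref{Weakcompactness}), and the regularity theorem (\thmref{Regularity}); the delicate points are verifying that the reparametrised points stay in $\mathcal E^1_0$ with controlled energy and that lower semicontinuity is strong enough to preserve the minimiser property in the $d_1$-limit, but all of these are furnished by the cited results.
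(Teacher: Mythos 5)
Your proposal is correct and follows essentially the same route as the paper: both arguments extract a unit-distance point $e_i(1)$ on the $d_1$-geodesic from $\vphi_{cscK}$ to a bad sequence $\vphi_i$, use convexity to force its energy down to the minimum, then apply weak compactness, lower semicontinuity, the regularity of minimisers, and uniqueness under discrete $\Aut(X;D)$ to contradict $d_1(e_\infty(1),\vphi_{cscK})=1$. The only cosmetic difference is that the paper runs a single contradiction directly against the coercivity estimate $\nu_\beta(\vphi)\geq A\,d_1(\vphi,\vphi_{cscK})+\nu_\beta(\vphi_{cscK})$ (assuming the slopes $A_i\to 0$), which yields properness as a corollary, whereas you treat properness and coercivity as two separate instances of the same argument.
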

\begin{proof}Let $\om_{cscK}=\om_0+i\p\bar\p\vphi_{cscK} $.
	We will prove that there exists a constant $A$ such that 
	\begin{align*}
	\nu_\beta(\vphi)\geq A \cdot d_1(\vphi,\vphi_{cscK})+\nu_\beta(\vphi_{cscK}),\quad \forall\vphi\in \mathcal H.
	\end{align*}
We prove by contradiction method. We assume that there exists a sequence $\vphi_i$ such that
$A_i=\frac{\nu_\beta(\vphi_i)-\nu_\beta(\vphi_{cscK})}{d_1(\vphi_i,\vphi_{cscK})}\rightarrow 0$.

We connect $\vphi_{cscK}$ to $\vphi_i$ by unit speed $d_1$-geodesic $e_i(s)$. We pick up a point $e_i(1)$, that is 
\begin{align*}
d_1(e_i(1),\vphi_{cscK})=1.
\end{align*}
The convexity \lemref{convexityKenergy} implies that 
\begin{align*}
\nu_\beta(e_i(1))\leq (1-\frac{1}{d_1(\vphi_i,\vphi_{cscK})})\nu_\beta(\vphi_{cscK})+\frac{\nu_\beta(\vphi_i)}{d_1(\vphi_i,\vphi_{cscK})}=\nu_\beta(\vphi_{cscK})+A_i.
\end{align*} 
Then by the weak compactness \lemref{Weakcompactness}, there is a $d_1$-convergent subsequence of $e_i(1)$ to $e_\infty(1)$.
Taking $i\rightarrow\infty$, the lower semicontinuity \lemref{convexityKenergy} implies $\nu_\beta(e_\infty(1))\leq \nu_\beta(\vphi_{cscK})$.
As a result, $e_\infty(1)$ is also a log $K$-energy minimiser.

According to the regularity \thmref{Regularity}, $e_\infty(1)$ is a cscK cone metric.
By the assumption of trivial $Aut(X;D)$, the uniqueness \thmref{Uniqueness} implies that $e_\infty(1)$ is the same to $\vphi_{cscK}$, which contradicts to
\begin{align*}
d_1(e_\infty(1),\vphi_{cscK})=1.
\end{align*}
Thus the proof is complete.
\end{proof}

The counterpart for $\chi$-twisted cscK cone metrics is given below.
\begin{thm}Suppose $\chi$ is a smooth closed $(1,1)$-form and $\chi>0$.
Suppose that there exists a $\chi$-twisted constant scalar curvature K\"ahler cone metric $\om_{cscK}=\om_0+i\p\bar\p\vphi_{cscK}$. Then the log $\chi$-twisted $K$-energy $\nu_{\beta,\chi}$ is proper. Moreover, there exists a constant $A$ such that $\nu_{\beta,\chi}(\vphi)\geq A \cdot d_1(\vphi,\vphi_{cscK})+\nu_\beta(\vphi_{cscK})$ for all $\vphi\in \mathcal H$.
\end{thm}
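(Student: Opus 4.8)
The plan is to run the contradiction argument of \thmref{Existence implies properness discrete} essentially verbatim, with the log $K$-energy replaced throughout by the log $\chi$-twisted $K$-energy $\nu_{\beta,\chi}$ and with the uniqueness input coming from \lemref{uniquechipositive} in place of \thmref{Uniqueness}. Since $\chi>0$ the automorphism action does not enter at all, so no discreteness hypothesis on $Aut(X;D)$ is needed here. As in the untwisted case, properness of $\nu_{\beta,\chi}$ follows at once from the coercivity inequality via the triangle inequality $d_1(\vphi,\vphi_{cscK})\geq d_1(0,\vphi)-d_1(0,\vphi_{cscK})$, so it suffices to produce a constant $A>0$ with $\nu_{\beta,\chi}(\vphi)\geq A\,d_1(\vphi,\vphi_{cscK})+\nu_{\beta,\chi}(\vphi_{cscK})$ for every $\vphi\in\mathcal H$.

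First I would assume, for contradiction, a sequence $\vphi_i\in\mathcal H$ with $A_i:=\bigl(\nu_{\beta,\chi}(\vphi_i)-\nu_{\beta,\chi}(\vphi_{cscK})\bigr)/d_1(\vphi_i,\vphi_{cscK})\to 0$; necessarily $d_1(\vphi_i,\vphi_{cscK})\to\infty$, since on a $d_1$-bounded set the estimate is trivial after adjusting the additive constant. Connect $\vphi_{cscK}$ to $\vphi_i$ by the unit-speed $d_1$-geodesic $e_i(s)$ and select the point $e_i(1)$, which satisfies $d_1(e_i(1),\vphi_{cscK})=1$. Convexity of $\nu_{\beta,\chi}$ along $d_1$-geodesics (\lemref{convexityKenergy}) gives $\nu_{\beta,\chi}(e_i(1))\leq\nu_{\beta,\chi}(\vphi_{cscK})+A_i$, so both $\nu_{\beta,\chi}(e_i(1))$ and $d_1(0,e_i(1))$ are uniformly bounded; the weak compactness \lemref{Weakcompactness} then extracts a $d_1$-convergent subsequence $e_i(1)\to e_\infty(1)\in\mathcal E^1$. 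By the $d_1$-lower semicontinuity of $\nu_{\beta,\chi}$ (\lemref{convexityKenergy}), and using that $\vphi_{cscK}$ is a global minimiser of $\nu_{\beta,\chi}$ (\lemref{globalminimiser}), one obtains $\nu_{\beta,\chi}(e_\infty(1))\leq\nu_{\beta,\chi}(\vphi_{cscK})=\inf_{\mathcal E^1}\nu_{\beta,\chi}$, so $e_\infty(1)$ is itself a log $\chi$-twisted $K$-energy minimiser.

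The regularity theorem \thmref{Regularity} now shows $e_\infty(1)\in D^{4,\a,\b}_{\bf w}(\om_\theta)$, hence it is a genuine $\chi$-twisted cscK cone metric, and \lemref{uniquechipositive} (equivalently Proposition~\ref{weakUniqueness}) forces $e_\infty(1)=\vphi_{cscK}$, contradicting $d_1(e_\infty(1),\vphi_{cscK})=1$. This establishes coercivity, and properness of $\nu_{\beta,\chi}$ follows.

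I do not expect a genuine analytic obstacle: every ingredient — convexity and lower semicontinuity of $\nu_{\beta,\chi}$ along $d_1$-geodesics, weak $d_1$-compactness of energy-bounded sequences, regularity of twisted minimisers, and uniqueness in the case $\chi>0$ — has already been assembled in the preceding sections, and the argument is a direct transcription of the proof of \thmref{Existence implies properness discrete}. The one point meriting a line of care is confirming that the $d_1$-limit $e_\infty(1)$ both lies in $\mathcal E^1$ and inherits the minimiser property in the limit; this is exactly what \lemref{Weakcompactness} together with the lower semicontinuity half of \lemref{convexityKenergy} provide, so it requires no new work.
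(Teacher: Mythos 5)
Your proposal is correct and follows essentially the same route as the paper: the paper's own proof simply reruns the contradiction argument of \thmref{Existence implies properness discrete} for $\nu_{\beta,\chi}$ and replaces the uniqueness input by Proposition~\ref{weakUniqueness} (uniqueness of the log $\chi$-twisted $K$-energy minimiser for $\chi>0$), which is exactly what you do. Your added remarks — that $d_1(\vphi_i,\vphi_{cscK})\to\infty$ may be assumed, and that no discreteness of $Aut(X;D)$ is needed when $\chi>0$ — are correct and consistent with the paper.
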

\begin{proof}
Following the argument above, $e_\infty(1)$ is a log $\chi$-twisted $K$-energy minimiser. According to the uniqueness of the $\chi$-twisted $K$-energy minimiser (Proposition \ref{weakUniqueness}), we have that $e_\infty(1)=\vphi_{cscK}$. But the distance between them is 1. Contradiction.
\end{proof}

In the general automorphism group case, the distance $d_1$ needs to be replaced by the distance $d_{1,G}$.
\begin{thm}\label{Existence general}Suppose that there exists a constant scalar curvature K\"ahler cone metric $\om_{cscK}$. Then the log $K$-energy is proper. Moreover, 
the log $K$-energy is coercive.
\end{thm}
\begin{proof}
The proof is similar to the proof given above. The major new input is the use of uniqueness of cscK cone metrics, \thmref{Uniqueness}.
\end{proof}
\subsection{Geodesic stability}\label{Geodesic stability}

The geodesic stability conjecture for cscK metric was proposed in Donaldson \cite{MR1736211}, aiming to detecting the existence of cscK metrics by using the geodesic rays.

\begin{defn}
An $d_1$-geodesic ray $\{e(s);0\leq s\leq \infty\}$ in $\mathcal E^1_0$ starting with $\vphi_0$ is called $\nu_\beta$-\textit{non-increasing} ($\nu_\beta$-strictly-decreasing), if the log $K$-energy $\nu_\beta$ is $\nu_\beta$-non-increasing ($\nu_\beta$-strictly-decreasing resp.) along $e(s)$.
\end{defn}

We extend Chen's definition of geodesic stability \cite{MR2471594} to log $K$-energy. The definition is well-defined according to convexity \lemref{convexityKenergy}.
\begin{defn}Suppose $\rho(t):[0,\infty)\rightarrow \mathcal E^1_0$ is a $d_1$-geodesic ray. The $\mathfrak  F$-invariant is defined along the $d_1$-geodesic ray in term of the log $K$-energy,
\begin{align*}
\mathfrak F=\lim_{k\rightarrow \infty} \nu_\beta(\rho(k+1))- \nu_\beta(\rho(k)).
\end{align*}
\end{defn}

We say a ray $\rho(t)\in  \mathcal E^1_0$ is a \textit{holomorphic ray} starting at $\vphi_0$, if it is generated by a one parameter holomorphic action $\sigma(t)\in Aut(X;D)$, i.e. $\om_{\rho(t)}=\sigma(t)^\ast\om_{\vphi_0}$. 
According to \cite{MR2471594}, two rays $\rho_1(t), \rho_2(t)$ are said to be \textit{parallel} if they have uniformly bounded $d_1$-distance, i.e. $\sup_t d_1(\rho_1(t),\rho_2(t))$ is finite. 
We say a ray is \textit{trivial}, if it is parallel to any holomorphic ray.

\begin{defn}\label{geodesic stability}
Given $\vphi_0\in \mathcal E^1_0$. Let $\rho(t):[0,\infty)\rightarrow \mathcal E^1_0$ be any $d_1$-geodesic starting at $\vphi_0$.
\begin{itemize}
\item
The point $\vphi_0$ is called geodesic semi-stable, if the $\mathfrak F$-invariant semi-positive.
\item
The point $\vphi_0$ is called geodesic stable, if one of the following holds, 
\begin{enumerate}
\item the $\mathfrak F$-invariant is strictly positive along $\rho(t)$, 
\item it vanishes and $\rho(t)$ is trivial.
\end{enumerate}
\end{itemize}

A K\"ahler class $\Om$ is geodesic stable (geodesic semi-stable resp.), if every $\vphi_0\in \mathcal E^1_0$ is geodesic stable (geodesic semi-stable resp.).
\end{defn}


\subsubsection{Discrete automorphism group}\label{Donaldson geodesic stability conjecture: discrete automorphism group}

We extend Donaldson conjecture \cite{MR1736211} to the cscK cone metrics.
\begin{thm}\label{geodesic conjecture}
Assume that the automorphism group $Aut(X;D)$ is discrete.
The following are all equivalent,
\begin{enumerate}
\item there exists no constant scalar curvature K\"ahler cone metric;
\item there exists a $\vphi_0\in\mathcal E^1_0$ and a non-increasing $d_1$-geodesic ray in $\mathcal E^1_0$ starting with $\vphi_0$;
\item for all $\vphi\in\mathcal E^1_0$, there exists a non-increasing $d_1$-geodesic ray in $\mathcal E^1_0$ starting with $\vphi$.
\end{enumerate}
\end{thm}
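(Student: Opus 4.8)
The plan is to prove the cyclic chain of implications: (third) $\implies$ (second) $\implies$ (first) $\implies$ (third). The implication (third) $\implies$ (second) is trivial, since the third statement applied to any fixed $\vphi$ produces the ray required by the second. For (second) $\implies$ (first), I argue by contraposition: suppose a cscK cone metric exists, say $\om_{cscK}=\om_0+i\p\bar\p\vphi_{cscK}$. By \thmref{Existence implies properness discrete}, the log $K$-energy is then coercive, i.e.\ $\nu_\beta(\vphi)\geq A\,d_1(\vphi,\vphi_{cscK})-B$ for some $A>0$. But along any $d_1$-geodesic ray $e(s)$, the distance $d_1(e(s),\vphi_{cscK})\to\infty$ as $s\to\infty$ (a ray is unit-speed and $\vphi_{cscK}$ is a fixed point, so by the triangle inequality $d_1(e(s),\vphi_{cscK})\geq s-d_1(e(0),\vphi_{cscK})$), hence $\nu_\beta(e(s))\to\infty$, contradicting that $\nu_\beta$ is non-increasing along $e(s)$. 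This rules out the existence of such a ray, so (second) fails whenever a cscK cone metric exists; equivalently (second) $\implies$ (first).

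The substantive implication is (first) $\implies$ (third): assuming there is no cscK cone metric, I must construct, starting from an \emph{arbitrary} $\vphi\in\mathcal E^1_0$, a $d_1$-geodesic ray along which $\nu_\beta$ is non-increasing. The construction is the standard one: by the convexity \lemref{convexityKenergy}, $\nu_\beta$ restricted to $\mathcal E^1_0$ is convex and lower semicontinuous along $d_1$-geodesics. Since no cscK cone metric exists, by the regularity \thmref{Regularity} there is no minimiser of $\nu_\beta$ in $\mathcal E^1_0$, hence $\inf_{\mathcal E^1_0}\nu_\beta$ is not attained. Take a minimising sequence $\vphi_k\in\mathcal H_\beta$ for $\nu_\beta$; necessarily $d_1(0,\vphi_k)\to\infty$, for otherwise \lemref{Weakcompactness} would extract a $d_1$-convergent subsequence whose limit, by lower semicontinuity, would be a minimiser. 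Connect $\vphi$ to $\vphi_k$ by the unit-speed $d_1$-geodesic $e_k(s)$, defined for $s\in[0,R_k]$ with $R_k=d_1(\vphi,\vphi_k)\to\infty$. Convexity of $\nu_\beta$ along $e_k$ together with $\nu_\beta(e_k(R_k))=\nu_\beta(\vphi_k)\to\inf\nu_\beta\leq\nu_\beta(\vphi)$ forces $\nu_\beta(e_k(s))$ to be non-increasing in $s$ up to an error tending to $0$: precisely, for $s\leq R_k$,
\begin{align*}
\nu_\beta(e_k(s))\leq \Big(1-\tfrac{s}{R_k}\Big)\nu_\beta(\vphi)+\tfrac{s}{R_k}\nu_\beta(\vphi_k).
\end{align*}
Now pass to a limit ray: by the standard compactness of geodesic segments in $(\mathcal E^1_0,d_1)$ (the segments $e_k$ all emanate from the fixed point $\vphi$ with unit speed, and $\mathcal E^1_0$ is a complete geodesic metric space, so after passing to a subsequence $e_k$ converges locally uniformly to a $d_1$-geodesic ray $e(s)$, $s\in[0,\infty)$, with $e(0)=\vphi$). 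Along $e$, using $d_1$-lower semicontinuity of $\nu_\beta$ and letting $k\to\infty$ in the inequality above (so that $s/R_k\to 0$), one gets $\nu_\beta(e(s))\leq\nu_\beta(\vphi)$ for all $s$; applying the same argument on each subsegment $[s_1,s_2]$ and using convexity shows $\nu_\beta(e(s))$ is non-increasing. This produces the desired ray for the given $\vphi$, establishing (third).

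The main obstacle is the passage to the limit ray and the verification that $\nu_\beta$ remains non-increasing (not merely bounded) along it. Two points require care: first, the local uniform convergence $e_k\to e$ of geodesic segments in $(\mathcal E^1_0,d_1)$, which rests on the completeness and geodesic structure of $\mathcal E^1_0$ (Theorem~2.3 in \cite{MR3406499}) and a diagonal argument, together with the fact that the weak $d_1$-limit of $C^{1,1}$-geodesics is again a $d_1$-geodesic; second, the interchange of limit and the lower semicontinuous functional $\nu_\beta$ along the ray, for which one applies \lemref{convexityKenergy} on each compact subsegment and uses that the affine upper bounds from the segments $e_k$ degenerate to the constant $\nu_\beta(\vphi)$ in the limit. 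Everything else is a formal consequence of convexity, lower semicontinuity, the weak compactness \lemref{Weakcompactness}, the regularity \thmref{Regularity}, and the coercivity from \thmref{Existence implies properness discrete}.
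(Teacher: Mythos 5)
Your proposal is correct, and the main implication $(1)\implies(3)$ follows essentially the paper's own route: produce a sequence $\vphi_k$ with $d_1(0,\vphi_k)\to\infty$ and $\nu_\beta(\vphi_k)\leq C$ (the paper extracts it from the failure of properness via \thmref{properclosedness}, you from a minimising sequence plus \lemref{Weakcompactness} and \thmref{Regularity} --- the same information), connect $\vphi$ to $\vphi_k$ by $d_1$-geodesics, use convexity to bound $\nu_\beta$ along the segments, extract a limit ray by weak compactness, and conclude non-increase from convexity along the limit ray together with the global bound $\nu_\beta(e(s))\leq\nu_\beta(\vphi)$. Where you genuinely diverge is $(2)\implies(1)$: the paper argues by contradiction, re-running the ray construction from the hypothetical cscK potential $\vphi_{cscK}$ using the points $\vphi(t_i)$ of the given ray, concluding that the new ray consists entirely of minimisers, hence (by \thmref{Regularity} and \thmref{Uniqueness}) is constant, contradicting unit speed; you instead invoke coercivity from \thmref{Existence implies properness discrete} directly, which forces $\nu_\beta(e(s))\geq A s - B'\to\infty$ along any unit-speed ray. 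Your route is shorter and avoids repeating the compactness construction, though it is not logically cheaper since the coercivity theorem itself rests on the same regularity and uniqueness results. One small point you should make explicit: the coercivity in \thmref{Existence implies properness discrete} is stated for $\vphi\in\mathcal H$, whereas your ray lives in $\mathcal E^1_0$; the inequality extends to $\mathcal E^1_0$ by approximating $\vphi$ in $d_1$ by smooth potentials with convergent energies (Lemma 3.1 in \cite{MR3687111}, already used in the proof of \thmref{Regularity}) together with the $d_1$-lower semicontinuity of $\nu_\beta$ from \lemref{convexityKenergy}. With that remark added, the argument is complete.
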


The proof of this theorem is divided into the following two propositions.
\begin{prop}\label{1implies3}
	$(1)\implies(3)$.
\end{prop}
\begin{proof}

We obtain from the properness \thmref{properclosedness} that, if there is no cscK cone metric, then there exists a decreasing sequence $\vphi_i\in \mathcal H_0$ such that $d_1(0,\vphi_i)\rightarrow \infty$, as $i\rightarrow\infty$, and
\begin{align}\label{boundedsequence}
\nu_\beta(\vphi_i)\leq C.
\end{align}

\textbf{Construction of $d_1$-geodesic ray}: From assumption of $(3)$, we are given a point $\vphi$. We connect it with $\vphi_i$ by a $d_1$-geodesic $e_i(s): [0,d_1(\vphi,\vphi_i)]\rightarrow \mathcal E^1$.

From convexity \lemref{convexityKenergy}, the log $K$-energy is convex along $e_i(s)$. It implies that for $s\in [0, d_1(\vphi,\vphi_i)]$, it holds
\begin{align*}
\nu_\beta(e_i(s))\leq (1-\frac{s}{d_1(\vphi,\vphi_i)})\nu_\beta(\vphi)+\frac{s}{d_1(\vphi,\vphi_i)}\nu_\beta(\vphi_i).
\end{align*}
That is bounded by $\nu_\beta(\vphi)+C$, due to \eqref{boundedsequence}.

Given $s$, $\nu_\beta(e_i(s))$ and $d_1(\vphi,e_i(s))=s$ are uniformly bounded, we could apply the weak compactness \lemref{Weakcompactness} to subtract a subsequence converges to $e(s)$, as $i\rightarrow \infty$. Actually, it is a geodesic ray $\{e(s);s\in[0,\infty)\}$ starting from $\vphi$. 

We then use continuity \lemref{convexityKenergy} to conclude that the log $K$-energy is also convex along $e(s)$. Thus the log $K$-energy function is non-increasing along the geodesic ray $e(s)$.

\end{proof}

\begin{prop}
$(2)\implies(1)$.
\end{prop}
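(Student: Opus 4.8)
The plan is to prove $(2)\implies(1)$ by contraposition, deducing it from the coercivity half of the previous subsection. So I would assume that a constant scalar curvature K\"ahler cone metric exists, fix an arbitrary $\vphi_0\in\mathcal E^1_0$ and an arbitrary $d_1$-geodesic ray $\{e(s);\,0\leq s<\infty\}$ in $\mathcal E^1_0$ with $e(0)=\vphi_0$, and show that $\nu_\beta$ cannot be non-increasing along $e(s)$, because in fact $\nu_\beta(e(s))\to+\infty$ as $s\to\infty$.

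First I would invoke \thmref{Existence implies properness discrete}: since $Aut(X;D)$ is discrete, the existence of a cscK cone metric makes the log $K$-energy coercive, so there are constants $A>0$, $B$ with
\begin{align*}
\nu_\beta(\vphi)\geq A\,d_1(\vphi,0)-B\qquad\text{for all }\vphi\in\mathcal H .
\end{align*}
The one step that needs a little care is extending this bound from $\mathcal H$ to $\mathcal E^1$. Given $\psi\in\mathcal E^1$, I would choose $\psi_j\in\mathcal H$ with $d_1(\psi_j,\psi)\to0$ and, crucially, $\nu_\beta(\psi_j)\to\nu_\beta(\psi)$; such an energy-convergent approximation is available from the $d_1$-density of $\mathcal H$ in $\mathcal E^1$ (Theorem 2.3 in \cite{MR3406499}) together with Lemma 3.1 in \cite{MR3687111}, exactly as used in the proof of \thmref{Regularity}. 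Passing to the limit in the displayed inequality and using continuity of $d_1$ then gives $\nu_\beta(\psi)\geq A\,d_1(\psi,0)-B$ for every $\psi\in\mathcal E^1$. I expect this extension to be the main, if mild, obstacle: $d_1$-lower semicontinuity of $\nu_\beta$ (\lemref{convexityKenergy}) alone would control $\nu_\beta(\psi)$ from above by $\liminf\nu_\beta(\psi_j)$, which is the wrong direction, so one genuinely needs the approximating sequence to converge in energy.

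Finally I would substitute $\psi=e(s)$. After reparametrizing so that the ray has unit speed we have $d_1(e(s),\vphi_0)=s$, hence by the triangle inequality $d_1(e(s),0)\geq s-d_1(\vphi_0,0)$, and therefore
\begin{align*}
\nu_\beta(e(s))\geq A\big(s-d_1(\vphi_0,0)\big)-B\longrightarrow+\infty\quad(s\to\infty).
\end{align*}
On the other hand $\nu_\beta$ is real valued on $\mathcal E^1$ by \lemref{convexityKenergy}, so $\nu_\beta(\vphi_0)<\infty$, and if $\nu_\beta$ were non-increasing along $e(s)$ we would have $\nu_\beta(e(s))\leq\nu_\beta(\vphi_0)$ for all $s$, contradicting the previous display. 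Hence no constant scalar curvature K\"ahler cone metric exists, which is exactly $(2)\implies(1)$. Note that the argument uses nothing about $\vphi_0$ beyond $\vphi_0\in\mathcal E^1_0$, consistent with the way $(2)$ is phrased in \thmref{geodesic conjecture}.
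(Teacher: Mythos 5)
Your argument is correct, but it takes a genuinely different route from the paper's. The paper proves $(2)\implies(1)$ by contradiction using the ray itself: assuming a cscK cone metric $\vphi_{cscK}$ exists, it re-runs the construction from the $(1)\implies(3)$ step (geodesics from $\vphi_{cscK}$ to $\vphi(t_i)$, convexity to bound the energy, weak compactness from \lemref{Weakcompactness} to pass to a limit) to produce a new $d_1$-geodesic ray $e(s)$ emanating from $\vphi_{cscK}$ along which $\nu_\beta$ is still non-increasing; since $\vphi_{cscK}$ is the global minimiser, every $e(s)$ is a minimiser, hence by \thmref{Regularity} a cscK cone metric, hence by \thmref{Uniqueness} (and discreteness of $Aut(X;D)$) equal to $\vphi_{cscK}$, which contradicts the fact that the constructed ray moves away from $\vphi_{cscK}$ at unit speed. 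You instead invoke the coercivity half of \thmref{Existence implies properness discrete} and push the linear lower bound $\nu_\beta\geq A\,d_1(\cdot,0)-B$ along the ray. Your route is shorter and correctly isolates the one technical point: the coercivity estimate is only proved on $\mathcal H$, and $d_1$-lower semicontinuity of $\nu_\beta$ goes the wrong way for this, so one genuinely needs the energy-convergent approximation of Lemma 3.1 in \cite{MR3687111} to transfer the inequality to $\mathcal E^1_0$ — which you do. What the paper's route buys in exchange is that it never needs to extend the coercivity inequality beyond $\mathcal H$ (it only uses convexity, lower semicontinuity and weak compactness on $\mathcal E^1$), at the cost of invoking the regularity and uniqueness theorems a second time. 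Both arguments rest on the same implicit conventions — the ray is non-constant (so it can be taken unit speed) and $\nu_\beta(\vphi_0)<\infty$ so that "non-increasing" has content — so neither point is a defect specific to your write-up.
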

\begin{proof}
We are given an $\mathcal E_0^1$ geodesic ray $\{\vphi(t); 0\leq t\leq \infty\}$ starting from $\vphi_0\in \mathcal E_0^1$ such that the log $K$-energy is non-increasing.

We prove by contradiction method. We assume there exists a cscK cone metric $\vphi$. We choose a non-increasing sequence $\vphi_i=\vphi(t_i)$ along the geodesic ray with $t_i\rightarrow \infty$.

Then following the construction above, we construct a geodesic ray $e(s)$ such that the log $K$-energy is non-increasing along it. Since $e(0)=\vphi$ is a cscK cone metric, $e(s)$ are all log $K$-energy minimiser. According to regularity \thmref{Regularity}, $e(s)$ are all cscK cone metrics. They all equal to $\vphi$ by uniqueness \thmref{Uniqueness}.

Then contradiction comes from the triangle inequality of the distance comparison. For large $s$,
\begin{align*}
s=d_1(\vphi_0,e(s))\leq d_1(\vphi_0,\vphi)+d_1(\vphi,e(s))=d_1(\vphi_0,\vphi).
\end{align*}
\end{proof}

The proof of the twisted cscK cone metric is identical. Let $\nu_{\beta,\chi}$ be the log $\chi$-twisted $K$-energy.
\begin{thm}\label{twisted geodesic conjecture}
Suppose $\chi>0$ is a smooth closed $(1,1)$ form.
The following are all equivalent,
\begin{enumerate}
\item there exists no $\chi$-twisted constant scalar curvature K\"ahler cone metric;
\item there exists a $\vphi_0\in\mathcal E^1_0$ and a $\nu_{\beta,\chi}$-strictly-decreasing $d_1$-geodesic ray in $\mathcal E^1_0$ starting with $\vphi_0$;
\item for all $\vphi\in\mathcal E^1_0$, there exists a $\nu_{\beta,\chi}$-strictly-decreasing $d_1$-geodesic ray $\{e(s);0\leq s\leq \infty\}$ in $\mathcal E^1_0$ starting with $\vphi$.
\end{enumerate}
\end{thm}
\begin{proof}
From the proof of \thmref{geodesic conjecture}, it remains to prove that the log $\chi$-twisted $K$-energy is strictly-decreasing in the statements $(2)$ and $(3)$.
If not, from uniqueness Proposition \ref{weakUniqueness} of the log $\chi$-twisted $K$-energy minimiser, we have that $e(r)=e(s)$ for any $r,s\geq s_0$. This is a contradiction to the distance between them is $|r-s|$.
\end{proof}

Therefore, \thmref{geodesic conjecture} implies the last conclusion in \thmref{cone geodesic conjecture}, that is the geodesic stability conjecture for cscK cone metrics in terms of $\mathfrak  F$-invariant.
\begin{thm}Assume that the automorphism group $Aut(X;D)$ is discrete.
$(M,\Om)$ admits a constant scalar curvature K\"ahler cone metric if and only if it is geodesic stable.
\end{thm}


\subsubsection{General automorphism group}\label{Donaldson geodesic stability conjecture: general case}
In this section, \thmref{Existence implies properness discrete} and \thmref{geodesic conjecture} will be extended to general case, that is the automorphism group is non-trivial.

This section is a generalisation of \cite{arXiv:1801.05907}.
We remark that the new key ingredient in our proofs is the use of uniqueness of cscK cone metrics, which was proved in the previous article in this series \cite{MR4020314}.

\begin{thm}\label{geodesic conjecture general}
The following are all equivalent,
\begin{enumerate}
\item there exists no constant scalar curvature K\"ahler cone metric;
\item either $Fut_\beta\neq0$, or there exists a $\vphi_0\in\mathcal E^1_0$ and a nontrivial, $\nu_\beta$-non-increasing, $d_1$-geodesic ray in $\mathcal E^1_0$ starting with $\vphi_0$;
\item either $Fut_\beta\neq0$, or for all $\vphi\in\mathcal E^1_0$, there exists a nontrivial, $\nu_\beta$-non-increasing, $d_1$-geodesic ray in $\mathcal E^1_0$ starting with $\vphi$.
\end{enumerate}
\end{thm}
\begin{proof}
We first prove $(1)\implies(3)$. We assume that there is no cscK cone metric. 
It is sufficient to show the second criterion in $(3)$, when $Fut_\beta=0$. Actually, vanishing of the log Futaki invariant implies that the log $K$-energy $\nu_\beta$ is $G$-invariant. 

We then consider two situations.
If $\nu_\beta$ is not bounded below, then $\nu_\beta$ is strictly deceasing. This situation is already contained in $(3)$. Then we turn to the situation, when $\nu_\beta$ is bounded below. Since we assume there is no cscK cone metric, properness \thmref{Properness implies existence general} implies that there is a sequence $\vphi_i$ such that $\nu_\beta(\vphi_i)\leq C$ and
\begin{align*}
d_{1,G}(0,\vphi_i)\rightarrow \infty,\text{ as }i\rightarrow\infty.
\end{align*}
Let $\om_{\psi_i}=\sigma_i^\ast\om_{\vphi_i}$. We have by the $G$-invariant of $\nu_\beta$,
\begin{align*}
d_{1}(0,\psi_i)\rightarrow \infty,\quad \nu_\beta(\psi_i)\leq C.
\end{align*}
The construction in Proposition \ref{1implies3} gives us a geodesic ray$\{e(s);s\in[0,\infty)\}$ such that $\psi_i(s)\rightarrow e(s), i\rightarrow \infty$ starting from any given potential $\vphi_0$ such that the log $K$-energy function is non-increasing along the geodesic ray $e(s)$. 

Actually, $e(s)$ is nontrivial. Otherwise, $d_{1,G}(0,e(s))$ is bounded. But
\begin{align*}
d_{1,G}(0,e(s))&\geq d_{1,G}(0,\vphi_i)-d_{1,G}(\vphi_i,e(s))\\
&=d_{1,G}(0,\vphi_i)-d_{1}(\psi_i,e(s))\rightarrow\infty.
\end{align*}
Contradiction! So, we obtain a non-increasing, nontrivial ray $e(s)$ as we need.
	
We then prove $(2)\implies(1)$. We prove by the contradiction method. We assume $\om_\vphi$ is a cscK cone metric. Then $Fut_\beta(\om_\vphi)=0$. According to $(2)$, there exists a $\vphi_0$ and a nontrivial, non-increasing, geodesic ray $\vphi(t)$ with $\vphi(0)=\vphi_0$. 

By the construction in Proposition \ref{1implies3}, the segments connecting $\vphi_0$ and $\vphi(t)$ $d_1$-converge to a non-increasing geodesic ray $\{e(s);s\in[0,\infty)\}, i\rightarrow \infty$ starting from $\vphi$, as $t\rightarrow\infty$. 
Since $\om_\vphi$ is a cscK cone metric, $e(s)$ are all log $K$-energy minimisers. Due to regularity \thmref{Regularity} and  uniqueness \thmref{Uniqueness}, $e(s)$ is trivial, i.e. $\om_{e(s)}=\sigma(s)^\ast \om_\vphi,\forall s$. 
According to Lemma 4.6 in \cite{arXiv:1801.05907}, $\vphi(t)$ is parallel to $e(s)$. That is $\vphi(t)$ is trivial, which is a contradiction.
\end{proof}

Then \thmref{geodesic conjecture general} is restated as following,
\begin{thm}\label{geodesic conjecture general chen}
$(M,\Om)$ admits a constant scalar curvature K\"ahler cone metric if and only if it is geodesic stable.
\end{thm}
\begin{proof}
\textbf{$\Longrightarrow$.}
We assume $\om_\vphi$ is a cscK cone metric and $\rho(t)$ be the geodesic ray starting with $\vphi_0$. We need to check geodesic stability condition. Since the log $K$-energy $\nu_\beta$ is convex along $\rho(t)$, the $\mathfrak F$-invariant is strictly positive, if $\nu_\beta$ is unbounded above. 

In the case, when $\nu_\beta$ is bounded above, the same argument above works. The construction in Proposition \ref{1implies3}, convexity and continuity \lemref{convexityKenergy} gives us a geodesic ray consisting of log $K$-energy minimisers. Then regularity \thmref{Regularity} and uniqueness \thmref{Uniqueness}, $e(s)$ is trivial. Furthermore, the $\mathfrak F$-invariant vanishes and $e(s)$ is parallel to $\rho(t)$.

\textbf{$\Longleftarrow$.}
We run the continuity path
\eqref{2Twisted cone path general}
\begin{align*}
t(S-\underline S_\b)=(1-t)(\tr_\vphi\om_{\vphi_j}-n).
\end{align*}
By Proposition \ref{path except 1}, the path is solvable before $t=1$. 
Following the same argument in Lemma 4.1 and 4.3 in \cite{arXiv:1801.05907}, the log $K$-energy is $G$-invariant and bounded below. As shown in Section \ref{General case}, it is sufficient to show that
	\begin{align}\label{d1g bounded}
	\sup_{0.1< t<1} d_{1,G}(0, \vphi_t)\leq C.
	\end{align} 

We prove by the contradiction method. 
By definition, we let 
\begin{align*}
\om_{\psi_t}:=\sigma_t^\ast\om_{\vphi_t}=\om+i\p\bar\p \tilde\vphi_t,\quad d_{1,G}(0, \vphi_t)=d_{1}(0, \psi_t).
\end{align*}
We assume that
$
	\sup_{0.1< t<1} d_{1}(0,  \psi_t)\rightarrow \infty.
$
By \eqref{proper existence K upper bound} and $G$-invariant of the log $K$-energy,
	\begin{align*}
	\sup_{0.1< t<1}\nu_\beta( \psi_t)\leq \inf_{\mathcal H_\beta} \nu_\b+1.
	\end{align*}
Let $\vphi_0$ be the potential given in the geodesic stability condition.
The construction in Proposition \ref{1implies3} gives us a geodesic ray$\{e(s);s\in[0,\infty)\}$ starting with $\vphi_0$. Moreover, the log $K$-energy is non-increasing along $e(s)$. This contradicts to $(1)$ in the geodesic stability Definition \ref{geodesic stability}.

The argument of $(1)\implies(3)$ in the proof of \thmref{geodesic conjecture general}, tells us that 
\begin{align*}
d_{1,G}(0,e(s))\rightarrow\infty, \text{ as }s\rightarrow \infty.
\end{align*}
So, $e(s)$ is not trivial, which contradicts to $(2)$ in Definition \ref{geodesic stability}.

We thus obtain \eqref{d1g bounded} and complete the proof.
\end{proof}

\section{CscK cone path}\label{CscK cone path}
\begin{defn}\label{cscK cone path defn}
	We say the path $\{\om_{\varphi^\beta}=\om_0+i\p\bar \p\varphi^\beta ;0< \beta\leq 1\} $ is a cscK cone path, if $\om_{\varphi^\beta}$ is a cscK cone metric with cone angle $\beta$ along the smooth divisor $D$.
\end{defn}
We recall the equation of the cscK cone metric in Definition \ref{csckconemetricdefn},
\begin{equation}\label{cscK cone equation openness}
\left\{
\begin{aligned}
&\frac{(\om_{\varphi^\beta})^n}{\om_{\theta^\beta}^n}=e^F,\\
&\tri_{\varphi^\beta} F=\tr_{\varphi^\beta}\theta^\beta-\underline S_\b.
\end{aligned}
\right.
\end{equation}
In which, the smooth $(1,1)$-form $\theta^\beta\in C_1(X,D)$ and $\om_{\theta^\beta}$ satisfies the equation in the distribution sense,
\begin{align*}
Ric(\om_\theta)=\theta^\beta+2\pi (1-\b)[D].
\end{align*}
The topological constant
\begin{align}\label{topological condition openness}
\underline S_\b=\frac{(C_1(X,D))[\om_0]^{n-1}}{[\om_0]^n}.
\end{align}

\subsection{Deform cone angle of the cscK cone path}

\begin{thm}\label{openess}
	Assume that $Aut(X;D)$ is discrete and $C_1(L_D)> 0$.
The cscK cone path is open when $\beta>0$. Precisely, if $\om_{\varphi^{\beta_0}}$ is a cscK cone metric with cone angle $\beta_0\in (0,1)$ along the smooth divisor $D$, then there is a $\delta>0$ such that for all $\beta\in (\beta_0-\delta,\beta_0+\delta)$, there exists a cscK cone metric $\om_{\varphi^{\beta}}$ with cone angle $\beta$ along $D$.
\end{thm}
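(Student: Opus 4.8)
The plan is to run a continuity-method argument in the cone-angle parameter $\beta$, using the log properness theorem (\thmref{cone properness conjecture}) as the key closedness-type input. Fix $\beta_0\in(0,1)$ and suppose $\om_{\vphi^{\beta_0}}$ is a cscK cone metric. The key reformulation is: a cscK cone metric of angle $\beta$ exists in $\Om$ if and only if the log $K$-energy $\nu_\beta$ is proper (since $Aut(X;D)$ is discrete). So it suffices to show that properness of $\nu_\beta$ is an \emph{open} condition in $\beta$, and then to invoke \thmref{Existence implies properness discrete} at $\beta=\beta_0$ to get that $\nu_{\beta_0}$ is in fact \emph{coercive}, i.e.\ $\nu_{\beta_0}(\vphi)\geq A\,d_1(0,\vphi)-B$ for constants $A>0$, $B$. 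Coercivity (the linear growth with a definite slope) is the quantitative strengthening that will survive a small perturbation of $\beta$, whereas bare properness would not obviously do so.

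First I would record the dependence of $\nu_\beta$ on $\beta$ explicitly. Using \lemref{log K energy}, one has
\begin{align*}
\nu_\beta(\vphi)=\nu_1(\vphi)+(1-\beta)J_{\Theta_D}(\vphi)+\frac{1}{V}\int_M \mathfrak h(\om_0^n-\om_\vphi^n),
\qquad \mathfrak h=-(1-\beta)\log|s|_h^2,
\end{align*}
so that $\nu_\beta-\nu_{\beta_0}=(\beta_0-\beta)\bigl[J_{\Theta_D}(\vphi)-\frac1V\int_M\log|s|_h^2(\om_0^n-\om_\vphi^n)\bigr]$. Now $\Theta_D\in C_1(L_D)$ is a fixed smooth $(1,1)$-form, and $\log|s|_h^2=\log h+\log|s|_h^2-\log h$; writing $\Theta_D=i\p\bar\p\log h$ and integrating by parts, the bracket is controlled by \lemref{d_1andJ} (applied with $\chi_0=\Theta_D$ and $f=-\log h$, which is smooth on $X$) together with the elementary estimate $\bigl|\frac1V\int_M\log|s|_h^2(\om_0^n-\om_\vphi^n)\bigr|\le C$ for $\vphi\in\mathcal H$ (this is exactly the type of bound used in \lemref{approximation proper}, since $\log|s|_h^2\le C$ and, after integration by parts against $i\p\bar\p$-exact pieces, the remaining contribution is the bounded smooth term). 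Hence there is a constant $C_0=C_0(X,D,\om_0,h)$, independent of $\beta$, with
\begin{align*}
|\nu_\beta(\vphi)-\nu_{\beta_0}(\vphi)|\le |\beta-\beta_0|\,\bigl(C_0\,d_1(0,\vphi)+C_0\bigr),\qquad \forall\,\vphi\in\mathcal H,\ \vphi\in\mathcal H_{\beta_0,0}.
\end{align*}
(One normalises $D(\vphi)=0$; since $d_1(0,\vphi)$ and the functionals are translation-invariant after this normalisation, there is no loss.) Then for $|\beta-\beta_0|<\delta:=A/(2C_0)$ we get
$\nu_\beta(\vphi)\ge \nu_{\beta_0}(\vphi)-\frac A2 d_1(0,\vphi)-\frac A2\ge \tfrac A2 d_1(0,\vphi)-B-\tfrac A2$,
so $\nu_\beta$ is coercive, hence proper, and by \thmref{cone properness conjecture} a cscK cone metric of angle $\beta$ exists in $\Om$. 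The hypothesis $C_1(L_D)>0$ enters to guarantee that the relevant cohomology and sign conditions ($C_1(L_D)\ge 0$, and the well-posedness of $\om_{\theta^\beta}$) hold uniformly for $\beta$ near $\beta_0$, and that $\om_0$ remains an admissible reference class; I would check this at the start.

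The main obstacle is the uniformity (in $\beta$) of the comparison estimate $|\nu_\beta(\vphi)-\nu_{\beta_0}(\vphi)|\le |\beta-\beta_0|(C_0 d_1(0,\vphi)+C_0)$: one must be careful that the constant $C_0$ coming out of \lemref{d_1andJ} and the $\log|s|_h^2$ integral does not degenerate, and that the $d_1$-distance is the \emph{same} metric (that of the fixed class $\Om=[\om_0]$) for all $\beta$ — which it is, since $d_1$ only depends on $\Om$, not on the cone angle. A secondary point is that $\mathcal H_\beta$ depends on $\beta$, but the log $K$-energy and $d_1$ are defined/extended on $\mathcal E^1$ uniformly (Section~\ref{Regularity of minimisers}), and properness is phrased over $\mathcal H$, so the argument takes place in $\beta$-independent function spaces; I would state the coercivity comparison on $\mathcal H$ (and extend to $\mathcal E^1$ by lower semicontinuity via \lemref{convexityKenergy} if convenient). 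Once coercivity transfers, existence is immediate from \thmref{cone properness conjecture}, and this reproves — with a geometric/variational proof rather than the linear-theory one of \cite{arXiv:1709.09616} — the openness asserted in the theorem.
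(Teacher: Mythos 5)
Your strategy --- transfer coercivity of $\nu_{\beta_0}$ to nearby $\beta$ by a direct comparison of the functionals and then invoke ``properness $\implies$ existence'' --- is genuinely different from the paper's proof, which instead approximates $\om_{\varphi^{\beta_0}}$ by smooth twisted cscK metrics, deforms the cone angle in the approximate equation via the implicit function theorem (the kernel of the linearisation is killed by $Ric(\om_{\theta^{\beta_0}_\eps})>\theta^{\beta_0}$, which is where $C_1(L_D)>0$ enters), and then passes to the limit $\eps\to 0$ using the apriori estimates of Section \ref{Apriori estimates} together with a delicate uniform $d_1$-bound (Proposition \ref{distacen bound}). However, your key comparison estimate has a genuine gap. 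You assert
\begin{align*}
\Bigl|\frac1V\int_M\log|s|_h^2\,(\om_0^n-\om_\vphi^n)\Bigr|\le C \qquad\text{for all }\vphi\in\mathcal H,
\end{align*}
and hence $|\nu_\beta(\vphi)-\nu_{\beta_0}(\vphi)|\le|\beta-\beta_0|\,(C_0 d_1(0,\vphi)+C_0)$. Only the one-sided bound $\frac1V\int_M\log|s|_h^2\,\om_\vphi^n\le\sup_X\log|s|_h^2$ is elementary (and this is all that \lemref{approximation proper} uses). After integration by parts, since $i\p\bar\p\log|s|_h^2=2\pi[D]-\Theta_D$ as currents, the leftover is not ``the bounded smooth term'' but the restricted energy $\frac{2\pi}{V}\int_D\vphi\sum_j\om_0^j\wedge\om_\vphi^{n-1-j}$, which is \emph{not} controlled by the ambient $d_1$-distance: for $\vphi_k=\tfrac{\ep}{2}\log(|s|_h^2+1/k)$ (suitably normalised), $d_1(0,\vphi_k)$ stays bounded while $\vphi_k|_D\to-\infty$ and $\frac1V\int_M(-\log|s|_h^2)\,\om_{\vphi_k}^n\to+\infty$. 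Consequently, for $\beta>\beta_0$ the term $(\beta-\beta_0)\frac1V\int_M\log|s|_h^2\,\om_\vphi^n$ admits no lower bound of the form $-\ep\, d_1(0,\vphi)-C$, and coercivity of $\nu_\beta$ does not follow from your comparison. (For $\beta<\beta_0$ the sign is favourable and your argument does go through as written.)

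The standard repair for $\beta>\beta_0$ is to control $\frac1V\int_M(-\log|s|_h^2)\om_\vphi^n$ not by $d_1$ but by a small multiple of the entropy: since $|s|_h^{-2\lambda}\cdot|s|_h^{2\beta_0-2}e^{h_0}\in L^1(\om_0^n)$ for every $\lambda<\beta_0$, the Legendre--Young inequality for the relative entropy gives $\frac1V\int_M(-\log|s|_h^2)\om_\vphi^n\le\lambda^{-1}\bigl(E_{\beta_0}(\vphi)+C_\lambda\bigr)$, and the resulting error $(\beta-\beta_0)\lambda^{-1}E_{\beta_0}(\vphi)$ can be absorbed because $E_{\beta_0}\ge 0$ enters $\nu_{\beta_0}$ with coefficient $1$ and satisfies $E_{\beta_0}\le\nu_{\beta_0}+C d_1+C$ by \lemref{d_1andJ}. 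That yields coercivity of $\nu_\beta$ for $|\beta-\beta_0|$ small and would give a legitimate alternative proof of \thmref{openess}; but this entropy-absorption step is the essential missing idea, not a routine verification, and it is not what you wrote. A minor further point: \lemref{d_1andJ} is stated for $\vphi\in\mathcal H_{\beta,0}$, so the normalisation $D(\vphi)=0$ must be fixed consistently before all of these comparisons.
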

\begin{proof}
From assumption, we have a cscK cone metric $\om_{\varphi^{\beta_0}}$ with cone angle $\beta_0$. According to the necessary part of the properness theorem "existence implies properness" (\thmref{Existence implies properness discrete}), the corresponding log $K$-energy $\nu_{\beta_0}$ is proper.

\textbf{Approximate the cscK cone metric $\om_{\varphi^{\beta_0}}$.}
Now we apply the approximation scheme (Proposition \ref{twisted approximation thm})
to conclude that the cscK cone metric $\om_{\varphi^{\beta_0}}$ has a smooth approximation of the twisted cscK metric $\{\vphi^{\beta_0}_\eps,\eps\in(0,1]\}$ satisfying the twisted cscK equation
	\begin{align}\label{approximate at beta0}
S(\om_{\varphi^{\beta_0}_\eps})=\tr_{\vphi^{\beta_0}_\eps}(Ric(\om_{\theta^{\beta_0}_\eps})-\theta^{\beta_0})+\underline S_{\b_0}.
\end{align}

\textbf{Deform the cone angle of the approximate equation \eqref{approximate at beta0}.}
We now fix the parameter $\eps$ and prove the deformation of \eqref{approximate at beta0} in terms of the parameter $\beta_0$.
In order to apply the implicit function theorem, we need to prove that the kernel of the linearisation operator is trivial. We compute that the linearisation operator of \eqref{approximate at beta0} at $\om_{\varphi^{\beta_0}_\eps}$ is
\begin{align*}
L_{\varphi^{\beta_0}_\eps}(u)=\tri^2_{\varphi^{\beta_0}_\eps}u+u^{i\bar j}[Ric(\om_{\varphi^{\beta_0}_\eps})-Ric(\om_{\theta^{\beta_0}_\eps})+\theta^{\beta_0}]_{i\bar j}.
\end{align*}
Suppose $u$ is the kernel of $L_{\varphi^{\beta_0}_\eps}(u)$. We rewrite the linearisation operator in the integral form, 
\begin{align*}
\int_X u L_{\varphi^{\beta_0}_\eps}(u)\om^n_{\varphi^{\beta_0}_\eps}
&=\int_X u\tri^2_{\varphi^{\beta_0}_\eps}u+u^{i\bar j}[Ric(\om_{\varphi^{\beta_0}_\eps})-Ric(\om_{\theta^{\beta_0}_\eps})+\theta^{\beta_0}]_{i\bar j}\om^n_{\varphi^{\beta_0}_\eps}\\
&=\int_X |\p\p u|^2_{\varphi^{\beta_0}_\eps}+u^{i}[Ric(\om_{\theta^{\beta_0}_\eps})-\theta^{\beta_0}]_{i\bar j}u^{\bar j}\om^n_{\varphi^{\beta_0}_\eps}.
\end{align*}
Since $Ric(\om_{\theta^{\beta_0}_\eps})>\theta^{\beta_0}$ (\lemref{thetaepstheta}), we see that $\int_X u L_{\varphi^{\beta_0}_\eps}(u)\om^n_{\varphi^{\beta_0}_\eps}=0$ implies $u=0$.
Then we are able to conclude that there exists a $\delta_{\eps}>0$ such that for all $\beta\in (\beta_0-\delta_{\eps},\beta_0+\delta_{\eps})$, there exists a smooth twisted cscK metric $\om_{\varphi_\eps^{\beta}}$ satisfying
	\begin{align}\label{approximate at beta}
S(\om_{\varphi^{\beta}_\eps})=\tr_{\vphi^{\beta}_\eps}(Ric(\om_{\theta^{\beta}_\eps})-\theta^{\beta})+\underline S_{\b}.
\end{align}
If we could take $\eps\rightarrow 0$ in an appropriate sense to have the limit of $\delta_\eps\rightarrow \delta_0>0$, we obtain a cscK cone metric $\om_{\varphi^{\beta}}$ with cone angle $\beta$ in $(\beta_0-\delta_{0},\beta_0+\delta_{0})$ and thus prove the theorem. It then boils down to obtain a priori estimates of the sequence $\varphi^{\beta}_\eps$ in the following steps.

\textbf{Uniform distance bound.} We will prove that there exists a small positive constant $\eps_0$ such that for all $\eps\in (0,\eps_0]$ and $\beta\in (\beta_0-\delta_{\eps_0},\beta_0+\delta_{\eps_0})$, we have
\begin{align}\label{Uniform distance bound}
d_1(\varphi^{\beta}_\eps,\varphi^{\beta_0})\leq 1.
\end{align}
We leave the proof in the next Proposition \ref{distacen bound}.

\textbf{Take convergent subsequence.} Now we fix the cone angle $\beta$, and will take convergent subsequence of $\varphi^{\beta}_\eps$ such that it converges to a cscK cone metric required.

The procedure is similar to what we did in Section \ref{Solving approximate equations}.
In order to apply \thmref{a prioriestimates approximation} to get
	\begin{align*}
	\|\varphi^{\beta}_\eps\|_\infty, \quad \|F_\eps=\frac{\om_{\varphi^{\beta}_\eps}^n}{\om_{\theta^{\beta}_{\eps}}^n}\|_\infty, \quad \sup_X \|\p F_\eps\|^2_{\om_{\theta^\beta_\eps}}, \quad\sup_X \tr_{\om_{\theta^\beta_\eps}}\om_{\varphi^{\beta}_\eps}\leq C,
	\end{align*} 
it is sufficient to verify that the entropy $E_\beta^\eps=\frac{1}{V}\int_X\log\frac{\om^n_{\varphi^{\beta}_\eps}}{\om^n_{\theta^\beta_\eps}}\om_{\varphi^{\beta}_\eps}^{n}$ is bounded uniformly for all $\eps\in(0,\eps_0]$. 
This is because
	\begin{align*}	
	E_{\beta}^{\eps}
	=\nu^{\eps}_{\beta}(\vphi^{\beta}_\eps)-J_{-\theta^{\beta}_{\eps}}(\vphi^{\beta}_\eps)
	-\frac{1}{V}\int_M [-(1-\b)\log(|s|_h^2+\eps)+h^\beta_0]\om_0^n-c_{\beta},
\end{align*}
and also $\nu^{\eps}_{\beta}(\vphi^{\beta}_\eps)\leq \nu^{\eps}_{\beta}(0)=0$ and $|J_{-\theta^{\beta}_{\eps}}(\vphi^{\beta}_\eps)|\leq C(n)\max_X \|\theta^{\beta}_{\eps}\|_{\om_0} d_1(0,\vphi^{\beta}_\eps)$ as above and the distance bound \eqref{Uniform distance bound}. Hence a subsequence of the approximate sequence $\vphi^{\beta}_\eps$ converges to $\vphi_\infty$ in $C^{0,\alpha}$ in $X$ and smoothly outside $D$. Moreover, $\vphi_\infty$ satisfies \eqref{cscK cone equation openness} in the regular part $M$ and the second order estimate $C^{-1}\om_{\theta}\leq \om_{\vphi}\leq C\om_{\theta}$. Then the Schauder estimate gives $\vphi_\infty\in C^{2,\a,\b}$ and the regularity \thmref{Geometric asymptotic} implies $\vphi_\infty\in D_{\bf w}^{4,\a,\b}(\om_\theta)$.
 
\end{proof}

\begin{prop}\label{distacen bound}
	Given the approximate sequence $\varphi^{\beta}_\eps$ in the proof of \thmref{openess}.
	There exists a small positive constant $\eps_0$ such that for all $\eps\in (0,\eps_0]$ and $\beta\in (\beta_0-\delta_{\eps_0},\beta_0+\delta_{\eps_0})$, we have
	\begin{align}\label{Uniform distance bound}
	d_1(\varphi^{\beta}_\eps,\varphi^{\beta_0})\leq 1.
	\end{align}
\end{prop}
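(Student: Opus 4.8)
The plan is to prove Proposition \ref{distacen bound} by a contradiction-plus-compactness argument that mirrors the structure of \thmref{Existence implies properness discrete}, but now with two parameters $\eps\to 0$ and $\beta\to\beta_0$ moving simultaneously. Suppose the bound \eqref{Uniform distance bound} fails. Then we can find sequences $\eps_k\to 0$ and $\beta_k\to\beta_0$ with $d_1(\varphi^{\beta_k}_{\eps_k},\varphi^{\beta_0})>1$ for all $k$. I would first show that the twisted $K$-energies at level $\beta_k$ converge to $\nu_{\beta_0}$ in a controlled way: since $\varphi^{\beta_k}_{\eps_k}$ is the global minimiser of the approximate log $K$-energy $\nu^{\eps_k}_{\beta_k}$ (\lemref{globalminimiser}, using the smooth $C^{1,1}$-geodesic and convexity as in \lemref{uniform entropy}), we have $\nu^{\eps_k}_{\beta_k}(\varphi^{\beta_k}_{\eps_k})\leq \nu^{\eps_k}_{\beta_k}(0)=0$. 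Combined with \lemref{approximation proper} (so that $\nu_{\beta_k}(\varphi)\leq \nu^{\eps_k}_{\beta_k}(\varphi)+C$ with $C$ uniform in $\eps_k$ and, after checking the dependence, essentially uniform in $\beta_k$ near $\beta_0$) and the fact that $\nu_{\beta_0}$ is proper, one would first need to see that $\nu_{\beta_k}$ is proper \emph{uniformly} for $\beta_k$ close to $\beta_0$. This uniform properness is the crucial structural input; I expect it follows because the difference $\nu_{\beta_k}-\nu_{\beta_0}$ is controlled by $|\beta_k-\beta_0|$ times quantities bounded by $d_1(0,\varphi)$ and $\|\cdot\|_\infty$-type terms (via \lemref{log K energy} and \lemref{d_1andJ}), so a proper modulus for $\nu_{\beta_0}$ degrades only slightly.

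Granting uniform properness near $\beta_0$, the contradiction sequence satisfies $\nu_{\beta_k}(\varphi^{\beta_k}_{\eps_k})\leq C$, hence $d_1(0,\varphi^{\beta_k}_{\eps_k})\leq C$ uniformly; then by \lemref{d_1andJ} the $J$-terms are uniformly bounded and, from the formula \eqref{Approximation of the log twisted K energy}, the approximate entropies $E^{\eps_k}_{\beta_k}(\varphi^{\beta_k}_{\eps_k})$ are uniformly bounded. Now I would invoke \thmref{aprioriestimates approximation} (the case $f=0$, with twisted term $\chi = Ric(\om_{\theta^{\beta_k}_{\eps_k}})-\theta^{\beta_k}\geq 0$ by \lemref{thetaepstheta} and $C_1(L_D)>0$), carefully checking that the constants $\alpha_1,\alpha_{\beta_k}$, $\|\Theta\|_\infty$, $\inf_X Ric(\om_{\theta^{\beta_k}_{\eps_k}})$, and the curvature data in \eqref{backgroundmetriccurvature} for the reference metric $\om_{\theta^{\beta_k}_{\eps_k}}$ vary continuously in $\beta$ and stay bounded as $\beta_k\to\beta_0$. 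This yields $\|\varphi^{\beta_k}_{\eps_k}\|_\infty,\|F_{\eps_k}\|_\infty, \sup_X\|\p F_{\eps_k}\|^2_{\om_{\theta^{\beta_k}_{\eps_k}}}$ and $C^{-1}\om_{\theta^{\beta_k}_{\eps_k}}\leq \om_{\varphi^{\beta_k}_{\eps_k}}\leq C\om_{\theta^{\beta_k}_{\eps_k}}$ all bounded uniformly in $k$, and then the higher-order estimates of \thmref{properclosedness} / \corref{properclosedness estimates} give a uniform $D^{4,\a,\b}_{\bf w}$ bound.

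From the uniform estimates one extracts a subsequence so that $\varphi^{\beta_k}_{\eps_k}$ converges in $C^{0,\alpha}(X)$ and smoothly on $M=X\setminus D$ to some limit $\varphi_\infty$; because the reference metrics $\om_{\theta^{\beta_k}_{\eps_k}}$ converge to $\om_{\theta^{\beta_0}}$ (using the uniform estimates for \eqref{thetaeps}-type equations), $\varphi_\infty$ solves \eqref{cscK cone equation openness} at angle $\beta_0$, i.e. $\varphi_\infty$ is a cscK cone metric with cone angle $\beta_0$. By the entropy-convergence argument of \lemref{properclosedness entropy convergence}, $\nu_{\beta_0}(\varphi_\infty)\leq \liminf_k\nu_{\beta_k}(\varphi^{\beta_k}_{\eps_k})$, and since $\varphi^{\beta_0}$ is a cscK cone metric, hence a global minimiser of $\nu_{\beta_0}$ (\lemref{globalminimiser}, \thmref{Uniqueness}), $\varphi_\infty$ is also a global minimiser. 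As $Aut(X;D)$ is discrete, the uniqueness theorem \thmref{Uniqueness} forces $\varphi_\infty$ to agree with $\varphi^{\beta_0}$ up to the normalisation $D(\cdot)=0$. But $d_1$ is continuous under this convergence, so $d_1(\varphi_\infty,\varphi^{\beta_0})\geq 1$, contradiction. I expect the main obstacle to be the uniform-in-$\beta$ bookkeeping: one must verify that every constant appearing in \thmref{aprioriestimates approximation} and in the approximation lemmas — the alpha invariants $\alpha_{\beta}$, the Ricci lower bound, the reference-metric curvature constants $C, C_\phi$ in \eqref{backgroundmetriccurvature}, and the normalisation constants $c_\beta$ in \eqref{ec} — depends on $\beta$ in a way that is continuous (or at least locally bounded) near $\beta_0$, so that nothing blows up as $\beta_k\to\beta_0$; this is where the geometric polyhomogeneity and weighted estimates from \cite{arXiv:1709.09616} for the reference metrics $\om_{\theta^\beta}$ must be used with care.
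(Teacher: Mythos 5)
Your overall contradiction--compactness strategy is the same as the paper's, but you deviate in two places that matter. First, the contradiction hypothesis: you only assume $d_1(\varphi^{\beta_k}_{\eps_k},\varphi^{\beta_0})>1$, which gives no upper bound on the distance, and you then have to manufacture one via a ``uniform properness'' of $\nu_\beta$ for $\beta$ near $\beta_0$. This can be made to work, but only if you use the quantitative \emph{coercivity} $\nu_{\beta_0}(\vphi)\geq A\,d_1(0,\vphi)-B$ from \thmref{Existence implies properness discrete} (qualitative properness alone does not survive the perturbation $\nu_\beta-\nu_{\beta_0}=O(|\beta-\beta_0|)(1+d_1(0,\vphi))$). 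The paper sidesteps this entirely by letting $\beta_i=\beta_0+\delta_{\eps_i}$ be the \emph{first} angle at which the distance reaches $1$, so the contradiction hypothesis is the equality $d_1(\varphi^{\beta_i}_{\eps_i},\varphi^{\beta_0})=1$; the upper bound $d_1(0,\vphi_i)\leq d_1(0,\varphi^{\beta_0})+1$ is then free, and the entropy bound follows from $\nu^{\eps_i}_{\beta_i}(\vphi_i)\leq 0$ and \lemref{d_1andJ} without any uniform coercivity input.

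The second deviation is the more serious one. You propose to run \thmref{aprioriestimates approximation} in the case $f=0$ with the reference metric $\om_{\theta^{\beta_k}_{\eps_k}}$ at the \emph{varying} angle $\beta_k$, and then pass to the limit. But the weighted H\"older spaces, the quasi-isometry class, the curvature bounds \eqref{backgroundmetriccurvature} and the alpha invariants are all anchored to a cone angle, so estimates measured against $\om_{\theta^{\beta_k}_{\eps_k}}$ do not transfer to statements about $\om_{\theta^{\beta_0}}$ without an explicit comparison of reference metrics across angles --- and that comparison is exactly the hard part. The paper's proof is engineered to avoid this: it keeps the reference metric fixed at angle $\beta_0$, writes $\tilde F_i=F_i+f_i$ with $f_i=\log\bigl(\om_{\theta^{\beta_i}_{\eps_i}}^n/\om_{\theta^{\beta_0}_{\eps_i}}^n\bigr)$, and invokes the $f\neq 0$ case of \thmref{aprioriestimates approximation}, where the only input about the angle change is $e^{-f_i}=(\text{bdd})\cdot(|s|^2_h+\eps_i)^{\beta_0-\beta_i}\in L^{p_0}(\om_0)$ with $p_0<1/|\beta_i-\beta_0|\to\infty$, plus $f_i\to 0$ in $L^p$. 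All subsequent bounds ($W^{1,2p}$ of $\tilde F_i$, $L^p$ of $i\p\bar\p\vphi_i$) are then with respect to the single metric $\om_{\theta^{\beta_0}}$, which is what the limit identification and the final weak-compactness step (\lemref{Weakcompactness}, requiring the $K$-energy bound verified via the continuity of $\nu_\beta$ in $\beta$) actually need. You should restructure your Step of uniform estimates along these lines; as written, the claim that the limit solves the angle-$\beta_0$ equation does not follow from bounds taken against the angle-$\beta_k$ references.
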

\begin{proof}
	We consider $\beta\in [\beta_0,\beta_0+\delta_{\eps_0})$, the other side is similar.
	We prove by contradiction method. Assume that there is a sequence $\eps_i\rightarrow 0$, such that $\delta_{\eps_i}\rightarrow 0$ and 
	\begin{align}\label{contradiction assumption openness}
	d_1(\varphi^{\beta_i}_{\eps_i},\varphi^{\beta_0})= 1.
	\end{align}
	In which, $\beta_i=\beta_0+\delta_{\eps_i}$ is the first cone angle moving forward $\beta_0$ when the identity above achieves.
	
	We will need several steps to achieve the contradiction.
	We set 
	\begin{align*}
	\vphi_i=\varphi^{\beta_i}_{\eps_i},\quad F_i=\log\frac{\om_{\vphi_i}^n}{\om_{\theta^{\beta_i}_{\eps_i}}^n}, \quad f_i=\log\frac{\om_{\theta^{\beta_i}_{\eps_i}}^n}{\om_{\theta^{\beta_0}_{\eps_i}}^n}, 
	\end{align*} and then rewrite \eqref{approximate at beta} into two equations and make use of the reference metric $\om_{\theta^{\beta_0}_{\eps_i}}$,
	\begin{equation}\label{approximate equation openness}
	\left\{
	\begin{aligned}
	\tilde F_i&=F_i+f_i=\log\frac{\om_{\vphi_i}^n}{\om_{\theta^{\beta_0}_{\eps_i}}^n},\\
	\tri_{\vphi_i} \tilde F_{\eps_i}&=\tr_{\vphi_i}(\theta^{\beta_i}+i\p\bar\p f_i)-\underline S_{\beta_i}.
	\end{aligned}
	\right.
	\end{equation}
	
	\textbf{Step 1: Uniform a priori estimates of $\vphi_i$}
	We will prove that 
	\begin{claim}\label{correct uniform bound openness claim}
	There is a constant $C$ independent of $i$ such that for any $p>1$,
	\begin{align}\label{correct uniform bound openness}
	\|\vphi_i\|_{\infty}, \quad \|i\p\bar \p{\vphi_i}\|_{L^p(\om_{\theta^{\beta_0}})}, \quad \|\tilde F_i\|_{W^{1,2p}(\om_{\theta^{\beta_0}})}\leq C,
	\end{align}
\end{claim}
	We start to prove this claim.
	According to \thmref{a prioriestimates approximation}, there is a constant $C$ such that 
	\begin{align}\label{uniform bound openness}
	\|\vphi_i\|_{\infty}, \quad \|\tilde F_i\|_{\infty}, \quad \sup_X \|\p \tilde F_i\|^2_{\vphi_i}, \quad \sup_X\|\tr_{\om_{\theta^{\beta_0}_{\eps_i}}}\om_{\vphi_i}\|_{L^p(\om_{\theta^{\beta_0}_{\eps_i}})}\leq C,
	\end{align} where $C$ depends on $\alpha_1,\alpha_\beta, n$, $\|\frac{\om_{\theta^{\beta_0}_{\eps_i}}^n}{\om_0^n}\|_{L^q(\om_0)}$ for some $q>1$ and the following quantities
	\begin{align}
	&\|e^{-f_i}\|_{L^{p_0}(\om_0)},\quad \|\underline S_{\beta_i}\|_\infty, \quad \|\theta^{\beta_i}\|_\infty,\quad\inf_XRic(\om_{\theta^{\beta_0}_{\eps_i}}),\quad
	E_{\beta_0}^{\eps_i}=\frac{1}{V}\int_X\tilde F_i\om_{\vphi_i}^{n}.\label{a prioriestimates approximation dependence open}	\end{align}
	In which, $p_0$ is sufficiently large and depends on $n$ and $p$.

	We now verify the constant dependence \eqref{a prioriestimates approximation dependence open} is uniformly for all $i$. That the scalar curvature average $\sup_i \underline S_{\beta_i}$ is bounded follows from the formula \eqref{topological condition openness}.
	The smooth closed $(1,1)$-form $\theta^{\beta_i}\in C_1(X,D)$ is chosen to smoothly depend on the parameter $\beta$, so $\sup_i\|\theta^{\beta_i}\|_\infty$
	is bounded. By $Ric(\om_{\theta^{\beta_0}_{\eps_i}})>\theta^{\beta_0}$ (\lemref{thetaepstheta}), we have the bound of $\inf_XRic(\om_{\theta^{\beta_0}_{\eps_i}})$ for all $i$.
	
	Then we examine $\|\frac{\om_{\theta^{\beta_0}_{\eps_i}}^n}{\om_0^n}\|_{L^q(\om_0)}$, $\|e^{-f_i}\|_{L^{p_0}(\om_0)}$ and $E_{\beta_0}^{\eps_i}$ as following.
	
	\textbf{1.} From the equation which \eqref{thetaeps}  $\om_{\theta^{\beta_0}_{\eps_i}}$ satisfies, we have
	\begin{align*}
	\frac{\om_{\theta^{\beta_0}_{\eps_i}}^n}{\om^n_0}=\frac{e^{h^{\beta_0}_0+c_{\beta_0}}}{(|s|^2_h+\eps_i)^{1-\beta_0}}\leq \frac{e^{h^{\beta_0}_0+c_{\beta_0}}}{|s|_h^{2(1-\beta_0)}}.
	\end{align*}	
	Recall that $h^{\beta_0}_0$ satisfies 
	$
	Ric(\om_0)=\theta^{\beta_0}+(1-\beta)\Theta_D+i\p\bar\p h^{\beta_0}_0,
	$
	and
	the normalisation constant $c$ satisfies that
	$
	e^{c_{\beta_0}}=\frac{\int_Me^{h^{\beta_0}_0}|s|_h^{2(\beta_0-1)}\om^n_0}{\int_Me^{h^{\beta_0}_0}(|s|^2_h+\eps)^{\beta_0-1}\om^n_0},
	$ which is bounded
	$
	1\leq e^{c_{\beta_0}}\leq 
	\frac{\int_Me^{h^{\beta_0}_0}|s|_h^{2(\beta_0-1)}\om^n_0}{\int_Me^{h^{\beta_0}_0}(|s|^2_h+1)^{\beta_0-1}\om^n_0}.
	$
	Therefore, both $h^{\beta_0}_0$ and $c_{\beta_0}$ are bounded, so the volume ratio $\frac{\om_{\theta^{\beta_0}_{\eps_i}}^n}{\om^n_0}$ is in $L^{q}(\om_0)$ for $1<q<\frac{1}{1-\beta_0}$.

	\textbf{2.} Since both $\om_{\theta^{\beta}_{\eps_i}}$ and $\om_{\theta^{\beta_0}_{\eps_i}}$ satisfy \eqref{thetaeps}, we have 
	\begin{align}\label{fi}
	e^{-f_i}=\frac{\om_{\theta^{\beta_0}_{\eps_i}}^n}{\om_{\theta^{\beta_i}_{\eps_i}}^n}=e^{h_0^{\beta_0}+c_{\beta_0}-h_0^{\beta_i}-c_{\beta_i}}(|s|^2_h+\eps_i)^{\beta_0-\beta_i}.
	\end{align}
	Since $c_{\beta_0}$, $c_\beta$, $h_0^{\beta_0}$, $h_0^{\beta}$ are bounded, we have $e^{-f_i}$ is $L^{p_0}(\om_0)$ if $p_0<\frac{1}{\beta-\beta_0}$. Consequently, $p_0$ could be very large, if $\beta$ is close to $\beta_0$.
	
	\textbf{3.} 
	From \eqref{Approximation of the log twisted K energy},
	we have
	\begin{align*}	
	E_{\beta_0}^{\eps_i}
	&=\frac{1}{V}\int_X(F_i+f_i)\om_{\vphi_i}^{n}
	= \frac{1}{V}\int_Xf_i\om_{\vphi_i}^{n}+\nu^{\eps_i}_{\beta_i}(\vphi_i)-J_{-\theta^{\beta_i}_{\eps_i}}(\vphi_i)\\
	&-\frac{1}{V}\int_M [-(1-\b_i)\log(|s|_h^2+\eps_i)+h_0]\om_0^n-c_{\beta_i},
	\end{align*}
	that is bounded independent of $i$, since $\nu^{\eps_i}_{\beta_i}(\vphi_i)\leq \nu^{\eps_i}_{\beta_i}(0)=0$ (c.f. \eqref{approximate K energy upper bound}) and $|J_{-\theta^{\beta_i}_{\eps_i}}(\vphi_i)|\leq C(n)\max_X \|\theta^{\beta_i}_{\eps_i}\|_{\om_0} d_1(0,\vphi_i)$ (\lemref{d_1andJ}) and \begin{align*}
	d_1(0,\vphi_i)\leq d_1(0,\varphi^{\beta_0})+d_1(\varphi^{\beta_0},\vphi_i)\leq d_1(0,\varphi^{\beta_0})+1
	\end{align*} by the assumption \eqref{contradiction assumption openness}.
	
	In conclusion, the constant $C$ in \eqref{uniform bound openness} is independent of $i$. Now we show that how to obtain \eqref{correct uniform bound openness} from \eqref{uniform bound openness}. We get $\|\p\tilde F_i\|_{L^{2p}(\om_{\theta^{\beta_0}_{\eps_i}})}\leq C$, from
	$|\p\tilde F_i|_{\om_{\theta^{\beta_0}_{\eps_i}}}^2\leq |\p\tilde F_i|^2_{\vphi_i}\tr_{\om_{\theta^{\beta_0}_{\eps_i}}}\om_{\vphi_i}$. Furthermore, we note that in the cone chart,  $\om_0\leq \om_{\theta^{\beta_0}_{\eps_i}}\leq C \om_{\theta^{\beta_0}}$. So we have
		\begin{align*}
\int_X	|\p\tilde F_i|_{\om_{\theta^{\beta_0}}}^{2p} \om^n_{\theta^{\beta_0}}\leq \int_X |\p\tilde F_i|^{2p}_{\om_{\theta^{\beta_0}_{\eps_i}}}(\tr_{\om_{\theta^{\beta_0}}}\om_{\theta^{\beta_0}_{\eps_i}})^p \cdot \frac{\om^n_{\theta^{\beta_0}}}{\om^n_0}\om^n_0.
	\end{align*}
Since $\frac{\om^n_{\theta^{\beta_0}}}{\om^n_0}$ is $L^q(\om_0)$ for some $q>1$, we have for some $\tilde p>p$,
			\begin{align*}
	\int_X	|\p\tilde F_i|_{\om_{\theta^{\beta_0}}}^{2p} \om^n_{\theta^{\beta_0}}\leq C\int_X |\p\tilde F_i|^{2\tilde p}_{\om_{\theta^{\beta_0}_{\eps_i}}} \om^n_0\leq C\int_X |\p\tilde F_i|^{2\tilde p}_{\om_{\theta^{\beta_0}_{\eps_i}}} \om^n_{\theta^{\beta_0}_{\eps_i}}.
	\end{align*}
	The estimate of $ \|i\p\bar \p{\vphi_i}\|_{L^p(\om_{\theta^{\beta_0}})}$ is similar.
	
\textbf{Step 2: Take a energy convergent subsequence $\vphi_{i_j}$ of $\vphi_i$}
	
Thanks to Claim \ref{correct uniform bound openness claim}, we could take a subsequence of $\vphi_i$ such that
\begin{itemize}
	\item $\vphi_{i_j}$ converges to $\vphi_\infty$ in $C^{1,\a,\b}$ and $i\p\bar \p\vphi_{i_j}\rightarrow \vphi_\infty$ weakly in $L^p(\om_{\theta^{\beta_0}})$;
	\item $\tilde F_{i_j}$ converges to $F_\infty$ in $C^{0,\a,\b}$ and $\p \tilde F_{i_j}\rightarrow \p F_\infty$ weakly in $L^p(\om_{\theta^{\beta_0}})$.
\end{itemize}
We are going to show that  $\vphi_\infty$ is a cscK cone metric. Then due to uniqueness (\thmref{Uniqueness}), we have	
\begin{align}\label{0distance openness}
d_1(\varphi^{\beta_0},\vphi_\infty)= 0. 
\end{align}

The proof is a limit procedure by applying a priori estimates from Section \ref{a priori estimates}.

\paragraph{a)}We will prove
\begin{align}\label{a openess}
\om^n_{\vphi_\infty}=e^{ F_\infty}\om^n_{\theta^{\beta_0}}\text{ weakly in } L^p.
\end{align}
	From \eqref{fi}, we know that
\begin{align*}
\int_X |f_i| \om_0^n\leq \|h_0^{\beta_0}+c_{\beta_0}-h_0^{\beta_i}-c_{\beta_i}\|_\infty+(\beta_i-\beta_0)\int_X |\log(|s|^2_h+\eps_i)| \om_0^n\rightarrow 0.
\end{align*}
Moreover, we have $\int_X |f_i|^p \om_0^n\rightarrow 0$ and $e^{-f_i}\rightarrow 1$ in $L^p(\om_{\theta^{\beta_0}})$ for some large $p$, as $i\rightarrow\infty$.	Then we are able to take limit on the both side of 
$
\om^n_{\vphi_i}=e^{F_i}\om^n_{\theta^{\beta_0}}=e^{\tilde F_i}e^{-f_i}\om^n_{\theta^{\beta_0}}
$
to get \eqref{a openess}.
	
\paragraph{b)} We choose a sequence of smooth $F_i$ converges to $F_\infty$ in $W^{1,p}(\om_{\theta^{\beta_0}})$ and approximate \eqref{a openess} by the complex Monge-Amp\`ere equation
\begin{align}\label{a approximate openess}
\om^n_{\vphi_i}=e^{ F_i}\om^n_{\theta_{\eps_i}^{\beta_0}}.
\end{align}
The existence of smooth solution $\vphi_i$ is guaranteed by \cite{MR2993005}.	The second order estimate of $\vphi_i$ is obtained in the following Section \ref{second order estimate w1p}. Then after taking $\eps_i\rightarrow 0$ in \eqref{a approximate openess} in $X\setminus D$, we have a K\"ahler cone metric $\om_{\vphi_1}$ which solves \eqref{a openess}. Moreover, $F_\infty\in C^{0,\a,\b}$, $\vphi_1$ is $C^{2,\a,\b}$ by Evans-Krylov estimates. Actually, we have $\vphi_1=\vphi_{\infty}$ by uniqueness of the solution of \eqref{a openess}.

\paragraph{c)}Then we derive the equation of $F_\infty$. We choose $\eta$ be a smooth function and rewrite \eqref{approximate equation openness} in the integral form
\begin{align*}
\int_X	i\p\bar \p \eta \tilde F_{i}\wedge \om_{\vphi_i}^{n-1}&=\int_X\eta  \theta^{\beta_i}\wedge \om_{\vphi_i}^{n-1}+\int_Xf_i i\p\bar\p \eta  \wedge \om_{\vphi_i}^{n-1}-\underline S_{\beta_i} \int_X\eta\om_{\vphi_i}^{n}.
\end{align*}
Since $\tilde F_{i}\rightarrow F_\infty$ in $C^{0,\a,\b}$, $\om_{\vphi_i}\rightarrow \om_{\vphi_\infty}$ in weakly $L^p$, $f_i\rightarrow 0$ in $L^p$, $\theta^{\beta_i},\underline S_{\beta_i}\rightarrow \theta^{\beta_0},\underline S_{\beta_0}$ smoothly, the integral identity above converges to
	\begin{align*}
	\int_X	i\p\bar \p \eta  F_{\infty}\wedge \om_{\vphi_\infty}^{n-1}&=\int_X\eta  \theta^{\beta_0}\wedge \om_{\vphi_\infty}^{n-1}-\underline S_{\beta_0} \int_X\eta\om_{\vphi_\infty}^{n}.
	\end{align*}
Since $\vphi_{\infty}$ is $C^{2,\a,\b}$, we could solve $\tri_{\vphi_\infty} F_1=\tr_{\vphi_\infty}\theta^{\beta_0}-\underline S_{\beta_0}$ to get a solution $F_1\in C^{2,\a,\b}$. But the smooth functions are dense in $W^{1,p}(\om_{\theta^{\beta_0}})$  and $F_\infty \in W^{1,p}(\om_{\theta^{\beta_0}})$, we choose $\eta=F_1-F_{\infty}$ in the integral identity above such that  
	\begin{align*}
\int_X	|\p  (F_1-F_{\infty})|^2_{\vphi_\infty} \om_{\vphi_\infty}^{n}=-\int_X	i\p\bar \p \eta \cdot (F_1-F_{\infty})\wedge \om_{\vphi_\infty}^{n-1}&=0.
\end{align*}
Thus $F_\infty$ coincides with $F_1$ and is also $C^{2,\a,\b}$.	 
	
	In conclusion, $\vphi_\infty\in D_{\bf w}^{4,\a,\b}(\om_{\theta^{\beta_0}})$ is a cscK cone metric.

\textbf{Step 3: Take a distance convergent subsequence of $\vphi_{i_j}$}
We are going to subtract a subsequence in the previous Step such that the $d_1$-distance $d_1(\varphi^{\beta_0},\vphi_\infty)$ converges to 1. Therefore we arrive at the contradiction to \eqref{0distance openness}
and therefore the distance bound \eqref{Uniform distance bound} is proved.

From assumption \eqref{contradiction assumption openness}, we already have the bound of the $d_1$-distance.
In order to apply \lemref{Weakcompactness} to get the $d_1$-convergent subsequence required, it is sufficient to verify that the log $K$-energy is bounded.

	Recall the formula of the log $K$-energy \eqref{log K energy},
	\begin{align*}
	\nu_\beta(\vphi)
	&=E_\beta(\vphi)
	+J_{-\theta^\beta}(\vphi)+\frac{1}{V}\int_M (-(1-\b)\log |s|_h^2+h^\beta_0)\om_0^n.
	\end{align*}
	We compare the log $K$-energy between different cone angle $\beta$ and $\beta_0$,
	\begin{align*}
	\nu_{\beta_i}(\vphi)-\nu_{\beta_0}(\vphi)
	&=\frac{1}{V}\int_M\log\frac{\om_{\theta^{\beta_0}}^n}{\om_{\theta^{\beta_i}}^n}\om^n_\vphi
	+J_{\theta^{\beta_0}-\theta^{\beta_i}}(\vphi)\\
	&+\frac{1}{V}\int_M [(\beta_i-\b_0)\log |s|_h^2+h^{\beta_i}_0-h^{\beta_0}_0]\om_0^n\\
	&=\frac{1}{V}\int_M [{h_0^{\beta_0}+c_{\beta_0}-h_0^{\beta_i}-c_\beta} + (\beta_0-\beta_i)\log(|s|^2_h+\eps)]\om^n_\vphi\\
	&+J_{\theta^{\beta_0}-\theta^{\beta_i}}(\vphi)
	+\frac{1}{V}\int_M [(\beta_i-\b_0)\log |s|_h^2+h^{\beta_i}_0-h^{\beta_0}_0]\om_0^n\\
	&\leq C(n)\|\theta^{\beta_0}-\theta^{\beta_i}\|_{\om_0} d_1(0,\vphi_i)C |\beta_{i}-\beta_0|.
	\end{align*}
	Here, we use the continuity of $h^\beta_0$ and $c_\beta$ on $\beta$ and \lemref{d_1andJ}.
	We see that $\nu_{\beta}(\vphi)$ is continuous on the cone angle $\beta$ for any $\vphi.$	
	As a consequence, we make use of \lemref{approximation proper} to obtain that the log $K$-energy is bounded, that is
	\begin{align*}
	\nu_\beta(\vphi_i)\leq \nu_{\beta_i}(\vphi_i)+C |\beta_{i}-\beta_0|
	\leq \nu^{\eps_i}_{\beta_i}(\vphi_i)+C+C |\beta_{i}-\beta_0|\leq 2C.
	\end{align*}
	Then the proof of this step is complete, and we thus obtain the distance uniform bound in the proposition.

\end{proof}




\begin{bibdiv}
\begin{biblist}
\bib{MR2562773}{article}{
   author={\AA hag, P.},
   author={Cegrell, U.},
   author={Ko\l odziej, S.},
   author={Pham, H. H.},
   author={Zeriahi, A.},
   title={Partial pluricomplex energy and integrability exponents of
   plurisubharmonic functions},
   journal={Adv. Math.},
   volume={222},
   date={2009},
   number={6},
   pages={2036--2058},
   issn={0001-8708},
   review={\MR{2562773}},
   doi={10.1016/j.aim.2009.07.002},
}

\bib{MR681859}{book}{
   author={Aubin, Thierry},
   title={Nonlinear analysis on manifolds. Monge-Amp\`ere equations},
   series={Grundlehren der Mathematischen Wissenschaften [Fundamental
   Principles of Mathematical Sciences]},
   volume={252},
   publisher={Springer-Verlag, New York},
   date={1982},
   pages={xii+204},
   isbn={0-387-90704-1},
   review={\MR{681859}},
   doi={10.1007/978-1-4612-5734-9},
}

\bib{MR886812}{article}{
	author={Bedford, Eric},
	author={Taylor, B. A.},
	title={Fine topology, \v{S}ilov boundary, and $(dd^c)^n$},
	journal={J. Funct. Anal.},
	volume={72},
	date={1987},
	number={2},
	pages={225--251},
	issn={0022-1236},
	review={\MR{886812}},
	doi={10.1016/0022-1236(87)90087-5},
}

\bib{MR3107540}{article}{
	author={Berman, Robert J.},
	title={A thermodynamical formalism for Monge-Amp\`ere equations,
		Moser-Trudinger inequalities and K\"{a}hler-Einstein metrics},
	journal={Adv. Math.},
	volume={248},
	date={2013},
	pages={1254--1297},
	issn={0001-8708},
	review={\MR{3107540}},
	doi={10.1016/j.aim.2013.08.024},
}

\bib{MR3671939}{article}{
	author={Berman, Robert J.},
	author={Berndtsson, Bo},
	title={Convexity of the $K$-energy on the space of K\"{a}hler metrics and
		uniqueness of extremal metrics},
	journal={J. Amer. Math. Soc.},
	volume={30},
	date={2017},
	number={4},
	pages={1165--1196},
	issn={0894-0347},
	review={\MR{3671939}},
	doi={10.1090/jams/880},
}

\bib{MR3956691}{article}{
	author={Berman, Robert J.},
	author={Boucksom, Sebastien},
	author={Eyssidieux, Philippe},
	author={Guedj, Vincent},
	author={Zeriahi, Ahmed},
	title={K\"{a}hler-Einstein metrics and the K\"{a}hler-Ricci flow on log Fano
		varieties},
	journal={J. Reine Angew. Math.},
	volume={751},
	date={2019},
	pages={27--89},
	issn={0075-4102},
	review={\MR{3956691}},
	doi={10.1515/crelle-2016-0033},
}

\bib{MR3090260}{article}{
	author={Berman, Robert J.},
	author={Boucksom, S\'{e}bastien},
	author={Guedj, Vincent},
	author={Zeriahi, Ahmed},
	title={A variational approach to complex Monge-Amp\`ere equations},
	journal={Publ. Math. Inst. Hautes \'{E}tudes Sci.},
	volume={117},
	date={2013},
	pages={179--245},
	issn={0073-8301},
	review={\MR{3090260}},
	doi={10.1007/s10240-012-0046-6},
}


\bib{MR3687111}{article}{
	author={Berman, Robert J.},
	author={Darvas, Tam\'{a}s},
	author={Lu, Chinh H.},
	title={Convexity of the extended K-energy and the large time behavior of
		the weak Calabi flow},
	journal={Geom. Topol.},
	volume={21},
	date={2017},
	number={5},
	pages={2945--2988},
	issn={1465-3060},
	review={\MR{3687111}},
	doi={10.2140/gt.2017.21.2945},
}

\bib{arXiv:1602.03114}{article}{
	author={Berman, Robert J.},
	author={Darvas, Tam\'as},
	author={Lu, Chinh H.},
	title={Regularity of weak minimizers of the K-energy and applications to properness and K-stability
},
	journal={arXiv:1602.03114},
	date={2016},
}
\bib{MR3283927}{article}{
   author={Berman, Robert J.},
   author={Guenancia, Henri},
   title={K\"{a}hler-Einstein metrics on stable varieties and log canonical
   pairs},
   journal={Geom. Funct. Anal.},
   volume={24},
   date={2014},
   number={6},
   pages={1683--1730},
   issn={1016-443X},
   review={\MR{3283927}},
   doi={10.1007/s00039-014-0301-8},
}

\bib{MR1945706}{article}{
	author={Boucksom, S\'{e}bastien},
	title={On the volume of a line bundle},
	journal={Internat. J. Math.},
	volume={13},
	date={2002},
	number={10},
	pages={1043--1063},
	issn={0129-167X},
	review={\MR{1945706}},
	doi={10.1142/S0129167X02001575},
}

\bib{MR3966781}{article}{
   author={Boucksom, S\'{e}bastien},
   title={Variational and non-archimedean aspects of the Yau-Tian-Donaldson
   conjecture},
   conference={
      title={Proceedings of the International Congress of
      Mathematicians---Rio de Janeiro 2018. Vol. II. Invited lectures},
   },
   book={
      publisher={World Sci. Publ., Hackensack, NJ},
   },
   date={2018},
   pages={591--617},
   review={\MR{3966781}},
}

\bib{MR2746347}{article}{
   author={Boucksom, S\'{e}bastien},
   author={Eyssidieux, Philippe},
   author={Guedj, Vincent},
   author={Zeriahi, Ahmed},
   title={Monge-Amp\`ere equations in big cohomology classes},
   journal={Acta Math.},
   volume={205},
   date={2010},
   number={2},
   pages={199--262},
   issn={0001-5962},
   review={\MR{2746347}},
   doi={10.1007/s11511-010-0054-7},
}
\bib{MR3405866}{article}{
	author={Calamai, Simone},
	author={Zheng, Kai},
	title={Geodesics in the space of K\"{a}hler cone metrics, I},
	journal={Amer. J. Math.},
	volume={137},
	date={2015},
	number={5},
	pages={1149--1208},
	issn={0002-9327},
	review={\MR{3405866}},
	doi={10.1353/ajm.2015.0036},
}

\bib{MR2484031}{article}{
	author={Chel\cprime tsov, I. A.},
	author={Shramov, K. A.},
	title={Log-canonical thresholds for nonsingular Fano threefolds},
	language={Russian, with Russian summary},
	journal={Uspekhi Mat. Nauk},
	volume={63},
	date={2008},
	number={5(383)},
	pages={73--180},
	issn={0042-1316},
	translation={
		journal={Russian Math. Surveys},
		volume={63},
		date={2008},
		number={5},
		pages={859--958},
		issn={0036-0279},
	},
	review={\MR{2484031}},
	doi={10.1070/RM2008v063n05ABEH004561},
}

\bib{MR3858468}{article}{
	author={Chen, Xiuxiong},
	title={On the existence of constant scalar curvature K\"{a}hler metric: a new
		perspective},
	journal={Ann. Math. Qu\'{e}.},
	volume={42},
	date={2018},
	number={2},
	pages={169--189},
	issn={2195-4755},
	review={\MR{3858468}},
	doi={10.1007/s40316-017-0086-x},
}

\bib{MR1772078}{article}{
	author={Chen, Xiuxiong},
	title={On the lower bound of the Mabuchi energy and its application},
	journal={Internat. Math. Res. Notices},
	date={2000},
	number={12},
	pages={607--623},
	issn={1073-7928},
	review={\MR{1772078}},
	doi={10.1155/S1073792800000337},
}

\bib{MR2471594}{article}{
	author={Chen, Xiuxiong},
	title={Space of K\"{a}hler metrics. III. On the lower bound of the Calabi
		energy and geodesic distance},
	journal={Invent. Math.},
	volume={175},
	date={2009},
	number={3},
	pages={453--503},
	issn={0020-9910},
	review={\MR{2471594}},
	doi={10.1007/s00222-008-0153-7},
}

\bib{arXiv:0809.4081}{article}{
	author={Chen, Xiuxiong},
	title={Space of K\"ahler metrics (IV)--On the lower bound of the K-energy
},
	journal={arXiv:0809.4081},
}

\bib{MR1863016}{article}{
	author={Chen, Xiuxiong},
	title={The space of K\"ahler metrics},
	journal={J. Differential Geom.},
	volume={56},
	date={2000},
	number={2},
	pages={189--234},
	issn={0022-040X},
	review={\MR{1863016}},
}

\bib{arXiv:1712.06697}{article}{
	AUTHOR = {Chen, Xiuxiong},
	AUTHOR = {Cheng, Jingrui},
	TITLE = {On the constant scalar curvature K\"ahler metrics, a priori estimates},
	JOURNAL = {arXiv:1712.06697},
	VOLUME = {},
	PAGES= {},
	YEAR={},
	
}

\bib{arXiv:1801.00656}{article}{
	AUTHOR = {Chen, Xiuxiong},
	AUTHOR = {Cheng, Jingrui},
	TITLE = {On the constant scalar curvature K\"ahler metrics, existence results},
	JOURNAL = {arXiv:1801.00656},
	VOLUME = {},
	PAGES= {},
	YEAR={},
	
}

\bib{arXiv:1801.05907}{article}{
	AUTHOR = {Chen, Xiuxiong},
	AUTHOR = {Cheng, Jingrui},
	TITLE = {On the constant scalar curvature K\"ahler metrics, general automorphism group},
	JOURNAL = {arXiv:1801.05907},
	VOLUME = {},
	PAGES= {},
	YEAR={},
	
}

\bib{MR3264766}{article}{
	author={Chen, Xiuxiong},
	author={Donaldson, Simon},
	author={Sun, Song},
	title={K\"{a}hler-Einstein metrics on Fano manifolds. I: Approximation of
		metrics with cone singularities},
	journal={J. Amer. Math. Soc.},
	volume={28},
	date={2015},
	number={1},
	pages={183--197},
	issn={0894-0347},
	review={\MR{3264766}},
	doi={10.1090/S0894-0347-2014-00799-2},
}

\bib{MR3264767}{article}{
	author={Chen, Xiuxiong},
	author={Donaldson, Simon},
	author={Sun, Song},
	title={K\"{a}hler-Einstein metrics on Fano manifolds. II: Limits with cone
		angle less than $2\pi$},
	journal={J. Amer. Math. Soc.},
	volume={28},
	date={2015},
	number={1},
	pages={199--234},
	issn={0894-0347},
	review={\MR{3264767}},
	doi={10.1090/S0894-0347-2014-00800-6},
}


\bib{MR2993005}{article}{
	author={Chen, Xiuxiong},
	author={He, Weiyong},
	title={The complex Monge-Amp\`ere equation on compact K\"{a}hler manifolds},
	journal={Math. Ann.},
	volume={354},
	date={2012},
	number={4},
	pages={1583--1600},
	issn={0025-5831},
	review={\MR{2993005}},
	doi={10.1007/s00208-012-0780-6},
}

\bib{MR3582114}{article}{
	author={Chen, XiuXiong},
	author={Li, Long},
	author={P\u aun, Mihai},
	title={Approximation of weak geodesics and subharmonicity of Mabuchi
		energy},
	language={English, with English and French summaries},
	journal={Ann. Fac. Sci. Toulouse Math. (6)},
	volume={25},
	date={2016},
	number={5},
	pages={935--957},
	issn={0240-2963},
	review={\MR{3582114}},
	doi={10.5802/afst.1516},
}
\bib{MR3348827}{article}{
   author={Chen, Xiuxiong},
   author={Wang, Yuanqi},
   title={Bessel functions, heat kernel and the conical K\"{a}hler-Ricci flow},
   journal={J. Funct. Anal.},
   volume={269},
   date={2015},
   number={2},
   pages={551--632},
   issn={0022-1236},
   review={\MR{3348827}},
   doi={10.1016/j.jfa.2015.01.015},
}

\bib{MR3668765}{article}{
   author={Chen, Xiuxiong},
   author={Wang, Yuanqi},
   title={On the regularity problem of complex Monge-Ampere equations with
   conical singularities},
   language={English, with English and French summaries},
   journal={Ann. Inst. Fourier (Grenoble)},
   volume={67},
   date={2017},
   number={3},
   pages={969--1003},
   issn={0373-0956},
   review={\MR{3668765}},
}

\bib{MR3010550}{article}{
	author={Chen, Xiuxiong},
	author={Zheng, Kai},
	title={The pseudo-Calabi flow},
	journal={J. Reine Angew. Math.},
	volume={674},
	date={2013},
	pages={195--251},
	issn={0075-4102},
	review={\MR{3010550}},
}

\bib{MR3698234}{article}{
   author={Collins, Tristan C.},
   author={Sz\'ekelyhidi, G\'abor},
   title={Convergence of the $J$-flow on toric manifolds},
   journal={J. Differential Geom.},
   volume={107},
   date={2017},
   number={1},
   pages={47--81},
   issn={0022-040X},
   review={\MR{3698234}},
}

\bib{MR3702499}{article}{
	author={Darvas, Tam\'{a}s},
	title={The Mabuchi completion of the space of K\"{a}hler potentials},
	journal={Amer. J. Math.},
	volume={139},
	date={2017},
	number={5},
	pages={1275--1313},
	issn={0002-9327},
	review={\MR{3702499}},
	doi={10.1353/ajm.2017.0032},
}

\bib{MR3406499}{article}{
	author={Darvas, Tam\'{a}s},
	title={The Mabuchi geometry of finite energy classes},
	journal={Adv. Math.},
	volume={285},
	date={2015},
	pages={182--219},
	issn={0001-8708},
	review={\MR{3406499}},
	doi={10.1016/j.aim.2015.08.005},
}

\bib{MR3600039}{article}{
	author={Darvas, Tam\'{a}s},
	author={Rubinstein, Yanir A.},
	title={Tian's properness conjectures and Finsler geometry of the space of
		K\"{a}hler metrics},
	journal={J. Amer. Math. Soc.},
	volume={30},
	date={2017},
	number={2},
	pages={347--387},
	issn={0894-0347},
	review={\MR{3600039}},
	doi={10.1090/jams/873},
}

\bib{MR813252}{article}{
   author={Demailly, Jean-Pierre},
   title={Mesures de Monge-Amp\`ere et caract\'{e}risation g\'{e}om\'{e}trique des
   vari\'{e}t\'{e}s alg\'{e}briques affines},
   language={French, with English summary},
   journal={M\'{e}m. Soc. Math. France (N.S.)},
   number={19},
   date={1985},
   pages={124},
   issn={0037-9484},
   review={\MR{813252}},
}

\bib{MR1211880}{article}{
   author={Demailly, Jean-Pierre},
   title={Monge-Amp\`ere operators, Lelong numbers and intersection theory},
   conference={
      title={Complex analysis and geometry},
   },
   book={
      series={Univ. Ser. Math.},
      publisher={Plenum, New York},
   },
   date={1993},
   pages={115--193},
   review={\MR{1211880}},
}

\bib{MR1158622}{article}{
   author={Demailly, Jean-Pierre},
   title={Regularization of closed positive currents and intersection
   theory},
   journal={J. Algebraic Geom.},
   volume={1},
   date={1992},
   number={3},
   pages={361--409},
   issn={1056-3911},
   review={\MR{1158622}},
}

\bib{MR2742678}{article}{
   author={Demailly, Jean-Pierre},
   title={Estimates on Monge-Amp\`ere operators derived from a local algebra
   inequality},
   language={English, with English and French summaries},
   conference={
      title={Complex analysis and digital geometry},
   },
   book={
      series={Acta Univ. Upsaliensis Skr. Uppsala Univ. C Organ. Hist.},
      volume={86},
      publisher={Uppsala Universitet, Uppsala},
   },
   date={2009},
   pages={131--143},
   review={\MR{2742678}},
}
\bib{MR3191972}{article}{
   author={Demailly, Jean-Pierre},
   author={Dinew, S\l awomir},
   author={Guedj, Vincent},
   author={Pham, Hoang Hiep},
   author={Ko\l odziej, S\l awomir},
   author={Zeriahi, Ahmed},
   title={H\"{o}lder continuous solutions to Monge-Amp\`ere equations},
   journal={J. Eur. Math. Soc. (JEMS)},
   volume={16},
   date={2014},
   number={4},
   pages={619--647},
   issn={1435-9855},
   review={\MR{3191972}},
   doi={10.4171/JEMS/442},
}
\bib{MR1852009}{article}{
   author={Demailly, Jean-Pierre},
   author={Koll\'{a}r, J\'{a}nos},
   title={Semi-continuity of complex singularity exponents and
   K\"{a}hler-Einstein metrics on Fano orbifolds},
   language={English, with English and French summaries},
   journal={Ann. Sci. \'{E}cole Norm. Sup. (4)},
   volume={34},
   date={2001},
   number={4},
   pages={525--556},
   issn={0012-9593},
   review={\MR{1852009}},
   doi={10.1016/S0012-9593(01)01069-2},
}
\bib{MR2647006}{article}{
   author={Demailly, Jean-Pierre},
   author={Pali, Nefton},
   title={Degenerate complex Monge-Amp\`ere equations over compact K\"{a}hler
   manifolds},
   journal={Internat. J. Math.},
   volume={21},
   date={2010},
   number={3},
   pages={357--405},
   issn={0129-167X},
   review={\MR{2647006}},
   doi={10.1142/S0129167X10006070},
}

\bib{MR2113021}{article}{
   author={Demailly, Jean-Pierre},
   author={Paun, Mihai},
   title={Numerical characterization of the K\"{a}hler cone of a compact K\"{a}hler
   manifold},
   journal={Ann. of Math. (2)},
   volume={159},
   date={2004},
   number={3},
   pages={1247--1274},
   issn={0003-486X},
   review={\MR{2113021}},
   doi={10.4007/annals.2004.159.1247},
}

\bib{MR3564131}{article}{
	author={Dervan, Ruadha\'{\i}},
	title={Alpha invariants and coercivity of the Mabuchi functional on Fano
		manifolds},
	language={English, with English and French summaries},
	journal={Ann. Fac. Sci. Toulouse Math. (6)},
	volume={25},
	date={2016},
	number={4},
	pages={919--934},
	issn={0240-2963},
	review={\MR{3564131}},
	doi={10.5802/afst.1515},
}
\bib{MR3830547}{article}{
   author={Di Nezza, Eleonora},
   author={Guedj, Vincent},
   title={Geometry and topology of the space of K\"{a}hler metrics on singular
   varieties},
   journal={Compos. Math.},
   volume={154},
   date={2018},
   number={8},
   pages={1593--1632},
   issn={0010-437X},
   review={\MR{3830547}},
   doi={10.1112/s0010437x18007170},
}

\bib{MR2669357}{article}{
   author={Dinew, S\l awomir},
   author={Zhang, Zhou},
   title={On stability and continuity of bounded solutions of degenerate
   complex Monge-Amp\`ere equations over compact K\"{a}hler manifolds},
   journal={Adv. Math.},
   volume={225},
   date={2010},
   number={1},
   pages={367--388},
   issn={0001-8708},
   review={\MR{2669357}},
   doi={10.1016/j.aim.2010.03.001},
}

\bib{MR967024}{article}{
	author={Ding, Wei Yue},
	title={Remarks on the existence problem of positive K\"{a}hler-Einstein
		metrics},
	journal={Math. Ann.},
	volume={282},
	date={1988},
	number={3},
	pages={463--471},
	issn={0025-5831},
	review={\MR{967024}},
	doi={10.1007/BF01460045},
}

\bib{Ding-Tian}{article}{
	author={W.Y. Ding},
	author={G. Tian},
	title={The generalized Moser-Trudinger inequality},
	conference={
      title={Nonlinear Analysis and Microlocal Analysis (K.-C. Chang et al., Eds.)},
   },
   book={
      publisher={World Scientific},
   },
	date={1992},
	pages={57-70},
}

\bib{MR1701920}{article}{
   author={Donaldson, S. K.},
   title={Moment maps and diffeomorphisms},
   note={Sir Michael Atiyah: a great mathematician of the twentieth
   century},
   journal={Asian J. Math.},
   volume={3},
   date={1999},
   number={1},
   pages={1--15},
   issn={1093-6106},
   review={\MR{1701920}},
   doi={10.4310/AJM.1999.v3.n1.a1},
}

\bib{MR2975584}{article}{
	author={Donaldson, S. K.},
	title={K\"{a}hler metrics with cone singularities along a divisor},
	conference={
		title={Essays in mathematics and its applications},
	},
	book={
		publisher={Springer, Heidelberg},
	},
	date={2012},
	pages={49--79},
	review={\MR{2975584}},
}

\bib{MR3966735}{article}{
   author={Donaldson, Simon},
   title={Some recent developments in K\"{a}hler geometry and exceptional
   holonomy},
   conference={
      title={Proceedings of the International Congress of
      Mathematicians---Rio de Janeiro 2018. Vol. I. Plenary lectures},
   },
   book={
      publisher={World Sci. Publ., Hackensack, NJ},
   },
   date={2018},
   pages={425--451},
   review={\MR{3966735}},
}

\bib{MR1736211}{article}{
	author={Donaldson, S. K.},
	title={Symmetric spaces, K\"{a}hler geometry and Hamiltonian dynamics},
	conference={
		title={Northern California Symplectic Geometry Seminar},
	},
	book={
		series={Amer. Math. Soc. Transl. Ser. 2},
		volume={196},
		publisher={Amer. Math. Soc., Providence, RI},
	},
	date={1999},
	pages={13--33},
	review={\MR{1736211}},
	doi={10.1090/trans2/196/02},
}

\bib{MR2439574}{article}{
   author={Eyssidieux, Philippe},
   author={Guedj, Vincent},
   author={Zeriahi, Ahmed},
   title={A priori $L^\infty$-estimates for degenerate complex Monge-Amp\`ere
   equations},
   journal={Int. Math. Res. Not. IMRN},
   date={2008},
   pages={Art. ID rnn 070, 8},
   issn={1073-7928},
   review={\MR{2439574}},
   doi={10.1093/imrn/rnn070},
}

\bib{MR2505296}{article}{
   author={Eyssidieux, Philippe},
   author={Guedj, Vincent},
   author={Zeriahi, Ahmed},
   title={Singular K\"{a}hler-Einstein metrics},
   journal={J. Amer. Math. Soc.},
   volume={22},
   date={2009},
   number={3},
   pages={607--639},
   issn={0894-0347},
   review={\MR{2505296}},
   doi={10.1090/S0894-0347-09-00629-8},
}

\bib{MR2794631}{article}{
   author={Fang, Hao},
   author={Lai, Mijia},
   author={Ma, Xinan},
   title={On a class of fully nonlinear flows in K\"{a}hler geometry},
   journal={J. Reine Angew. Math.},
   volume={653},
   date={2011},
   pages={189--220},
   issn={0075-4102},
   review={\MR{2794631}},
   doi={10.1515/CRELLE.2011.027},
}
\bib{MR569410}{article}{
   author={Forn\ae ss, John Erik},
   author={Narasimhan, Raghavan},
   title={The Levi problem on complex spaces with singularities},
   journal={Math. Ann.},
   volume={248},
   date={1980},
   number={1},
   pages={47--72},
   issn={0025-5831},
   review={\MR{569410}},
   doi={10.1007/BF01349254},
}

\bib{arXiv:1401.7857}{article}{
	author={Guedj, Vincent},
	title={The metric completion of the Riemannian space of K\"ahler metrics
	},
	journal={arXiv:1401.7857},
	date={2014},
}

\bib{MR2352488}{article}{
	author={Guedj, Vincent},
	author={Zeriahi, Ahmed},
	title={The weighted Monge-Amp\`ere energy of quasiplurisubharmonic
		functions},
	journal={J. Funct. Anal.},
	volume={250},
	date={2007},
	number={2},
	pages={442--482},
	issn={0022-1236},
	review={\MR{2352488}},
	doi={10.1016/j.jfa.2007.04.018},
}

\bib{MR3488129}{article}{
	author={Guenancia, Henri},
	author={P\u aun, Mihai},
	title={Conic singularities metrics with prescribed Ricci curvature:
		general cone angles along normal crossing divisors},
	journal={J. Differential Geom.},
	volume={103},
	date={2016},
	number={1},
	pages={15--57},
	issn={0022-040X},
	review={\MR{3488129}},
}

\bib{MR3896025}{article}{
	author={He, Weiyong},
	author={Zeng, Yu},
	title={Constant scalar curvature equation and regularity of its weak
		solution},
	journal={Comm. Pure Appl. Math.},
	volume={72},
	date={2019},
	number={2},
	pages={422--448},
	issn={0010-3640},
	review={\MR{3896025}},
	doi={10.1002/cpa.21790},
}

\bib{arXiv:1701.06943}{article}{
	author={He, Weiyong},
	author={Zeng, Yu},
	title={The Calabi flow with rough initial data},
	journal= {arXiv:1701.06943}
}

\bib{MR0199184}{article}{
   author={Hironaka, Heisuke},
   title={Resolution of singularities of an algebraic variety over a field
   of characteristic zero. I, II},
   journal={Ann. of Math. (2) {\bf 79} (1964), 109--203; ibid. (2)},
   volume={79},
   date={1964},
   pages={205--326},
   issn={0003-486X},
   review={\MR{0199184}},
   doi={10.2307/1970547},
}

\bib{MR1045639}{book}{
	author={H\"{o}rmander, Lars},
	title={An introduction to complex analysis in several variables},
	series={North-Holland Mathematical Library},
	volume={7},
	edition={3},
	publisher={North-Holland Publishing Co., Amsterdam},
	date={1990},
	pages={xii+254},
	isbn={0-444-88446-7},
	review={\MR{1045639}},
}

\bib{MR3857693}{article}{
	author={Keller, Julien},
	author={Zheng, Kai},
	title={Construction of constant scalar curvature K\"{a}hler cone metrics},
	journal={Proc. Lond. Math. Soc. (3)},
	volume={117},
	date={2018},
	number={3},
	pages={527--573},
	issn={0024-6115},
	review={\MR{3857693}},
	doi={10.1112/plms.12132},
}

\bib{MR2425147}{article}{
	author={Ko\l odziej, S\l awomir},
	title={H\"{o}lder continuity of solutions to the complex Monge-Amp\`ere
		equation with the right-hand side in $L^p$: the case of compact K\"{a}hler
		manifolds},
	journal={Math. Ann.},
	volume={342},
	date={2008},
	number={2},
	pages={379--386},
	issn={0025-5831},
	review={\MR{2425147}},
	doi={10.1007/s00208-008-0239-y},
}
\bib{MR3302615}{article}{
	author={Li, Haozhao},
	author={Shi, Yalong},
	author={Yao, Yi},
	title={A criterion for the properness of the $K$-energy in a general
		K\"{a}hler class},
	journal={Math. Ann.},
	volume={361},
	date={2015},
	number={1-2},
	pages={135--156},
	issn={0025-5831},
	review={\MR{3302615}},
	doi={10.1007/s00208-014-1073-z},
}
\bib{MR3833786}{article}{
	author={Li, Haozhao},
	author={Wang, Bing},
	author={Zheng, Kai},
	title={Regularity scales and convergence of the Calabi flow},
	journal={J. Geom. Anal.},
	volume={28},
	date={2018},
	number={3},
	pages={2050--2101},
	issn={1050-6926},
	review={\MR{3833786}},
	doi={10.1007/s12220-017-9896-y},
}

\bib{MR3803119}{article}{
	author={Li, Long},
	title={Subharmonicity of conic Mabuchi's functional, I},
	language={English, with English and French summaries},
	journal={Ann. Inst. Fourier (Grenoble)},
	volume={68},
	date={2018},
	number={2},
	pages={805--845},
	issn={0373-0956},
	review={\MR{3803119}},
}

\bib{arXiv:1805.04944}{article}{
	author={Li, Long},
	author={Wang, Jian},
	author={Zheng, Kai},
	title={Conic singularities metrics with prescribed scalar curvature: a priori estimates for normal crossing divisors},
	journal={Bull. Soc. Math. France},
	volume={148},
	date={2020},
	number={1},
	pages={51--97},
	doi={10.24033/bsmf.2799},
}

\bib{MR3761174}{article}{
	author={Li, Long},
	author={Zheng, Kai},
	title={Generalized Matsushima's theorem and K\"ahler--Einstein cone
		metrics},
	journal={Calc. Var. Partial Differential Equations},
	volume={57},
	date={2018},
	number={2},
	pages={57:31},
	issn={0944-2669},
	review={\MR{3761174}},
	doi={10.1007/s00526-018-1313-2},
}

\bib{MR3968885}{article}{
	author={Li, Long},
	author={Zheng, Kai},
	title={Uniqueness of constant scalar curvature K\"{a}hler metrics with cone
		singularities. I: reductivity},
	journal={Math. Ann.},
	volume={373},
	date={2019},
	number={1-2},
	pages={679--718},
	issn={0025-5831},
	review={\MR{3968885}},
	doi={10.1007/s00208-017-1626-z},
}


\bib{MR352517}{article}{
   author={Skoda, Henri},
   title={Sous-ensembles analytiques d'ordre fini ou infini dans ${\bf
   C}^{n}$},
   language={French},
   journal={Bull. Soc. Math. France},
   volume={100},
   date={1972},
   pages={353--408},
   issn={0037-9484},
   review={\MR{352517}},
}

\bib{MR2869020}{article}{
	author={Song, Jian},
	author={Tian, Gang},
	title={Canonical measures and K\"{a}hler-Ricci flow},
	journal={J. Amer. Math. Soc.},
	volume={25},
	date={2012},
	number={2},
	pages={303--353},
	issn={0894-0347},
	review={\MR{2869020}},
	doi={10.1090/S0894-0347-2011-00717-0},
}

\bib{MR3470713}{article}{
	author={Song, Jian},
	author={Wang, Xiaowei},
	title={The greatest Ricci lower bound, conical Einstein metrics and Chern
		number inequality},
	journal={Geom. Topol.},
	volume={20},
	date={2016},
	number={1},
	pages={49--102},
	issn={1465-3060},
	review={\MR{3470713}},
	doi={10.2140/gt.2016.20.49},
}

\bib{MR2368374}{article}{
   author={Song, Jian},
   author={Weinkove, Ben},
   title={On the convergence and singularities of the $J$-flow with
   applications to the Mabuchi energy},
   journal={Comm. Pure Appl. Math.},
   volume={61},
   date={2008},
   number={2},
   pages={210--229},
   issn={0010-3640},
   review={\MR{2368374}},
   doi={10.1002/cpa.20182},
}

\bib{MR1787650}{book}{
	author={Tian, Gang},
	title={Canonical metrics in K\"{a}hler geometry},
	series={Lectures in Mathematics ETH Z\"{u}rich},
	note={Notes taken by Meike Akveld},
	publisher={Birkh\"{a}user Verlag, Basel},
	date={2000},
	pages={vi+101},
	isbn={3-7643-6194-8},
	review={\MR{1787650}},
	doi={10.1007/978-3-0348-8389-4},
}

\bib{MR894378}{article}{
	author={Tian, Gang},
	title={On K\"{a}hler-Einstein metrics on certain K\"{a}hler manifolds with
		$C_1(M)>0$},
	journal={Invent. Math.},
	volume={89},
	date={1987},
	number={2},
	pages={225--246},
	issn={0020-9910},
	review={\MR{894378}},
	doi={10.1007/BF01389077},
}

\bib{MR1471884}{article}{
	author={Tian, Gang},
	title={K\"{a}hler-Einstein metrics with positive scalar curvature},
	journal={Invent. Math.},
	volume={130},
	date={1997},
	number={1},
	pages={1--37},
	issn={0020-9910},
	review={\MR{1471884}},
	doi={10.1007/s002220050176},
}

\bib{MR944606}{article}{
   author={Tsuji, Hajime},
   title={Existence and degeneration of K\"{a}hler-Einstein metrics on minimal
   algebraic varieties of general type},
   journal={Math. Ann.},
   volume={281},
   date={1988},
   number={1},
   pages={123--133},
   issn={0025-5831},
   review={\MR{944606}},
   doi={10.1007/BF01449219},
}

\bib{MR3368100}{article}{
	author={Yao, Chengjian},
	title={Existence of weak conical K\"{a}hler-Einstein metrics along smooth
		hypersurfaces},
	journal={Math. Ann.},
	volume={362},
	date={2015},
	number={3-4},
	pages={1287--1304},
	issn={0025-5831},
	review={\MR{3368100}},
	doi={10.1007/s00208-014-1140-5},
}

\bib{MR3472831}{article}{
	author={Yao, Chengjian},
	title={The continuity method to deform cone angle},
	journal={J. Geom. Anal.},
	volume={26},
	date={2016},
	number={2},
	pages={1155--1172},
	issn={1050-6926},
	review={\MR{3472831}},
	doi={10.1007/s12220-015-9586-6},
}

\bib{MR480350}{article}{
   author={Yau, Shing Tung},
   title={On the Ricci curvature of a compact K\"{a}hler manifold and the
   complex Monge-Amp\`ere equation. I},
   journal={Comm. Pure Appl. Math.},
   volume={31},
   date={1978},
   number={3},
   pages={339--411},
   issn={0010-3640},
   review={\MR{480350}},
   doi={10.1002/cpa.3160310304},
}

\bib{MR3911741}{article}{
	author={Yin, Hao},
	author={Zheng, Kai},
	title={Expansion formula for complex Monge-Amp\`ere equation along cone
		singularities},
	journal={Calc. Var. Partial Differential Equations},
	volume={58},
	date={2019},
	number={2},
	pages={Art. 50, 32},
	issn={0944-2669},
	review={\MR{3911741}},
	doi={10.1007/s00526-019-1498-z},
}

\bib{MR1857051}{article}{
   author={Zeriahi, Ahmed},
   title={Volume and capacity of sublevel sets of a Lelong class of
   plurisubharmonic functions},
   journal={Indiana Univ. Math. J.},
   volume={50},
   date={2001},
   number={1},
   pages={671--703},
   issn={0022-2518},
   review={\MR{1857051}},
   doi={10.1512/iumj.2001.50.2062},
}

\bib{MR4020314}{article}{
   author={Zheng, Kai},
   title={Geodesics in the space of K\"{a}hler cone metrics II: Uniqueness of
   constant scalar curvature K\"{a}hler cone metrics},
   journal={Comm. Pure Appl. Math.},
   volume={72},
   date={2019},
   number={12},
   pages={2621--2701},
   issn={0010-3640},
   review={\MR{4020314}},
   doi={10.1002/cpa.21869},
}

\bib{MR3412393}{article}{
	author={Zheng, Kai},
	title={$I$-properness of Mabuchi's $K$-energy},
	journal={Calc. Var. Partial Differential Equations},
	volume={54},
	date={2015},
	number={3},
	pages={2807--2830},
	issn={0944-2669},
	review={\MR{3412393}},
	doi={10.1007/s00526-015-0884-4},
}

\bib{Zheng}{article}{
   author={Zheng, Kai},
   title={K\"ahler metrics with cone singularities and uniqueness problem},
   conference={
      title={Proceedings of the 9th ISAAC Congress, Krak\'ow 2013},
   },
   book={
   title={Current Trends in Analysis and its Applications},
      series={Trends in Mathematics},
      publisher={Springer International Publishing},
   },
   date={2015},
   pages={395-408},
}

\end{biblist}
\end{bibdiv}

\end{document}